\newsavebox\CBox
\newcommand\hcancel[2][0.5pt]{%
	\ifmmode\sbox\CBox{$#2$}\else\sbox\CBox{#2}\fi%
	\makebox[0pt][l]{\usebox\CBox}%
	\rule[0.5\ht\CBox-#1/2]{\wd\CBox}{#1}}
\newtheorem{theorem}{Theorem}[section]
\newtheorem{corollary}{Corollary}[section]
\newtheorem{definition}{Definition}[section]
\newtheorem{lemma}{Lemma}[section]
\newtheorem{proposition}{Proposition}[section]
\begin{document}

\title{\textbf{Koszul duality via suspending Lefschetz fibrations}}\author{Yin Li}\date{}\maketitle
\begin{abstract}
Let $M$ be a Liouville 6-manifold which is the smooth fiber of a Lefschetz fibration on $\mathbb{C}^4$ constructed by suspending a Lefschetz fibration on $\mathbb{C}^3$. We prove that for many examples including stabilizations of Milnor fibers of hypersurface cusp singularities, the compact Fukaya category $\mathcal{F}(M)$ and the wrapped Fukaya category $\mathcal{W}(M)$ are related through $A_\infty$-Koszul duality, by identifying them with cyclic and Calabi-Yau completions of the same quiver algebra. This implies the split-generation of the compact Fukaya category $\mathcal{F}(M)$ by vanishing cycles. Moreover, new examples of Liouville manifolds which admit quasi-dilations in the sense of Seidel-Solomon are obtained.
\end{abstract}

\section{Introduction}

\subsection{Background}

Let $M$ be a Liouville manifold obtained by attaching a cylindrical end $\mathbb{R}_+\times V$ to a Liouville domain $M^\mathrm{in}$, where $V=\partial M^\mathrm{in}$ is the contact boundary. Fix some coefficient field $\mathbb{K}$ to work with. Associated to $M$ there are two versions of Fukaya categories: the compact Fukaya category $\mathcal{F}(M)$ and the wrapped Fukaya category $\mathcal{W}(M)$. Under the assumption that $c_1(M)=0$, they are $\mathbb{Z}$-graded $A_\infty$-categories.

Recall that the objects of $\mathcal{F}(M)$ are oriented, $\mathit{Spin}$, closed exact Lagrangian submanifolds $L\subset M$ with vanishing Maslov class (when $\mathrm{char}(\mathbb{K})=2$, the orientable and $\mathit{Spin}$ assumptions on $L$ can be removed). Let $L_1,\dots,L_r$ be a finite collection of objects in $\mathcal{F}(M)$, using compactly supported Hamiltonian perturbations, one can always achieve that these Lagrangian submanifolds are intersecting transversally, so that the Floer cochain complexes $\mathit{CF}^\ast(L_i,L_j)$ are well-defined. The $A_\infty$-relations on the chain level have been established rigorously by Seidel in $\cite{ps1}$, which makes $\mathcal{F}(M)$ a well-defined $A_\infty$-category.

Since $M$ is non-compact, it is also natural to take into account certain non-compact Lagrangian submanifolds of $M$, and define an $A_\infty$-category $\mathcal{W}(M)$ which has possibly infinite dimensional morphism spaces given by the wrapped Floer cochain complexes $\mathit{CW}^\ast(L_i,L_j)$. To be concrete, the non-compact Lagrangian submanifolds which are allowed as objects of $\mathcal{W}(M)$ are those which are modelled on a cone $\mathbb{R}_+\times\Lambda$ over the cylindrical end $\mathbb{R}_+\times V$, where $\Lambda\subset V$ is a closed Legendrian submanifold. The $A_\infty$-structure on $\mathcal{W}(M)$ can be defined either by using a linear Hamiltonian function together with the telescope construction $\cite{as}$ or a quadratic Hamiltonian function with some appropriate rescalings of the Liouville flow $\cite{ma1}$.\bigskip

When $M=T^\ast Q$ is the cotangent bundle of a compact smooth manifold $Q$, the $A_\infty$-categories $\mathcal{F}(M)$ and $\mathcal{W}(M)$ have topological interpretations, whose most refined versions are due to Abouzaid ($\cite{ma2,ma3}$). Denote by $\mathcal{F}_M$ and $\mathcal{W}_M$ the $A_\infty$-algebras $\mathit{CF}^\ast(Q,Q)$ and $\mathit{CW}^\ast(T_q^\ast Q,T_q^\ast Q)$ respectively, we have the following quasi-isomorphisms:
\begin{equation}
\mathcal{F}_M\cong C^\ast(Q;\mathbb{K});\mathcal{W}_M\cong C_{-\ast}(\Omega_q Q;\mathbb{K}),
\end{equation}
where $C^\ast(Q;\mathbb{K})$ is the dg algebra of singular cochains on $Q$ and $C_{-\ast}(\Omega_q Q;\mathbb{K})$ is the dg algebra of chains on the based loop space $\Omega_q Q$.

A useful topological tool of studying the dg algebras $C^\ast(Q;\mathbb{K})$ and $C_{-\ast}(\Omega_q Q;\mathbb{K})$ is Adams' cobar construction $\cite{ja,ah}$. Recall that there are natural augmentations
\begin{equation}
\varepsilon_\mathcal{F}:C^\ast(Q;\mathbb{K})\rightarrow\mathbb{K},\varepsilon_\mathcal{W}:C_{-\ast}(\Omega_q Q;\mathbb{K})\rightarrow\mathbb{K},
\end{equation}
which make $C^\ast(Q;\mathbb{K})$ and $C_{-\ast}(\Omega_q Q;\mathbb{K})$ into augmented dg algebras, where $\varepsilon_\mathcal{F}$ is induced by the inclusion $\mathit{pt}\hookrightarrow Q$ of a point, and $\varepsilon_\mathcal{W}$ comes from the trivial local system $\pi_1(Q,\mathit{pt})\rightarrow\mathbb{K}$. It follows from the Eilenberg-Moore equivalence that
\begin{equation}\label{eq:em}
R\mathrm{Hom}_{C_{-\ast}(\Omega_q Q;\mathbb{K})}(\mathbb{K},\mathbb{K})\cong C^\ast(Q;\mathbb{K}).
\end{equation}
If we further assume that $Q$ is \textit{simply-connected}, it follows from Adams' cobar construction that there is another quasi-isomorphism
\begin{equation}
R\mathrm{Hom}_{C^\ast(Q;\mathbb{K})}(\mathbb{K},\mathbb{K})\cong C_{-\ast}(\Omega_q Q;\mathbb{K}),
\end{equation}
namely $C^\ast(Q;\mathbb{K})$ and $C_{-\ast}(\Omega_q Q;\mathbb{K})$ are Koszul dual as dg algebras.
\bigskip

Generalizations of the above Koszul duality in the context of symplectic topology have been obtained by Etg\"{u}-Lekili $\cite{etl1}$ and Ekholm-Lekili $\cite{ekl}$. More specifically, they considered symplectic manifolds $M$ which are plumbings of cotangent bundles $T^\ast Q_v$ of simply connected manifolds $Q_v$ according a tree $T=(T_0,T_1)$, where $T_0$ is the set of vertices, and $T_1$ is the set of edges. Denote by
\begin{equation}
\mathcal{F}_M:=\bigoplus_{v,w\in T_0}\mathit{CF}^\ast(Q_v,Q_w)
\end{equation}
the endomorphism algebra of the zero sections $Q_v\subset T^\ast Q_v,Q_w\subset T^\ast Q_w$ in the compact Fukaya category $\mathcal{F}(M)$, and by
\begin{equation}
\mathcal{W}_M:=\bigoplus_{v,w\in T_0}\mathit{CW}^\ast(L_v,L_w)
\end{equation}
the endomorphism algebra of cotangent fibers $L_v=T_q^\ast Q_v,L_w=T_q^\ast Q_w$ in the wrapped Fukaya category $\mathcal{W}(M)$. Up to quasi-isomorphism, these are strictly unital $A_\infty$-algebras over the semisimple ring $\Bbbk:=\bigoplus_{v\in T_0}\mathbb{K}e_v$, where $e_v$ is an idempotent in $\mathit{CF}^0(Q_v,Q_v)$ or $\mathit{CW}^0(L_v,L_v)$.

Note that $\mathcal{F}_M$ and $\mathcal{W}_M$ are equipped with augmentations $\varepsilon_{\mathcal{F}}:\mathcal{F}_M\rightarrow\Bbbk$ and $\varepsilon_{\mathcal{W}}:\mathcal{W}_M\rightarrow\Bbbk$, where $\varepsilon_{\mathcal{F}}$ is defined by projecting to $\Bbbk\cong\mathcal{F}_M^0$, while $\varepsilon_{\mathcal{W}}$ is induced from the exact Lagrangian filling $\left(\bigcup_{v\in T_0}Q_v\right)\cap D^{2n}$ of the Legendrian submanifold $\left(\bigcup_{v\in T_0}Q_v\right)\cap\partial D^{2n}\subset(S^{2n-1},\xi_\mathit{std})$. In $\cite{ekl,etl1}$ it is proved that there are quasi-isomorphisms
\begin{equation}\label{eq:Koszul}
R\mathrm{Hom}_{\mathcal{W}_M}(\Bbbk,\Bbbk)\cong\mathcal{F}_M,R\mathrm{Hom}_{\mathcal{F}_M}(\Bbbk,\Bbbk)\cong\mathcal{W}_M
\end{equation}
when
\begin{itemize}
	\item $\dim_\mathbb{R}(M)=4$ and $M$ is a plumbing of $T^\ast S^2$'s with $T=A_n$ or $D_n$ and $\mathrm{char}(\mathbb{K})\neq2$;
	\item $\dim_\mathbb{R}(M)\geq6$ and $M$ is a plumbing of $T^\ast Q_v$'s according to any tree $T$, with each $Q_v$ being simply-connected.
\end{itemize}

\paragraph{Conventions} 
\begin{itemize}
	\item In this paper, we will need to deal with a dg or $A_\infty$-algebra $\mathcal{A}$ which is bigraded, namely $\mathcal{A}=\bigoplus_{i,j}\mathcal{A}^{i,j}$ as a $\Bbbk$-bimodule, and the differential or $A_\infty$-operations changes only the first grading $i$, see Section \ref{section:Koszul}. In general, the Koszul dual $R\mathrm{Hom}_\mathcal{A}(\Bbbk,\Bbbk)$ of $\mathcal{A}$ regarded as a singly graded $A_\infty$-algebra with respect to the total grading $i+j$ differs from the Koszul dual of $\mathcal{A}$ regarded as a bigraded $A_\infty$-algebra. The latter case is the viewpoint taken in $\cite{lpwz}$. As an example, consider $\mathcal{A}=\mathbb{K}[x]/(x^2)$ with $|x|=(0,1)$. In the first case, $R\mathrm{Hom}_\mathcal{A}(\Bbbk,\Bbbk)$ is isomorphic to the ring of formal power series $\mathbb{K}[[y]]$ with $|y|=0$, while in the second case one gets the polynomial ring $\mathbb{K}[y]$ with $|y|=(1,-1)$. In order to distinguish between these two situations, we will keep the notation $R\mathrm{Hom}_\mathcal{A}(\Bbbk,\Bbbk)$ for taking the \textit{singly graded} Koszul dual of $\mathcal{A}$, and use $E(\mathcal{A})$ to stand for the \textit{bigraded} Koszul dual.
	\item We will always regard $\Bbbk$ as a left $\mathcal{F}_M$-module. One can equivalently view $\Bbbk$ as a right $\mathcal{F}_M$-module, so that in the second formula of (\ref{eq:Koszul}) above becomes $R\mathrm{Hom}_{\mathcal{F}_M^\mathit{op}}(\Bbbk,\Bbbk)\cong\mathcal{W}_M^\mathit{op}$.
\end{itemize}

However, we should remark that the Liouville manifolds which satisfy the Koszul duality (\ref{eq:Koszul}) form only a very restrictive class. Known counterexamples include cotangent bundles of most of the non-simply connected manifolds and their plumbings, and plumbings of $T^\ast S^2$'s according to a non-Dynkin tree $\cite{etl2}$. The common feature of these counterexamples is that the degree zero part of the symplectic cohomology $\mathit{SH}^\ast(M)$ is infinite dimensional. Since homological mirror symmetry predicts the ring isomorphism $\mathit{SH}^0(M)\cong H^0(M^\vee,\mathcal{O}_{M^\vee})$ when the mirror $M^\vee$ of $M$ is a smooth algebraic variety, many examples of Liouville manifolds with infinite dimensional $\mathit{SH}^0(M)$ arise from mirror symmetry. For example, the works of Gross-Hacking-Keel $\cite{ghk}$ and Abouzaid-Auroux-Katzarkov $\cite{aak}$ suggest that the $A_\infty$-algebras $\mathcal{F}_M$ and $\mathcal{W}_M$ are not Koszul dual when $M$ is a log Calabi-Yau surface or an affine conic bundle over $(\mathbb{C}^\ast)^{n-1}$. In the case when $M$ is the complement of an anticanonical divisor in some smooth projective variety, see $\cite{jp}$ and $\cite{gp2}$ for computations of $\mathit{SH}^0(M)$ using Morse-Bott spectral sequences. In general, the $A_\infty$-algebras $\mathcal{F}_M$ and $\mathcal{W}_M$ will mean the endomorphism algebras of a set of split-generators in the compact and wrapped Fukaya categories respectively.
\bigskip

Because of this, Koszul duality between the Fukaya $A_\infty$-algebras $\mathcal{F}_M$ and $\mathcal{W}_M$ should impose restrictions on the behavior of symplectic cohomology, so that it has ``tempered growth". Several theories of similar flavour have already appeared in symplectic topology. For instance, Seidel and Solomon ($\cite{ps2,ss}$) introduced the notion of a \textit{dilation} (or more generally, a \textit{quasi-dilation}) as a distinguished cohomology class in $\mathit{SH}^1(M)$, and showed that there is no exact Lagrangian $K(\pi,1)$ in a Liouville manifold which carries a dilation. From a more algebraic viewpoint, the existence of a quasi-dilation in $\mathit{SH}^1(M)$ corresponds under the closed-open string map to the existence of a \textit{dilating} $\mathbb{C}^\ast$-\textit{action} on the Fukaya category $\mathcal{F}(M)$ defined over $\mathbb{K}=\mathbb{C}$, see $\cite{ps2,ps7}$.

Let
\begin{equation}
\{f(z_1,\dots,z_n)=0\}\subset\mathbb{C}^n
\end{equation}
be any affine hypersurface with an isolated singularity at the origin. Denote by $M_f$ the Milnor fiber associated to $f$, and by $M^\sigma_f$ its stabilization, which a smooth fiber of the Lefschetz fibration
\begin{equation}
\tilde{f}+w^2:\mathbb{C}^{n+1}\rightarrow\mathbb{C},
\end{equation}
where by $\tilde{f}$ we mean a Morsification of $f$. There is the following theorem due to Seidel, which shows that the required tempered behaviour of symplectic cohomology can be achieved by iterated applications of stabilizations.
\begin{theorem}[Seidel $\cite{ps2},\cite{ps8}$]
Any Milnor fiber $M_f^{\sigma\sigma}$ obtained by double stabilization admits a quasi-dilation, and any Milnor fiber $M_f^{\sigma\sigma\sigma}$ obtained by triple stabilization admits a dilation.
\end{theorem}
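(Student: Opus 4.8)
The plan is to work entirely with the Batalin--Vilkovisky (BV) structure on symplectic cohomology, for which I recall that the BV operator $\Delta\colon\mathit{SH}^\ast(M)\to\mathit{SH}^{\ast-1}(M)$ makes $\mathit{SH}^\ast(M)$ into a BV algebra: a \emph{dilation} is a class $B\in\mathit{SH}^1(M)$ with $\Delta B=1$, while a \emph{quasi-dilation} is a pair $(h,B)$ with $h\in\mathit{SH}^0(M)^\times$ invertible and $B\in\mathit{SH}^1(M)$ satisfying $\Delta B=h$, so that a dilation is the special case $h=1$. I would reduce the theorem to two independent assertions: (A) every double stabilization $M_f^{\sigma\sigma}$ carries a quasi-dilation, and (B) if a Liouville manifold $N$ carries a quasi-dilation, then its stabilization $N^\sigma$ carries a genuine dilation. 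Granting these, the first claim is exactly (A), and the second follows by applying (B) to $N=M_f^{\sigma\sigma}$, since $M_f^{\sigma\sigma\sigma}=(M_f^{\sigma\sigma})^\sigma$.

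For (A), I would exploit the symmetry that appears precisely at the second suspension. After the linear change of coordinates $u=w_1+iw_2$, $v=w_1-iw_2$ the double stabilization becomes the affine conic bundle
\[
M_f^{\sigma\sigma}=\{\tilde f(z_1,\dots,z_n)+uv=\epsilon\}\subset\mathbb{C}^{n+2},
\]
which carries the fibrewise Hamiltonian action $t\cdot(z,u,v)=(z,tu,t^{-1}v)$ of $\mathbb{C}^\ast$ (equivalently of $S^1$) preserving the Liouville form up to an exact term; note that a single suspension $\tilde f+w^2$ admits no such action, which is why the phenomenon is genuinely quadratic. I would let $B\in\mathit{SH}^1(M_f^{\sigma\sigma})$ be the class generated by the fundamental vector field of this $S^1$-action, i.e.\ represented by the Hamiltonian generating the rotation of the conic fibres. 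The core computation is then $\Delta B=h$, where $h\in\mathit{SH}^0$ is the class concentrated along the fixed locus $\{u=v=0\}\cong M_f$; because the action is free near the contact boundary at infinity, where $|uv|$ is large, a Morse--Bott localization argument for $\Delta$ identifies $h$ with an invertible element, yielding the quasi-dilation $(h,B)$.

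For (B), I would compare the BV structures of $N$ and $N^\sigma$ through the suspension (Thom--Sebastiani) relationship between the symplectic cohomologies of Milnor fibers of $f$ and $f+w^2$. The added $A_1$-direction contributes a distinguished degree-one generator, and the plan is to show that pairing the quasi-dilation class $B$ of $N$ with this generator produces a class $\widehat B\in\mathit{SH}^1(N^\sigma)$ whose BV image is exactly the unit: the extra square furnishes precisely the normalization that converts the invertible defect $h$ into $1\in\mathit{SH}^0(N^\sigma)$. Geometrically this reflects the fact that the fixed locus of the conic action, after one more suspension, contributes with the parity that renders its equivariant class trivial, so that the non-freeness responsible for $h\neq 1$ is cancelled.

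The main obstacle is the pair of BV computations. In (A) one must establish $\Delta B=h$ and, more delicately, the invertibility of $h$, which requires controlling the moduli of Floer cylinders defining $\Delta$ both near the fixed locus and near infinity, where transversality and Gromov compactness have to be reconciled with the non-compact $\mathbb{C}^\ast$-symmetry; a clean way to organize this is an $S^1$-equivariant/Borel model for $\mathit{SH}^\ast$ together with the localization theorem. In (B) the crux is the suspension comparison itself: showing that the defect $h$ interacts with the generator of the $A_1$-factor so as to normalize to the unit, which is exactly where the parity distinction between the second and third stabilizations becomes decisive, and where I would expect the argument to be most technical.
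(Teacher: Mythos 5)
First, a point of comparison: the paper does not prove this statement. It is quoted as background, with the proof living in Seidel's own work --- Lecture 19 of \cite{ps2}, the Lefschetz fibration argument of Seidel--Solomon \cite{ss}, and \cite{ps8} --- which the paper invokes again in the remarks surrounding Corollary \ref{corollary:quasi-dilation} and in Section \ref{section:quasi-dilation}. Measured against that argument, your part (A) is on the right track: writing $M_f^{\sigma\sigma}$ as the conic bundle $\{\tilde f(z)+uv=\epsilon\}$ and exploiting the fibrewise rotation is exactly the geometric source of the quasi-dilation (this is the ``dilating $\mathbb{C}^\ast$-action'' viewpoint of \cite{ps7}); in Seidel's sources the same input is packaged differently, by projecting away the base coordinates one at a time and applying the transfer result of \cite{ss}, which lifts a quasi-dilation (resp.\ dilation) from the fibre of an exact Lefschetz fibration to its total space, starting from the innermost fibre $\{uv=\mathrm{const}\}\cong T^\ast S^1$. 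Your sketch of (A) has real technical gaps --- the rotation Hamiltonian $\tfrac{1}{2}(|u|^2-|v|^2)$ is indefinite and improper, so ``the class represented by this Hamiltonian'' is not a well-defined element of $\mathit{SH}^1$ without substantial work, and the invertibility of $h$ requires an argument (it is an orbit/Seidel-type class, not simply a consequence of freeness at infinity) --- but this half is recoverable.

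Part (B) is the genuine gap, and your whole reduction rests on it. The claim that a single suspension upgrades a quasi-dilation on $N$ to a dilation on $N^\sigma$ is unproven in your sketch (the ``parity/normalization'' paragraph contains no construction), it is not the mechanism in Seidel's proof, and as stated it is false. Take $N=\{xy=1\}\cong T^\ast S^1$, the Milnor fibre of the $A_1$ curve singularity: it carries a quasi-dilation over every field, by the elementary computation in $H_{1-\ast}(\mathcal{L}S^1)$. Its stabilization is $N^\sigma\cong T^\ast S^2$, which by Seidel--Solomon carries no dilation over a field of characteristic $2$. Since the mechanism you propose is field-independent, this rules it out. (The same example shows the theorem must be read with $\mathrm{char}(\mathbb{K})\neq 2$, as in Seidel's sources: $T^\ast S^2$ is itself a double stabilization, and since $\mathit{SH}^0(T^\ast S^2)\cong\mathbb{K}$, any quasi-dilation on it is automatically a dilation, so even the first half of the statement fails over $\mathbb{Z}/2$; your field-blind reduction erases this subtlety.)

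The actual reason the third stabilization does better than the second is not that suspension converts quasi-dilations into dilations --- the Lefschetz-fibration transfer preserves the type of the class --- but that the base case of the induction improves. After two stabilizations the iterated fibration structure bottoms out at the conic $\{w_1^2+w_2^2=\mathrm{const}\}\cong T^\ast S^1$, which admits only a quasi-dilation; after three it bottoms out at the affine quadric $\{w_1^2+w_2^2+w_3^2=\mathrm{const}\}\cong T^\ast S^2$, which admits an honest dilation once $\mathrm{char}(\mathbb{K})\neq 2$. To salvage your outline, replace (B) by the type-preserving transfer statement for Lefschetz fibrations and run the induction twice, once from each base case.
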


One of the purposes of this paper is to show that suspending Lefschetz fibration is also a useful tool in producing new examples of Liouville manifolds whose Fukaya categories are related by $A_\infty$-Koszul duality.\bigskip

Let $\mathbb{K}$ be any field and let
\begin{equation}
\pi:E\rightarrow\mathbb{C}
\end{equation}
be an exact symplectic Lefschetz fibration on some $(2n-2)$-dimensional Liouville manifold $E$ with its smooth fiber given by the Liouville manifold $M$, where $n>2$. Associated to $\pi$ there is a directed $A_\infty$-category $\mathcal{A}(\pi)$ over $\mathbb{K}$, whose objects are objects of the compact Fukaya category $\mathcal{F}(E)$ together with the Lefschetz thimbles of $\pi$, and the morphism space $\mathit{CF}^\ast(\Delta_i,\Delta_j)$ between two thimbles $\Delta_i$ and $\Delta_j$, if non-trivial, is defined using a Hamiltonian perturbation which is small at infinity, see $\cite{ps1}$. Consider the \textit{suspension} of $\pi$, which is a Lefschetz fibration
\begin{equation}
\pi^\sigma:E\times\mathbb{C}\rightarrow\mathbb{C}
\end{equation}
defined by $\pi+w^2$, where $w$ is the holomorphic coordinate on the second factor $\mathbb{C}$. A smooth fiber of $\pi^\sigma$ is a Liouville manifold $M^\sigma$. Denote by $\mathcal{V}(M^\sigma)\subset\mathcal{F}(M^\sigma)$ the full $A_\infty$-subcategory formed by the Lagrangian spheres $V_i^\sigma\subset M^\sigma$ which are double branched covers of the Lefschetz thimbles $\Delta_i\subset E$ of $\pi$. One can further apply the suspension construction to $\pi^\sigma$, which produces a Lefschetz fibration $\pi^{\sigma\sigma}:E\times\mathbb{C}^2\rightarrow\mathbb{C}$. It is proved by Seidel in $\cite{ps4}$ that we have the following quasi-isomorphism between $A_\infty$-categories
\begin{equation}\label{eq:susp-Lef}
\mathcal{V}(M^{\sigma\sigma})\cong\mathcal{A}(\pi)\oplus\mathcal{A}(\pi)^\vee[-n],
\end{equation}
where the $A_\infty$-structure on the right-hand side is the trivial extension of that on $\mathcal{A}(\pi)$, see Section \ref{section:suspension}. Equivalently, (\ref{eq:susp-Lef}) can be expressed in terms of endomorphisms algebras, namely $\mathcal{V}_{M^{\sigma\sigma}}\cong\mathcal{A}_\pi\oplus\mathcal{A}_\pi^\vee[-n]$, where
\begin{equation}
\mathcal{V}_{M^{\sigma\sigma}}:=\bigoplus_{i,j}\mathit{CF}^\ast(V_i^{\sigma\sigma},V_j^{\sigma\sigma}) \textrm{ and } \mathcal{A}_\pi:=\bigoplus_{i,j}\mathit{CF}^\ast(\Delta_i,\Delta_j).
\end{equation}
\bigskip

In algebraic terms, the $A_\infty$-algebra $\mathcal{V}_{M^{\sigma\sigma}}$ is the \textit{cyclic completion} of the directed $A_\infty$-algebra $\mathcal{A}_\pi$ in the sense of Segal $\cite{es}$. There is also a Koszul dual construction, due to Keller $\cite{bk}$, called \textit{Calabi-Yau completion}, see Section \ref{section:CY-completion}. When applied to $\mathcal{A}_\pi$, it produces an $n$-Calabi-Yau algebra $\Pi_n(\mathcal{A}_\pi)$ in the sense of Ginzburg $\cite{vg}$. Therefore, to prove Koszul duality between the $A_\infty$-algebras $\mathcal{V}_{M^{\sigma\sigma}}$ and $\mathcal{W}_{M^{\sigma\sigma}}$, it suffices to identify $\mathcal{W}_{M^{\sigma\sigma}}$ with the $n$-Calabi-Yau completion $\Pi_n(\mathcal{A}_\pi)$. As remarked above, this has already been achieved in the case when $M^{\sigma\sigma}$ is the Milnor fiber associated to a simple singularity and $\dim_\mathbb{R}(M^{\sigma\sigma})\geq6$, in which case $\Pi_n(\mathcal{A}_\pi)$ is the Ginzburg algebra associated to a quiver with trivial potential. One of the purposes of this paper is to study some interesting symplectic manifolds whose wrapped Fukaya categories are described by quivers with non-trivial potentials.

\subsection{Main result}

In this paper, we consider the symplectic 6-manifolds $M_{p,q,r}\subset\mathbb{C}^4$ which are Milnor fibers associated to the isolated singularities
\begin{equation}
x^p+y^q+z^r+\lambda xyz+w^2=0,
\end{equation}
where
\begin{equation}
\frac{1}{p}+\frac{1}{q}+\frac{1}{r}\leq 1.
\end{equation}
The constant $\lambda\in\mathbb{C}$ above is allowed to take all but finitely many values, see $\cite{ak1}$ for details. To be explicit, we will take in this paper $\lambda=1$ when $\frac{1}{p}+\frac{1}{q}+\frac{1}{r}<1$, and $\lambda=0$ when $\frac{1}{p}+\frac{1}{q}+\frac{1}{r}=1$. Changing the value of $\lambda$ will not change the symplectic structure on $M_{p,q,r}$ up to exact symplectomorphism.

Note that the above manifolds are stabilizations of the Milnor fibers $T_{p,q,r}\subset\mathbb{C}^3$ associated to the non-simple isolated singularity of modality one
\begin{equation}\label{eq:4DMilnor}
t_{p,q,r}(x,y,z):=x^p+y^q+z^r+\lambda xyz=0
\end{equation}
studied by Keating in $\cite{ak1}$ and $\cite{ak2}$. Equivalently, they can be realized as smooth fibers of the suspension of the Lefschetz fibration $\tilde{t}_{p,q,r}:\mathbb{C}^3\rightarrow\mathbb{C}$, with $\tilde{t}_{p,q,r}$ being a Morsification of $t_{p,q,r}$. The endomorphism algebra of the directed $A_\infty$-category $\mathcal{A}(\tilde{t}_{p,q,r})$ will be denoted by $\mathcal{A}_{p,q,r}$.

In fact, our main result (Theorem \ref{theorem:main}) and its applications hold in the more general case when $p\geq2,q\geq2$ and $r\geq2$, and we can always take $\lambda=1$ for these triples $(p,q,r)=(k,2,2),(3,3,2),(4,3,2),(5,3,2)$, where $k\geq2$. The corresponding Weinstein manifolds $M_{p,q,r}$ are no longer Milnor fibers associated to any isolated singularity, but still share a lot of properties similar to Milnor fibers, e.g. they are homotopy equivalent to wedges of Lagrangian spheres. For this reason, these Weinstein manifolds are called \textit{generalized Milnor fibers}, see $\cite{ak1}$ for a detailed discussion of their symplectic topology. 

Unless otherwise stated, we shall impose $p\geq2,q\geq2,r\geq2$ as our standing assumptions when referring to the Weinstein manifolds $M_{p,q,r}$. In particular, this applies to our main result (Theorem \ref{theorem:main}) and its applications (Corollaries \ref{corollary:formal} to \ref{corollary:quasi-dilation}). In fact, $M_{p,q,r}$ is a well-define Stein manifold whenever $p,q,r$ are non-negative, and Koszul duality holds when $p\geq1,q\geq1,r\geq1$. See also Section \ref{section:exceptional} for a brief discussion for other triples $(p,q,r)$.
\bigskip

Before stating our main theorem, recall that any Weinstein domain $M^\mathrm{in}$ can be constructed by attaching critical handles to a subcritical Weinstein domain $M_0^\mathrm{in}$ along a disjoint union of Legendrian spheres in $\partial M_0^\mathrm{in}$. In our case, it turns out that the handlebody decomposition of $M_{p,q,r}$ is pretty simple in the sense that there are no subcritical handles involved (cf. Section \ref{section:surgery}), so that we can take $M_0^\mathrm{in}=D^6$ and apply Legendrian surgery along the link $\Lambda_{p,q,r}\subset(S^5,\xi_\mathit{std})$ in the standard contact 5-sphere, which is a disjoint union of standard unknotted $S^2$'s. Denote by $\mathcal{C}E(\Lambda_{p,q,r})$ the \textit{Chekanov-Eliashberg dg algebra} associated to $\Lambda_{p,q,r}$ (see Section \ref{section:comparison} for its definition), and let
\begin{equation}\label{eq:endW}
\mathcal{W}_{p,q,r}:=\bigoplus_{1\leq i\leq p+q+r-1}\mathit{CW}^\ast(L_i,L_j)
\end{equation}
be the endomorphism algebra of Lagrangian cocores of the critical handles. It is announced in Bourgeois-Ekholm-Eliashberg $\cite{bee}$ (see also $\cite{ekl}$ for a sketch of proof) that there is a quasi-isomorphism between the Chekanov-Eliashberg algebra and the wrapped Fukaya $A_\infty$-algebra of Lagrangian cocores, which when adapted to our case implies the quasi-isomorphism:
\begin{equation}\label{eq:BEE}
\mathcal{W}_{p,q,r}\cong\mathcal{C}E(\Lambda_{p,q,r}).
\end{equation}
We should emphasize at this point that the quasi-isomorphism (\ref{eq:BEE}) is crucial to all of our applications of Theorem \ref{theorem:main} in this paper. More specifically, \textit{all the corollaries stated in Section \ref{section:application} depend on the results claimed in $\cite{bee}$, which is still work-in-progress}.
\bigskip

With the notations fixed as above, the main result proved in this paper can be stated as follows.
\begin{theorem}\label{theorem:main}
Let $\mathbb{K}$ be any field, and set $\Bbbk:=\bigoplus_{1\leq i\leq p+q+r-1}\mathbb{K}e_i$. There is a quasi-isomorphism between dg algebras over $\Bbbk$:
\begin{equation}
\mathcal{C}E(\Lambda_{p,q,r})\cong\Pi_3(\mathcal{A}_{p,q,r}).
\end{equation}
\end{theorem}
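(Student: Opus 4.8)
The plan is to deduce the theorem from an explicit comparison of two models of one and the same Ginzburg-type dg algebra over $\Bbbk$. Combined with the quasi-isomorphism (\ref{eq:BEE}), the statement is equivalent to the identification $\mathcal{W}_{p,q,r}\cong\Pi_3(\mathcal{A}_{p,q,r})$ of the wrapped endomorphism algebra of the cocores with the $3$-Calabi-Yau completion; both are strictly unital dg algebras over the semisimple ring $\Bbbk$ with $p+q+r-1$ idempotents, so the goal is a $\Bbbk$-linear dg-algebra quasi-isomorphism between them. First I would make the right-hand side completely explicit. Keller's tensor-algebra description \cite{bk} gives $\Pi_3(\mathcal{A}_{p,q,r})=T_{\mathcal{A}_{p,q,r}}(\Theta[2])$, where $\Theta=R\mathrm{Hom}_{\mathcal{A}^e_{p,q,r}}(\mathcal{A}_{p,q,r},\mathcal{A}^e_{p,q,r})$ is the inverse dualizing bimodule; for a minimal directed model this presents $\Pi_3(\mathcal{A}_{p,q,r})$ as the Ginzburg dg algebra $\mathcal{G}_3(Q_{p,q,r},W_{p,q,r})$ of an explicit quiver with potential. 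Concretely one adjoins to the quiver $Q_{p,q,r}$ of $\mathcal{A}_{p,q,r}$ a reversed arrow $a^\ast$ in degree $-1$ for each arrow $a$ and a loop $t_v$ in degree $-2$ at each vertex, with $da=0$, $da^\ast=\partial_a W_{p,q,r}$ and $dt_v=e_v\big(\sum_a[a,a^\ast]\big)e_v$. The quiver $Q_{p,q,r}$ and the potential $W_{p,q,r}$ are read off from Keating's computation \cite{ak1} of the directed $A_\infty$-structure on the thimbles of $\tilde t_{p,q,r}$, and the key feature distinguishing the present situation from the $ADE$ case is that $W_{p,q,r}$ is \emph{not} trivial: it carries a cubic term reflecting the cusp monomial $\lambda xyz$ together with the arm contributions $x^p,y^q,z^r$.

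Next I would compute the left-hand side directly from the surgery presentation. Since $M_{p,q,r}$ has a handle decomposition with no subcritical handles (cf. Section \ref{section:surgery}), the link $\Lambda_{p,q,r}\subset(S^5,\xi_\mathrm{std})$ is a disjoint union of $p+q+r-1$ standard unknotted $S^2$'s whose pairwise linking is governed by the intersection pattern of the vanishing cycles of $\tilde t_{p,q,r}$. From this front and linking data I would enumerate the Reeb chords of $\Lambda_{p,q,r}$ together with their gradings, and organize the generators of $\mathcal{C}E(\Lambda_{p,q,r})$ into those matching the arrows $a$ (degree $0$), the reversed arrows $a^\ast$ (degree $-1$), and the loops $t_v$ (degree $-2$) of the Ginzburg quiver. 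The differential is then the count of rigid holomorphic disks in the symplectization with boundary on the cocore fillings.

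The comparison is carried out by matching generators and checking that the differentials agree. Because both algebras are quasi-free over $\Bbbk$ on the same graded generating bimodule, a dg-algebra map that is an isomorphism on generators --- possibly after a filtered change of generators --- is automatically a quasi-isomorphism, so the entire content lies in the differentials. The identities $da=0$ and the commutator relation $dt_v=\sum_a[a,a^\ast]$ are essentially forced by the geometry of the cocores and by degree reasons; the real work, and the step I expect to be the main obstacle, is to show that the holomorphic-disk differential on the degree-$(-1)$ generators reproduces the cyclic derivatives $\partial_a W_{p,q,r}$. This is exactly where these examples depart from the trivial-potential $ADE$ situation treated in \cite{etl1,ekl}, in which both differentials are quadratic: here one must pin down the higher-order terms of $W_{p,q,r}$ and verify that the corresponding higher disks are counted with the right coefficients.

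To control this last step without a head-on enumeration of every disk, I would exploit two forms of rigidity. First, the $\mathbb{Z}$-grading together with the internal second grading that bigrades $\mathcal{A}_{p,q,r}$ (cf.\ the Conventions) sharply restricts which cyclic words may occur in $W_{p,q,r}$, so that the potential is determined up to finitely many coefficients. Second, the degree-zero cohomology $H^0\big(\mathcal{C}E(\Lambda_{p,q,r})\big)$ carries a ring structure that must coincide with the Jacobi algebra $\mathrm{Jac}(Q_{p,q,r},W_{p,q,r})=H^0\big(\Pi_3(\mathcal{A}_{p,q,r})\big)$; matching these two rings, together with the constraint imposed on the Koszul-dual side by the cyclic completion $\mathcal{A}_{p,q,r}\oplus\mathcal{A}_{p,q,r}^\vee[-3]$ (Segal \cite{es}) --- which the suspension formula (\ref{eq:susp-Lef}) identifies with the compact Fukaya algebra of the vanishing cycles --- should fix the remaining coefficients and force the two differentials to agree, yielding the desired quasi-isomorphism $\mathcal{C}E(\Lambda_{p,q,r})\cong\Pi_3(\mathcal{A}_{p,q,r})$.
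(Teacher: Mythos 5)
Your overall skeleton agrees with the paper's: present $\Pi_3(\mathcal{A}_{p,q,r})$ as the Ginzburg algebra $\mathcal{G}(Q_{p,q,r},w_{p,q,r})$ via Keller's theorem, obtain a surgery front for $\Lambda_{p,q,r}$, and match generators and differentials of $\mathcal{C}E(\Lambda_{p,q,r})$ with those of the Ginzburg algebra. But the step where you propose to avoid the actual enumeration of holomorphic disks contains a genuine circularity. You want to fix the unknown coefficients of the differential by invoking ``the constraint imposed on the Koszul-dual side by the cyclic completion $\mathcal{A}_{p,q,r}\oplus\mathcal{A}_{p,q,r}^\vee[-3]$, which the suspension formula (\ref{eq:susp-Lef}) identifies with the compact Fukaya algebra of the vanishing cycles.'' The formula (\ref{eq:susp-Lef}) is Seidel's theorem for \emph{double} suspensions $M^{\sigma\sigma}$, whereas $M_{p,q,r}$ is only a \emph{single} suspension of $\tilde{t}_{p,q,r}:\mathbb{C}^3\rightarrow\mathbb{C}$; for a single suspension one only has the bimodule splitting (\ref{eq:sCY}), which is strictly weaker, and the identification $\mathcal{V}_{p,q,r}\cong\mathcal{A}_{p,q,r}\oplus\mathcal{A}_{p,q,r}^\vee[-3]$ is precisely Corollary \ref{corollary:formal} of the paper --- a \emph{consequence} of Theorem \ref{theorem:main}, not an input to it. This is exactly the pitfall flagged in the footnote of Section \ref{section:application} (credited to Keating). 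The paper does use a version of this trick, but only in the legitimate case $r=2$, where $M_{p,q,2}$ genuinely is a double suspension of a fibration on $\mathbb{C}^2$ so that Corollary \ref{corollary:double-susp} applies; and even there it is used only at the very end, to pin down the orientation signs $\bm{\varepsilon}$ \emph{after} the full computation over $\mathbb{Z}/2$, with the general $r$ case then requiring further explicit sign computations for the flow trees attached to the $A_{r-2}$ chain.

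Your second rigidity mechanism is also not independently available: $H^0(\mathcal{C}E(\Lambda_{p,q,r}))$ is the quotient of the degree-zero chord algebra by the image of the differential on the degree $-1$ generators, so matching it with the Jacobi algebra presupposes the very disk counts you are trying to avoid. The grading and Adams-grading constraints restrict the shape of the potential, but they cannot tell you which rigid disks actually exist or with what coefficients; the paper resolves this by carrying out the full cellular dg algebra computation of Rutherford--Sullivan over $\mathbb{Z}/2$ (Sections \ref{section:cellular}--\ref{section:combi}, in particular Propositions \ref{proposition:222} and \ref{proposition:pqr}) and then treating orientations separately via Karlsson's scheme (Section \ref{section:o}). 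Two smaller points: your description of the potential as containing ``arm contributions $x^p,y^q,z^r$'' is incorrect --- $w_{p,q,r}=a_1b_2c_2+a_2b_1c_1+a_1b_3c_3-a_2b_3c_3$ is purely cubic and involves no arm arrows, and this homogeneity is in fact what makes the bigrading and Koszul duality arguments (Proposition \ref{proposition:Koszul}) work; and your opening reduction through (\ref{eq:BEE}) makes the theorem depend on the unproven work-in-progress $\cite{bee}$, a dependence the paper deliberately confines to the corollaries while proving Theorem \ref{theorem:main} itself by direct computation of $\mathcal{C}E(\Lambda_{p,q,r})$.
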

We remark that the 3-Calabi-Yau completion $\Pi_3(\mathcal{A}_{p,q,r})$ is quasi-isomorphic to the Ginzburg dg algebra $\mathcal{G}_{p,q,r}$ associated to a quiver $Q_{p,q,r}$ with non-trivial potential $w_{p,q,r}$, see Section \ref{section:CY-completion}. As far as we know, this gives the first set of Liouville manifolds whose wrapped Fukaya categories can be identified with a Ginzburg dg algebra defined by a quiver with a \textit{non-trivial} potential.
\bigskip

It seems to be appropriate to say here a few words about the method we use to prove Theorem \ref{theorem:main}. In order to get the Legendrian frontal description of the Milnor fibers $M_{p,q,r}$, we start with a Lefschetz fibration on $T_{p,q,r}$ constructed by Keating in $\cite{ak2}$, whose smooth fiber is symplectomorphic to a 3-punctured torus. This fibration induces naturally a Lefschetz fibration on the stabilization $M_{p,q,r}$, whose smooth fiber is symplectomorphic to a 4-dimensional $D_4$ Milnor fiber. Using an algorithm due to Casals-Murphy $\cite{cm}$, this Lefschetz fibration on $M_{p,q,r}$ can then be translated to produce a Legendrian frontal description of $M_{p,q,r}$ which involves both 2-handles and 3-handles. After a standard procedure of handle cancellations and Reidemeister moves, we can simplify the front diagram so as to obtain the attaching link $\Lambda_{p,q,r}\subset S^5$ for $M_{p,q,r}$. In order to compute the Chekanov-Eliashberg algebra $\mathcal{C}E(\Lambda_{p,q,r})$, we use the cellular model introduced by Rutherford-Sullivan $\cite{rs1,rs2}$ for Legendrian surfaces. This enables us to simplify the analysis of Morse flow trees and compute $\mathcal{C}E(\Lambda_{p,q,r})$ over $\mathbb{Z}/2$. To get a computation of $\mathcal{C}E(\Lambda_{p,q,r})$ over any field $\mathbb{K}$, we appeal to the result of Karlsson $\cite{ck1,ck2}$ on the orientation data of Morse flow trees.

\subsection{Applications}\label{section:application}

Denote by
\begin{equation}
\mathcal{V}_{p,q,r}:=\bigoplus_{1\leq i\leq p+q+r-1}\mathit{CF}^\ast(V_i,V_j)
\end{equation}
the endomorphism algebra in the compact Fukaya category $\mathcal{F}(M_{p,q,r})$ of a basis of vanishing cycles $V_1,\dots,V_{p+q+r-1}\subset M_{p,q,r}$. To be specific, we assume that the basis of vanishing cycles is chosen so that $V_i$ is disjoint from the Lagrangian cocore $L_j$ in (\ref{eq:endW}) when $i\neq j$, and $V_i$ intersects $L_i$ transversely at a unique point. Since $M_{p,q,r}$ is the smooth fiber of a Lefschetz fibration on $\mathbb{C}^4$ obtained by suspending $\tilde{t}_{p,q,r}:\mathbb{C}^3\rightarrow\mathbb{C}$ once instead of twice, it is in general not clear whether $\mathcal{V}_{p,q,r}$ is a trivial extension of the directed $A_\infty$-algebra $\mathcal{A}_{p,q,r}$, see Lemma \ref{lemma:trivial-extension} \footnote{The author is grateful to Ailsa Keating for pointing out this.}. However, combining Theorem \ref{theorem:main} above with Theorem 4 of $\cite{ekl}$, we obtain the following formality result of $A_\infty$-algebras:
\begin{corollary}\label{corollary:formal}
There is a quasi-isomorphism
\begin{equation}
\mathcal{V}_{p,q,r}\cong\mathcal{A}_{p,q,r}\oplus\mathcal{A}_{p,q,r}^\vee[-3].
\end{equation}
In particular, the Fukaya $A_\infty$-algebra $\mathcal{V}_{p,q,r}$ is formal.
\end{corollary}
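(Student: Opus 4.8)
The plan is to obtain the identification purely algebraically from Theorem~\ref{theorem:main}, avoiding any direct geometric computation of $\mathcal{V}_{p,q,r}$ from the single suspension; as observed just before Lemma~\ref{lemma:trivial-extension}, a single suspension need not present $\mathcal{V}_{p,q,r}$ as a trivial extension, so the naive route through (\ref{eq:susp-Lef}) is unavailable. First I would record the chain of quasi-isomorphisms
\begin{equation}
\mathcal{W}_{p,q,r}\cong\mathcal{C}E(\Lambda_{p,q,r})\cong\Pi_3(\mathcal{A}_{p,q,r}),
\end{equation}
whose first arrow is (\ref{eq:BEE}) and whose second is Theorem~\ref{theorem:main}. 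This presents the wrapped algebra of the cocores as the $3$-Calabi--Yau completion of the directed algebra $\mathcal{A}_{p,q,r}$.

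Next I would bring in the geometric half of Koszul duality. Since the vanishing cycles and the cocores form dual bases --- $V_i\cap L_j=\emptyset$ for $i\neq j$ and $V_i\pitchfork L_i$ at a single point --- the exact Lagrangian fillings $\{V_i\}$ induce precisely the augmentation $\varepsilon_\mathcal{W}:\mathcal{W}_{p,q,r}\to\Bbbk$ of the Legendrian surgery picture. Theorem~4 of $\cite{ekl}$ then yields
\begin{equation}
\mathcal{V}_{p,q,r}\cong R\mathrm{Hom}_{\mathcal{W}_{p,q,r}}(\Bbbk,\Bbbk),
\end{equation}
identifying the compact Fukaya algebra of the vanishing cycles with the singly graded Koszul dual of the wrapped algebra. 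I would stress that this is the direction of Koszul duality which survives in full generality here; the converse $R\mathrm{Hom}_{\mathcal{V}_{p,q,r}}(\Bbbk,\Bbbk)\cong\mathcal{W}_{p,q,r}$ is exactly what fails once $\mathit{SH}^0$ is infinite dimensional, and must not be used.

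The last input is algebraic: the Koszul dual of an $n$-Calabi--Yau completion is the corresponding cyclic completion, that is, the trivial extension, which is the Keller $\cite{bk}$ and Segal $\cite{es}$ duality underlying the entire set-up. Applied to $\mathcal{A}_{p,q,r}$ with $n=3$ this reads
\begin{equation}
R\mathrm{Hom}_{\Pi_3(\mathcal{A}_{p,q,r})}(\Bbbk,\Bbbk)\cong\mathcal{A}_{p,q,r}\oplus\mathcal{A}_{p,q,r}^\vee[-3].
\end{equation}
Because the cyclic completion is finite dimensional whenever $\mathcal{A}_{p,q,r}$ is, there is no completion ambiguity of the kind flagged in the conventions, and the singly graded $R\mathrm{Hom}$ (rather than $E(-)$) reproduces it on the nose. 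Concatenating the three displays gives the desired quasi-isomorphism $\mathcal{V}_{p,q,r}\cong\mathcal{A}_{p,q,r}\oplus\mathcal{A}_{p,q,r}^\vee[-3]$.

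For the formality assertion I would observe that the trivial-extension $A_\infty$-structure on $\mathcal{A}_{p,q,r}\oplus\mathcal{A}_{p,q,r}^\vee[-3]$ inherits all of its higher products from those of $\mathcal{A}_{p,q,r}$, together with the evaluation pairing against the dual bimodule, so it is formal as soon as $\mathcal{A}_{p,q,r}$ is; the latter is an explicit finite-dimensional algebra presented by the quiver $Q_{p,q,r}$ with its relations, whose formality I would read off from that presentation. The main obstacle is not any single step but the bookkeeping around the two Koszul duals: one must check that the augmentation entering Theorem~4 of $\cite{ekl}$ is exactly the filling augmentation dual to $\{V_i\}$, and that the singly graded dual of the Calabi--Yau completion lands on the finite-dimensional cyclic completion rather than on a completed version of it.
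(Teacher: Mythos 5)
Your overall route is the same as the paper's own proof in Section~\ref{section:quiver}: combine Theorem~\ref{theorem:main} with the generalized Eilenberg--Moore equivalence of Theorem~\ref{theorem:gem} (Theorem 4 of \cite{ekl}) and with Koszul duality between the Calabi--Yau completion and the cyclic completion. Two remarks on your set-up before the main point: the detour through $\mathcal{W}_{p,q,r}$ via (\ref{eq:BEE}) is unnecessary, since Theorem~\ref{theorem:gem} applies directly to $\mathcal{C}E(\Lambda_{p,q,r})$ with the vanishing cycles as fillings, which is exactly how the paper obtains (\ref{eq:gem1}); and your insistence that the converse duality ``must not be used'' is misplaced for these particular manifolds---both directions of (\ref{eq:Koszul1}) do hold here---although avoiding it costs nothing.

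There are, however, two genuine gaps. First, the augmentation bookkeeping that you yourself single out as ``the main obstacle'' is left undone, and it is precisely the nontrivial step: Theorem~\ref{theorem:gem} computes $R\mathrm{Hom}$ with respect to the augmentation $\varepsilon_V$ induced by the vanishing-cycle fillings, while the algebraic duality for $\Pi_3(\mathcal{A}_{p,q,r})$ is taken with respect to the trivial projection $\varepsilon:\mathcal{G}_{p,q,r}\rightarrow\Bbbk$; unless these two augmented dg algebras are identified, your three displayed quasi-isomorphisms do not concatenate. The paper closes this in Lemma~\ref{lemma:quiver}, using the explicit quasi-isomorphism (\ref{eq:222}) to check that every degree-zero generator of $\mathcal{G}_{p,q,r}$ corresponds to a Reeb chord between \emph{distinct} components of $\Lambda_{p,q,r}$, hence is annihilated by any filling augmentation. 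Second, the ``Keller--Segal duality'' $R\mathrm{Hom}_{\Pi_3(\mathcal{A})}(\Bbbk,\Bbbk)\cong\mathcal{A}\oplus\mathcal{A}^\vee[-n]$ that you invoke as a black box is not available off the shelf in the form you need: in this paper it is established (Section~\ref{section:Koszul}) only for Ginzburg algebras of quivers with homogeneous cubic potential, via the Adams bigrading of Proposition~\ref{proposition:Koszul}. To place $\Pi_3(\mathcal{A}_{p,q,r})$ in that class one must first prove that $\mathcal{A}_{p,q,r}$ is quasi-isomorphic to a quadratic quiver algebra of global dimension at most $2$ (the paper's lemma, resting on Keating's computation and the theory of quasi-tilted algebras) and then apply Proposition~\ref{proposition:CY-completion}. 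Relatedly, your claim that finite-dimensionality of the cyclic completion rules out the completion ambiguity is circular as stated: it presupposes that the singly graded Koszul dual of $\Pi_3(\mathcal{A}_{p,q,r})$ \emph{is} the cyclic completion, which is the statement being proved. The non-circular mechanism is to establish the bigraded duality first and only then pass to the $\mathbb{Z}$-graded statement; the finiteness input the paper actually uses for the latter is Proposition~\ref{proposition:lf} together with Lemma~\ref{lemma:Adams}, i.e.\ finite-dimensionality of $H^\ast(\mathcal{G}_{p,q,r})$ in each fixed degree, not finite-dimensionality of $\mathcal{B}_{p,q,r}$.
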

The detailed proof of the above corollary is given in Section \ref{section:quiver}. Note that when $r=2$, $M_{p,q,r}$ can be regarded as the smooth fiber of the \textit{double suspension} of the Lefschetz fibration on $\mathbb{C}^2$ defined by the Morsification of
\begin{equation}
(x^{p-2}-y^2)(x^2-\lambda y^{q-2}),
\end{equation}
see Section 2.2.5 of $\cite{ak1}$. In this case, the formality of $\mathcal{V}_{p,q,2}$ follows from a result of Seidel $\cite{ps4}$. We will give a brief sketch of his argument in Section \ref{section:suspension}, and the fact that $M_{p,q,r}$ being a double suspension is necessary for his formality theorem is also explained in detail there. 
\bigskip

As a consequence of Theorem \ref{theorem:main} and Corollary \ref{corollary:formal}, we have quasi-isomorphisms
\begin{equation}\label{eq:Koszul1}
R\mathrm{Hom}_{\mathcal{V}_{p,q,r}}(\Bbbk,\Bbbk)\cong\mathcal{C}E(\Lambda_{p,q,r}),R\mathrm{Hom}_{\mathcal{C}E(\Lambda_{p,q,r})}(\Bbbk,\Bbbk)\cong\mathcal{V}_{p,q,r}
\end{equation}
between $\mathbb{Z}$-graded $A_\infty$-algebras. This follows from a version Koszul duality between the Calabi-Yau completion and its corresponding cyclic completion, which is explained in Section \ref{section:Koszul}. In fact, it can be proved that the Fukaya category $\mathcal{F}(M_{p,q,r})$ is generated by vanishing cycles, see Corollary \ref{corollary:split-generation}.

\paragraph{Remark} It is worthy to compare the approach that we adopt in this paper to understand the wrapped Fukaya category with the dictionary of Seidel (Section 6 of $\cite{ps5}$), which describes the wrapped Fukaya category $\mathcal{W}(M)$ of the total space of a Lefschetz fibration $\pi:M\rightarrow\mathbb{C}$ as the result of a categorical localization. More precisely, one can form from the $A_\infty$-algebras $\mathcal{A}_\pi$ and $\mathcal{V}_F$ a \textit{curved} $A_\infty$-algebra
\begin{equation}
\mathcal{D}_M:=\mathcal{A}_\pi\oplus t\mathcal{V}_F[[t]]
\end{equation}
with curvature $\mu_\mathcal{D}^0=t\cdot\mathrm{id}$, $\mathrm{id}\in\mathcal{D}^0\cong\Bbbk$, where $F$ is the smooth fiber of $\pi$, and $t$ is a formal variable of degree 2. It is conjectured by Seidel in $\cite{ps5}$ and proved by Ganatra-Maydanskiy in the appendix of $\cite{bee}$ that we have a quasi-isomorphism
\begin{equation}
\mathcal{W}_M\cong T(\overline{\mathcal{D}}_M[1])^\vee,
\end{equation}
which expresses the linear dual of the wrapped Fukaya $A_\infty$-algebra as a tensor coalgebra, where $\overline{\mathcal{D}}_M\cong\mathcal{D}_M/\mathcal{D}_M^0$. Therefore in order to show that $\mathcal{W}_M$ is the $A_\infty$-Koszul dual of $\mathcal{V}_M$, it suffices to identify $\mathcal{D}_M$ with the $A_\infty$-algebra $\mathcal{V}_M$ with vanishing curvature. The simplest case when $M$ is a 4-dimensional $A_n$ Milnor fiber has been treated in detail by Pomerleano in Section 9 of $\cite{dp}$.
\bigskip

It is reasonable to expect that the Koszul duality (\ref{eq:Koszul1}) can be applied to understand the symplectic topologies of the Weinstein 6-manifolds $M_{p,q,r}$. As an application of the Koszul duality functor introduced in $\cite{bgs}$, we have the following split-generation result of the compact Fukaya categories of $M_{p,q,r}$.
\begin{corollary}\label{corollary:split-generation}
The Fukaya category of closed exact Lagrangian submanifolds $\mathcal{F}(M_{p,q,r})$ is split-generated by vanishing cycles.
\end{corollary}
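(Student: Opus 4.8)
The plan is to deduce split-generation from the Koszul duality (\ref{eq:Koszul1}) by comparing two Yoneda-type functors into module categories, one on the wrapped side and one on the compact side. I will freely use Theorem \ref{theorem:main} and Corollary \ref{corollary:formal}, which together identify $\mathcal{C}E(\Lambda_{p,q,r})\cong\Pi_3(\mathcal{A}_{p,q,r})$ with a smooth Ginzburg dg algebra, and $\mathcal{V}_{p,q,r}\cong R\mathrm{Hom}_{\mathcal{C}E(\Lambda_{p,q,r})}(\Bbbk,\Bbbk)$ with its proper Koszul dual, the latter being the genuine endomorphism algebra $\bigoplus_{i,j}\mathit{CF}^\ast(V_i,V_j)$ of the vanishing cycles. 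The first step is to invoke the generation theorem for wrapped Fukaya categories: by the quasi-isomorphism (\ref{eq:BEE}) together with generation by Lagrangian cocores, $\mathcal{W}(M_{p,q,r})$ is quasi-equivalent to $\mathrm{Perf}(\mathcal{C}E(\Lambda_{p,q,r}))$, with the cocores $L_i$ corresponding to the free modules $e_i\mathcal{C}E(\Lambda_{p,q,r})$.

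Next I would send each closed exact Lagrangian $L\subset M_{p,q,r}$ to the right $\mathcal{C}E(\Lambda_{p,q,r})$-module $\mathcal{Y}(L):=\bigoplus_i\mathit{CW}^\ast(L_i,L)$. Because $L$ is compact, no wrapping is needed and $\mathit{CW}^\ast(L_i,L)\simeq\mathit{CF}^\ast(L_i,L)$ has finite-dimensional cohomology, so $\mathcal{Y}(L)$ lies in the full subcategory $\mathcal{D}_{\mathrm{fd}}(\mathcal{C}E(\Lambda_{p,q,r}))$ of modules with finite-dimensional total cohomology. For a Ginzburg dg algebra this finite-dimensional derived category is split-generated by the simple modules $S_i=\Bbbk e_i$, a standard property of Calabi-Yau completions (cf.\ \cite{bk}), and the Koszul duality functor of \cite{bgs} — built from the second quasi-isomorphism in (\ref{eq:Koszul1}) — restricts to an equivalence
\[
\mathcal{D}_{\mathrm{fd}}(\mathcal{C}E(\Lambda_{p,q,r}))\;\simeq\;\mathrm{Perf}(\mathcal{V}_{p,q,r})
\]
sending the simple $S_i$ to the summand of $\mathcal{V}_{p,q,r}$ represented by the vanishing cycle $V_i$.

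The heart of the argument is then to check that the composite of $L\mapsto\mathcal{Y}(L)$ with this equivalence agrees with the ordinary Yoneda functor $\mathcal{Y}_\mathcal{F}\colon\mathcal{F}(M_{p,q,r})\to\mathrm{mod}\text{-}\mathcal{V}_{p,q,r}$, $L\mapsto\bigoplus_i\mathit{CF}^\ast(V_i,L)$. This compatibility rests on the geometric duality between the two bases: $V_i$ meets $L_i$ transversely in a single point and is disjoint from $L_j$ for $j\neq i$, which makes $\{V_i\}$ and $\{L_i\}$ dual collections and matches the two module structures through the closed-open/open-closed comparison. Granting this, each $\mathcal{Y}_\mathcal{F}(L)$ is a perfect $\mathcal{V}_{p,q,r}$-module, hence lies in the thick subcategory generated by the $\mathcal{Y}_\mathcal{F}(V_i)$; since the global Yoneda functor is cohomologically full and faithful and $\mathcal{F}(M_{p,q,r})$ is idempotent-complete, this forces $L$ itself to lie in the thick subcategory split-generated by $V_1,\dots,V_{p+q+r-1}$.

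I expect the main obstacle to be precisely this last compatibility statement — verifying that the abstract Koszul-duality equivalence of \cite{bgs} is realized by the geometric Yoneda functors on both sides, i.e.\ that transporting the proper $\mathcal{C}E(\Lambda_{p,q,r})$-module $\mathcal{Y}(L)$ across Koszul duality reproduces the $\mathcal{V}_{p,q,r}$-module $\mathcal{Y}_\mathcal{F}(L)$ attached to $L$. Making this rigorous relies on the formality of $\mathcal{V}_{p,q,r}$ from Corollary \ref{corollary:formal} (so that $\mathcal{V}_{p,q,r}$ is genuinely the cyclic completion dual to $\mathcal{C}E(\Lambda_{p,q,r})$) together with careful bookkeeping of the augmentations, rather than any further Floer-theoretic input; by contrast, the finiteness claim $\mathcal{Y}(L)\in\mathcal{D}_{\mathrm{fd}}(\mathcal{C}E(\Lambda_{p,q,r}))$ and the generation of $\mathcal{D}_{\mathrm{fd}}$ by the simples are comparatively routine.
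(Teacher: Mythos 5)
Your overall strategy is the same as the paper's: identify $\mathcal{W}(M_{p,q,r})$ with modules over the Ginzburg algebra via cocore generation, view a closed exact Lagrangian $L$ as the proper module $\bigoplus_i\mathit{CW}^\ast(L_i,L)$, transport it across the Koszul duality functor of $\cite{bgs}$ to a perfect $\mathcal{V}_{p,q,r}$-module, and conclude split-generation from full faithfulness. However, there is a genuine gap at the step you dismiss as ``comparatively routine'': the claim that $\mathcal{D}_{\mathrm{fd}}$ of the \emph{uncompleted} Ginzburg dg algebra is split-generated by the simple modules $S_i$, equivalently that the Koszul duality functor restricts to an equivalence $\mathcal{D}_{\mathrm{fd}}(\mathcal{C}E(\Lambda_{p,q,r}))\simeq D^{\mathit{perf}}(\mathcal{V}_{p,q,r})$, is \emph{not} a standard property of Calabi-Yau completions. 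The standard statement holds for the \emph{completed} Ginzburg algebra $\widehat{\mathcal{G}}$: in general one only gets $D^{\mathit{prop}}(\widehat{\mathcal{G}})\cong D^{\mathit{perf}}(\mathcal{B})$. For the uncompleted algebra it can fail outright --- already for the Jordan quiver with zero potential, finite-dimensional modules on which the degree-zero arrow acts with nonzero eigenvalue have no Ext's to or from the simple at the augmentation, so they do not lie in the thick subcategory generated by the simples. Since Theorem \ref{theorem:main} identifies $\mathcal{C}E(\Lambda_{p,q,r})$ with the \emph{uncompleted} $\Pi_3(\mathcal{A}_{p,q,r})\cong\mathcal{G}_{p,q,r}$, your argument needs a reason why completion is harmless here, and that reason is precisely where the paper does real work: Lemma \ref{lemma:Adams} shows by explicit manipulation of the relations $\partial w_{p,q,r}/\partial(\cdot)$ that $H^\ast(\mathcal{G}_{p,q,r})$ is finite-dimensional in each degree, whence $\mathcal{G}_{p,q,r}\cong\widehat{\mathcal{G}}_{p,q,r}$ (Proposition \ref{proposition:lf}) and the hypothesis of Proposition \ref{proposition:functor} is met. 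Without this input, specific to the quiver with potential $(Q_{p,q,r},w_{p,q,r})$, your chain of equivalences does not get off the ground.

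Relatedly, you have the difficulty inverted. The compatibility of the geometric Yoneda functors with the algebraic Koszul duality equivalence, which you single out as the heart of the matter, is handled in the paper by simply composing equivalences: $D^{\mathit{perf}}(\mathcal{F}(M_{p,q,r}))\hookrightarrow D^{\mathit{prop}}(\mathcal{W}(M_{p,q,r}))\cong D^{\mathit{prop}}(\mathcal{W}_{p,q,r})\cong D^{\mathit{perf}}(\mathcal{V}_{p,q,r})\cong D^{\mathit{perf}}(\mathcal{V}(M_{p,q,r}))$, using Theorem \ref{theorem:generation}, Ganatra's homological smoothness of $\mathcal{W}(M_{p,q,r})$ (to restrict to proper modules), Theorem \ref{theorem:main}, Lemma \ref{lemma:quiver}, and Proposition \ref{proposition:functor}. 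The genuinely nonroutine ingredient is the degreewise finite-dimensionality of $H^\ast(\mathcal{G}_{p,q,r})$; once you supply that (or cite Lemma \ref{lemma:Adams}), your argument closes up and coincides with the paper's.
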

This will be proved in Section \ref{section:split-generation}. Note that when $\frac{1}{p}+\frac{1}{q}+\frac{1}{r}=1$, the Milnor fiber $M_{p,q,r}$ can be defined by a quasi-homogeneous polynomial with weights $p,q,r,2$, and
\begin{equation}
\frac{1}{p}+\frac{1}{q}+\frac{1}{r}+\frac{1}{2}=\frac{3}{2}\neq1,
\end{equation}
therefore the split-generation of $\mathcal{F}(M_{p,q,r})$ by vanishing cycles follows from a theorem of Seidel ($\cite{ps2,ps3}$).

For earlier results concerning the generation of the compact Fukaya categories of plumbings, see $\cite{asm}$. In general, not much is known about the compact Fukaya category of a Liouville manifold, despite the fact that significant effort has been devoted to understand the wrapped Fukaya category of a Weinstein manifold $\cite{bee,gps1,gps2}$.\bigskip

Our next application ties Koszul duality between Fukaya $A_\infty$-algebras up with categorical dynamics in the sense of Seidel $\cite{ps2}$. Let $M$ be a Liouville manifold, recall that according to Seidel-Solomon $\cite{ss}$, a quasi-dilation is a pair $(b,h)\in\mathit{SH}^1(M)\times\mathit{SH}^0(M)^\times$ such that under the action of the BV operator $\Delta:\mathit{SH}^\ast(M)\rightarrow\mathit{SH}^{\ast-1}(M)$, it satisfies
\begin{equation}
\Delta(hb)=h.
\end{equation}
A proof of the following fact is given in Section \ref{section:quasi-dilation}.
\begin{corollary}\label{corollary:quasi-dilation}
The Liouville manifold $M_{p,q,r}$ admits a quasi-dilation over any field $\mathbb{K}$.
\end{corollary}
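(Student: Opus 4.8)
The plan is to produce the quasi-dilation from a \emph{dilating} $\mathbb{C}^\ast$-action on the Fukaya category, exploiting the algebraic structure supplied by Corollary \ref{corollary:formal} rather than an ambient symmetry of $M_{p,q,r}$; the latter is unavailable once $\frac{1}{p}+\frac{1}{q}+\frac{1}{r}<1$ and $\lambda=1$, since the defining polynomial is then not quasi-homogeneous. As recalled in the introduction, a quasi-dilation $(b,h)\in\mathit{SH}^1(M_{p,q,r})\times\mathit{SH}^0(M_{p,q,r})^\times$ corresponds under the closed-open string map to a dilating $\mathbb{C}^\ast$-action on $\mathcal{F}(M_{p,q,r})$ \cite{ps2,ps7}. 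First I would invoke the closed-open isomorphism $\mathit{SH}^\ast(M_{p,q,r})\cong\mathit{HH}^\ast(\mathcal{W}_{p,q,r})$ --- valid because $\mathcal{W}(M_{p,q,r})$ is generated by the Lagrangian cocores $L_i$ \cite{gps1,gps2} --- so that the BV operator $\Delta$ is identified with the operator on Hochschild cohomology induced by the smooth Calabi-Yau structure on the wrapped category. The task then becomes the construction of a suitable second grading.

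The key input is Corollary \ref{corollary:formal}, which presents $\mathcal{V}_{p,q,r}$ as the trivial extension $\mathcal{A}_{p,q,r}\oplus\mathcal{A}_{p,q,r}^\vee[-3]$. This algebra carries a canonical second (``weight'') grading, with $\mathcal{A}_{p,q,r}$ placed in weight $0$ and $\mathcal{A}_{p,q,r}^\vee[-3]$ in weight $1$; since $\mathcal{A}_{p,q,r}^\vee\cdot\mathcal{A}_{p,q,r}^\vee=0$, the trivial-extension product is weight-homogeneous, so $\mathcal{V}_{p,q,r}$ is bigraded in the sense discussed in the Conventions. Passing to the bigraded Koszul dual $E(\mathcal{V}_{p,q,r})\cong\Pi_3(\mathcal{A}_{p,q,r})\cong\mathcal{W}_{p,q,r}$ --- the bigraded enhancement of the Koszul duality (\ref{eq:Koszul1}), together with Theorem \ref{theorem:main} --- transports this weight grading onto the wrapped category, where it becomes the grading by the number of Calabi-Yau dual generators. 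Infinitesimally the weight grading is a $\mathbb{C}^\ast$-action whose generator is the Euler derivation; under the identifications above this derivation is a class $b\in\mathit{HH}^1(\mathcal{W}_{p,q,r})\cong\mathit{SH}^1(M_{p,q,r})$. Because both the trivial extension and the Koszul-dual construction are defined over an arbitrary coefficient field, so is $b$.

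It remains to check that this $\mathbb{C}^\ast$-action is dilating. Here I would use that the cyclic pairing underlying $\Delta$ is itself weight-homogeneous: it pairs $\mathcal{A}_{p,q,r}$ (weight $0$) with $\mathcal{A}_{p,q,r}^\vee$ (weight $1$), hence has weight $1$. Since $1\neq 0$ in any field $\mathbb{K}$, the weight of the Calabi-Yau structure never vanishes, which is exactly the condition guaranteeing that the $\mathbb{C}^\ast$-action is dilating and that the resulting invertible element $h\in\mathit{SH}^0(M_{p,q,r})^\times$ --- determined by this weight together with the unit --- satisfies $\Delta(hb)=h$. This is the point at which one obtains a \emph{quasi}-dilation uniformly in the characteristic of $\mathbb{K}$: an honest dilation ($h=1$) would require normalizing away the weight of the pairing, which can fail, whereas the quasi-dilation absorbs it into $h$.

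The main obstacle is to make Seidel's correspondence between dilating $\mathbb{C}^\ast$-actions and quasi-dilations rigorous in the present setting and, concretely, to verify the identity $\Delta(hb)=h$ by controlling the BV operator on $\mathit{HH}^\ast(\Pi_3(\mathcal{A}_{p,q,r}))$ --- equivalently the Connes operator on Hochschild homology under Calabi-Yau duality. The non-homogeneity of the potential $w_{p,q,r}$ in the cusp cases means one cannot import the computation from a quasi-homogeneous model, and one must instead track the weight grading through the Koszul-dual differential directly; this is precisely where the single-suspension (rather than double-suspension) nature of $M_{p,q,r}$ makes the problem genuinely harder than the stabilization theorem of Seidel recalled in the introduction. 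As a consistency check, when $r=2$ the manifold $M_{p,q,r}$ is a double suspension and the quasi-dilation also follows from that theorem.
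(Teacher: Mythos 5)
Your outline coincides with the paper's own proof in Section \ref{section:quasi-dilation}: the candidate class is the Euler derivation of the weight grading placing $\mathcal{A}_{p,q,r}$ in weight $0$ and $\mathcal{A}_{p,q,r}^\vee[-3]$ in weight $1$ (this is precisely the paper's $\frac{1}{3}\mathit{eu}_\mathcal{B}$, which is indeed integral over any field), and it is transported to $\mathit{SH}^1(M_{p,q,r})$ via Koszul duality and Ganatra's BV isomorphism $\mathit{SH}^\ast(M_{p,q,r})\cong\mathit{HH}^\ast(\mathcal{W}(M_{p,q,r}),\mathcal{W}(M_{p,q,r}))$. However, there is a genuine gap at the central step, and you flag it yourself as the ``main obstacle'': the dilating property. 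You assert that since the cyclic pairing has nonzero weight, the action is dilating; but this is the statement to be proved, not a criterion one can quote. The paper proves it as Proposition \ref{proposition:quasi-dilation}, by an explicit computation with Tradler's chain-level BV operator, $\langle\Delta_{cyc}(c),b\rangle=\langle c(b),\mathrm{id}_\mathcal{B}\rangle$, combined with the description $Z(\mathcal{B})=Z(\mathcal{A})\ltimes\mathit{Ann}_{\mathcal{A}^\vee}(C(\mathcal{A}))$ of the centre of a trivial extension and, crucially, the hypothesis that $\mathcal{A}_{p,q,r}$ is a \emph{one-way algebra}, which forces $Z(\mathcal{A}_{p,q,r})\cong\mathbb{K}$ and pins down $\Delta_{cyc}\left(\frac{1}{3}\mathit{eu}_\mathcal{B}\right)=1$ exactly. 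Your proposal contains no substitute for this computation, and never identifies the one-way property (or any feature of $\mathcal{A}_{p,q,r}$ beyond directedness) as an input; nonvanishing weight of the pairing alone does not yield the identity in $\mathit{HH}^0$.

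Your explanation of where $h$ comes from is also not correct. No normalization of the pairing's weight is needed: the weight derivation already satisfies the honest equation $\Delta_{cyc}(b)=1$ on the algebraic model over any field, including characteristic $3$. The reason the geometric conclusion is only a \emph{quasi}-dilation is the point you gloss over when you say the BV operator on $\mathit{HH}^\ast(\mathcal{W}_{p,q,r})$ ``is identified with'' the one induced by the algebraic Calabi-Yau structure: Theorem \ref{theorem:main} gives an isomorphism of Gerstenhaber algebras as in (\ref{eq:Gerstanhaber}), but the Calabi-Yau structure on $\mathcal{W}(M_{p,q,r})$ coming from symplectic geometry may differ from the one on $\mathcal{G}_{p,q,r}$, and a change of Calabi-Yau structure conjugates the BV operator by an invertible element $u\in\mathit{HH}^0(\mathcal{G}_{p,q,r},\mathcal{G}_{p,q,r})^\times$; that $u$ is the $h$ of the quasi-dilation $(\frac{1}{3}\mathit{eu}_{p,q,r},u)$. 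Finally, transporting the BV identity through Koszul duality uses the BV algebra isomorphism $\mathit{HH}^\ast(\mathcal{B},\mathcal{B})\cong\mathit{HH}^\ast(\mathcal{G},\mathcal{G})$ for Koszul dual Calabi-Yau algebras, which requires the $\mathbb{Z}$-graded (not merely bigraded) duality of Proposition \ref{proposition:lf}, hence the finite-dimensionality of $H^\ast(\mathcal{G}_{p,q,r})$ in each degree established in Lemma \ref{lemma:Adams}; your bigraded transport skips this verification.
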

As we have remarked above, $M_{p,q,2}$ is Liouville isomorphic to an affine conic bundle over $\mathbb{C}^2$, therefore the existence of a quasi-dilation follows from an iterative application of the Lefschetz fibration argument due to Seidel-Solomon, see Lecture 19 of $\cite{ps2}$.

\section*{Acknowledgements}
I would like to thank my PhD supervisor Yank{\i} Lekili for very useful conversations during the preparation of this work, especially for his help with the proof of Corollary \ref{corollary:split-generation}. I am also grateful to Travis Schedler for helpful discussions concerning Koszul duality and BV structures; and Ailsa Keating for pointing out the mistake concerning my misunderstanding of Seidel's result on suspending Lefschetz fibrations in an earlier version of this paper, see Section \ref{section:application}. After writing up the first draft of this paper, the discussions with Roger Casals have contributed a lot to Section \ref{section:exceptional}, therefore I would also like to express my gratitude to him. Finally, I'm indebted to the referee for many useful comments and suggestions, which have considerably improved this paper.
\bigskip

This work was supported by the Engineering and Physical Sciences Research Council [EP/L015234/1], the EPSRC Centre for Doctoral Training in Geometry and Number Theory (The London School of Geometry and Number Theory), University College London. The author is also funded by King's College London for his PhD studies.

\section{Noncommutative symplectic geometry}

We collect here some basic algebraic concepts and facts concerning the noncommutative geometry of a symplectic Calabi-Yau manifold. They will be used later to study the commutative geometry of a Liouville manifold. Let $\mathbb{K}$ be any field.

\subsection{Calabi-Yau algebras}\label{section:CY algebra}

Let $\mathcal{A}$ be a $\mathbb{Z}$-graded $A_\infty$-algebra, denote by $\mathcal{A}^e$ the tensor algebra $\mathcal{A}\otimes\mathcal{A}^\mathit{op}$. For any $\mathcal{A}$-bimodule $\mathcal{M}$, define its derived dual in the derived category $D^\mathit{mod}(\mathcal{A}^e)$ of $\mathcal{A}$-bimodules to be
\begin{equation}\label{eq:dual-bimod}
\mathcal{M}^\vee:=R\mathrm{Hom}_{\mathcal{A}^e}(\mathcal{M},\mathcal{A}^e).
\end{equation}
$\mathcal{A}$ is said to be an $n$-\textit{Calabi-Yau algebra} if there is a non-degenerate class $\eta\in\mathit{HH}_{-d}(\mathcal{A},\mathcal{A})$, inducing a (self-dual) quasi-isomorphism
\begin{equation}
\mathcal{A}^\vee[n]\cong\mathcal{A}
\end{equation}
between $\mathcal{A}$-bimodules.\bigskip

In this paper, we will be mainly dealing with 3-Calabi-Yau algebras. An important class of 3-Calabi-Yau dg algebras are given by quivers with potentials, originally introduced by Ginzburg in $\cite{vg}$. Let $Q=(Q_0,Q_1)$ be a finite quiver with the set of vertices $Q_0$ and te set of arrows $Q_1$. By definition, a potential on $Q$ is an element
\begin{equation}
w\in\mathbb{K}Q/[\mathbb{K}Q,\mathbb{K}Q]
\end{equation}
in the space of all cyclic paths in $Q$. The \textit{Ginzburg algebra} $\mathcal{G}(Q,w)$ associated to $(Q,w)$ is the dg algebra freely generated over $\mathbb{K}$ by
\begin{itemize}
	\item the arrows $a\in Q_1$ with $\deg(a)=0$;
	\item the opposite arrows $a^\ast$ of $a$ with $\deg(a^\ast)=-1$;
	\item loops $z_v$ for every vertex $v\in Q_0$, with $\deg(z_v)=-2$.
\end{itemize}
The differential $d$ on $\mathcal{G}(Q,w)$ is defined to be
\begin{equation}
da=0,da^\ast=\frac{^\circ\partial \tilde{w}}{\partial a},d(\sum_{v\in Q_0}z_v)=\sum_{a\in Q_1}[a,a^\ast]
\end{equation}
on the set of generators, where $\tilde{w}$ is the sum of all cyclic permutations of $w$, and $^\circ\partial$ is the circular derivative introduced by Kontsevich, which is by definition
\begin{equation}
\frac{^\circ\partial\tilde{w}}{\partial a}=\sum_{\tilde{w}=uav}vu.
\end{equation}
One can then extend $d$ to a differential on the whole algebra $\mathcal{G}(Q,w)$ by graded Leibniz rule. Note that $\mathcal{G}(Q,w)$ can be regarded as a dg algebra over $\Bbbk=\bigoplus_{v\in Q_0}\mathbb{K}e_v$, with its $\Bbbk$-bimodule structure induced from the path algebra $\mathbb{K}\widetilde{Q}$ of the extended quiver $\widetilde{Q}$, which is obtained by adjoining opposite arrows $a^\ast$ and loops $z_v$ to the original quiver $Q$.
\bigskip

In the terminology of the work-in-progress by Cohen-Ganatra $\cite{cg}$, a Calabi-Yau algebra in the above sense is called a \textit{smooth 3-Calabi-Yau algebra}, meaning that the Calabi-Yau structure $\eta\in\mathit{HH}_{-d}(\mathcal{A},\mathcal{A})$ admits a lift in the negative cyclic homology $\mathit{HC}^-_{-d}(\mathcal{A},\mathcal{A})$. The Koszul dual notion, which is a chain level refinement of Poincar\'{e} duality between Floer cohomologies, is referred to as a \textit{compact 3-Calabi-Yau algebra}. To give an example, start again with the quiver with potential $(Q,w)$. Without generality, we can assume $w$ to be \textit{reduced}, namely there is no summand in $w$ which is a cycle in $Q$ with length $<3$. Consider the $\mathbb{Z}$-graded $A_\infty$-algebra $\mathcal{B}(Q,w)$, which as a graded $\Bbbk$-module is given by
\begin{equation}
\mathcal{B}(Q,w):=\bigoplus_{i,j\in Q_0}\mathbb{K}^{\delta_{ij}}\oplus V_{ij}^\vee[-1]\oplus V_{ji}^\vee[-2]\oplus\mathbb{K}^{\delta_{ij}}[-3],
\end{equation}
where $V_{ij}$ is the (trivially graded) vector space with basis consisting of the arrows in $Q_1$ which start from the vertex $i$ and end at the vertex $j$, and $V_{ji}$ is the vector space generated by the opposite of the arrows in $V_{ij}$. The $A_\infty$-operations
\begin{equation}
\mu^k_\mathcal{B}:\mathcal{B}(Q,w)^{\otimes k}\rightarrow\mathcal{B}(Q,w)[2-k]
\end{equation}
are defined explicitly by
\begin{equation}
\mu^k_\mathcal{B}(g_k^\vee,\dots,g_1^\vee)=(-1)^{(k-1)|g_k^\vee|+\dots+2|g_3^\vee|+|g_2^\vee|}\sum_g\mathrm{Coeff}_{g_k\dots g_1}(dg)\cdot g^\vee,
\end{equation}
where the sum on the right-hand side above is taken over all the generators of the Ginzburg algebra $\mathcal{G}(Q,w)$, and $\mathrm{Coeff}_{g_k\dots g_1}(dg)$ is the coefficient of $g_k\dots g_1$ in the differential of $g$ in $\mathcal{G}(Q,w)$, which is determined by the potential $w$. Moreover, the grading of $g^\vee$ is given by $|g^\vee|=1-\deg(g)$ in the $A_\infty$-algebra $\mathcal{B}(Q,w)$.

Consider the pairing $(\cdot,\cdot)$ on the vector space generated by the degree 0 and degree 1 generators of $\mathcal{G}(Q,w)$, which satisfies
\begin{itemize}
	\item $(g_i,g_j)=-(-1)^{|g_1||g_2|}(g_j,g_i)$;
	\item $(g_i,g_j)=0$ unless $t(g_i)=h(g_j)$ and $t(g_j)=h(g_i)$, where $h(g)$ and $t(g)$ are respectively the head and tail of an arrow or opposite arrow $g$;
	\item the matrix with entries given by $(g_i,g_j)$ is invertible.
\end{itemize}
Let $\langle\cdot,\cdot\rangle$ be the dual pairing of $(\cdot,\cdot)$, in this way we get a non-degenerate pairing
\begin{equation}
\langle\cdot,\cdot\rangle_\mathcal{B}:\mathcal{B}(Q,w)\times\mathcal{B}(Q,w)\rightarrow\Bbbk[-3]
\end{equation}
defined by
\begin{equation}
\langle g_1^\vee,g_2^\vee\rangle_\mathcal{B}=(-1)^{|g_1^\vee|}\langle g_1^\vee,g_2^\vee\rangle.
\end{equation}
Using the definition of $\mu_\mathcal{B}^k$, one can verify that
\begin{equation}
\left\langle\mu_\mathcal{B}^k(g_k^\vee,\dots,g_1^\vee),g_0^\vee\right\rangle_\mathcal{B}=(-1)^{k+|g_k^\vee|(|g_{k-1}^\vee|+\dots+|g_0^\vee|)}\left\langle\mu_\mathcal{B}^k(g_{k-1}^\vee,\dots,g_0^\vee),g_k^\vee\right\rangle_\mathcal{B},
\end{equation}
which shows that $\mathcal{B}(Q,w)$ is a \textit{cyclic} $A_\infty$-algebra with respect to the pairing $\langle\cdot,\cdot\rangle_\mathcal{B}$.

We refer the readers to $\cite{cg}$ for the precise definition of a compact Calabi-Yau structure. When $\mathrm{char}(\mathbb{K})=0$, any cyclic $A_\infty$-algebra $\mathcal{B}$ over $\Bbbk$ carries a canonical compact Calabi-Yau structure, and a compact Calabi-Yau structure on any $A_\infty$-algebra $\mathcal{B}$ determines a quasi-isomorphism between $\mathcal{B}$ and a cyclic $A_\infty$-algebra $\mathcal{B}'$. When $\mathrm{char}(\mathbb{K})\neq0$, the notions of a cyclic $A_\infty$-algebra and a compact Calabi-Yau $A_\infty$-algebra in general differ from each other. Since we shall be mainly dealing with $A_\infty$-algebras arising from quivers with potentials in this paper, we will not distinguish between these two notions.

\subsection{Cyclic completions}\label{section:cyclic}

We recall the construction of the cyclic completion of an $A_\infty$-algebra from $\cite{es}$. This is nothing else but the chain level refinement of trivial extensions of (graded) associative algebras.

Let $\Bbbk$ be a semisimple ring which is a direct sum of a finite number of copies of the field $\mathbb{K}$. Suppose that $\mathcal{A}$ is a $\mathbb{Z}$-graded $A_\infty$-algebra over $\Bbbk$, with $\mu_\mathcal{A}^k:\mathcal{A}^{\otimes k}\rightarrow\mathcal{A}[2-k]$ being its structure maps, then one can associate to $\mathcal{A}$ another $A_\infty$-algebra $\mathcal{B}$ which as an $\mathcal{A}$-bimodule is given by
\begin{equation}
\mathcal{A}\oplus\mathcal{A}^\vee[-n],
\end{equation}
where $\mathcal{A}^\vee[-n]$ is the dual diagonal $\mathcal{A}$-bimodule defined by (\ref{eq:dual-bimod}), whose bimodule operations will be denoted by
\begin{equation}
\mu^{s|1|r}_{\mathcal{A}^\vee[-n]}:\mathcal{A}^{\otimes s}\otimes\mathcal{A}^\vee[-n]\otimes\mathcal{A}^{\otimes r}\rightarrow\mathcal{A}^\vee[1-r-s-n].
\end{equation}
The $A_\infty$-operations
\begin{equation}
\mu^k_\mathcal{B}:\mathcal{B}^{\otimes k}\rightarrow\mathcal{B}[2-k]
\end{equation}
on the trivial extension are then defined to be the direct sum of $\mu^k_\mathcal{A}$ with $\mu^{i-1|1|k-i}_{\mathcal{A}^\vee[-n]}$. More precisely,
\begin{equation}
\begin{split}
&\mu_\mathcal{B}^k\left((a_k,a_k^\vee),\dots,(a_1,a_1^\vee)\right):=\Big(\mu_\mathcal{A}^k(a_k,\dots,a_1),\\
&\sum_{i=1}^k(-1)^{|a_1|+\dots+|a_{i-1}|-i+2}\mu^{i-1|1|k-i}_{\mathcal{A}^\vee[-n]}(a_k,\dots,a_{i+1},a_i^\vee,a_{i-1},\dots,a_1)\Big),
\end{split}
\end{equation}
where $a_j\in\mathcal{A}$ and $a_j^\vee\in\mathcal{A}^\vee[-n]$.

The $A_\infty$-algebra $\mathcal{B}$ defined above is called the $n$-\textit{cyclic completion} of $\mathcal{A}$. In many cases, the $A_\infty$ structure $\mu_\mathcal{B}^\bullet$ on $\mathcal{B}$ defined above is cyclic and extends the (usually) non-cyclic $A_\infty$-structure $\mu_\mathcal{A}^\bullet$ on $\mathcal{A}$. The following is an example.
\bigskip

Let $\mathcal{A}\cong\mathbb{K}\overrightarrow{Q}/\overrightarrow{I}$ be a finite-dimensional graded associative algebra over $\mathbb{K}$ defined by a quiver $\overrightarrow{Q}=(\overrightarrow{Q}_0,\overrightarrow{Q}_1)$ with the set of vertices $\overrightarrow{Q}_0$ and the set of arrows $\overrightarrow{Q}_1$, together with a 2-sided ideal $\overrightarrow{I}\subset\mathbb{K}\langle\overrightarrow{Q}_1\rangle^{\otimes 2}$ generated by quadratic relations (so in particular the $A_\infty$-structure on $\mathcal{A}$ is formal). In this case, the 3-cyclic completion $\mathcal{B}=\mathcal{A}\oplus\mathcal{A}^\vee[-3]$ has a simple realization. More precisely, denote by $\{\rho_1,\dots,\rho_r\}$ the set of relations which generate the ideal $\overrightarrow{I}$, and by $y_j$ an arrow which reverses the compositions of the arrows appeared in $\rho_j$. Then $\mathcal{B}$ is the cyclic $A_\infty$-algebra defined by the quiver with potential $(Q,w)$ (cf. Section \ref{section:CY algebra}), where $Q_0=\overrightarrow{Q}_0$,
\begin{equation}
Q_1=\overrightarrow{Q}_1\cup\{y_1,\dots,y_r\}
\end{equation}
and
\begin{equation}
w=\sum_{1\leq j\leq r}y_j\rho_j.
\end{equation}
\begin{proposition}\label{proposition:formality}
Let $\mathcal{A}\cong\mathbb{K}\overrightarrow{Q}/\overrightarrow{I}$ be as above, then the $A_\infty$-structure on its 3-cyclic completion $\mathcal{B}$ is formal.
\end{proposition}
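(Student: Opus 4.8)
The plan is to isolate the only place where higher products of $\mathcal{B}=\mathcal{A}\oplus\mathcal{A}^\vee[-3]$ can hide. Since $\mathcal{A}$ is a genuine associative algebra, $\mu^k_\mathcal{A}=0$ for $k\neq2$, so in the cyclic-completion formula the $\mathcal{A}$-component of $\mu^k_\mathcal{B}$ vanishes for every $k\geq3$; the only possibly nonzero higher operations are those landing in the $\mathcal{A}^\vee[-3]$-summand, i.e. the bimodule operations $\mu^{s|1|r}_{\mathcal{A}^\vee[-3]}$ with $s+r\geq2$. Thus formality of $\mathcal{B}$ is equivalent to formality of the $A_\infty$-bimodule $\mathcal{A}^\vee=R\mathrm{Hom}_{\mathcal{A}^e}(\mathcal{A},\mathcal{A}^e)$, and I would establish this by identifying $\mathcal{B}$ with the cyclic $A_\infty$-algebra $\mathcal{B}(Q,w)$ of the quiver with potential $w=\sum_j y_j\rho_j$ described before the statement.

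First I would compute $\mathcal{A}^\vee$ from an explicit $\mathcal{A}$-bimodule resolution of $\mathcal{A}$. Because $\overrightarrow{I}$ is generated in degree two, the start of the minimal resolution is canonical,
\[
\cdots\longrightarrow \mathcal{A}\otimes_\Bbbk R\otimes_\Bbbk\mathcal{A}\xrightarrow{\,d_2\,}\mathcal{A}\otimes_\Bbbk V\otimes_\Bbbk\mathcal{A}\xrightarrow{\,d_1\,}\mathcal{A}\otimes_\Bbbk\mathcal{A}\xrightarrow{\,\mu\,}\mathcal{A}\longrightarrow0,
\]
with $V=\mathbb{K}\overrightarrow{Q}_1$, $R\subset V\otimes_\Bbbk V$ the span of the relations $\rho_j$, and $d_1,d_2$ the usual inner differentials. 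In the situation at hand --- a directed quiver with quadratic relations --- one finds this resolution has length three and is capped by a top term with one generator per vertex, dual to $\mathcal{A}\otimes_\Bbbk\mathcal{A}$, which is exactly the self-duality feeding the $3$-Calabi--Yau completion. Applying $\mathrm{Hom}_{\mathcal{A}^e}(-,\mathcal{A}^e)$ and transferring the resulting $A_\infty$-bimodule structure to cohomology should then reproduce the four layers of $\mathcal{B}(Q,w)$ --- idempotents in degree zero, duals of arrows in degree one, duals of opposite arrows in degree two, and a top class in degree three --- with the binary products recording the relations through the dual arrows $y_j$ and the cubic potential $w=\sum_j y_j\rho_j$.

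Granting this identification, formality is forced by the shape of $w$. As each $\rho_j$ is quadratic, $w$ is homogeneous of path-length three, so every circular derivative ${}^\circ\partial\tilde{w}/\partial a$ is quadratic; hence the differential of the Ginzburg algebra $\mathcal{G}(Q,w)$ contains no monomial of path-length one or of path-length $\geq3$. By the defining formula $\mu^k_\mathcal{B}(g_k^\vee,\dots,g_1^\vee)=\pm\sum_g\mathrm{Coeff}_{g_k\cdots g_1}(dg)\,g^\vee$, this makes $\mu^1_\mathcal{B}=0$ and $\mu^k_\mathcal{B}=0$ for all $k\geq3$, so $\mathcal{B}$ is quasi-isomorphic to the ordinary graded associative algebra $(\mathcal{B},\mu^2_\mathcal{B})$ and is therefore formal. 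The same mechanism can be phrased as a purity statement: the path-length (Adams) weight is preserved by every $\mu^k_\mathcal{B}$ and coincides with the cohomological degree on $H^\ast(\mathcal{B})$, so an operation of cohomological degree $2-k$ can be nonzero only when $k=2$.

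The step I expect to be the genuine obstacle is the middle one: controlling the \emph{entire} $A_\infty$-bimodule structure on the derived dual $\mathcal{A}^\vee$, not just its cohomology $\mathrm{Ext}^\ast_{\mathcal{A}^e}(\mathcal{A},\mathcal{A}^e)$. Quadraticity pins down the low-degree part of the resolution and guarantees that the transferred operations are length-homogeneous, but one must still verify that no higher bimodule operation $\mu^{s|1|r}$ with $s+r\geq2$ survives the homotopy transfer; this is precisely where the length-three termination of the resolution, the Adams-weight bookkeeping, and care with the signs in the cyclic pairing $\langle\cdot,\cdot\rangle_\mathcal{B}$ all enter.
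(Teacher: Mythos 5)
Your opening reduction is the right one, and it is the same as the paper's: since $\mathcal{A}$ has only $\mu^2$, any failure of formality must sit in the bimodule operations $\mu^{s|1|r}_{\mathcal{A}^\vee[-3]}$ with $s+r\geq 2$. But the route you then take both misidentifies the dual and leaves open exactly the step that matters. In Segal's cyclic completion the summand $\mathcal{A}^\vee[-3]$ is the $\Bbbk$-linear dual of the diagonal bimodule (admittedly the paper's pointer to (\ref{eq:dual-bimod}) invites your reading, but the linear dual is forced by the finite-dimensionality of $\mathcal{B}(Q,w)$ and by the explicit formula for $\mu^k_\mathcal{B}$ in Section \ref{section:cyclic}), and its bimodule operations $\mu^{s|1|r}_{\mathcal{A}^\vee[-3]}$ are, by construction, termwise dualizations of the operations $\mu^{s+1+r}_{\mathcal{A}}$. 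This is what the paper means by ``$\mu^k_\mathcal{B}$ is defined additively in terms of $\mu^k_\mathcal{A}$'': once $\mathcal{A}$ is an honest associative algebra, every $\mu^{s|1|r}_{\mathcal{A}^\vee[-3]}$ with $s+r\geq2$ vanishes identically, so $\mathcal{B}$ is the ordinary trivial-extension algebra $\mathcal{A}\ltimes\mathcal{A}^\vee[-3]$, with no differential and no higher products, and formality is immediate. No bimodule resolution, no homotopy transfer, and no identification with $\mathcal{B}(Q,w)$ enters this proof; that identification is a separate assertion of the paper, not an ingredient here.

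By instead taking $\mathcal{A}^\vee=R\mathrm{Hom}_{\mathcal{A}^e}(\mathcal{A},\mathcal{A}^e)$ you have picked up the wrong dual: that object is the inverse dualizing complex, which is the input of the Calabi--Yau completion $\Pi_3(\mathcal{A})$ of Section \ref{section:CY-completion}, and it is inverse to, not equal to, the linear dual (for $\mathcal{A}$ smooth and proper one has $\mathrm{Hom}_\Bbbk(\mathcal{A},\Bbbk)\otimes^{L}_{\mathcal{A}}R\mathrm{Hom}_{\mathcal{A}^e}(\mathcal{A},\mathcal{A}^e)\simeq\mathcal{A}$). With that reading $\mathcal{A}\oplus\mathcal{A}^\vee[-3]$ is not $\mathcal{B}(Q,w)$, so your final observation --- which is correct, and would prove formality of $\mathcal{B}(Q,w)$ itself, since a reduced cubic potential makes every $dg$ in $\mathcal{G}(Q,w)$ quadratic and hence kills $\mu^k_{\mathcal{B}(Q,w)}$ for $k\neq2$ --- cannot be transported back to $\mathcal{B}$. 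Your description of the resolution is also wrong: for a quadratic algebra of global dimension two the minimal bimodule resolution is
\begin{equation*}
0\rightarrow\mathcal{A}\otimes_\Bbbk R\otimes_\Bbbk\mathcal{A}\rightarrow\mathcal{A}\otimes_\Bbbk V\otimes_\Bbbk\mathcal{A}\rightarrow\mathcal{A}\otimes_\Bbbk\mathcal{A}\rightarrow\mathcal{A}\rightarrow0,
\end{equation*}
whose top term has one generator per \emph{relation}, not per vertex, and it is not self-dual; self-duality of that kind is a feature of the $3$-Calabi--Yau completion, not of $\mathcal{A}$. Finally, the verification you yourself defer --- that no higher bimodule operation survives the homotopy transfer --- is precisely the unproved content of your argument, so as written the proposal does not establish the proposition.
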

\begin{proof}
This is obvious since $\mu_\mathcal{B}^k$ is defined additively in terms of $\mu_\mathcal{A}^k$, but $\mathcal{A}$ is formal as an $A_\infty$-algebra.
\end{proof}

\subsection{Calabi-Yau completions}\label{section:CY-completion}

We recall here another algebraic construction associated to $A_\infty$-algebras due to Keller $\cite{bk}$. This is Koszul dual to Segal's cyclic completion.
\bigskip

Let $\mathcal{A}$ be a $\mathbb{Z}$-graded $A_\infty$-algebra over the semisimple ring $\Bbbk=\bigoplus_{i=1}^r\mathbb{K}e_i$. Without loss of generality we can assume that $\mu_\mathcal{A}^k=0$ for $k\geq3$, namely it is differential graded, since in general we can always find a dg algebra $\mathcal{A}'$ which is quasi-isomorphic to $\mathcal{A}$ and apply the construction described below to $\mathcal{A}'$. Its $n$-\textit{Calabi-Yau completion} is defined to be the tensor dg algebra
\begin{equation}\Pi_n(\mathcal{A}):=T_\mathcal{A}(\mathrm{Res}(\mathcal{A})^\vee[n-1]),\end{equation}
where the $\mathcal{A}$-bimodule $\mathrm{Res}(\mathcal{A})^\vee$ is called the \textit{inverse dualizing complex}. 

Let us illustrate the above definition under the assumption that $\mathcal{A}$ is a \textit{semi-free dg algebra} over the field $\mathbb{K}$, which means its underlying graded algebra is freely generated over $\mathbb{K}$, with the set of generators given by $\{a_1,\dots,a_m\}$. Consider $\Omega_\mathcal{A}^1$, the bimodule of differentials on $\mathcal{A}$, which has $Da_1,\dots,Da_m$ as its basis, with $D:\mathcal{A}\rightarrow\Omega^1_\mathcal{A}$ being the universal derivation. Recall that we have a short exact sequence
\begin{equation}0\rightarrow\Omega_\mathcal{A}^1\xrightarrow{\alpha}\mathcal{A}\otimes\mathcal{A}\xrightarrow{\circ}\mathcal{A}\rightarrow0,\end{equation}
where $\alpha(Da)=a\otimes1-1\otimes a$, and $\circ$ is the multiplication on $\mathcal{A}$. In this case, $\mathrm{Res}(\mathcal{A})$ is the mapping cone of $\alpha$, which gives a semi-free bimodule resolution of $\mathcal{A}$ with basis $\left\{e_1\otimes e_1,\cdots,e_r\otimes e_r,Da_1[1],\cdots,Da_m[1]\right\}$. $\mathrm{Res}(\mathcal{A})^\vee$ is the dual bimodule of $\mathrm{Res}(\mathcal{A})$. 

If $\phi:\mathcal{A}\rightarrow\mathcal{A}'$ is a quasi-isomorphism between dg algebras over $\Bbbk$, then there is an induced quasi-isomorphism $\Pi_n(\phi):\Pi_n(\mathcal{A})\rightarrow\Pi_n(\mathcal{A}')$.

\begin{theorem}[Keller $\cite{bk,bke}$]
If $\mathcal{A}$ is homologically smooth, then its $n$-Calabi-Yau completion $\Pi_n(\mathcal{A})$ is a smooth $n$-Calabi-Yau algebra.
\end{theorem}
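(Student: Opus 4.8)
The plan is to reduce the statement to two independent claims about $B := \Pi_n(\mathcal{A}) = T_\mathcal{A}(\Theta)$, where $\Theta := \mathrm{Res}(\mathcal{A})^\vee[n-1]$: first, that $B$ is homologically smooth, i.e. perfect as a $B$-bimodule; and second, that $B$ carries a non-degenerate class $\eta\in\mathit{HH}_{-n}(B,B)$ which lifts to negative cyclic homology $\mathit{HC}^-_{-n}(B,B)$. The central input throughout is that homological smoothness of $\mathcal{A}$ makes $\mathcal{A}$ a perfect $\mathcal{A}^e$-module, so that the derived dual $\mathrm{Res}(\mathcal{A})^\vee$ is again perfect and reflexive; in particular $\Theta$ is a perfect $\mathcal{A}$-bimodule with $\Theta^\vee\simeq\mathrm{Res}(\mathcal{A})[1-n]\simeq\mathcal{A}[1-n]$ in $D^\mathit{mod}(\mathcal{A}^e)$.

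For smoothness, I would use the standard bimodule resolution of a tensor algebra. For $B=T_\mathcal{A}(\Theta)$ there is a canonical short exact sequence of $B$-bimodules, giving a distinguished triangle
\begin{equation}
B\otimes_\mathcal{A}\Theta\otimes_\mathcal{A}B\longrightarrow B\otimes_\mathcal{A}B\longrightarrow B\longrightarrow(B\otimes_\mathcal{A}\Theta\otimes_\mathcal{A}B)[1]
\end{equation}
in $D^\mathit{mod}(B^e)$, where the first map records the bimodule action of $\Theta\subset B$ and the second is the multiplication. Since $\mathcal{A}$ is smooth and $\Theta$ is perfect, the two outer terms are perfect $B$-bimodules (inductions along $\mathcal{A}^e\to B^e$ of perfect $\mathcal{A}$-bimodules), so $B$ is perfect over $B^e$; this establishes homological smoothness.

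For the Calabi-Yau structure, I would compute the inverse dualizing complex $\Theta_B:=R\mathrm{Hom}_{B^e}(B,B^e)$ by applying $(-)^\vee=R\mathrm{Hom}_{B^e}(-,B^e)$ to the triangle above. The key point is that dualization commutes with induction, $\left(B\otimes_\mathcal{A}\mathcal{M}\otimes_\mathcal{A}B\right)^\vee\simeq B\otimes_\mathcal{A}\mathcal{M}^\vee\otimes_\mathcal{A}B$ for perfect $\mathcal{M}$; combined with $\Theta^\vee\simeq\mathcal{A}[1-n]$ and $\mathcal{A}^\vee\simeq\Theta[1-n]$, this should dualize the triangle into a copy of itself shifted by $[1-n]$. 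Rotating and comparing the cones would then yield $\Theta_B\simeq B[-n]$, equivalently the self-dual bimodule quasi-isomorphism $B^\vee[n]\cong B$ demanded by the definition of an $n$-Calabi-Yau algebra.

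It remains to promote this bimodule-level duality to a genuine Calabi-Yau structure and to exhibit the required lift to negative cyclic homology. I would follow Keller's construction: the class $\eta$ arises from a canonical element determined by the reflexivity isomorphism for $\Theta$, and the grading shift built into $\Theta=\mathrm{Res}(\mathcal{A})^\vee[n-1]$ is precisely what makes this element cyclically invariant. The main obstacle, and the part requiring the most care, is exactly this last step: one must verify that the two maps appearing after dualization literally agree (a naturality check on the canonical bimodule maps, not merely an abstract isomorphism of cones), and that the resulting class in $\mathit{HH}_{-n}(B,B)$ lifts through $\mathit{HC}^-_{-n}(B,B)\to\mathit{HH}_{-n}(B,B)$. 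Controlling the mixed-complex structure — the interaction of the Connes $B$-operator with the tensor-algebra filtration — is where the real content of the theorem lies; everything before it is formal homological algebra built on the perfectness and reflexivity of $\Theta$.
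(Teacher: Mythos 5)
The paper itself contains no proof of this statement; it is quoted directly from Keller \cite{bk,bke}, so the only available comparison is with the argument in those references, and your outline reproduces it faithfully: the tensor-algebra bimodule triangle yields homological smoothness, dualizing that triangle together with $\Theta^\vee\simeq\mathcal{A}[1-n]$ yields the self-duality $B^\vee[n]\cong B$, and what remains is the lift to negative cyclic homology. You have also correctly isolated the one genuinely delicate step — the lift of the class to $\mathit{HC}^-_{-n}(B,B)$ is precisely the point whose original treatment in \cite{bk} was flawed and which the erratum \cite{bke} repairs, which is why the paper cites both references.
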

\bigskip

We now specialize the above construction to the case of quiver algebras. Denote again by $\overrightarrow{Q}=(\overrightarrow{Q}_0,\overrightarrow{Q}_1)$ a finite quiver. Let $\mathcal{A}=\mathbb{K}\overrightarrow{Q}/\overrightarrow{I}$ be the graded associative algebra defined by a quadratic ideal $\overrightarrow{I}\subset\mathbb{K}\langle\overrightarrow{Q}_1\rangle^{\otimes 2}$. Here we assume in addition that
\bigskip

\textit{$\mathcal{A}$ has global dimension less than or equal to 2.}
\bigskip

From these data we can construct a quiver with potential $(Q,w)$ exactly as in Section \ref{section:cyclic}. Namely one starts with the quiver $\overrightarrow{Q}$ and adds arrows to it according to the relations which generate $\overrightarrow{I}$.

In this case we have the following result due to Keller, which gives a concrete realization of the 3-Calabi-Yau completion $\Pi_3(\mathcal{A})$ as the Ginzburg algebra associated to $(Q,w)$.
\begin{proposition}[Theorem 6.10 of $\cite{bk}$]\label{proposition:CY-completion}
Under the above assumptions, the 3-Calabi-Yau completion $\Pi_3(\mathcal{A})$ is quasi-isomorphic to the Ginzburg dg algebra $\mathcal{G}(Q,w)$.
\end{proposition}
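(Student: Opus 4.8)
The plan is to build an explicit finite projective bimodule resolution of $\mathcal{A}$, dualize it to obtain the inverse dualizing complex, and then read off the tensor dg algebra $\Pi_3(\mathcal{A})=T_\mathcal{A}(\mathrm{Res}(\mathcal{A})^\vee[2])$ generator by generator, matching the result with the Ginzburg data $(Q,w)$. The two standing hypotheses are exactly what make this work: quadraticity of $\overrightarrow{I}$ controls the shape of the resolution, while $\mathrm{gldim}(\mathcal{A})\leq 2$ forces it to terminate after three terms. Concretely, I would first write down
\begin{equation}
0\rightarrow\mathcal{A}\otimes_\Bbbk R\otimes_\Bbbk\mathcal{A}\xrightarrow{d_2}\mathcal{A}\otimes_\Bbbk V\otimes_\Bbbk\mathcal{A}\xrightarrow{d_1}\mathcal{A}\otimes_\Bbbk\Bbbk\otimes_\Bbbk\mathcal{A}\xrightarrow{d_0}\mathcal{A}\rightarrow0,
\end{equation}
where $V=\mathbb{K}\overrightarrow{Q}_1$ is spanned by the arrows and $R=\mathrm{span}\{\rho_1,\dots,\rho_r\}$ by the relations, with $d_0$ the multiplication, $d_1(Da)=a\otimes1-1\otimes a$, and $d_2$ recording how each $\rho_j$ decomposes into its constituent arrows. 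Exactness at $P_0$ and $P_1$ is the statement that $\overrightarrow{I}$ is generated in degree $2$, while injectivity of $d_2$ and the vanishing of all higher terms is exactly $\mathrm{gldim}(\mathcal{A})\leq2$; this realizes $\mathrm{Res}(\mathcal{A})$.

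Next I would apply $R\mathrm{Hom}_{\mathcal{A}^e}(-,\mathcal{A}^e)$ to this resolution. Since each term is a finitely generated free $\mathcal{A}$-bimodule, $\mathrm{Hom}_{\mathcal{A}^e}(\mathcal{A}\otimes_\Bbbk W\otimes_\Bbbk\mathcal{A},\mathcal{A}^e)\cong\mathcal{A}\otimes_\Bbbk W^\ast\otimes_\Bbbk\mathcal{A}$, so $\mathrm{Res}(\mathcal{A})^\vee$ is the three-term cochain complex with terms $\mathcal{A}\otimes\Bbbk^\ast\otimes\mathcal{A}$, $\mathcal{A}\otimes V^\ast\otimes\mathcal{A}$, $\mathcal{A}\otimes R^\ast\otimes\mathcal{A}$ sitting in cohomological degrees $0,1,2$. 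After the shift by $[n-1]=[2]$, which sends cohomological degree $c$ to internal degree $c-2$, these three bimodule generators land in degrees $-2,-1,0$, and I would identify them with the loops $z_v$ (from $\Bbbk^\ast$), the opposite arrows $a^\ast$ (from $V^\ast$), and the new arrows $y_j$ (from $R^\ast$) respectively. Together with the original arrows coming from $\mathcal{A}$ itself, this already reproduces the generator set of $\mathcal{G}(Q,w)$.

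To upgrade this bookkeeping into an honest quasi-isomorphism of dg algebras, I would replace $\mathcal{A}$ by the semifree model $\widetilde{\mathcal{A}}=\mathbb{K}\langle a\ (\deg 0),\ t_j\ (\deg -1)\rangle$ with $dt_j=\rho_j$, which is permissible by the functoriality statement recorded just before the theorem, and then compute $\Pi_3(\widetilde{\mathcal{A}})=T_{\widetilde{\mathcal{A}}}(\mathrm{Res}(\widetilde{\mathcal{A}})^\vee[2])$ through the mapping-cone description of $\mathrm{Res}$ from the semifree case. The crucial observation is that the resolution generators $t_j$ of $\widetilde{\mathcal{A}}$ sit in degree $-1$ with $dt_j=\rho_j$, which is precisely the Ginzburg relation $dy_j^\ast=\partial w/\partial y_j=\rho_j$ coming from $w=\sum_j y_j\rho_j$; identifying $t_j=y_j^\ast$ then merges the two sources of degree $-1$ generators into the pair $\{a^\ast,y_j^\ast\}$ and yields exactly the underlying graded $\Bbbk$-algebra of $\mathcal{G}(Q,w)$.

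Finally I would match the two remaining differentials. The induced map $d_1^\vee\colon\mathcal{A}\otimes V^\ast\otimes\mathcal{A}\to\mathcal{A}\otimes R^\ast\otimes\mathcal{A}$, dual to the way arrows assemble into relations, must be shown to send $a^\ast$ to the circular derivative $\partial w/\partial a=\sum_{\tilde{w}=uav}vu$, and here the quadratic form of $w=\sum_j y_j\rho_j$ is what makes each occurrence of $a$ inside some $\rho_j$ contribute a single term paired with the corresponding $y_j$. Likewise $d_0^\vee$, combined with the bimodule structure over $\widetilde{\mathcal{A}}$, must reproduce $d(\sum_{v}z_v)=\sum_{a\in Q_1}[a,a^\ast]$. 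I expect the main obstacle to be exactly this last step: verifying, with all Koszul signs, that the duals of the resolution differentials coincide with Kontsevich's circular derivative and with the commutator term rather than some sign-twisted variant. Everything upstream is formal, so this sign bookkeeping is the genuine content of the argument, and it is also the place where quadraticity and $\mathrm{gldim}(\mathcal{A})\leq2$ enter in an essential way.
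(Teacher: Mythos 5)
Note first that the paper does not prove this proposition at all: it is quoted directly from Keller (Theorem 6.10 of \cite{bk}), so there is no internal proof to compare against, and your proposal is in effect a reconstruction of Keller's own argument. Its skeleton is the right one and matches that argument: the length-three bimodule resolution whose existence encodes $\mathrm{gldim}(\mathcal{A})\leq2$, dualization into $\mathrm{Res}(\mathcal{A})^\vee$, the degree bookkeeping $(z_v,a^\ast,y_j)\leftrightarrow(\Bbbk^\ast,V^\ast,R^\ast)$ after the shift $[2]$, and the identification $t_j=y_j^\ast$ merging the two sources of degree $-1$ generators.

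The genuine gap is in your third step. You call $\widetilde{\mathcal{A}}=(\mathbb{K}\langle a,t_j\rangle,\,dt_j=\rho_j)$ ``the semifree model'' of $\mathcal{A}$ and justify passing to it by the functoriality of $\Pi_3$. But functoriality only says that $\Pi_3$ preserves quasi-isomorphisms; it does not tell you that the canonical projection $\widetilde{\mathcal{A}}\rightarrow\mathcal{A}$ (killing the $t_j$) is a quasi-isomorphism, and that is precisely the nontrivial input here. One has $H^0(\widetilde{\mathcal{A}})\cong\mathcal{A}$ for free, but the vanishing of $H^{<0}(\widetilde{\mathcal{A}})$ --- i.e.\ the absence of higher syzygies forcing generators in degrees $\leq-2$ --- is equivalent to the Koszulity of $\mathcal{A}$, and this is where the hypotheses are actually consumed: quadraticity of $\overrightarrow{I}$ and $\mathrm{gldim}(\mathcal{A})\leq2$, together with minimality of $\{\rho_1,\dots,\rho_r\}$ (an assumption you should also make explicit, since it is what makes your $d_2$ injective in step 1), force all $\mathrm{Tor}_i^\mathcal{A}(\Bbbk,\Bbbk)$ onto the diagonal, whence the minimal semifree model has generators only in degrees $0$ and $-1$ and quadraticity pins down $dt_j=\rho_j$ with no corrections. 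Your steps 1--2 (a bimodule resolution of $\mathcal{A}$) and step 3 (an algebra resolution of $\mathcal{A}$) are logically disconnected as written; the first does not yield the second without this argument. A related slippage appears in step 4: in the model $T_\mathcal{A}(\mathrm{Res}(\mathcal{A})^\vee[2])$ built from your explicit resolution, the element $\rho_j$ is zero and there is no generator $y_j^\ast$, so $d(\sum_v z_v)$ contains no $[y_j,y_j^\ast]$ term there; Ginzburg's commutator formula can only be matched in the $\widetilde{\mathcal{A}}$-model, where $[y_j,t_j]$ arises from the cone differential. So the final sign bookkeeping you single out as ``the genuine content'' is in fact routine, while the real content --- the quasi-isomorphism $\widetilde{\mathcal{A}}\xrightarrow{\sim}\mathcal{A}$ --- is assumed rather than proved.
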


\subsection{Koszul duality}\label{section:Koszul}

We explain the Koszul duality between the compact and smooth 3-Calabi-Yau algebras $\mathcal{B}$ and $\mathcal{G}$ defined in Section \ref{section:CY algebra} associated to the same quiver with potential $(Q,w)$, where
\bigskip

\textit{$w$ is homogeneous and contains only cubic terms.}
\bigskip

We refer the interested readers to Section 2 of $\cite{ekl}$ for a detailed introduction to the algebraic preliminaries concerning Koszul duality.
\bigskip

In this section, we will follow the convention of $\cite{lpwz}$, and work with $\mathbb{Z}\times\mathbb{Z}$-graded augmented $A_\infty$-algebras $(\mathcal{A},\varepsilon)$ over a semisimple ring $\Bbbk$, where $\varepsilon:\mathcal{A}\rightarrow\Bbbk$ is an augmentation. This means that all the $A_\infty$-operations $\mu_\mathcal{A}^k$ have bidegree $(2-k,0)$. Let $\phi:\mathcal{A}\rightarrow\mathcal{B}$ be a morphism between bigraded $A_\infty$-algebras, which consists of a sequence of $\Bbbk$-linear maps $\phi_k:\mathcal{A}^{\otimes k}\rightarrow\mathcal{B}$, then $\phi_k$ has bidegree $(1-k,0)$. The second grading on $\mathcal{A}$ will be referred to as the \textit{Adams grading}, it is preserved by all the $A_\infty$-operations $\mu^k_\mathcal{A}$. With respect to the bigrading, as a $\Bbbk$-bimodule $\mathcal{A}$ decomposes as $\mathcal{A}=\bigoplus_{i,j}\mathcal{A}^{i,j}$. We say that $\mathcal{A}$ is \textit{locally finite} if each $\mathcal{A}^{i,j}$ is finite-dimensional over $\Bbbk$.

As a $\Bbbk$-bimodule, the Koszul dual $E(\mathcal{A})$ of $\mathcal{A}$ is defined explicitly by
\begin{equation}\label{eq:redbar}
E(\mathcal{A})^{m,n}:=\bigoplus_{d\geq1}\bigoplus_{\sum_{k=1}^di_k=m,\sum_{k=1}^dj_k=n}(\overline{\mathcal{A}}[1]^\#)^{i_1,j_1}\otimes\dots\otimes(\overline{\mathcal{A}}[1]^\#)^{i_d,j_d},
\end{equation}
where $\overline{\mathcal{A}}:=\ker(\varepsilon)$ is the augmentation ideal, $\#$ denotes the graded linear dual, and the shift functor $[1]$ acts on the first grading. Note that $E(\mathcal{A})$ carries te structure of an augmented dg algebra, with its differential induced from the $A_\infty$-structure on $\mathcal{A}$. 

Note that a map $\phi:\mathcal{A}\rightarrow\mathcal{B}$ between $\mathbb{Z}\times\mathbb{Z}$-graded augmented $A_\infty$-algebras is a quasi-isomorphism if and only if the induced coaugmented dg coalgebra map $B\phi:B\mathcal{A}\rightarrow B\mathcal{B}$ on their bar constructions is a quasi-isomorphism. This can be proved using the spectral sequence associated to the \textit{word length filtration} on the complex $B\mathcal{A}$, see Section 2.2.1 of $\cite{ekl}$. Recall that by definition, $E(\mathcal{A})=(B\mathcal{A})^\#$, therefore a quasi-isomorphism $\phi:\mathcal{A}\rightarrow\mathcal{B}$ between augmented $A_\infty$-algebras induces a quasi-isomorphism $E(\mathcal{A})\rightarrow E(\mathcal{B})$.

\begin{theorem}[Theorem 2.4 of $\cite{lpwz}$]\label{theorem:lf}
Let $\mathcal{A}$ be a locally finite augmented $A_\infty$-algebra. If its Koszul dual $E(\mathcal{A})$ is also locally finite, then $E(E(\mathcal{A}))$ is quasi-isomorphic to $\mathcal{A}$.
\end{theorem}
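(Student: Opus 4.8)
The plan is to identify the double Koszul dual $E(E(\mathcal{A}))$ with the cobar construction of the bar construction $B\mathcal{A}$, and then to conclude by the classical bar--cobar quasi-isomorphism. Throughout I would keep track of the Adams (second) grading, which is preserved by each of the functors $B$, $\Omega$ and $(-)^\#$, so that every finiteness statement can be checked one bidegree at a time, and local finiteness becomes reflexivity $V^{\#\#}\cong V$ of each graded piece.

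First I would record the interplay between duality and the two classical constructions. For any coaugmented, locally finite dg coalgebra $C$, dualizing exchanges cobar and bar: there is a natural isomorphism of coaugmented dg coalgebras
\[
B(C^\#)\;\cong\;(\Omega C)^\#,
\]
where $\Omega C=T(\overline{C}[-1])$ is the cobar construction. As graded coalgebras both sides are the tensor coalgebra on $\overline{C^\#}[1]=(\overline{C}[-1])^\#$, the identification of the dual of the tensor algebra with the tensor coalgebra being exactly where local finiteness enters: it guarantees that in each bidegree the a priori completed product collapses to the genuine (uncompleted) tensor coalgebra. Under this identification the bar differential on $B(C^\#)$, which is assembled from the product of $C^\#$ and hence from the dual of the coproduct of $C$, matches the dual of the cobar differential on $\Omega C$. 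Verifying that the signs agree is the one genuinely computational point.

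I would then apply this with $C=B\mathcal{A}$, so that $C^\#=(B\mathcal{A})^\#=E(\mathcal{A})$. The hypothesis that $E(\mathcal{A})$ is locally finite is precisely the statement that $B\mathcal{A}$ is locally finite in each bidegree, and this is exactly what validates both the displayed isomorphism and the ensuing reflexivity arguments (without it one lands in the completed, power-series type situation flagged in the Conventions, where the double dual fails). Combining the definition $E(E(\mathcal{A}))=(B\,E(\mathcal{A}))^\#$ with the isomorphism above and dualizing once more, local finiteness of $\Omega B\mathcal{A}$ (inherited from that of $B\mathcal{A}$) together with reflexivity yields
\[
E(E(\mathcal{A}))\;=\;\big(B\,(B\mathcal{A})^\#\big)^\#\;\cong\;\big((\Omega B\mathcal{A})^\#\big)^\#\;\cong\;\Omega B\mathcal{A}.
\]

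Finally I would invoke the counit of the bar--cobar adjunction, which supplies a quasi-isomorphism of augmented $A_\infty$-algebras $\Omega B\mathcal{A}\xrightarrow{\;\sim\;}\mathcal{A}$ for any augmented $A_\infty$-algebra $\mathcal{A}$; composing with the isomorphism above gives the desired $E(E(\mathcal{A}))\simeq\mathcal{A}$. I expect the main obstacle to be bookkeeping rather than conceptual: confirming the sign-compatibility in $B(C^\#)\cong(\Omega C)^\#$, and, more importantly, checking at each stage that local finiteness genuinely propagates ($\mathcal{A}$ and $E(\mathcal{A})$ locally finite $\Rightarrow$ $B\mathcal{A}$ and $\Omega B\mathcal{A}$ locally finite), so that every appeal to reflexivity is legitimate. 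This is precisely where the hypothesis on $E(\mathcal{A})$ is indispensable. Alternatively, one could bypass the abstract bar--cobar machinery and instead build a comparison map $\mathcal{A}\to E(E(\mathcal{A}))$ directly from the evaluation on the locally finite $\Bbbk$-bimodule $\overline{\mathcal{A}}$, and prove it is a quasi-isomorphism via the word-length spectral sequence on $B\mathcal{A}$ already used in the excerpt to show that quasi-isomorphisms induce quasi-isomorphisms on Koszul duals.
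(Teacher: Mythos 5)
First, a point of order: the paper does not actually prove this statement---it is imported from $\cite{lpwz}$ (with local finiteness of $\mathcal{A}$ added, as the remark following the theorem explains), so the benchmark is the bar--cobar argument of $\cite{lpwz}$ and $\cite{ekl}$, which is indeed the strategy you adopt. The genuine problem is your first step. For a coaugmented dg coalgebra $C$, the graded dual turns the direct sum over word lengths into a direct product, so one only has a natural injection
\[
B(C^{\#})\;\hookrightarrow\;(\Omega C)^{\#}=\Bigl(\bigoplus_{d\geq 0}(\overline{C}[-1])^{\otimes d}\Bigr)^{\#}=\prod_{d\geq 0}\bigl((\overline{C}[-1])^{\otimes d}\bigr)^{\#},
\]
and collapsing the product onto the sum requires that every bidegree of $\Omega C$ receive contributions from only finitely many word lengths. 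That is an Adams-connectedness--type condition which is \emph{not} implied by local finiteness of $C$, nor by your hypotheses: your propagation claim ``$\mathcal{A}$ and $E(\mathcal{A})$ locally finite $\Rightarrow\Omega B\mathcal{A}$ locally finite'' is false. Take $\mathcal{A}=\mathbb{K}\oplus\mathbb{K}a\oplus\mathbb{K}b$, the square-zero augmented algebra with $|a|=(1,1)$ and $|b|=(-1,-1)$. A word of $B\mathcal{A}$ with $p$ letters $a$ and $q$ letters $b$ lies in bidegree $(-2q,\,p-q)$, which determines $(p,q)$; hence $B\mathcal{A}$, and therefore $E(\mathcal{A})=(B\mathcal{A})^{\#}$, is locally finite. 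But in $\Omega B\mathcal{A}$ the letters $[a][-1]$ and $[b][-1]$ sit in the \emph{opposite} bidegrees $(1,1)$ and $(-1,-1)$, so words of every even length contribute to bidegree $(0,0)$: $\Omega B\mathcal{A}$ is not locally finite, the reflexivity $((\Omega B\mathcal{A})^{\#})^{\#}\cong\Omega B\mathcal{A}$ you invoke fails, and $B(E(\mathcal{A}))$ is a proper subcomplex of $(\Omega B\mathcal{A})^{\#}$. This is exactly the completion phenomenon ($\mathbb{K}[y]$ versus $\mathbb{K}[[y]]$) that the Conventions paragraph of the paper warns about.

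Note that the theorem itself survives in this example: over a field cohomology commutes with graded duals, and since $E(\mathcal{A})$ here is the free algebra $T(V)$ on $V=(\overline{\mathcal{A}}[1])^{\#}$ with zero differential, $H^{\ast}(BE(\mathcal{A}))\cong\Bbbk\oplus V[1]$, whence $H^{\ast}(E(E(\mathcal{A})))\cong\Bbbk\oplus\overline{\mathcal{A}}=\mathcal{A}$. So what breaks is your proof, not the statement, and the repair is not bookkeeping: one must downgrade the isomorphism $B((B\mathcal{A})^{\#})\cong(\Omega B\mathcal{A})^{\#}$ to the claim that the natural inclusion (with $C=B\mathcal{A}$) is a \emph{quasi}-isomorphism, and proving this is precisely the convergence question for the word-length filtration that Lemma 9 and Theorem 17 of $\cite{ekl}$ are designed to handle. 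It is also why the paper, in its actual application (Proposition \ref{proposition:Koszul}), chooses Adams gradings so that the augmentation ideals of $\mathcal{B}$ and $\mathcal{G}$ are concentrated in Adams degrees of a single sign---in that situation the product genuinely collapses and your chain of isomorphisms becomes valid. Your closing alternative (a direct comparison map analyzed via the word-length spectral sequence) points in the right direction, but as sketched it postpones rather than resolves this convergence issue, which is the actual mathematical content of the theorem.
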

\paragraph{Remark} In $\cite{lpwz}$, there is no locally finiteness assumption on $\mathcal{A}$. However, this assumption is essential for the argument presented in $\cite{lpwz}$, see for example Lemma 9 and Theorem 17 of $\cite{ekl}$, where such an assumption is included.
\begin{proposition}\label{proposition:Koszul}
Let $\mathcal{B}$ and $\mathcal{G}$ be the compact and smooth Calabi-Yau 3-algebras defined by the same quiver with potential $(Q,w)$. We can equip them with Adams gradings so that there are quasi-isomorphisms between bigraded $A_\infty$-algebras
\begin{equation}
E(\mathcal{B})\cong\mathcal{G},E(\mathcal{G})\cong\mathcal{B}.
\end{equation}
\end{proposition}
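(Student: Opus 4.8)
The plan is to establish the first quasi-isomorphism $E(\mathcal{B})\cong\mathcal{G}$ by a direct computation of the bar construction, and then to obtain the second one $E(\mathcal{G})\cong\mathcal{B}$ formally, by invoking the biduality of Theorem \ref{theorem:lf} together with the fact (recalled above) that $E(-)$ sends quasi-isomorphisms to quasi-isomorphisms.

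First I would fix compatible Adams gradings. Declaring every arrow of $Q$ to have Adams weight $1$, the cubicity of $w$ forces the opposite arrows $a^*$ and the loops $z_v$ to have Adams weights $2$ and $3$ respectively, so that the Ginzburg differential $d$ has bidegree $(1,0)$. Dually, I would equip $\mathcal{B}$ with the Adams grading for which the generators of $\overline{\mathcal{B}}[1]^\#$ sit in the same bidegrees as $a, a^*, z_v$, and check that all $\mu^k_\mathcal{B}$ then carry bidegree $(2-k,0)$. I would also record that $\mathcal{B}$ is locally finite, being finite-dimensional, and that $\mathcal{G}$ is locally finite: since every generator of $\mathcal{G}$ has strictly positive Adams weight, each bidegree piece $\mathcal{G}^{i,j}$ is spanned by finitely many paths. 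This finiteness, which hinges on the cubicity hypothesis, is exactly what Theorem \ref{theorem:lf} requires, and is what makes the bigraded Koszul dual a genuine (uncompleted) tensor algebra rather than the completed one produced by the singly graded dual.

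The heart of the argument is the computation $E(\mathcal{B})\cong\mathcal{G}$. The key observation is that, because $w$ is cubic and reduced, the defining formula for $\mu^k_\mathcal{B}$ is supported only in arity $k=2$: the differential of every generator of $\mathcal{G}$ is a sum of words of length exactly two (or is zero), so $\mathrm{Coeff}_{g_k\cdots g_1}(dg)$ vanishes unless $k=2$, and there are no linear terms, whence $\mu^1_\mathcal{B}=0$ as well. Consequently $\mathcal{B}$ is an honest graded associative algebra, the bar construction $B\mathcal{B}$ carries only the differential induced by $\mu^2_\mathcal{B}$, and $E(\mathcal{B})=(B\mathcal{B})^\#$ is the tensor algebra $T_\Bbbk(\overline{\mathcal{B}}[1]^\#)$ equipped with a quadratic differential. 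Its generators, living in cohomological degrees $0,-1,-2$, match the free generators $a,a^*,z_v$ of $\mathcal{G}$, and since the induced differential is a derivation it is determined by its values on generators: the coefficient of a word $g_2 g_1$ in $d(g)$ equals the coefficient of $g^\vee$ in $\mu^2_\mathcal{B}(g_2^\vee,g_1^\vee)$, which by the very definition of $\mu^2_\mathcal{B}$ is $\mathrm{Coeff}_{g_2 g_1}(dg)$. Thus the differential on $E(\mathcal{B})$ reproduces the Ginzburg differential, and $E(\mathcal{B})$ and $\mathcal{G}$ coincide as dg algebras.

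I expect the signs to be the main nuisance rather than a genuine obstacle: matching the coefficients above on the nose requires tracking the shift $[1]$ on $\overline{\mathcal{B}}$, the Koszul signs in the bar differential, and the explicit signs entering the definitions of $\mu^k_\mathcal{B}$ and of the pairing $\langle\cdot,\cdot\rangle_\mathcal{B}$; I anticipate these can be absorbed by a uniform rescaling of the dual generators. Once $E(\mathcal{B})\cong\mathcal{G}$ is established, the second quasi-isomorphism follows formally: applying the functor $E(-)$ to $E(\mathcal{B})\cong\mathcal{G}$ gives $E(\mathcal{G})\cong E(E(\mathcal{B}))$, and since both $\mathcal{B}$ and $E(\mathcal{B})\cong\mathcal{G}$ are locally finite, Theorem \ref{theorem:lf} yields $E(E(\mathcal{B}))\cong\mathcal{B}$. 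Composing the two gives $E(\mathcal{G})\cong\mathcal{B}$, as required.
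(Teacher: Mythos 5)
Your proposal is correct and takes essentially the same route as the paper's own proof: fix Adams bigradings making the Ginzburg differential of bidegree $(1,0)$ (your convention differs from the paper's only by a harmless regrading — the paper assigns first degree $1$ to every generator of $\mathcal{G}$ with Adams degrees $-1,-2,-3$, so that the total degree recovers the original grading), identify $E(\mathcal{B})=T(\overline{\mathcal{B}}[1]^\#)$ with $\mathcal{G}$ directly from the definition of $\mu^\bullet_\mathcal{B}$ in terms of $\mathrm{Coeff}_{g_k\cdots g_1}(dg)$, and then obtain $E(\mathcal{G})\cong E(E(\mathcal{B}))\cong\mathcal{B}$ from local finiteness of both algebras via Theorem \ref{theorem:lf} together with the fact that $E$ preserves quasi-isomorphisms. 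The role you give to cubicity of $w$ (concentrating the structure in $\mu^2$ and forcing local finiteness of $\mathcal{G}$) is exactly the role it plays in the paper.
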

\begin{proof}
To begin with, we equip $\mathcal{B}$ with a trivial bigrading $(0,j)$, where the first grading is always fixed to be 0, and the Adams grading is the total grading on the $\mathbb{Z}$-graded $A_\infty$-algebra $\mathcal{B}$. 

Note that although the original grading on $E(\mathcal{B})=\mathcal{G}$ is usually not locally finite, we can equip $\mathcal{G}$ with a bigrading by declaring the following:
\begin{itemize}
	\item the original arrows $a$ in $Q_1$ have bigrading $(1,-1)$;
	\item the opposite arrows $a^\ast$ to $a$ have bigrading $(1,-2)$;
	\item the loops $z_v$ at the vertex $v$ have bigrading $(1,-3)$,
\end{itemize}
so that the total grading recovers the original grading on $\mathcal{G}$. By our assumption that every term in the potential $w$ is cubic, we see that the differential $d$ on $\mathcal{G}$ has bidegree $(1,0)$ with respect to the bigrading defined above.

Note that $(\mathcal{B},\varepsilon_{\mathcal{B}})$ is a bigraded augmented $A_\infty$-algebra with $\varepsilon_\mathcal{B}:\mathcal{B}\rightarrow\Bbbk$ being the projection to the degree 0 part. By (\ref{eq:redbar}) we have
\begin{equation}
E(\mathcal{B})=T(\overline{\mathcal{B}}[1]^\vee).
\end{equation}
It follows from our definitions in Section \ref{section:CY algebra} that the right-hand side above is quasi-isomorphic to the Ginzburg dg algebra $\mathcal{G}$ equipped with the bigrading specified above. This proves the first quasi-isomorphism.
	
Since $\mathcal{G}$ is locally finite with respect to the double grading specified above, and $\mathcal{B}$ is clearly locally finite, we can apply Theorem \ref{theorem:lf} to $\mathcal{B}$, which gives
\begin{equation}
\mathcal{B}\cong E(E(\mathcal{B}))\cong E(\mathcal{G}).
\end{equation}
\end{proof}

\paragraph{Remark} The bigrading used in the above proof has potential applications in showing the primitivity of the homology classes of Lagrangian homology spheres in $M_{p,q,r}$ when $\mathbb{K}=\mathbb{C}$. In fact, Corollaries \ref{corollary:formal} and \ref{corollary:split-generation} boil the question of classifying Lagrangian homology spheres in $M_{p,q,r}$ down to that of classifying $\mathbb{C}^\ast$-equivariant $A_\infty$-modules over a quiver algebra $\mathcal{B}_{p,q,r}$, see Section \ref{section:quiver}. Meanwhile, the dg category of perfect $A_\infty$-modules over $\mathcal{B}_{p,q,r}$ admits a bigraded refinement, which comes essentially from the bigrading on $\mathcal{B}_{p,q,r}$ specified in the proof of the above Proposition. One can then imitate the argument of $\cite{ps10}$.

\begin{proposition}\label{proposition:lf}
Assume in addition that $H^\ast(\mathcal{G})$ is finite dimensional in each fixed degree, we have quasi-isomorphisms
\begin{equation}
R\mathrm{Hom}_{\mathcal{B}}(\Bbbk,\Bbbk)\cong\mathcal{G},R\mathrm{Hom}_{\mathcal{G}}(\Bbbk,\Bbbk)\cong\mathcal{B}
\end{equation}
as $\mathbb{Z}$-graded $A_\infty$-algebras.
\end{proposition}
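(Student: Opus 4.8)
The plan is to reduce the two singly graded Koszul dualities to the bigraded dualities already established in Proposition \ref{proposition:Koszul}, by showing that under the finiteness hypothesis the functors $R\mathrm{Hom}_{\mathcal{A}}(\Bbbk,\Bbbk)$ and $E(\mathcal{A})$ agree up to quasi-isomorphism. As stressed in the Conventions, the two differ only in the following way: within each fixed total degree $t=m+n$, the formula (\ref{eq:redbar}) for $E(\mathcal{A})$ takes a direct sum over the Adams weight $n$ (equivalently, over the word length in the bar construction), whereas $R\mathrm{Hom}_{\mathcal{A}}(\Bbbk,\Bbbk)=(B\mathcal{A})^{\#}$ is the full graded linear dual and hence takes the corresponding direct product. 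There is therefore a canonical completion map $E(\mathcal{A})\to R\mathrm{Hom}_{\mathcal{A}}(\Bbbk,\Bbbk)$, given in each total degree by the inclusion of a direct sum into the associated product; it is a map of dg algebras because the cobar structure maps are locally finite. First I would establish, as a general lemma, that this completion map is a quasi-isomorphism whenever $H^{\ast}(E(\mathcal{A}))$ is finite-dimensional in each total degree.

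The key point is an Adams-weight splitting. Since every differential and $A_\infty$-operation has bidegree $(2-k,0)$ and so preserves the Adams weight $n$ (this is where one uses that every term of $w$ is cubic, as arranged in the proof of Proposition \ref{proposition:Koszul}), both $E(\mathcal{A})$ and $R\mathrm{Hom}_{\mathcal{A}}(\Bbbk,\Bbbk)$ split as complexes over $n$. Writing $D_n$ for the Adams-weight-$n$ subcomplex, one has $E(\mathcal{A})=\bigoplus_n D_n$ and $R\mathrm{Hom}_{\mathcal{A}}(\Bbbk,\Bbbk)=\prod_n D_n$, with the completion map the evident inclusion. Because products of $\mathbb{K}$-vector spaces are exact, homology commutes with both arbitrary direct sums and arbitrary direct products, giving $H^{\ast}(E(\mathcal{A}))=\bigoplus_n H^{\ast}(D_n)$ and $H^{\ast}(R\mathrm{Hom}_{\mathcal{A}}(\Bbbk,\Bbbk))=\prod_n H^{\ast}(D_n)$. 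Finite-dimensionality of $H^{t}(E(\mathcal{A}))=\bigoplus_n H^{t}(D_n)$ forces all but finitely many $H^{t}(D_n)$ to vanish, so the inclusion $\bigoplus_n H^{t}(D_n)\hookrightarrow\prod_n H^{t}(D_n)$ is an isomorphism in every degree $t$, and the completion map is a quasi-isomorphism.

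It then remains to feed in Proposition \ref{proposition:Koszul}. For the first duality I take $\mathcal{A}=\mathcal{B}$: there $E(\mathcal{B})\cong\mathcal{G}$, and the standing hypothesis that $H^{\ast}(\mathcal{G})$ is finite-dimensional in each degree is precisely the condition of the lemma, yielding $R\mathrm{Hom}_{\mathcal{B}}(\Bbbk,\Bbbk)\cong E(\mathcal{B})\cong\mathcal{G}$. For the second duality I take $\mathcal{A}=\mathcal{G}$, equipped with the bigrading from the proof of Proposition \ref{proposition:Koszul}; then $E(\mathcal{G})\cong\mathcal{B}$ (equivalently by Theorem \ref{theorem:lf}), and since $\mathcal{B}$ is finite-dimensional, $H^{\ast}(E(\mathcal{G}))\cong\mathcal{B}$ is automatically finite-dimensional in each total degree, so the lemma gives $R\mathrm{Hom}_{\mathcal{G}}(\Bbbk,\Bbbk)\cong E(\mathcal{G})\cong\mathcal{B}$. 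As the quasi-isomorphisms of Proposition \ref{proposition:Koszul} are in particular quasi-isomorphisms for the total grading, both conclusions hold at the level of $\mathbb{Z}$-graded $A_\infty$-algebras.

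The main obstacle I anticipate is not the homological bookkeeping, which is formal once the lemma is set up, but the clean identification of $R\mathrm{Hom}_{\mathcal{A}}(\Bbbk,\Bbbk)$ as the Adams-weight completion of $E(\mathcal{A})$ together with the verification that the completion map respects the full dg (indeed $A_\infty$) algebra structure and not merely the underlying complexes. Once that structural comparison is secured, the collapse of the $\bigoplus$-versus-$\prod$ distinction on homology rests only on exactness of products of $\mathbb{K}$-vector spaces and on the finiteness hypothesis. A secondary check is that the Adams grading of Proposition \ref{proposition:Koszul} is genuinely preserved by all structure maps, so that the splitting into the $D_n$ is legitimate; this again reduces to the cubicity of $w$.
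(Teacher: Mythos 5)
Your proof is correct, and while the underlying mechanism is the same as the paper's, the packaging is genuinely different and worth comparing. The paper's own proof invokes Section 2.3 of \cite{dy} to identify $R\mathrm{Hom}_{\mathcal{B}}(\Bbbk,\Bbbk)$ with the \emph{completed} Ginzburg algebra $\widehat{\mathcal{G}}$, and then shows that the completion map $\mathcal{G}\rightarrow\widehat{\mathcal{G}}$ is a quasi-isomorphism by a filtration argument: cubicity of $w$ makes the differential homogeneous for the path-length filtration, so the induced filtration on $H^\ast(\mathcal{G})$ is Hausdorff (giving cohomological injectivity), while degreewise finite-dimensionality of $H^\ast(\mathcal{G})$ makes that filtration complete (giving surjectivity), with \cite{cw} cited for the latter. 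Your Adams-weight splitting is the graded incarnation of exactly this step: under $E(\mathcal{B})\cong\mathcal{G}$ from Proposition \ref{proposition:Koszul}, your $\prod_n D_n$ \emph{is} $\widehat{\mathcal{G}}$, since in a fixed total degree the Adams weight and the path length determine one another; and because homogeneity upgrades the filtration to an honest grading, the Hausdorff/completeness bookkeeping collapses to the sum-versus-product comparison on cohomology, which is cleaner and rests only on exactness of products of $\mathbb{K}$-vector spaces. Your treatment buys three things beyond the paper's: it is self-contained (no appeal to \cite{dy} or \cite{cw}, only the standard identification of $R\mathrm{Hom}_{\mathcal{A}}(\Bbbk,\Bbbk)$ with the totalized dual of the bar construction); it handles both quasi-isomorphisms uniformly through a single lemma, whereas the paper explicitly proves only $R\mathrm{Hom}_{\mathcal{B}}(\Bbbk,\Bbbk)\cong\mathcal{G}$ and leaves the second statement tacit; and it makes visible that the second quasi-isomorphism $R\mathrm{Hom}_{\mathcal{G}}(\Bbbk,\Bbbk)\cong\mathcal{B}$ requires only finite-dimensionality of $H^\ast(E(\mathcal{G}))\cong\mathcal{B}$, which is automatic, rather than the stated hypothesis on $H^\ast(\mathcal{G})$. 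The one point you should keep explicit in a final write-up is the identification $R\mathrm{Hom}_{\mathcal{A}}(\Bbbk,\Bbbk)\cong(B\mathcal{A})^{\#}$ as dg algebras with the convolution product, since that is where the derived functor actually enters; the rest of your argument is formal once that comparison is in place.
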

\begin{proof}
According to Section 2.3 of $\cite{dy}$, in general we have
\begin{equation}
R\mathrm{Hom}_{\mathcal{B}}(\Bbbk,\Bbbk)\cong\widehat{\mathcal{G}},
\end{equation}
where $\widehat{\mathcal{G}}$ is the \textit{completed Ginzburg algebra} associated to the quiver with potential $(Q,w)$, namely the Ginzburg algebra $\mathcal{G}$ completed with respect to the path length in $\mathbb{K}\widetilde{Q}$, with $\widetilde{Q}$ being the ``double" of $Q$ obtained by adding to $Q$ the reversed arrows $a^\ast$ for each $a\in Q_1$ and the loops $z_v$ for each $v\in Q_0$. 

To conclude the proof, we need to show that with our assumptions, $\widehat{\mathcal{G}}$ is quasi-isomorphic to the uncompleted Ginzburg algebra $\mathcal{G}$. This can be seen by considering the filtration $F^\bullet H^\ast(\mathcal{G})$ on cohomology induced by the path length filtration on $\mathcal{G}$. By our standing assumption that the potential $w$ consists only of cubic terms, we see that the differentials of the generators in $\mathcal{G}$ consist of homogeneous terms with respect to the path length filtration, thus the filtration $F^\bullet$ on $H^\ast(\mathcal{G})$ is Hausdorff, which means that the completion map $\mathcal{G}\rightarrow\widehat{\mathcal{G}}$ is cohomologically injective. On the other hand, since $H^\ast(\mathcal{G})$ is finite dimensional in each degree, we conclude that the filtration $F^\bullet$ on $H^\ast(\mathcal{G})$ is complete, therefore $\mathcal{G}\rightarrow\widehat{\mathcal{G}}$ is cohomologically surjective. See Section 5.4 of $\cite{cw}$.
\end{proof}

\section{Formality of $A_\infty$-structures}

Assuming Theorem \ref{theorem:main}, we prove in this section the formality statement claimed in Corollary \ref{corollary:formal}, and therefore identify the Fukaya $A_\infty$-algebra $\mathcal{V}_{p,q,r}$ of vanishing cycles with the cyclic completion of a quiver algebra which is quasi-isomorphic to $\mathcal{A}_{p,q,r}$. We begin by recalling the suspension construction of Lefschetz fibrations due to Seidel $\cite{ps4}$, and a general form of the Eilenberg-Moore equivalence relating the Fukaya $A_\infty$-algebra and the Chekanov-Eliashberg algebra due to Ekholm-Lekili $\cite{ekl}$.

\subsection{Suspension of a Lefschetz fibration}\label{section:suspension}

Let $E$ be a $2n$-dimensional Liouville manifold and let $\pi:E\rightarrow\mathbb{C}$ be an exact symplectic Lefschetz fibration with smooth fiber $M$, which is also a Lioiuville manifold. Assume that $c_1(M)=0$. The \textit{suspension}
\begin{equation}\pi^\sigma:E^\sigma:=E\times\mathbb{C}\rightarrow\mathbb{C}\end{equation}
of the Lefschetz fibration $\pi$ is defined as $\pi^\sigma(x,y)=\pi(x)+y^2$, where $x\in E$ and $y\in\mathbb{C}$. Note in particular that $\pi^\sigma$ is still a Lefschetz fibration. Denote by $M^\sigma$ a smooth fiber of $\pi^\sigma$, which is again a $2n$-dimensional Liouville manifold with $c_1(M^\sigma)=0$, so we have two well-defined $\mathbb{Z}$-graded $A_\infty$-categories: the Fukaya category $\mathcal{F}(M)$ of the smooth fiber of $\pi$, and the Fukaya category $\mathcal{F}(M^\sigma)$ of the smooth fiber of $\pi^\sigma$. We describe here the algebraic construction of Seidel $\cite{ps4}$, which, when applied to geometry, describes a full $A_\infty$-subcategory $\mathcal{V}(M^{\sigma})\subset\mathcal{F}(M^{\sigma})$ in terms of the $A_\infty$-category $\mathcal{A}(\pi)$ associated to the Lefschetz fibration $\pi$.
\bigskip

Let $\mathcal{B}$ be a $\mathbb{Z}$-graded, strictly unital, proper $A_\infty$-category, defined over any field $\mathbb{K}$, fix a set $\{S_1,\dots,S_k\}$ of non-trivial objects of $\mathcal{B}$, which means that $\hom_\mathcal{B}(S_i,S_i)$ is never acyclic for $1\leq i\leq k$. Let $\mathcal{A}\subset\mathcal{B}$ be the directed $A_\infty$-subcategory with the same objects as $\mathcal{B}$ but whose morphism spaces are set to be
\begin{equation}
\hom_\mathcal{A}(S_i,S_j)=\left\{\begin{array}{ll}\hom_\mathcal{B}(S_i,S_j) & i<j \\ \mathbb{K}\cdot e_{S_i} & i=j \\ 0 & i>j \end{array}\right.
\end{equation}
The $A_\infty$-structure on $\mathcal{A}$ is defined to be the restriction of that of $\mathcal{B}$.

Let $\mathit{C\ell}_2(\mathcal{B})$ be another $A_\infty$-category with objects
\begin{equation}
(S_1^-,\dots,S_k^-,S_1^+,\dots,S_k^+),
\end{equation}
where $S_i^+$ is a copy of $S_i$, while $S_i^-$ is a shifted copy $S_i[1]$. As the simplest instance of $A_\infty$-Morita equivalence, $\mathit{C\ell}_2(\mathcal{B})$ is quasi-isomorphic to $\mathcal{B}$. Let $\mathcal{C}\subset\mathit{C\ell}_2(\mathcal{B})$ be the associated directed $A_\infty$-subcategory. Schematically we have
\begin{equation}
\mathcal{C}=\left[\begin{array}{ll} \mathcal{A} & 0 \\ \mathcal{B}[-1] & \mathcal{A} \end{array} \right]\subset\mathit{C\ell}_2(\mathcal{B})=\left[\begin{array}{ll} \mathcal{B} & \mathcal{B}[1] \\ \mathcal{B}[-1] & \mathcal{B} \end{array} \right].
\end{equation}

Finally, we introduce a third $A_\infty$-category $\mathcal{B}^\sigma$ with objects $(S_1^\sigma,\dots,S_k^\sigma)$. This is the full $A_\infty$-subcategory of $\mathcal{C}^\mathit{tw}$ consisting of the twisted complexes
\begin{equation}
S_i^\sigma=\mathit{Cone}(S_i^{-}[-1]\xrightarrow{e_{S_i}}S_i^+)=\left(S_i^-\oplus S_i^+,\delta_{S_i^\sigma}=\left[\begin{array}{ll} 0 & 0 \\ e_{S_i} & 0 \end{array}\right]\right).
\end{equation}
Under the assumption that each $S_i$ is simple, i.e.
\begin{equation}\label{eq:H0}
H^0(\hom_\mathcal{B}(S_i,S_i))=\mathbb{K}[e_{S_i}]
\end{equation}
for $i=1,\dots,k$, the directed $A_\infty$-category $\mathcal{A}$, together with the quasi-isomorphism class of the $\mathcal{A}$-bimodule $\mathcal{B}$ determine the $A_\infty$-category $\mathcal{B}^\sigma$ up to quasi-isomorphism.
\bigskip

The $A_\infty$-category $\mathcal{B}^\sigma$ is called the \textit{algebraic suspension} of $\mathcal{B}$. Let $\mathcal{A}^\sigma\subset\mathcal{B}^\sigma$ be its associated directed $A_\infty$-subcategory, it is easy to see that $\mathcal{A}^\sigma$ is quasi-isomorphic to $\mathcal{A}$. The main result of $\cite{ps4}$ is a description of the algebraic suspension $\mathcal{B}^\sigma$ in terms of the pairing $(\mathcal{A},\mathcal{B})$.
\begin{lemma}[Lemma 4.2 of $\cite{ps4}$]\label{lemma:trivial-extension}
Assume that (\ref{eq:H0}) holds, and as an $\mathcal{A}$-bimodule, $\mathcal{B}$ is quasi-isomorphic to $\mathcal{A}\oplus(\mathcal{B}/\mathcal{A})[-1]$, then the $A_\infty$-category $\mathcal{B}^\sigma$ is quasi-isomorphic to the trivial extension constructed from $\mathcal{A}$ and the $\mathcal{A}$-bimodule $(\mathcal{B}/\mathcal{A})[-1]$.
\end{lemma}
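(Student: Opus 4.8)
The plan is to compute the morphism complexes of $\mathcal{B}^\sigma$ directly from the twisted-complex description, and then to identify the resulting $A_\infty$-structure with that of the trivial extension by playing the two hypotheses against each other. First I would write a morphism of twisted complexes $S_i^\sigma\rightarrow S_j^\sigma$ as a matrix of its components $\phi_{--},\phi_{++}\in\hom_\mathcal{A}(S_i,S_j)$ and $\phi_{+-}\in\hom_\mathcal{B}(S_i,S_j)[-1]$; the fourth component vanishes because $\hom_\mathcal{C}(S_i^+,S_j^-)=0$ by directedness. The twisted differential is the sum of the internal differentials of $\mathcal{A}$ and $\mathcal{B}$ together with the terms produced by twisting with $\delta_{S_i^\sigma}$ and $\delta_{S_j^\sigma}$. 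Since these $\delta$'s are the units $e_{S_i}$, strict unitality kills every contribution of arity $k\geq3$, and the surviving twisting term sends $(\phi_{--},\phi_{++})$ to $\iota(\phi_{--})-\iota(\phi_{++})$ in the $\phi_{+-}$-slot, where $\iota:\mathcal{A}\hookrightarrow\mathcal{B}$ is the inclusion. Splitting off the diagonal $\{\phi_{--}=\phi_{++},\ \phi_{+-}=0\}$ as a subcomplex isomorphic to $\hom_\mathcal{A}(S_i,S_j)$, the complement is a shifted mapping cone of $\iota$; as $\iota$ is an honest injection of complexes, this cone is quasi-isomorphic to its cokernel, yielding
\begin{equation}
\hom_{\mathcal{B}^\sigma}(S_i^\sigma,S_j^\sigma)\simeq\hom_\mathcal{A}(S_i,S_j)\oplus(\mathcal{B}/\mathcal{A})(S_i,S_j)[-1]
\end{equation}
as complexes, which already matches the underlying $\Bbbk$-module of the trivial extension of $\mathcal{A}$ by $(\mathcal{B}/\mathcal{A})[-1]$.

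The subtler task is to pin down the $A_\infty$-operations, since the naive splitting above is not adapted to the product: a cone representative of a class in $(\mathcal{B}/\mathcal{A})[-1]$ still carries a diagonal shadow $\phi_{--}$, so products of two such representatives can acquire nonzero $\mathcal{A}$-components, which is forbidden in a trivial extension. This is precisely where the hypotheses enter. By the simplicity assumption (\ref{eq:H0}) and the invariance statement recorded before the lemma, $\mathcal{B}^\sigma$ depends up to quasi-isomorphism only on $\mathcal{A}$ and the $\mathcal{A}$-bimodule quasi-isomorphism type of $\mathcal{B}$; the conceptual reason is that in the directed category $\mathcal{C}$ any off-diagonal ($-\rightarrow+$) morphism can only be pre- and post-composed with diagonal ($\mathcal{A}$-)morphisms, so that only the bimodule structure of $\mathcal{B}$, and never its full algebra structure, feeds into the construction.

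I would therefore invoke the second hypothesis to replace $\mathcal{B}$ by the quasi-isomorphic split bimodule, and recompute $\mathcal{C}$ with its off-diagonal block now literally of the form $\mathcal{A}[-1]\oplus(\mathcal{B}/\mathcal{A})[-1]$. The $\mathcal{A}[-1]$-summand of the off-diagonal, together with the two diagonal copies of $\mathcal{A}$ and the cone differential $e_{S_i}$, assembles into a mapping cone of the identity on $\mathcal{A}$; this piece is contractible and can be removed by a homotopy equivalence of twisted complexes. In the reduced model the module direction $(\mathcal{B}/\mathcal{A})[-1]$ is purely off-diagonal, so directedness of $\mathcal{C}$ forces every nonzero higher product $\mu^k_{\mathcal{B}^\sigma}$ to involve at most one module input, the remaining inputs acting through the bimodule operations $\mu^{s|1|r}_{(\mathcal{B}/\mathcal{A})[-1]}$. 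This is exactly the defining structure of the trivial extension recalled in Section \ref{section:cyclic}, and a comparison of signs with the operations written there completes the identification.

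The main obstacle I expect lies in this last step: verifying that transporting the $A_\infty$-structure along the homotopy cancellation of the contractible $\mathrm{Cone}(\mathrm{id}_\mathcal{A})$ summand (say by the homological perturbation lemma) introduces no spurious higher operations with two or more module inputs, and that the surviving operations agree with the trivial-extension formula of Section \ref{section:cyclic} on the nose, signs included. The conceptual input that directedness caps the number of module inputs at one is robust, but the sign bookkeeping and the compatibility with the units $e_{S_i}$ defining the cones $S_i^\sigma$ require care. A secondary point to check is that the bimodule quasi-isomorphism furnished by the splitting hypothesis may be chosen compatibly with the directed structure, so that the invariance statement genuinely applies.
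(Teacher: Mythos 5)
Your proposal is correct, but note first that the paper contains no proof of this statement: it is quoted from Seidel $\cite{ps4}$ (Lemma 4.2 there), so the only meaningful comparison is with Seidel's argument, and your route is essentially his. That proof runs exactly as you outline: by the invariance statement recorded just before the lemma (which rests on the directedness observation you give), $\mathcal{B}^\sigma$ depends only on $\mathcal{A}$ and the quasi-isomorphism class of $\mathcal{B}$ as an $\mathcal{A}$-bimodule; the splitting hypothesis then lets one replace $\mathcal{B}$ by a model in which the off-diagonal block of $\mathcal{C}$ is $\mathcal{A}[-1]\oplus(\mathcal{B}/\mathcal{A})[-1]$, and the suspension of that model is computed explicitly. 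One clarification dissolves the ``main obstacle'' you flag: no homological perturbation is needed at all. In the split model the subspace $D\oplus M$ of each morphism complex of $\mathcal{B}^\sigma$ — with $D$ the diagonal pairs $(\phi_{--},\phi_{++},0,0)$, $\phi_{--}=\phi_{++}$, and $M$ the purely off-diagonal $(\mathcal{B}/\mathcal{A})[-1]$-components — is closed under \emph{all} twisted-complex operations: any term with two or more $M$-inputs vanishes because a composable chain in $\mathcal{C}$ crosses the off-diagonal block at most once; any term with $\delta$-insertions and total arity at least $3$ vanishes by strict unitality, since $\delta=(e_{S_i},0)$ sits in the $\mathcal{A}$-summand of the direct-sum bimodule and hence never produces $(\mathcal{B}/\mathcal{A})$-output; and the surviving operations are precisely $\mu^k_\mathcal{A}$ together with the bimodule operations with a single $M$-input, i.e. the trivial-extension structure of Section \ref{section:cyclic}. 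Since the complementary summand is a copy of $\mathrm{Cone}(\mathrm{id}_\mathcal{A})$, hence acyclic, the inclusion of $D\oplus M$ is a strict $A_\infty$-functor and a quasi-isomorphism, so there are no transferred higher operations and no sign bookkeeping beyond the fixed conventions. Your ``secondary point'' is likewise exactly where (\ref{eq:H0}) enters: the splitting forces $H^0\bigl((\mathcal{B}/\mathcal{A})(S_i,S_i)\bigr)=0$, so the unit class is carried to a nonzero multiple of $(e_{S_i},0)$ under the bimodule quasi-isomorphism, and after rescaling the cone objects on the two sides agree up to quasi-isomorphism.
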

\bigskip

Geometrically, the construction above can be applied to the pairing
\begin{equation}
(\mathcal{A},\mathcal{B})=(\mathcal{A}(\pi),\mathcal{V}(M)),
\end{equation}
where $\mathcal{V}(M)\subset\mathcal{F}(M)$ is the full $A_\infty$-subcategory which consists of vanishing cycles $V_1,\dots,V_k\subset M_\ast$, where $M_\ast\cong M$ is the fiber of $\pi$ over some chosen base point $\ast\in\mathbb{C}$. Starting from a distinguished basis of vanishing cycles $V_1,\dots,V_k$, the Lagrangian spheres $V_1^\sigma,\dots,V_k^\sigma\subset M_\ast^\sigma$ can be described as double branched covers of the corresponding basis of Lefschetz thimbles $\Delta_1,\dots,\Delta_k$ whose vanishing paths share the common end point $\ast$. In particular, $\partial\Delta_i=V_i$. Denote by $\mathcal{V}(M^\sigma)\subset\mathcal{F}(M^\sigma)$ the full $A_\infty$-subcategory formed by $V_1^\sigma,\dots,V_k^\sigma$, the algebraic constructions above can be translated into geometry via the following:
\begin{proposition}[Lemma 6.3 of $\cite{ps4}$]\label{proposition:algebraic-geometric}
Let $\mathbb{K}$ be any field. There is a quasi-isomorphism between $A_\infty$-categories over $\mathbb{K}$:
\begin{equation}
\mathcal{V}(M^\sigma)\cong\mathcal{V}(M)^\sigma.
\end{equation}
\end{proposition}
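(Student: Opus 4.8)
The plan is to exploit the two complementary fibration structures that $M^\sigma$ inherits from $\pi$, and to match the resulting Floer-theoretic data with the purely algebraic model $\mathcal{V}(M)^\sigma$ constructed in Section \ref{section:suspension}. Writing $M^\sigma=\{\pi(x)+y^2=c\}$ for a regular value $c\in\mathbb{C}$, the projection $q:M^\sigma\to E$, $(x,y)\mapsto x$, exhibits $M^\sigma$ as a double cover of $E$ branched along the fiber $M=\pi^{-1}(c)$, with deck transformation $\tau(x,y)=(x,-y)$. Since $\partial\Delta_i=V_i\subset M$ lies in the branch locus, the preimage $q^{-1}(\Delta_i)$ is precisely the $\tau$-invariant Lagrangian sphere $V_i^\sigma$, namely the double cover of the disk $\Delta_i$ branched along its boundary. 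The goal is then to compute $\mathit{CF}^\ast(V_i^\sigma,V_j^\sigma)$ together with the $A_\infty$-operations among the $V_i^\sigma$ in terms of the pair $(\mathcal{A}(\pi),\mathcal{V}(M))$, and to recognize the answer as the algebraic suspension $\mathcal{B}^\sigma$.

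First I would reinterpret the generators through the \emph{other} projection $p:M^\sigma\to\mathbb{C}$, $(x,y)\mapsto y$, which is itself a Lefschetz fibration with smooth fiber $M$: its critical values are the pairs $\pm y_i$ solving $y^2=c-w_i$ for the critical values $w_i$ of $\pi$, and the vanishing cycle at each of $\pm y_i$ is a copy of $V_i$. In this picture $V_i^\sigma$ is the \emph{matching cycle} joining $+y_i$ to $-y_i$ along a path through the origin, assembled from the two Lefschetz thimbles $V_i^+,V_i^-$ of $p$. This realizes $V_i^\sigma$ as the mapping cone $\mathit{Cone}(V_i^-[-1]\xrightarrow{e_{V_i}}V_i^+)$ in the twisted-complex category, matching verbatim the twisted complex defining $S_i^\sigma$ in Section \ref{section:suspension}. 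It then remains to identify the Floer theory of the $2k$ thimbles $V_i^\pm$ with the algebraic datum $\mathit{C\ell}_2(\mathcal{V}(M))$ and its directed subcategory $\mathcal{C}$: the diagonal blocks recover the directed subcategory $\mathcal{A}=\mathcal{A}(\pi)\subset\mathcal{V}(M)$, while the off-diagonal morphisms from $V_i^-$ to $V_j^+$, computed by holomorphic sections of $p$ crossing the origin, must reproduce $\hom_{\mathcal{V}(M)}(V_i,V_j)$ with the grading shift by $[-1]$ dictated by passing between the two sheets.

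The heart of the argument, and the step I expect to be the main obstacle, is the moduli-space comparison that makes this identification precise. Choosing an almost complex structure for which $q$ is pseudoholomorphic, every rigid holomorphic polygon in $M^\sigma$ bounded by the $V_i^\sigma$ projects to a polygon in $E$ bounded by the $\Delta_i$; those disjoint from the branch locus $M$ lift in two ways and account for the diagonal $\mathcal{A}(\pi)$-contributions, while those meeting $M$ must be matched, through a degeneration argument (collapsing the pair $\pm y_i$, equivalently stretching the neck around $M$), with holomorphic polygons in the fiber $M$ bounded by the vanishing cycles $V_i$—these supply the off-diagonal contributions governed by $\mathcal{V}(M)$. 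Carrying this out rigorously demands a gluing/degeneration theorem for the branched cover, $\tau$-equivariant transversality, and—most delicately—a careful bookkeeping of orientation signs and Maslov indices, so that the codimension of the branch locus produces exactly the degree shift entering the dual-bimodule summand $\mathcal{A}^\vee[-n]$ of the suspension and the $[-1]$ shifts in the cone objects.

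Finally, assembling these identifications produces an $A_\infty$-functor from the category generated by the cones $V_i^\sigma$ to the algebraically defined suspension $\mathcal{V}(M)^\sigma=\mathcal{B}^\sigma\subset\mathcal{C}^\mathit{tw}$. Since the moduli comparison shows it is an isomorphism on generators and on their cohomological morphism spaces, it is a quasi-isomorphism of $A_\infty$-categories, which is exactly the asserted equivalence $\mathcal{V}(M^\sigma)\cong\mathcal{V}(M)^\sigma$.
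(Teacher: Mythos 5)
You should first be aware that the paper contains no proof of this proposition: it is quoted verbatim as Lemma 6.3 of Seidel's suspension paper \cite{ps4}, with only a remark appended explaining how to remove a hypothesis from Seidel's original statement. So your proposal is really a reconstruction of the cited proof, and in outline it is the right one. In fact your second paragraph alone essentially suffices: realizing $V_i^\sigma$ as the matching cycle of the auxiliary fibration $p(x,y)=y$ over the path through the origin joining $\pm y_i$, hence as $\mathit{Cone}(V_i^-[-1]\xrightarrow{e_{V_i}}V_i^+)$ in twisted complexes over the directed category of thimbles of $p$, and then identifying that directed category (on the ordered collection $\Delta_1^-,\dots,\Delta_k^-,\Delta_1^+,\dots,\Delta_k^+$, with the graded lifts of the second group shifted by one) with $\mathcal{C}\subset\mathit{C\ell}_2(\mathcal{V}(M))$. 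Both ingredients are established theorems of Seidel (matching cycles are mapping cones; the directed category of a Lefschetz fibration is the directed subcategory of the fiber's Fukaya category on the ordered vanishing cycles), they hold over any field, and once they are invoked the full subcategory on the cones is $\mathcal{V}(M)^\sigma$ by definition. The branched-cover moduli analysis of your third paragraph is then unnecessary.

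The genuine gap is the characteristic-$2$ issue, which your proposal never engages with even though the statement is asserted over an arbitrary field $\mathbb{K}$. The comparison you single out as ``the heart of the argument'' --- choosing $J$ so that $q$ is pseudoholomorphic, lifting polygons across the branch locus, and invoking $\tau$-equivariant transversality --- is precisely where Seidel's original hypothesis $\mathrm{char}(\mathbb{K})\neq 2$ comes from: in \cite{ps4} the category $\mathcal{A}(\pi)$ is itself defined as the $\mathbb{Z}/2$-invariant subcategory of $\mathcal{F}(\widetilde{E})$ for the double cover $\widetilde{E}\cong M^\sigma$ branched along $M_\ast$, and equivariant perturbation and invariance arguments are exactly what break down in characteristic $2$. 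The paper's remark following the proposition records the fix: define $\mathcal{A}(\pi)$ directly on $E$ via the Hamiltonian perturbations of \cite{ps9}, or as a partially wrapped category as in \cite{gps1}, so that the branched cover is never needed. As written, your argument establishes the proposition only for $\mathrm{char}(\mathbb{K})\neq2$. A minor further slip: the degree shift the branch locus must account for is the $[-1]$ appearing in the cone objects and in the block $\mathcal{B}[-1]$ of $\mathcal{C}$; the summand $\mathcal{A}^\vee[-n]$ you invoke belongs to the bimodule statement (\ref{eq:sCY}) and to the double suspension (Corollary \ref{corollary:double-susp}), not to the categorical identification $\mathcal{V}(M^\sigma)\cong\mathcal{V}(M)^\sigma$ being proved here.
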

\paragraph{Remark} In $\cite{ps4}$, there is an additional assumption that $\mathrm{char}(\mathbb{K})\neq2$ in the above proposition. This is because the Fukaya category $\mathcal{A}(\pi)$ is defined to be the $\mathbb{Z}/2$-invariant subcategory of the Fukaya category $\mathcal{F}(\widetilde{E})$, where $\widetilde{E}$ is a double cover of $E$ branched along the smooth fiber $M_\ast$. However, this assumption can actually be removed since the Fukaya category $\mathcal{A}(\pi)$ can also be defined directly on $E$ using a particular class of Hamiltonian perturbations specified in $\cite{ps9}$, without passing to the double branched cover $\widetilde{E}$. Alternatively, $\mathcal{A}(\pi)$ can be defined as a version of partially wrapped Fukaya category $\cite{gps1}$.
\bigskip

The weak Calabi-Yau property of the $A_\infty$-category $\mathcal{V}(M)$ implies the existence of a quasi-isomorphism
\begin{equation}\label{eq:CY}
\mathcal{V}(M)/\mathcal{A}(\pi)\cong\mathcal{A}(\pi)^\vee[-n+1]
\end{equation}
between $\mathcal{A}(\pi)$-bimodules. Note that this is \textit{strictly weaker} than the assumption in Lemma \ref{lemma:trivial-extension}, which requires the existence of a quasi-isomorphism
\begin{equation}\label{eq:sCY}
\mathcal{V}(M)\cong\mathcal{A}(\pi)\oplus\mathcal{A}(\pi)^\vee[-n+1]
\end{equation}
between $\mathcal{A}(\pi)$-bimodules.  Because of this, in general it is not true that the $A_\infty$-category $\mathcal{V}(M^\sigma)$ is quasi-equivalent to the trivial extension $\mathcal{A}(\pi)\oplus\mathcal{A}(\pi)^\vee[-n]$.

\paragraph{Remark} In a recent paper $\cite{lu}$, Lekili and Ueda prove that the Fukaya $A_\infty$-algebras of certain distinguished bases of vanishing cycles in the Milnor fibers of weighted homogeneous singularities with a conditions on the weights are non-formal. In particular, the Milnor fibers of the surface singularities of the form  
\begin{equation}
x^p+y^q+z^2=0,\textrm{ with }\frac{1}{p}+\frac{1}{q}<\frac{1}{2}
\end{equation}
provide explicit examples of Liouville manifolds $M$ which satisfy (\ref{eq:CY}) but not (\ref{eq:sCY}).
\bigskip

In the other direction, it is proved by Seidel in $\cite{ps4}$ that the condition (\ref{eq:sCY}) is satisfied for fibers $M^\sigma$ obtained by once suspensions, namely there is a quasi-isomorphism
\begin{equation}
\mathcal{V}(M^\sigma)\cong\mathcal{A}(\pi)\oplus\mathcal{A}(\pi)^\vee[-n]
\end{equation}
as $\mathcal{A}(\pi)$-bimodules. Denote by $\mathcal{V}(M^{\sigma\sigma})$ the $A_\infty$-category of vanishing cycles $V_1^{\sigma\sigma},\dots,V_k^{\sigma\sigma}\subset M^{\sigma\sigma}$ of the double suspension $\pi^{\sigma\sigma}:E\times\mathbb{C}^2\rightarrow\mathbb{C}$. As a corollary of Lemma \ref{lemma:trivial-extension}, we have the following:
\begin{corollary}[Corollary 6.5 of $\cite{ps4}$]\label{corollary:double-susp}
$\mathcal{V}(M^{\sigma\sigma})$ is quasi-equivalent to the trivial extension $\mathcal{A}(\pi)\oplus\mathcal{A}(\pi)^\vee[-n-1]$. In particular, its endomorphism algebra $\mathcal{V}_{M^{\sigma\sigma}}$ is formal as an $A_\infty$-algebra over $\Bbbk$ if $\mathcal{A}_\pi$ is formal.
\end{corollary}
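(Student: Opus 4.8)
The plan is to realize $M^{\sigma\sigma}$ as a \emph{single} suspension of the Lefschetz fibration $\pi^\sigma$, and then feed the once-suspension splitting into Lemma \ref{lemma:trivial-extension}. First I would apply Proposition \ref{proposition:algebraic-geometric} to the fibration $\pi^\sigma:E\times\mathbb{C}\to\mathbb{C}$, whose smooth fiber is $M^\sigma$ and whose suspension is precisely $\pi^{\sigma\sigma}$. This identifies the geometric category with an algebraic suspension,
\[
\mathcal{V}(M^{\sigma\sigma})\cong\mathcal{V}(M^\sigma)^\sigma,
\]
reducing the problem to computing the algebraic suspension of the pair $\bigl(\mathcal{A}(\pi^\sigma),\mathcal{V}(M^\sigma)\bigr)$. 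At the outset I would record that the relevant directed subcategory $\mathcal{A}(\pi^\sigma)\subset\mathcal{V}(M^\sigma)$ is, up to quasi-isomorphism, the algebraic suspension $\mathcal{A}(\pi)^\sigma$, which by the discussion preceding the lemma is itself quasi-isomorphic to $\mathcal{A}(\pi)$; thus we may treat $\mathcal{A}(\pi^\sigma)$ and $\mathcal{A}(\pi)$ interchangeably throughout.

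Next I would verify the two hypotheses of Lemma \ref{lemma:trivial-extension} for $\mathcal{B}=\mathcal{V}(M^\sigma)$ and $\mathcal{A}=\mathcal{A}(\pi^\sigma)$. Condition (\ref{eq:H0}) asks that each generating object be simple: since the objects $V_i^\sigma$ are Lagrangian spheres, their self-Floer cohomology is that of a sphere, so $H^0(\hom(V_i^\sigma,V_i^\sigma))=\mathbb{K}\cdot e_{V_i^\sigma}$ and this is immediate. The substantive hypothesis is the bimodule decomposition of $\mathcal{B}$ relative to $\mathcal{A}$, and this is exactly the content of the once-suspension result (\ref{eq:sCY}) recalled above: $\mathcal{V}(M^\sigma)$ splits as $\mathcal{A}(\pi)\oplus\mathcal{A}(\pi)^\vee[-n]$ as an $\mathcal{A}(\pi)$-bimodule, so that the complementary bimodule $\mathcal{V}(M^\sigma)/\mathcal{A}(\pi^\sigma)$ is quasi-isomorphic to $\mathcal{A}(\pi)^\vee[-n]$. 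I would emphasize that this is an \emph{honest} splitting, not merely the weak Calabi-Yau relation.

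With both hypotheses in place, Lemma \ref{lemma:trivial-extension} produces $\mathcal{V}(M^\sigma)^\sigma$ as the trivial extension of $\mathcal{A}(\pi)$ by the once-shifted bimodule $\bigl(\mathcal{V}(M^\sigma)/\mathcal{A}(\pi^\sigma)\bigr)[-1]\cong\mathcal{A}(\pi)^\vee[-n-1]$, where the shift by $[-1]$ is precisely the mechanism by which suspension raises the Calabi-Yau dimension from $n$ to $n+1$ (mirroring the passage from $[-n+1]$ to $[-n]$ under the first suspension). Combined with the first step this gives
\[
\mathcal{V}(M^{\sigma\sigma})\cong\mathcal{A}(\pi)\oplus\mathcal{A}(\pi)^\vee[-n-1],
\]
which is the asserted quasi-equivalence. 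For the formality clause I would then pass to endomorphism algebras and argue exactly as in Proposition \ref{proposition:formality}: the operations $\mu^k$ of a trivial extension are assembled additively from the operations of $\mathcal{A}_\pi$ together with the bimodule operations on $\mathcal{A}_\pi^\vee[-n-1]$, so if $\mathcal{A}_\pi$ carries no higher products then neither does $\mathcal{V}_{M^{\sigma\sigma}}$, and the latter is formal.

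The main obstacle is conceptual rather than computational, and is concentrated entirely in the second hypothesis. The strong splitting $\mathcal{V}(M^\sigma)\cong\mathcal{A}(\pi)\oplus\mathcal{A}(\pi)^\vee[-n]$ is strictly stronger than the weak Calabi-Yau relation (\ref{eq:CY}) that holds for an arbitrary fiber, and it genuinely \emph{fails} for $\mathcal{V}(M)$ itself in general (cf.\ the non-formality examples of \cite{lu}). It is exactly Seidel's theorem \cite{ps4} that one suspension suffices to upgrade the weak relation to an actual bimodule splitting; the role of the \emph{double} suspension here is that the first suspension is needed just to enter the range where Lemma \ref{lemma:trivial-extension} applies, and the second suspension then converts that splitting into the desired trivial extension. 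I would therefore invoke the once-suspension result as a black box and take particular care with the degree-shift bookkeeping, since it is the propagation of the single shift $[-1]$ under each suspension that keeps the Calabi-Yau dimension consistent.
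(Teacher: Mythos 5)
Your proposal is correct and follows essentially the same route as the paper: identify $\mathcal{V}(M^{\sigma\sigma})\cong\mathcal{V}(M^\sigma)^\sigma$ via Proposition \ref{proposition:algebraic-geometric}, feed Seidel's once-suspension splitting $\mathcal{V}(M^\sigma)\cong\mathcal{A}(\pi)\oplus\mathcal{A}(\pi)^\vee[-n]$ into Lemma \ref{lemma:trivial-extension} as the required bimodule hypothesis, and conclude formality as in Proposition \ref{proposition:formality}. You also handle the degree-shift bookkeeping correctly (unshifted splitting as input, trivial extension by the $[-1]$-shifted quotient as output), which is precisely how the paper uses the lemma.
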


\subsection{Generalized Eilenberg-Moore equivalence}\label{section:GEM}

Let $M_{-\Lambda}^\mathrm{in}$ be a Weinstein domain with $c_1(M_{-\Lambda})=0$, and let $S^\mathrm{in}_v\subset M_{-\Lambda}^\mathrm{in}$ be a Lagrangian submanifold with Legendrian boundary $\Lambda_v\subset\partial M_{-\Lambda}^\mathrm{in}$. Assume that $S^\mathrm{in}_v$ is oriented, $\mathit{Spin}$, and has vanishing Maslov class. Fix a finite set $\Gamma$, assume that the Lagrangian submanifolds $S_v$ for $v\in\Gamma$ intersect with each other transversely, and that $\Lambda_v$ are mutually disjoint. Put
\begin{equation}
S^\mathrm{in}=\bigcup_{v\in\Gamma}S_v^\mathrm{in},\Lambda=\bigsqcup_{v\in\Gamma}\Lambda_v.
\end{equation}
We denote by $M^\mathrm{in}$ the Weinstein domain obtained by attaching handles to $M_{-\Lambda}^\mathrm{in}$ along $\Lambda$, and by $S_v$ the union of $S_v^\mathrm{in}$ with the Lagrangian core disc of the Weinstein handle attached along $\Lambda_v$. Note that $S_v\subset M$ is a closed exact Lagrangian submanifold, and its Floer cochain complexes $\mathit{CF}^\ast(S_v,S_w)$ for $v,w\in\Gamma$ are well-defined. The geometric data above give rise to two $A_\infty$-algebras over the semisimple ring $\Bbbk:=\bigoplus_{v\in\Gamma}\mathbb{K}e_v$, namely the Fukaya $A_\infty$-algebra
\begin{equation}
\mathcal{V}_M:=\bigoplus_{v,w\in\Gamma}\mathit{CF}^\ast(S_v,S_w)
\end{equation}
and the Chekanov-Eliashberg algebra $\mathcal{C}E(\Lambda)$. By $\cite{ehk}$, the Lagrangian fillings $S_v^\mathrm{in}$ of $\Lambda_v$ give rise to an augmentation
\begin{equation}
\varepsilon_S:\mathcal{C}E(\Lambda)\rightarrow\Bbbk.
\end{equation}
\begin{theorem}[Theorem 4 of $\cite{ekl}$]\label{theorem:gem}
There exists a quasi-isomorphism between $A_\infty$-algebras
\begin{equation}\label{eq:gem}
R\mathrm{Hom}_{\mathcal{C}E(\Lambda)}(\Bbbk,\Bbbk)\cong\mathcal{V}_M.
\end{equation}
\end{theorem}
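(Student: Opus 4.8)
The plan is to read this as the symplectic incarnation of the Eilenberg--Moore/Koszul duality recalled in the introduction, with the Chekanov--Eliashberg algebra $\mathcal{C}E(\Lambda)$ playing the role of $C_{-\ast}(\Omega_q Q;\mathbb{K})$ and the Fukaya algebra $\mathcal{V}_M$ playing the role of $C^\ast(Q;\mathbb{K})$, so that the asserted quasi-isomorphism is the analogue of $R\mathrm{Hom}_{C_{-\ast}(\Omega_q Q)}(\mathbb{K},\mathbb{K})\cong C^\ast(Q)$. The first step is purely algebraic: since $\varepsilon_S\colon\mathcal{C}E(\Lambda)\to\Bbbk$ is an augmentation arising from the Lagrangian fillings $S_v^\mathrm{in}$, the module $\Bbbk$ carries the reduced bar resolution over $\mathcal{C}E(\Lambda)$, and hence $R\mathrm{Hom}_{\mathcal{C}E(\Lambda)}(\Bbbk,\Bbbk)$ is computed by the linear dual of the bar construction $B(\mathcal{C}E(\Lambda),\varepsilon_S)$. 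Thus the entire content is geometric: one must identify this dualized bar complex, as an $A_\infty$-algebra over $\Bbbk$, with $\mathcal{V}_M=\bigoplus_{v,w\in\Gamma}\mathit{CF}^\ast(S_v,S_w)$.

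Next I would construct the comparison map by a neck-stretching (SFT) degeneration along the contact hypersurface $\partial M_{-\Lambda}^\mathrm{in}$, which separates the critical handles attached along $\Lambda$ from the subdomain $M_{-\Lambda}^\mathrm{in}$ containing the fillings $S^\mathrm{in}$. Under this degeneration a rigid Floer strip for the pair $(S_v,S_w)$ breaks into a building consisting of (i) a piece living in the handle/cylindrical region, whose boundary lies on the Lagrangian core discs and which is asymptotic to a word of Reeb chords of $\Lambda$, recording the tensor-word structure of the bar complex, together with (ii) rigid disks in $M_{-\Lambda}^\mathrm{in}$ with boundary on $S^\mathrm{in}$ and positive punctures at those Reeb chords, which are exactly the disks counted by the augmentation $\varepsilon_S$. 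In this way the intersection points generating $\mathit{CF}^\ast(S_v,S_w)$ should match the Reeb-chord words generating $\big(B(\mathcal{C}E(\Lambda),\varepsilon_S)\big)^\#$, and the Floer differential should match the sum of the internal (linearized contact-homology) differential and the bar differential twisted by $\varepsilon_S$.

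The main obstacle is precisely this moduli-space comparison: establishing compactness (that limits of Floer disks are exactly the expected buildings, with no unaccounted breaking), gluing (that every such building arises from a unique Floer configuration), and transversality for all the relevant moduli spaces. Beyond the differential, one must check that the higher $A_\infty$-products $\mu^k_{\mathcal{V}_M}$ agree with the $A_\infty$-structure that the dg structure of $\mathcal{C}E(\Lambda)$ induces on the dualized bar complex; this requires controlling the degenerations of holomorphic discs with several boundary punctures. Finally, the orientations and signs must be pinned down coherently, which is where the characteristic of $\mathbb{K}$ enters, and for which I would invoke the filling-induced coherent orientation scheme of $\cite{ehk}$.

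To conclude, I would organize the two complexes by the action (equivalently word-length) filtration inherited from the Reeb chords, observe that the comparison map is filtered and induces an isomorphism on the associated graded pieces, and then deduce that it is a quasi-isomorphism by comparing the resulting spectral sequences. As an alternative, essentially equivalent route that shortcuts part of the analysis, one can combine the surgery formula $\mathcal{W}_M\cong\mathcal{C}E(\Lambda)$ of $\cite{bee}$ with the coalgebraic description $\mathcal{W}_M\cong T(\overline{\mathcal{D}}_M[1])^\vee$ recorded in the introductory remark, reducing the statement to recognizing $\mathcal{V}_M$ inside this tensor-coalgebra model; this is the viewpoint adopted in $\cite{ekl}$, and it is the one best suited to our applications.
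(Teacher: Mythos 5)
The first thing to say is that the paper contains no proof of this statement: it is imported verbatim as Theorem 4 of $\cite{ekl}$ and used as a black box (e.g.\ in Section \ref{section:quiver} via (\ref{eq:gem1})). So there is no internal argument to compare yours against; the only meaningful comparison is with the proof in $\cite{ekl}$ itself. Measured against that, your sketch assembles the right ingredients and in the right order: the purely algebraic reduction of $R\mathrm{Hom}_{\mathcal{C}E(\Lambda)}(\Bbbk,\Bbbk)$ to the dual bar construction taken with respect to the filling-induced augmentation $\varepsilon_S$, the neck-stretching degeneration along $\partial M^{\mathrm{in}}_{-\Lambda}$ matching intersection points of the capped Lagrangians $S_v$ with words of Reeb chords of $\Lambda$ weighted by the augmentation, the word-length/action filtration and spectral-sequence comparison, and the orientation scheme coming from the fillings as in $\cite{ehk}$. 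This is a faithful outline of the Ekholm--Lekili strategy. But be clear that it is an outline and not a proof: the compactness, gluing, transversality and coherent-orientation statements that you list as ``the main obstacle'' are precisely the mathematical content of $\cite{ekl}$, and nothing in your write-up discharges them.

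The weak point is your proposed ``alternative, essentially equivalent route.'' It is neither equivalent nor available in the generality of the theorem. The coalgebraic description $\mathcal{W}_M\cong T(\overline{\mathcal{D}}_M[1])^\vee$ quoted in the paper's introduction pertains to the total space of a Lefschetz fibration, with $\mathcal{D}_M$ a \emph{curved} $A_\infty$-algebra built from $\mathcal{A}_\pi$ and the fiber; Theorem \ref{theorem:gem} concerns an arbitrary Weinstein handle attachment along $\Lambda$ with arbitrary fillings $S_v^{\mathrm{in}}$, where no fibration is given. Even in the Lefschetz setting, ``recognizing $\mathcal{V}_M$ inside the tensor-coalgebra model'' is exactly the nontrivial step: one must identify the curved algebra $\mathcal{D}_M$ with $\mathcal{V}_M$ with vanishing curvature, and the paper goes out of its way (Lemma \ref{lemma:trivial-extension}, the discussion around (\ref{eq:CY})--(\ref{eq:sCY}), and the proof of Corollary \ref{corollary:formal}) to stress that such identifications do \emph{not} come for free and can fail. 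Moreover this route runs through the surgery formula of $\cite{bee}$, which the paper explicitly flags as work-in-progress, and the viewpoint you describe is that of Seidel/Ganatra--Maydanskiy rather than ``the one adopted in $\cite{ekl}$,'' so the closing attribution is also off. In short: keep the first three paragraphs as an honest summary of the $\cite{ekl}$ proof you are citing, and drop the shortcut.
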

This should be regarded as a generalization of the Eilenberg-Moore equivalence (\ref{eq:em}). In $\cite{ekl}$, the theorem is also proved for the more general case when $\mathcal{C}E(\Lambda)$ is the Chekanov-Eliashberg algebra with loop space coefficients and $\mathcal{V}_M$ is the endomorphism algebra of the infinitesimally wrapped Fukaya category.

\subsection{Quiver algebras as Fukaya categories}\label{section:quiver}

We prove Corollary \ref{corollary:formal} in this subsection. As a by-product, it enables us to identify a full subcategory of the derived Fukaya category $D^\mathit{perf}\mathcal{F}(M_{p,q,r})$ with the derived category of perfect modules over an $A_\infty$-category associated to quiver with potential introduced in Section \ref{section:CY algebra}. Similar but more sophisticated results have been obtained by Smith in $\cite{is}$.

When $\frac{1}{p}+\frac{1}{q}+\frac{1}{r}\leq1$, the Weinstein manifold $M_{p,q,r}$ is the Milnor fiber associated to the corresponding isolated singularity $t_{p,q,r}+w^2=0$. However, in this paper we are also interested in the case when $\frac{1}{p}+\frac{1}{q}+\frac{1}{r}>1$ and $p,q,r\geq2$, where $M_{p,q,r}$ is no longer a Milnor fiber, but a generalized Milnor fiber in the sense of $\cite{ak1}$. More precisely, this means that the polynomial $t_{p,q,r}+w^2$ on $\mathbb{C}^4$ has a finite number of isolated critical points, instead of a unique isolated critical point at the origin. By taking the intersection of a smooth fiber of $t_{p,q,r}+w^2:\mathbb{C}^4\rightarrow\mathbb{C}$ with a large ball $B^8\subset\mathbb{C}^4$, we still get a well-defined Weinstein domain whose completion is Weinstein deformation equivalent to $M_{p,q,r}$. Our considerations here will work for both of these cases.
\bigskip

In order to understand a full $A_\infty$-subcategory of the compact Fukaya categoty $\mathcal{F}(M_{p,q,r})$, we shall use the results assembled in the last two subsections. Consider the Lefschetz fibration $\tilde{t}_{p,q,r}:\mathbb{C}^3\rightarrow\mathbb{C}$ defined as a Morsification of $t_{p,q,r}(x,y,z)$, see (\ref{eq:4DMilnor}). Note that in our case, the smooth fiber of $\tilde{t}_{p,q,r}$ is symplectomorphic to the Milnor fiber $T_{p,q,r}\subset\mathbb{C}^3$ associated to the singularity $t_{p,q,r}(x,y,z)=0$. The suspension of $\tilde{t}_{p,q,r}$ is the Lefschetz fibration on $\mathbb{C}^4$ defined by
\begin{equation}
\tilde{f}_{p,q,r}(x,y,z,w):=\tilde{t}_{p,q,r}(x,y,z)+w^2.
\end{equation}
Its smooth fiber is symplecomorphic to the Milnor fiber $M_{p,q,r}\subset\mathbb{C}^4$, whose symplectic topology is the main interest of this paper.
\bigskip

Let $\mathbb{K}$ be any field. Consider the full $A_\infty$-subcategory $\mathcal{V}(M_{p,q,r})\subset\mathcal{F}(M_{p,q,r})$ whose objects are vanishing cycles $V_1,\dots,V_{p+q+r-1}$ in the Milnor fiber $M_{p,q,r}$. Since $M_{p,q,r}$ is a fiber of the suspension $\tilde{t}_{p,q,r}^{\sigma}:\mathbb{C}^4\rightarrow\mathbb{C}$, the vanishing cycles in $M_{p,q,r}$ can be interpreted as double covers of the Lefschetz thimbles of $\tilde{t}_{p,q,r}$ branched along the vanishing cycles in the Milnor fiber $T_{p,q,r}$. Thus our notation for $\mathcal{V}(M_{p,q,r})$ is compatible with our previous convention, when the manifold of interest is not necessarily a Milnor fiber. As a consequence, the $A_\infty$-category $\mathcal{V}(M_{p,q,r})$ is a \textit{deformation} of the trivial extension
\begin{equation}
\mathcal{A}(\tilde{t}_{p,q,r})\oplus\mathcal{A}(\tilde{t}_{p,q,r})^\vee[-3].
\end{equation}

To show that this deformation is trivial, we use Theorem \ref{theorem:main} and the generalized Eilenberg-Moore equivalence (\ref{eq:gem}) learned in the last subsection. Take $M_{-\Lambda}^\mathrm{in}$ in Section \ref{section:GEM} to be the standard symplectic ball $D^6$, and let $\Lambda=\Lambda_{p,q,r}$ be the link of Legendrian 2-spheres obtained in Proposition \ref{proposition:link}. It follows that $M=M_{p,q,r}$ and $\mathcal{V}_M=\mathcal{V}_{p,q,r}$. By Theorem \ref{theorem:gem}, there is a quasi-isomorphism
\begin{equation}\label{eq:gem1}
R\mathrm{Hom}_{\mathcal{C}E(\Lambda_{p,q,r})}(\Bbbk,\Bbbk)\cong\mathcal{V}_{p,q,r}.
\end{equation}
On the other hand, our computation of the Chekanov-Eliashberg algebra in Theorem \ref{theorem:main} gives the quasi-isomorphism
\begin{equation}\label{eq:main}
\mathcal{C}E(\Lambda_{p,q,r})\cong\Pi_3(\mathcal{A}_{p,q,r}).
\end{equation}
In order to compute the Koszul dual of the 3-Calabi-Yau completion $\Pi_3(\mathcal{A}_{p,q,r})$ on the right-hand side above, it is more convenient to have an explicit model for it. the following Lemma enables us to identify $\Pi_3(\mathcal{A}_{p,q,r})$ with a Ginzburg algebra associated to some quiver with potential $(Q_{p,q,r},w_{p,q,r})$.
\begin{lemma}
The directed $A_\infty$-algebra $\mathcal{A}_{p,q,r}$ is quasi-isomorphic to a quiver algebra with global dimension no more than 2.
\end{lemma}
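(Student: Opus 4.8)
The plan is to compute the cohomology algebra $A := H^\ast(\mathcal{A}_{p,q,r})$ explicitly as a quiver algebra $\mathbb{K}\overrightarrow{Q}_{p,q,r}/\overrightarrow{I}_{p,q,r}$ whose ideal $\overrightarrow{I}_{p,q,r}$ is generated by quadratic relations, to show that the directed $A_\infty$-structure on $\mathcal{A}_{p,q,r}$ is formal so that $\mathcal{A}_{p,q,r}\simeq A$, and finally to verify that $A$ has global dimension at most $2$. This last property is exactly what is needed to apply Keller's Proposition~\ref{proposition:CY-completion} and realize $\Pi_3(\mathcal{A}_{p,q,r})$ as the Ginzburg algebra $\mathcal{G}(Q_{p,q,r},w_{p,q,r})$ of a quiver with potential.

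First I would identify the underlying quiver. Recall that $\mathcal{A}_{p,q,r}$ is the directed $A_\infty$-algebra of a distinguished basis of Lefschetz thimbles $\Delta_1,\dots,\Delta_{p+q+r-1}$ of $\tilde{t}_{p,q,r}:\mathbb{C}^3\to\mathbb{C}$; by directedness its cohomology is the endomorphism algebra of the associated exceptional collection of vanishing cycles $V_1,\dots,V_{p+q+r-1}$, which are Lagrangian $2$-spheres in the fiber $T_{p,q,r}$. Using Keating's explicit model for this fibration together with Picard--Lefschetz theory $\cite{ak1,ak2}$, the morphism spaces $\mathit{CF}^\ast(\Delta_i,\Delta_j)$ with $i<j$ are read off from the geometric intersections $V_i\cap V_j$, each transverse intersection point contributing a degree-$1$ generator. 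In this way the arrows $\overrightarrow{Q}_{p,q,r}$ are indexed by the edges of the Coxeter--Dynkin diagram of $T_{p,q,r}$, and computing the product $\mu^2$ on these generators produces the relations $\overrightarrow{I}_{p,q,r}$. These sit in cohomological degree $2$ and are, crucially, quadratic, the relation at the central vertex being the algebraic shadow of the cubic term $\lambda xyz$ in $t_{p,q,r}$.

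Next I would prove formality. The cohomology algebra $A$ carries, in addition to its cohomological grading (arrows in degree $1$, relation classes in degree $2$), a path-length grading assigning weight $1$ to each arrow; this is well defined because the relations $\rho_j$ are homogeneous of length $2$, and it coincides with the Adams grading used in Section~\ref{section:Koszul}. If this grading is preserved by the chain-level $A_\infty$-operations, then a higher product $\mu^k$ of $k$ arrows would have to carry length $k$ while sitting in cohomological degree $\sum_i|a_i|+2-k=2$; since $A$ is concentrated in cohomological degrees $0,1,2$ and hence in lengths $0,1,2$, this contribution vanishes whenever $k\geq3$, giving $\mu^{\geq3}=0$ and therefore formality. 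When $\tfrac{1}{p}+\tfrac{1}{q}+\tfrac{1}{r}=1$ the weight grading is genuinely geometric, induced by the $\mathbb{C}^\ast$-action coming from the weighted homogeneity of $t_{p,q,r}$ (with $\lambda=0$), so the operations preserve it and the argument applies directly. In the remaining cases $\tfrac{1}{p}+\tfrac{1}{q}+\tfrac{1}{r}<1$ the Morsification $\tilde{t}_{p,q,r}$ is no longer weighted homogeneous and no such chain-level grading is available; here I would instead argue intrinsic formality, verifying that the Hochschild obstruction groups $\mathit{HH}^n(A,A)$ of internal degree $2-n$ vanish for all $n\geq3$, which I expect to deduce from the Koszulity of the quadratic algebra $A$.

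Finally I would bound the global dimension. With the quiver and its quadratic relations in hand, I would write down the minimal projective resolutions of the simple modules $S_v$ directly: for the arm vertices the resolution has length at most $1$, while at the central vertex the single quadratic relation contributes exactly one further syzygy, so that every simple has projective dimension at most $2$ and hence $\mathrm{gl.dim}(A)\leq2$. The main obstacle is the formality statement in the non-quasi-homogeneous regime $\tfrac{1}{p}+\tfrac{1}{q}+\tfrac{1}{r}<1$: there is no geometric grading to kill the higher products, and one must control all the Massey products simultaneously through the intrinsic-formality (Koszulity) argument sketched above, rather than by the clean weight-counting available in the weighted homogeneous case.
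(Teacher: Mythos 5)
Your plan has the right skeleton (identify the cohomology algebra, prove formality, bound the global dimension), but two of its three steps rest on premises that are incorrect or unjustified. First, the algebra you propose to find is not the right one. By Keating's computation \cite{ak1} --- which is what the paper's proof invokes --- $H^\ast(\mathcal{A}_{p,q,r})$ is the ``squid'' algebra $\mathbb{K}\overrightarrow{Q}_{p,q,r}/\overrightarrow{I}_{p,q,r}$: a doubled arrow $a_1,a_2:A\to B$, three arms attached to $B$, and the three quadratic relations $b_2a_1=0$, $b_1a_2=0$, $b_3(a_1-a_2)=0$ (all supported at the vertex $A$, not ``a single quadratic relation at the central vertex''). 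This algebra contains nonzero paths of every length up to $\max(p,q,r)$ along the arms, e.g.\ $x_{p-2}\cdots x_1b_1a_1\neq0$. Consequently, in your normalization (each intersection point a degree-$1$ generator, so cohomological degree equals path length) the algebra is \emph{not} concentrated in degrees $0,1,2$ as soon as one of $p,q,r$ exceeds $2$, and your stated justification for killing $\mu^{\geq3}$ collapses. (The weaker fact your degree count actually needs --- that cohomological degree equals Adams length on all of $A$ --- does hold for the squid algebra, but you would still need a chain-level Adams grading preserved by the operations, which you only produce in the weighted-homogeneous case $\frac{1}{p}+\frac{1}{q}+\frac{1}{r}=1$.) Second, the fallback for $\frac{1}{p}+\frac{1}{q}+\frac{1}{r}<1$ is not a proof: Koszulity of a quadratic algebra does not imply intrinsic formality; one must actually verify the vanishing of the Hochschild obstruction groups $\mathit{HH}^n(A,A)$ in internal degree $2-n$ for $n\geq3$, and that is precisely the hard point you defer. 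Formality of Fukaya-type algebras of vanishing cycles can genuinely fail --- the paper itself cites Lekili--Ueda \cite{lu} for such examples --- so it cannot be waved through.

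The idea you are missing is that, once Keating's identification is in place, formality is soft. Her theorem identifies the thimbles with a tilting object $T$ in the hereditary category $\mathit{Coh}(\mathbb{P}^1_{p,q,r})$; since $\mathrm{Ext}^{>0}(T,T)=0$, the morphism spaces between thimbles are concentrated in degree $0$ for the corresponding choice of gradings (your grading differs from this one only by shifts $\Delta_i\mapsto\Delta_i[n_i]$, which do not affect formality). A minimal $A_\infty$-algebra concentrated in a single degree has $\mu^k=0$ for all $k\neq2$ for degree reasons, so $\mathcal{A}_{p,q,r}$ is formal with no weight grading, $\mathbb{C}^\ast$-action, or Koszulity input. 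This is why the paper's proof consists essentially of two citations: Keating \cite{ak1} for the identification with the squid algebra, and then --- instead of writing projective resolutions --- the observation that this algebra is a quasi-tilted algebra of canonical type \cite{ls}, and quasi-tilted algebras have global dimension at most $2$ by Happel--Reiten--Smal{\o} \cite{hrs}. Your resolution computation would also work, but only after the correct quiver and relations are in hand.
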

\begin{proof}
It is proved by Keating in $\cite{ak1}$ that $\mathcal{A}_{p,q,r}$ is quasi-isomorphic to the endomorphism algebra of a tilting object for the hereditary category $\mathit{Coh}(\mathbb{P}^1_{p,q,r})$, where $\mathit{Coh}(\mathbb{P}^1_{p,q,r})$ is the abelian category of coherent sheaves on the weighted projective line $\mathbb{P}^1_{p,q,r}$. More precisely, it follows from the computation in $\cite{ak1}$ that $\mathcal{A}_{p,q,r}$ can be identified with the graded associative algebra $\mathbb{K}\overrightarrow{Q}_{p,q,r}/\overrightarrow{I}_{p,q,r}$ associated to the following directed quiver
\begin{equation}
\begin{tikzcd}
& & \bullet_{P_1} \arrow[r,"x_1"]
& \bullet_{P_2} \arrow[r,"x_2"] & \bullet\dots\bullet \arrow[r,"x_{p-2}"] & \bullet_{P_{p-1}} \\
\bullet_A \arrow[r,bend left,"a_1"] \arrow[r,bend right,"a_2"] & \bullet_B
\arrow[ur,"b_1"]
\arrow[dr,"b_3"]
\arrow[r,"b_2"] & \bullet_{Q_1}
\arrow[r,"y_1"] & \bullet_{Q_2} \arrow[r,"y_2"] & \bullet\dots\bullet \arrow[r,"y_{q-2}"] & \bullet_{Q_{q-1}} \\
& & \bullet_{R_1} \arrow[r,"z_1"]
& \bullet_{R_2} \arrow[r,"z_2"] & \bullet\dots\bullet \arrow[r,"z_{r-2}"] & \bullet_{R_{r-1}}
\end{tikzcd}
\end{equation}
with relations in $\overrightarrow{I}_{p,q,r}$ given by
\begin{equation}\label{eq:rel}
b_2\circ a_1=0,b_1\circ a_2=0,b_3\circ(a_1-a_2)=0.
\end{equation}
It then follows that $\mathbb{K}\overrightarrow{Q}_{p,q,r}/\overrightarrow{I}_{p,q,r}$ is a \textit{quasi-tilted algebra of canonical type}, see $\cite{ls}$. Quasi-tilted algebras are studied in $\cite{hrs}$, and they are characterized by having global dimension at most 2 and each indecomposable module having projective dimension at most 1 or injective dimension at most 1. 
\end{proof}

By the above lemma, the conditions imposed on the quiver algebra $\mathcal{A}$ in Section \ref{section:CY-completion} is satisfied for $\mathbb{K}\overrightarrow{Q}_{p,q,r}/\overrightarrow{I}_{p,q,r}$. Since by its construction recalled in Section \ref{section:CY-completion}, the Calabi-Yau completion $\Pi_3(\mathcal{A}_{p,q,r})$ is unchanged up to quasi-isomorphism by replacing $\mathcal{A}_{p,q,r}$ with the quiver algebra $\mathbb{K}\overrightarrow{Q}_{p,q,r}/\overrightarrow{I}_{p,q,r}$, which is quadratic by (\ref{eq:rel}), using Proposition \ref{proposition:CY-completion} we can identify the 3-Calabi-Yau completion of $\mathcal{A}_{p,q,r}$ with the Ginzburg algebra $\mathcal{G}_{p,q,r}:=\mathcal{G}(Q_{p,q,r},w_{p,q,r})$ defined by the following quiver $Q_{p,q,r}$
\begin{equation}\label{eq:quiver}
\begin{tikzcd}
& & \bullet_{P_1} \arrow[r,"x_1"] \arrow[dll,orange,bend right,"c_1"]
& \bullet_{P_2} \arrow[r,"x_2"] & \bullet\dots\bullet \arrow[r,"x_{p-2}"] & \bullet_{P_{p-1}} \\
\bullet_A \arrow[r,bend left,"a_1"] \arrow[r,bend right,"a_2"] & \bullet_B
\arrow[ur,"b_1"]
\arrow[dr,"b_3"]
\arrow[r,"b_2"] & \bullet_{Q_1} \arrow[ll,bend left,orange,"c_2"]
\arrow[r,"y_1"] & \bullet_{Q_2} \arrow[r,"y_2"] & \bullet\dots\bullet \arrow[r,"y_{q-2}"] & \bullet_{Q_{q-1}} \\
& & \bullet_{R_1} \arrow[r,"z_1"] \arrow[ull,orange,bend left,"c_3"]
& \bullet_{R_2} \arrow[r,"z_2"] & \bullet\dots\bullet \arrow[r,"z_{r-2}"] & \bullet_{R_{r-1}}
\end{tikzcd}
\end{equation}
with potential
\begin{equation}
w_{p,q,r}=a_1b_2c_2+a_2b_1c_1+a_1b_3c_3-a_2b_3c_3.
\end{equation}
By Proposition \ref{proposition:Koszul},
\begin{equation}\label{eq:Koszul2}
R\mathrm{Hom}_{\mathcal{G}_{p,q,r}}(\Bbbk,\Bbbk)\cong\mathcal{B}_{p,q,r},
\end{equation}
where $\mathcal{B}_{p,q,r}:=\mathcal{B}(Q_{p,q,r},w_{p,q,r})$ is the compact 3-Calabi-Yau algebra associated to the same quiver with potential $(Q_{p,q,r},w_{p,q,r})$.
\begin{lemma}\label{lemma:quiver}
There is a quasi-isomorphism
\begin{equation}
\mathcal{V}_{p,q,r}\cong\mathcal{B}_{p,q,r}
\end{equation}
between $A_\infty$-algebras over $\Bbbk$.
\end{lemma}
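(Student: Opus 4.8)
The plan is to obtain $\mathcal{V}_{p,q,r}\cong\mathcal{B}_{p,q,r}$ as a three-step composition of quasi-isomorphisms already assembled above, the only genuine content being to check that every arrow in the chain respects augmentations, so that it descends to the Koszul duals $R\mathrm{Hom}(\Bbbk,\Bbbk)$. Note that I cannot simply invoke Corollary \ref{corollary:formal}, since that statement is downstream of this lemma (recall that $\mathcal{B}_{p,q,r}$ is itself the $3$-cyclic completion of $\mathbb{K}\overrightarrow{Q}_{p,q,r}/\overrightarrow{I}_{p,q,r}$, so Lemma \ref{lemma:quiver} is precisely the step that feeds into the formality result). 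Hence the route must go through the wrapped side and Koszul duality.

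First I would apply the generalized Eilenberg-Moore equivalence (Theorem \ref{theorem:gem}), with $M=M_{p,q,r}$ and $\Lambda=\Lambda_{p,q,r}$, to produce the quasi-isomorphism (\ref{eq:gem1}), namely $\mathcal{V}_{p,q,r}\cong R\mathrm{Hom}_{\mathcal{C}E(\Lambda_{p,q,r})}(\Bbbk,\Bbbk)$, where $\Bbbk$ is regarded as a $\mathcal{C}E(\Lambda_{p,q,r})$-module via the augmentation $\varepsilon_S$ coming from the Lagrangian fillings. Next, combining the computation of Theorem \ref{theorem:main} with Proposition \ref{proposition:CY-completion} gives quasi-isomorphisms of dg algebras over $\Bbbk$, $\mathcal{C}E(\Lambda_{p,q,r})\cong\Pi_3(\mathcal{A}_{p,q,r})\cong\mathcal{G}_{p,q,r}$, where the first is (\ref{eq:main}) and the second realizes the $3$-Calabi-Yau completion of $\mathbb{K}\overrightarrow{Q}_{p,q,r}/\overrightarrow{I}_{p,q,r}$ as the Ginzburg algebra $\mathcal{G}_{p,q,r}$.

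The crux is to promote this into an augmentation-preserving quasi-isomorphism. As recalled in Section \ref{section:Koszul}, a quasi-isomorphism of augmented $A_\infty$-algebras induces a quasi-isomorphism of bar constructions, hence of the Koszul duals $R\mathrm{Hom}(\Bbbk,\Bbbk)$; so I must verify that $\varepsilon_S$ is carried to the canonical augmentation on $\mathcal{G}_{p,q,r}$ (projection onto the idempotents $\Bbbk=\bigoplus_i\mathbb{K}e_i$) under the identification (\ref{eq:main}). This is where I expect the real work to lie: one has to match the semisimple subalgebra $\Bbbk$ and the augmentation ideals on the two sides, using that the idempotents $e_i$ correspond under (\ref{eq:main}) and that every generator of $\mathcal{G}_{p,q,r}$ lies in the augmentation ideal. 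Granting this compatibility, (\ref{eq:main}) yields $R\mathrm{Hom}_{\mathcal{C}E(\Lambda_{p,q,r})}(\Bbbk,\Bbbk)\cong R\mathrm{Hom}_{\mathcal{G}_{p,q,r}}(\Bbbk,\Bbbk)$.

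Finally I would invoke (\ref{eq:Koszul2}) — the output of Propositions \ref{proposition:Koszul} and \ref{proposition:lf} for the quiver with potential $(Q_{p,q,r},w_{p,q,r})$, whose potential is cubic so that the Adams-grading hypothesis and the degreewise finiteness of $H^\ast(\mathcal{G}_{p,q,r})$ needed for Proposition \ref{proposition:lf} are in force — to conclude $R\mathrm{Hom}_{\mathcal{G}_{p,q,r}}(\Bbbk,\Bbbk)\cong\mathcal{B}_{p,q,r}$. Composing the three quasi-isomorphisms then gives $\mathcal{V}_{p,q,r}\cong\mathcal{B}_{p,q,r}$ over $\Bbbk$, as desired. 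The first and last steps are direct citations; the main obstacle is the augmentation-compatibility check in the middle, for which the homogeneity of the gradings introduced in Section \ref{section:Koszul} is the key leverage.
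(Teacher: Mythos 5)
Your three-step skeleton is exactly the paper's: (\ref{eq:gem1}) from Theorem \ref{theorem:gem}, then (\ref{eq:main}) together with Proposition \ref{proposition:CY-completion} to replace $\mathcal{C}E(\Lambda_{p,q,r})$ by the Ginzburg algebra $\mathcal{G}_{p,q,r}$, then (\ref{eq:Koszul2}) to identify $R\mathrm{Hom}_{\mathcal{G}_{p,q,r}}(\Bbbk,\Bbbk)$ with $\mathcal{B}_{p,q,r}$. You also correctly isolate the crux: the filling augmentation $\varepsilon_V$ (your $\varepsilon_S$) must be identified, under (\ref{eq:main}), with the trivial projection $\varepsilon:\mathcal{G}_{p,q,r}\rightarrow\Bbbk$, since otherwise the three quasi-isomorphisms do not compose at the level of Koszul duals. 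But you leave exactly this step unproved ("granting this compatibility"), and the leverage you propose for it would not close it. Homogeneity with respect to the single $\mathbb{Z}$-grading only forces the transported augmentation to vanish on generators of nonzero degree; it says nothing about the degree-zero generators $a_1,a_2,b_1,b_2,b_3,c_1,c_2,c_3,x_i,y_j,z_k$, which is precisely where a nontrivial augmentation could live. The bigrading of Section \ref{section:Koszul}, under which every generator has first degree $1$, would formally do the job, but it is an auxiliary algebraic structure imposed on $\mathcal{G}_{p,q,r}$: the geometric augmentation $\varepsilon_V$ is not known a priori to respect it, so invoking it begs the question.

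The missing idea --- and this is how the paper argues --- is geometric: by the explicit identification (\ref{eq:222}) (and its extension to general $(p,q,r)$), every degree-zero generator of $\mathcal{G}_{p,q,r}$ corresponds to a Reeb chord between two \emph{different} components of $\Lambda_{p,q,r}$, i.e.\ lies in $e_j\mathcal{C}E(\Lambda_{p,q,r})e_i$ with $i\neq j$. Since $\varepsilon_V$ is a map of $\Bbbk$-bimodules and $e_j\Bbbk e_i=0$ for $i\neq j$, it annihilates all such mixed chords by definition, irrespective of any holomorphic-disk count. Combined with the degree argument for the remaining generators, this shows $\varepsilon_V$ is the trivial projection, so $(\mathcal{C}E(\Lambda_{p,q,r}),\varepsilon_V)$ and $(\mathcal{G}_{p,q,r},\varepsilon)$ are quasi-isomorphic as \emph{augmented} dg algebras, which is what licenses applying (\ref{eq:Koszul2}) to compute the left-hand side of (\ref{eq:gem1}). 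Without this observation your argument has a genuine gap at the exact point you yourself identified as the real work.
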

\begin{proof}
By our discussions in Section \ref{section:Koszul}, one can use a quasi-isomorphic replacement of the Chekanov-Eliashberg dg algebra $\mathcal{C}E(\Lambda_{p,q,r})$ when computing its Koszul dual. In order to make use of (\ref{eq:Koszul2}) to compute the left hand side of (\ref{eq:gem1}), besides the quasi-isomorphism (\ref{eq:main}), we need to show that the augmentation $\varepsilon_V:\mathcal{C}E(\Lambda_{p,q,r})\rightarrow\Bbbk$ induced by the Lagrangian fillings $V_1^\mathrm{in},\dots,V_{p+q+r-1}^\mathrm{in}$ of the Legendrian link $\Lambda_{p,q,r}$ by vanishing cycles corresponds, under (\ref{eq:main}), to the trivial projection $\varepsilon:\mathcal{G}_{p,q,r}\rightarrow\Bbbk$, which is the augmentation that we used on the left hand side of (\ref{eq:Koszul2}) to compute the Koszul dual of $\mathcal{G}_{p,q,r}$. To see this, we need to refer to the explicit quasi-isomorphism (\ref{eq:222}) between the cellular dg algebra $\mathcal{C}(\Lambda_{2,2,2})$ and the Ginzburg algebra $\mathcal{G}_{2,2,2}$, which shows that the degree zero generators $a_1,a_2,b_1,b_2,b_3,c_1,c_2,c_3$ of the Ginzburg algebra $\mathcal{G}_{2,2,2}$ correspond geometrically to Reeb chords between different components of $\Lambda_{2,2,2}$ under the quasi-isomorphism (\ref{eq:main}). By definition, the image of these Reeb chords under $\varepsilon_V$ are zero, which shows that $(\mathcal{C}E(\Lambda_{2,2,2}),\varepsilon_V)$ and $(\mathcal{G}_{2,2,2},\varepsilon)$ are quasi-isomorphic as augmented dg algebras. The general case can be argued in a completely identical way, as the newly created degree 0 generators in $\mathcal{C}E(\Lambda_{p,q,r})$ after attaching unknoted Legendrian spheres to $\Lambda_{2,2,2}$ are also Reeb chords between different components of $\Lambda_{p,q,r}$.
\end{proof}

Since $\mathcal{B}_{p,q,r}$ is by construction the cyclic completion $\mathcal{A}_{p,q,r}\oplus\mathcal{A}_{p,q,r}^\vee[-3]$, we conclude that there is a quasi-isomorphism
\begin{equation}
\mathcal{V}_{p,q,r}\cong\mathcal{A}_{p,q,r}\oplus\mathcal{A}_{p,q,r}^\vee[-3],
\end{equation}
which implies the formality of the $A_\infty$-algebra $\mathcal{V}_{p,q,r}$, by Proposition \ref{proposition:formality}.

\section{Split-generation and quasi-dilations}

Assuming Koszul duality between the $A_\infty$-algebras $\mathcal{V}_{p,q,r}$ and $\mathcal{C}E(\Lambda_{p,q,r})$, we prove in this section the Corollaries \ref{corollary:split-generation} and \ref{corollary:quasi-dilation} stated in Section \ref{section:application}.

\subsection{Split-generation}\label{section:split-generation}

As in Section \ref{section:Koszul}, denote by $\mathcal{G}$ and $\mathcal{B}$ the Ginzburg dg algebra and the cyclic $A_\infty$-algebra associated to the same quiver with potential $(Q,w)$. By Proposition \ref{proposition:Koszul}, when $w$ is cubic, $\mathcal{B}=E(\mathcal{G})$ is the (bigraded) Koszul dual of $\mathcal{G}$.
\bigskip

Recall that all the $A_\infty$-modules over $\mathcal{G}$ form a dg category $\mathcal{G}^\mathit{mod}$. There is a \textit{Koszul duality functor}
\begin{equation}
\mathcal{K}:\mathcal{G}^\mathit{mod}\rightarrow\mathcal{B}^\mathit{mod}
\end{equation}
defined by $R\mathrm{Hom}_\mathcal{G}(\Bbbk,\cdot)$. This is first introduced by Beilinson-Ginzburg-Soergel in $\cite{bgs}$ for Koszul algebras, its generalization in the context of $A_\infty$-Koszul duality is straightforward.

Denote by $D^\mathit{prop}(\mathcal{G})$ the derived category of proper $\mathcal{G}$-modules, namely those $\mathcal{G}$-modules whose cohomologies are finite dimensional; and by $D^\mathit{perf}(\mathcal{B})$ the derived category of perfect $\mathcal{B}$-modules, which is obtained by taking the split-closure of the homotopy category of the $A_\infty$-category of twisted complexes over $\mathcal{B}$, i.e. $H^0(\mathit{Tw}(\mathcal{B}))$.
\begin{proposition}[$\cite{la,kn}$]\label{proposition:functor}
Assume that $H^\ast(\mathcal{G})$ is finite-dimensional in each fixed degree. The restriction of the Koszul duality functor $\mathcal{K}$ induces an equivalence
\begin{equation}
D\mathcal{K}:D^\mathit{prop}(\mathcal{G})\rightarrow D^\mathit{perf}(\mathcal{B}).
\end{equation}
\end{proposition}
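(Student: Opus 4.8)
The plan is to treat $\mathcal{K}=R\mathrm{Hom}_{\mathcal{G}}(\Bbbk,\cdot)$ as an exact (triangulated) functor of derived categories and to run the standard Koszul-duality argument: match distinguished generators on the two sides, verify full faithfulness on the subcategory generated by the simple module, and then identify the source subcategory with $D^{\mathit{prop}}(\mathcal{G})$. The $\mathcal{B}$-module structure on $\mathcal{K}(N)$ comes, as usual, from the action of $\mathcal{B}\cong R\mathrm{Hom}_{\mathcal{G}}(\Bbbk,\Bbbk)$ by precomposition, so $\mathcal{K}$ genuinely lands in $\mathcal{B}^{\mathit{mod}}$. Every step below rests on the double Koszul duality recorded in Proposition \ref{proposition:lf}, whose hypothesis is exactly that $H^\ast(\mathcal{G})$ be finite-dimensional in each degree.

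First I would compute the image of the generator. Regard $\Bbbk$ as a $\mathcal{G}$-module via the augmentation $\varepsilon:\mathcal{G}\to\Bbbk$; it is proper since it is finite-dimensional. By Proposition \ref{proposition:lf} (which forces $\mathcal{G}\simeq\widehat{\mathcal{G}}$ under our finiteness hypothesis) we have $\mathcal{K}(\Bbbk)=R\mathrm{Hom}_{\mathcal{G}}(\Bbbk,\Bbbk)\cong\mathcal{B}$ as $A_\infty$-algebras, so $\mathcal{K}(\Bbbk)$ is the free rank-one $\mathcal{B}$-module. Since the free module is a classical generator of $D^{\mathit{perf}}(\mathcal{B})$, the thick subcategory $\langle\Bbbk\rangle\subset D(\mathcal{G})$ generated by $\Bbbk$ is carried by $\mathcal{K}$ into $D^{\mathit{perf}}(\mathcal{B})$ with split-generating image.

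Next I would establish full faithfulness of $\mathcal{K}$ on $\langle\Bbbk\rangle$. It is enough to check that $\mathcal{K}$ is a quasi-isomorphism on the endomorphisms of the generator, i.e. that the canonical map
\begin{equation}
R\mathrm{Hom}_{\mathcal{G}}(\Bbbk,\Bbbk)\longrightarrow R\mathrm{Hom}_{\mathcal{B}}(\mathcal{K}\Bbbk,\mathcal{K}\Bbbk)
\end{equation}
is a quasi-isomorphism: the left-hand side is $\mathcal{B}$ by Proposition \ref{proposition:lf}, the right-hand side is $\mathrm{End}_{\mathcal{B}}(\mathcal{B})\cong\mathcal{B}$, and the map is the tautological $A_\infty$-identification coming from functoriality of $\mathcal{K}$. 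Because $\mathcal{K}$ is exact, the $\mathrm{Hom}$-complexes between arbitrary shifts, cones, and retracts built from $\Bbbk$ are computed from this single quasi-isomorphism by applying the five-lemma along distinguished triangles; an induction over the thick-subcategory structure of $\langle\Bbbk\rangle$ then gives full faithfulness there. As the image is thick (a fully faithful exact functor into the idempotent-complete $D^{\mathit{perf}}(\mathcal{B})$ has thick image) and contains the split-generator $\mathcal{B}$, we conclude that $\mathcal{K}$ restricts to an equivalence $\langle\Bbbk\rangle\xrightarrow{\sim}D^{\mathit{perf}}(\mathcal{B})$.

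The hard part will be the final identification $\langle\Bbbk\rangle=D^{\mathit{prop}}(\mathcal{G})$, which is where the references $\cite{la,kn}$ enter and where the finiteness hypothesis is indispensable. The inclusion $\langle\Bbbk\rangle\subseteq D^{\mathit{prop}}(\mathcal{G})$ is immediate, since $\Bbbk$ has finite-dimensional cohomology and $D^{\mathit{prop}}(\mathcal{G})$ is thick. For the reverse inclusion I would exploit that $\mathcal{G}$ is concentrated in non-positive (cohomological) degrees, hence connective after regrading, with finite-dimensional cohomology in each degree; this is precisely the ``positive dg algebra'' situation of Keller–Nicolás, for which $D^{\mathit{prop}}(\mathcal{G})=D_{\mathit{fd}}(\mathcal{G})$ is generated by the simple modules $\Bbbk e_v$. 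Concretely, any proper module $N$ admits a finite Postnikov tower obtained by truncating in cohomological degree and peeling off, at each stage, a finite direct sum of shifts of the $\Bbbk e_v$ as subquotients; the finiteness of $\dim_{\mathbb{K}}H^\ast(N)$ guarantees that the tower terminates after finitely many steps, exhibiting $N$ as an iterated extension of shifted copies of $\Bbbk$ and so placing it in $\langle\Bbbk\rangle$. This dévissage, controlled by the finiteness of $H^\ast(\mathcal{G})$, is the technical core; granting it, the equivalence $D\mathcal{K}:D^{\mathit{prop}}(\mathcal{G})\to D^{\mathit{perf}}(\mathcal{B})$ follows by combining it with the restricted equivalence of the previous paragraph.
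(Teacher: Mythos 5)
Your first three steps are sound: $\mathcal{K}(\Bbbk)=R\mathrm{Hom}_{\mathcal{G}}(\Bbbk,\Bbbk)\cong\mathcal{B}$ is the free rank-one module (this uses Proposition \ref{proposition:lf}, which indeed needs the degreewise finiteness), the endomorphism quasi-isomorphism propagates through shifts, cones and retracts to give full faithfulness on the thick subcategory $\langle\Bbbk\rangle$, and since the essential image is thick and contains the generator $\mathcal{B}$ you get an equivalence $\langle\Bbbk\rangle\simeq D^{\mathit{perf}}(\mathcal{B})$. Note that the paper itself gives no proof of this proposition (it is cited to Avramov and Keller--Nicol\'{a}s, and the Remark that follows it indicates the intended route: the equivalence $D^{\mathit{prop}}(\widehat{\mathcal{G}})\cong D^{\mathit{perf}}(\mathcal{B})$ for the completed Ginzburg algebra, combined with $\mathcal{G}\cong\widehat{\mathcal{G}}$), so a self-contained argument like yours is legitimate in principle.

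The genuine gap is in your final step, the inclusion $D^{\mathit{prop}}(\mathcal{G})\subseteq\langle\Bbbk\rangle$. Your Postnikov tower needs every finite-dimensional $H^0(\mathcal{G})$-module to have all of its composition factors among the vertex simples $\Bbbk e_v$; equivalently, the image of the arrow ideal in the Jacobi algebra $H^0(\mathcal{G})$ must be nilpotent. This is \emph{not} a formal consequence of connectivity plus degreewise finite-dimensional cohomology, and it is not what Keller--Nicol\'{a}s prove: their generation statement involves \emph{all} simple finite-dimensional $H^0$-modules, not only the vertex ones. For instance, for the one-loop quiver with the non-homogeneous potential $w=x^3/3-x^2/2$, the Jacobi algebra is $\mathbb{K}[x]/(x^2-x)\cong\mathbb{K}\times\mathbb{K}$: it is finite-dimensional, yet the simple module $\mathbb{K}_{x=1}$ on which $x$ acts by $1$ satisfies $R\mathrm{Hom}_{\mathcal{G}}(\Bbbk,\mathbb{K}_{x=1})=0$ (compute with the standard length-three resolution of the vertex simple), so it is proper, orthogonal to $\Bbbk$, and hence not in $\langle\Bbbk\rangle$; the principle you invoke fails at exactly this step, and with it conservativity of $D\mathcal{K}$. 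What rescues the proposition in the paper's context is the standing assumption of Section \ref{section:Koszul} that the potential is homogeneous and cubic: then $H^0(\mathcal{G})$ is graded by path length with degree-zero part $\Bbbk$, so once it is finite-dimensional its augmentation ideal is nilpotent and equals the radical, whence all composition factors of the truncations $H^i(N)$ are vertex simples and your d\'{e}vissage goes through. Alternatively, you may quote $\mathcal{G}\cong\widehat{\mathcal{G}}$ from Proposition \ref{proposition:lf} and use that finite-dimensional modules over the completed Jacobi algebra are nilpotent. Either way, this is where the hypotheses genuinely enter---not merely, as you assert, in guaranteeing that the tower terminates---and the argument must be supplied.
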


\paragraph{Remark} Without the assumption that $H^\ast(\mathcal{G})$ is finite-dimensional in each degree, we have instead an equivalence
\begin{equation}
D^\mathit{prop}(\widehat{\mathcal{G}})\cong D^\mathit{perf}(\mathcal{B}),
\end{equation}
where $\widehat{\mathcal{G}}$ is the complete Ginzburg algebra. When $H^\ast(\mathcal{G})$ is locally finite with respect to the total grading, there is a quasi-isomorphism $\mathcal{G}\cong\widehat{\mathcal{G}}$.

The bigraded version of the Koszul duality functor $\mathcal{K}$ induces an equivalence between $D^\mathit{prop}(\mathcal{G})$ and $D^\mathit{perf}(\mathcal{B})$ whenever $\mathcal{G}$ is \textit{Adams connected} as a bigraded $A_\infty$-algebra, see Theorem B of $\cite{lpwz}$. Although the Adams connectedness condition is satisfied for any Ginzburg algebra $\mathcal{G}$ defined by a quiver $Q$ with cubic potential $w$, this version of derived equivalence cannot be used to prove the split-generation of the compact Fukaya category $\mathcal{F}(M_{p,q,r})$ by vanishing cycles. This is because the derived category of bigraded $A_\infty$-modules over the bigraded $A_\infty$-algebra $\mathcal{G}$ or $\mathcal{B}$ is in general unrelated to the corresponding singly graded version.
\bigskip

Another ingredient which is relevant for the proof of Corollary \ref{corollary:split-generation} is the split-generation of the wrapped Fukaya category by Lagrangian cocores, which should follow by combing the work of Bourgeois-Ekholm-Eliashberg $\cite{bee}$ on Legendrian surgery with Abouzaid's geometric generation criterion $\cite{ma1}$. For convenience, we state the following generation result, which appears in the recent work $\cite{cggr}$. A more general statement, which takes into account also Weinstein domains with stops, is proved in $\cite{gps2}$.
\begin{theorem}[$\cite{cggr,gps2}$]\label{theorem:generation}
Let $M$ be a Weinstein manifold, which can be realized as the result of Weinstein handle attachment to $D^{2n}$. Denote by $L_1,\dots,L_k$ the Lagrangian cocore discs in the $n$-handles, the wrapped Fukaya category $\mathcal{W}(M)$ over any field $\mathbb{K}$ is generated by $L_1,\dots,L_k$, equipped with appropriate brane structures which make them objects of $\mathcal{W}(M)$.
\end{theorem}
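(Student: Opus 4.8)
The plan is to deduce the theorem from Abouzaid's geometric split-generation criterion $\cite{ma1}$, combined with the surgery description of the endomorphism algebra of the cocores. Let $\mathcal{A}\subset\mathcal{W}(M)$ denote the full subcategory on $L_1,\dots,L_k$. Abouzaid's criterion reduces generation to a single computation: if the image of the open-closed map $\mathcal{OC}:\mathit{HH}_\ast(\mathcal{A},\mathcal{A})\rightarrow\mathit{SH}^\ast(M)$ contains the unit $1\in\mathit{SH}^0(M)$, then $\mathcal{A}$ split-generates $\mathcal{W}(M)$. So the whole problem is to exhibit the unit in the image of $\mathcal{OC}$.

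First I would fix the handle geometry. Since $M$ is built from $D^{2n}$ by attaching only critical $n$-handles along a Legendrian link $\Lambda=\bigsqcup_i\Lambda_i\subset(S^{2n-1},\xi_{\mathit{std}})$, each cocore $L_i$ meets the core disk of its handle transversely in a single point, giving a canonical degree-$0$ generator of $\mathit{CW}^\ast(L_i,L_i)$. Using the surgery formula (\ref{eq:BEE}) I would replace $\mathcal{A}$ by the Chekanov-Eliashberg algebra $\mathcal{C}E(\Lambda)$, so that the Hochschild complex $\mathit{HH}_\ast(\mathcal{A},\mathcal{A})$ acquires an explicit model whose low-action part is concentrated near the handles. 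The key geometric claim to establish is that the Hochschild class built from these intersection points maps under $\mathcal{OC}$ to the class in $\mathit{SH}^0(M)$ generated by the short Reeb orbits created by the surgery, and that this class is the unit.

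To organize the action estimate I would argue inductively over the handle attachments, starting from $\mathit{SH}^\ast(D^{2n})=0$ (the subcritical base contributes nothing) and tracking how each critical handle enlarges symplectic cohomology. Equivalently, and more robustly, I would invoke the sectorial machinery of $\cite{gps1,gps2}$: presenting $M$ as a stop removal and identifying the cocores with the linking disks of the stop, the stop-removal and K\"{u}nneth theorems reduce generation to the local handle model $T^\ast D^n$, where the single cocore is a cotangent fiber that manifestly generates. This is the route made rigorous in $\cite{gps2}$; the parallel argument through the surgery exact triangle appears in $\cite{cggr}$.

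The hard part will be the Floer-theoretic input rather than the formal reduction. Verifying that $\mathcal{OC}$ hits the unit requires controlling holomorphic curves with both boundary on the cocores and an interior asymptotic to the newly created Reeb orbits, and matching the resulting count with the surgery-created generators; the sectorial route instead demands the full descent and K\"{u}nneth formulas, each of which rests on substantial transversality and gluing analysis. A further, more structural caveat is that the surgery identification (\ref{eq:BEE}) itself relies on $\cite{bee}$, which is still work in progress, so a genuinely self-contained proof must either reprove that identification or bypass it entirely through $\cite{gps1,gps2}$.
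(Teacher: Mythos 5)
Your proposal is consistent with how the paper handles this statement: the paper does not prove Theorem \ref{theorem:generation} at all, but imports it from $\cite{cggr}$ and $\cite{gps2}$, remarking only that it ``should follow by combining'' the Legendrian surgery results of $\cite{bee}$ with Abouzaid's criterion $\cite{ma1}$ --- which is precisely your first route --- while your second route (stop removal, linking disks, and reduction to the local model $T^\ast D^n$) is exactly the argument of $\cite{gps2}$ that the paper cites as the more general statement. One caveat worth flagging: Abouzaid's criterion only yields \emph{split}-generation, whereas the theorem as stated (and as proved in $\cite{cggr,gps2}$ by resolving objects into twisted complexes of cocores, respectively by sectorial descent) asserts honest generation; this discrepancy is harmless for the paper's purposes, since Corollary \ref{corollary:split-generation} only requires split-generation, but a self-contained proof of the stated theorem must go through the $\cite{cggr}$ or $\cite{gps2}$ arguments rather than through the open-closed map alone.
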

\bigskip

We are now prepared to prove Corollary \ref{corollary:split-generation} by combining the facts stated above. In our specific setting, the Milnor fiber $M_{p,q,r}$ can be constructed by attaching $(p+q+r-1)$ 3-handles along $\Lambda_{p,q,r}$ to the standard symplectic ball $D^6$, see Figure \ref{fig:front-without-2-handles}. Recall that $\mathcal{W}_{p,q,r}$ is the endomorphism algebra of the Lagrangian cocores in $M_{p,q,r}$.

Since we have proved the quasi-isomorphism $\mathcal{V}_{p,q,r}\cong\mathcal{B}_{p,q,r}$ in Section \ref{section:quiver}, and it follows from Theorem \ref{theorem:main} that $\mathcal{W}_{p,q,r}\cong\mathcal{G}_{p,q,r}$. By Proposition \ref{proposition:Koszul}, there is Koszul duality between the $A_\infty$-algebras $\mathcal{V}_{p,q,r}$ and $\mathcal{W}_{p,q,r}$, namely
\begin{equation}
E(\mathcal{V}_{p,q,r})\cong\mathcal{W}_{p,q,r},E(\mathcal{W}_{p,q,r})\cong\mathcal{V}_{p,q,r},
\end{equation}
where $\mathcal{V}_{p,q,r}$ and $\mathcal{W}_{p,q,r}$ above are equipped with bigradings which coincide with the ones on the quiver algebras $\mathcal{B}_{p,q,r}$ and $\mathcal{G}_{p,q,r}$ described in the proof of Proposition \ref{proposition:Koszul}. However, for geometric applications, we have to get rid of the double gradings and obtain a version of Koszul duality between $\mathcal{V}_{p,q,r}$ and $\mathcal{W}_{p,q,r}$ as $\mathbb{Z}$-graded $A_\infty$-algebras. This is possible by Proposition \ref{proposition:lf} and the following lemma.

\begin{lemma}\label{lemma:Adams}
Let $p\geq2,q\geq2,r\geq2$. $H^\ast(\mathcal{G}_{p,q,r})$ is finite-dimensional in each fixed degree.
\end{lemma}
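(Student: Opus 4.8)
The plan is to reduce the statement to the finite-dimensionality of the degree-zero cohomology and then propagate finiteness to all degrees. Recall that $\mathcal{G}_{p,q,r}=\mathcal{G}(Q_{p,q,r},w_{p,q,r})$ and that $H^0(\mathcal{G}_{p,q,r})$ is the Jacobian algebra $\mathrm{Jac}(Q_{p,q,r},w_{p,q,r})=\mathbb{K}Q_{p,q,r}/(\partial w_{p,q,r})$. Since $w_{p,q,r}$ is cubic, the Adams bigrading of Proposition \ref{proposition:Koszul} is available: the differential preserves the Adams degree, and a path with $n_1$ arrows, $n_2$ reversed arrows and $n_3$ loops has Adams degree $-(n_1+2n_2+3n_3)$, so for each fixed Adams degree the word-length of the contributing paths is bounded. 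As the quiver is finite, each Adams-graded piece of $\mathcal{G}_{p,q,r}$ is finite-dimensional, hence $H^\ast(\mathcal{G}_{p,q,r})$ is automatically finite-dimensional in each \emph{Adams} degree. The content of the lemma is therefore to show that in each fixed \emph{total} degree only finitely many Adams degrees contribute to cohomology; equivalently, that cohomology classes represented by paths of arbitrarily large word-length vanish.

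First I would prove that $\mathrm{Jac}(Q_{p,q,r},w_{p,q,r})$ is finite-dimensional, which is the crux. Reading the cyclic derivatives of $w_{p,q,r}=a_1b_2c_2+a_2b_1c_1+a_1b_3c_3-a_2b_3c_3$ yields the relations $a_1b_2=a_2b_1=(a_1-a_2)b_3=0$, $c_2a_1=c_1a_2=c_3(a_1-a_2)=0$, and $b_1c_1=b_3c_3=-b_2c_2$. Inspecting the quiver (\ref{eq:quiver}), every oriented cycle must pass through the vertices $A$ and $B$: the vertices $P_{\geq2},Q_{\geq2},R_{\geq2}$ lie on dead-end linear chains and cannot occur in a cycle, and the only arrows returning to $A$ are $c_1,c_2,c_3$. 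Hence every oriented cycle is a concatenation of the length-three cycles $a_ib_jc_j$ based at $A$ and their cyclic rotations. Writing $\beta:=b_3c_3$, the relations give $a_i\beta=0$ and $\beta a_i=0$, so all length-three cycles through $A$ and $B$ vanish, while the length-three cycles based at $P_1,Q_1,R_1$ square to zero. Consequently every oriented cycle of length at least six vanishes, and since the acyclic paths in the finite quiver are bounded in length, $\mathrm{Jac}(Q_{p,q,r},w_{p,q,r})$ is finite-dimensional. I stress that this computation is uniform in $p,q,r\geq2$ and does not depend on the sign of $1-\tfrac1p-\tfrac1q-\tfrac1r$; the finiteness is forced by the precise potential even though $\mathbb{P}^1_{p,q,r}$ may be of tubular or wild type.

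Given Jacobi-finiteness, I would then invoke the general principle that the Ginzburg dg algebra of a quiver with finite-dimensional Jacobian algebra has cohomology which is finite-dimensional in each degree; this is standard in the theory of quivers with potential (Amiot; see also Keller $\cite{bk}$), and is exactly the property needed for the associated $3$-Calabi-Yau category to be Hom-finite. Combined with the automatic per-Adams-degree finiteness above, this gives that $H^\ast(\mathcal{G}_{p,q,r})$ is finite-dimensional in each total degree. As a by-product this verifies the hypothesis of Proposition \ref{proposition:lf} and, a posteriori, the identification $\mathcal{G}_{p,q,r}\cong\widehat{\mathcal{G}}_{p,q,r}$.

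The main obstacle is the Jacobian computation: one must check that the signs in $w_{p,q,r}$ are precisely such that all sufficiently long oriented cycles die, which is what distinguishes these generalized Milnor fibers from the counterexamples to Koszul duality discussed in the introduction. The propagation from $H^0$ to all degrees is then a formal consequence of the theory, but some care is needed because the cohomology is genuinely spread over infinitely many total degrees — finiteness here is a degreewise, not a global, statement.
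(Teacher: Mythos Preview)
Your proposal is correct and the core computation coincides with the paper's: both read off the cyclic derivatives of $w_{p,q,r}$ and show that every length-three cycle $a_ib_jc_j$ based at the vertex $A$ vanishes in $H^0(\mathcal{G}_{p,q,r})=\mathrm{Jac}(Q_{p,q,r},w_{p,q,r})$, whence the Jacobian algebra is finite-dimensional. The paper's proof in fact stops exactly at that point --- it only treats degree zero explicitly and leaves the passage from Jacobi-finiteness to degreewise finiteness of $H^\ast$ tacit --- so your appeal to Amiot's criterion is a genuine addition rather than a different route. Your side remark that the length-three cycles at $P_1,Q_1,R_1$ merely square to zero is accurate but not strictly needed once the cycles at $A$ are killed: any oriented cycle of length $\geq6$ at any vertex already contains some $a_ib_jc_j$ as a subword and hence dies.
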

\begin{proof}
From (\ref{eq:quiver}), we see that the only cycles of arrows in the quiver $Q_{p,q,r}$ are of the form $a_ib_jc_j$, where $i=1,2$ and $j=1,2,3$. To show that they vanish in the cohomology algebra $H^\ast(\mathcal{G}_{p,q,r})$, we need to take into account the relations coming from the differentials of the potential $w_{p,q,r}$. From $\partial w_{p,q,r}/\partial{c_1}$ and $\partial w_{p,q,r}/\partial{c_2}$, we see that $a_1b_2=a_2b_1=0$ in $H^\ast(\mathcal{G}_{p,q,r})$, so the only possible non-zero cycles in the cohomology algebra are $a_1b_1c_1$, $a_1b_3c_3$, $a_2b_2c_2$ and $a_2b_3c_3$. From $\partial w_{p,q,r}/\partial{a_1}$ and $\partial w_{p,q,r}/\partial{a_2}$, one gets the relations
\begin{equation}
b_2c_2+b_3c_3=0,b_1c_1-b_3c_3=0
\end{equation}
in $H^\ast(\mathcal{G}_{p,q,r})$, so we are reduced to show that $a_1b_1c_1=a_2b_2c_2=0$. From $\partial w_{p,q,r}/\partial c_3$, we get $a_1b_3=a_2b_3$, so it suffices to prove that $a_1b_1c_1=0$. But from the above we get
\begin{equation}
a_1b_1c_1=a_1b_3c_3=-a_1b_2c_2=0.
\end{equation}
\end{proof}

The above lemma enables us to apply Proposition \ref{proposition:functor}, from which we get an equivalence
\begin{equation}
D^\mathit{prop}(\mathcal{G}_{p,q,r})\cong D^\mathit{perf}(\mathcal{B}_{p,q,r}).
\end{equation}
By Lemma 3.25 of $\cite{ps1}$ and Lemma \ref{lemma:quiver}, there is a derived equivalence $D^\mathit{perf}(\mathcal{B}_{p,q,r})\cong D^\mathit{perf}(\mathcal{V}_{p,q,r})$. Similarly, Theorem \ref{theorem:main} implies that $D^\mathit{perf}(\mathcal{G}_{p,q,r})\cong D^\mathit{perf}(\mathcal{W}_{p,q,r})$. Since both of $\mathcal{G}_{p,q,r}$ and $\mathcal{W}_{p,q,r}$ are homologically smooth $A_\infty$-algebras, by restricting to the full subcategory of proper modules we get an equivalence $D^\mathit{prop}(\mathcal{G}_{p,q,r})\cong D^\mathit{prop}(\mathcal{W}_{p,q,r})$, see Lecture 7 of $\cite{ps2}$. We have thus interpreted the derived equivalence in Proposition \ref{proposition:functor} as an equivalence between derived categories of certain modules over Fukaya $A_\infty$-algebras:
\begin{equation}\label{eq:pp}
D^\mathit{prop}(\mathcal{W}_{p,q,r})\cong D^\mathit{perf}(\mathcal{V}_{p,q,r}).
\end{equation}
By Theorem \ref{theorem:generation}, we have an identification
\begin{equation}
D^\mathit{perf}(\mathcal{W}(M_{p,q,r}))\cong D^\mathit{perf}(\mathcal{W}_{p,q,r}).
\end{equation}
It is proved by Ganatra in $\cite{sg}$ that the wrapped Fukaya category of any Weinstein manifold is a homologically smooth $A_\infty$-category (in its general form, this fact depends on the results claimed in the work-in-progress $\cite{bee}$), therefore the above equivalence restricts to an equivalence
\begin{equation}
D^\mathit{prop}(\mathcal{W}(M_{p,q,r}))\cong D^\mathit{prop}(\mathcal{W}_{p,q,r}).
\end{equation}
Using this equivalence, (\ref{eq:pp}) can be interpreted equivalently as an equivalence
\begin{equation}
D\mathcal{K}:D^\mathit{prop}(\mathcal{W}(M_{p,q,r}))\xrightarrow{\simeq}D^\mathit{perf}(\mathcal{V}(M_{p,q,r})).
\end{equation}
On the other hand, since each object of $\mathcal{F}(M_{p,q,r})$ can be regarded as a proper $A_\infty$-module over $\mathcal{W}_{p,q,r}$ under the Yoneda functor
\begin{equation}
\mathcal{Y}:\mathcal{W}(M_{p,q,r})\rightarrow\mathcal{W}_{p,q,r}^\mathit{mod},
\end{equation}
we get a fully faithful embedding
\begin{equation}
D\mathcal{J}:D^\mathit{perf}(\mathcal{F}(M_{p,q,r}))\hookrightarrow D^\mathit{prop}(\mathcal{W}(M_{p,q,r})).
\end{equation}
Combining with the equivalence $D\mathcal{K}$, we obtain a fully faithful embedding
\begin{equation}
D\mathcal{K}\circ D\mathcal{J}:D^\mathit{perf}(\mathcal{F}(M_{p,q,r}))\hookrightarrow D^\mathit{perf}(\mathcal{V}(M_{p,q,r})),
\end{equation}
which proves the split-generation of $\mathcal{F}(M_{p,q,r})$ by vanishing cycles.

\paragraph{Remark} As a by-product of the above argument, we get the equivalence
\begin{equation}
D^\mathit{prop}(\mathcal{W}(M_{p,q,r}))\cong D^\mathit{perf}(\mathcal{F}(M_{p,q,r})),
\end{equation}
which is expected to be true for a general Weinstein manifold $M$, namely when $\mathcal{F}(M)$ and $\mathcal{W}(M)$ are not necessarily related by $A_\infty$-Koszul duality. However, the fully faithfulness of the functor $D\mathcal{K}$ above relies on the fact that
\begin{equation}
R\mathrm{Hom}_{\mathcal{V}_{p,q,r}}(\Bbbk,\Bbbk)\cong\mathcal{W}_{p,q,r}
\end{equation}
as $\mathbb{Z}$-graded $A_\infty$-algebras. Because of this, in general the generation of the wrapped Fukaya category $\mathcal{W}(M)$ by cocores does not lead to a split-generation result of the compact Fukaya category $\mathcal{F}(M)$.

\subsection{Quasi-dilation}\label{section:quasi-dilation}

In this subsection, $\mathcal{A}$ will be a special kind of a directed $A_\infty$-algebra over some semisimple ring $\Bbbk$.

\begin{definition}[Definition 5.10 of $\cite{cmrs}$]
A one-way algebra $\mathcal{A}$ is a finite dimensional algebra over $\Bbbk$ with a complete set $\{e_1,\cdots,e_r\}$ of orthogonal idempotents such that
\begin{itemize}
	\item for $i\neq j$, if $e_i\mathcal{A}e_j\neq0$, then $e_j\mathcal{A}e_i=0$;
	\item for any idempotent $e_i$, we have $\dim_\mathbb{K}(e_i\mathcal{A}e_i)=1$;
	\item $r>1$ and $\mathcal{A}$ is indecomposable.
\end{itemize}
\end{definition}

Note that many of the known examples (say those studied in $\cite{ak2}$) of the directed Fukaya categories $\mathcal{A}(\pi)$ associated to Lefschetz fibrations $\pi$ can actually be identified with one-way algebras. In particular, the directed $A_\infty$-algebras $\mathcal{A}_{p,q,r}$ encountered in Section \ref{section:quiver} are one-way algebras.
\bigskip

From now on, assume that $\mathcal{A}$ is an one-way algebra. Denote by $\mathcal{B}=\mathcal{A}\oplus\mathcal{A}^\vee[-n]$ the cyclic completion of $\mathcal{A}$. In particular, $\mathcal{B}$ is a compact $n$-Calabi-Yau algebra. Since $\mathcal{A}$ is formal, then so is $\mathcal{B}$ by Proposition \ref{proposition:formality}. Since we are actually working with $\mathbb{Z}$-graded associative algebras over $\Bbbk$, we will mainly take the algebraic, rather than the categorical point of view in this subsection. Recall that the Hochschild cochain complex $\mathit{CC}^\ast(\mathcal{B},\mathcal{B})$ is defined to be
\begin{equation}
\hom_\Bbbk(T\overline{\mathcal{B}},\mathcal{B}),
\end{equation}
the space of $\Bbbk$-linear maps from the reduced tensor algebra of $\mathcal{B}$ to $\mathcal{B}$. Using the grading on $\mathcal{B}$, we get the decomposition
\begin{equation}
\mathit{CC}^d(\mathcal{B},\mathcal{B})=\bigoplus_{d=r+s}\mathit{CC}^r(\mathcal{B},\mathcal{B}[s]),
\end{equation}
where the right-hand side is the subspace of linear maps $\mathcal{B}^{\otimes r}\rightarrow\mathcal{B}$ of degree $s$. In particular, the Hochschild complex of a graded algebra is bigraded, and the corresponding Hochschild differential has bidegree $(1,0)$.
\bigskip

Formality of the $A_\infty$-algebra $\mathcal{B}$ implies that there is a distinguished Hochschild cocycle
\begin{equation}
\mathit{eu}_\mathcal{B}\in\mathit{CC}^1(\mathcal{B},\mathcal{B})
\end{equation}
defined by sending a homogeneous element $b$ with $|b|=i$ to $i\cdot b$. The fact that
\begin{equation}
|b_2b_1|=|b_1|+|b_2|
\end{equation}
then implies that $\mathit{eu}_\mathcal{B}$ is a derivation, namely $\mathit{eu}_\mathcal{B}\in\mathit{HH}^1(\mathcal{B},\mathcal{B})$. We call $\mathit{eu}_\mathcal{B}$ the \textit{Euler vector field}.
\bigskip

As a trivial extension, $\mathcal{B}$ is easily seen to be a ($\mathbb{Z}$-graded) symmetric algebra, whose non-degenerate inner product
\begin{equation}
\langle\cdot,\cdot\rangle:\mathcal{B}\otimes\mathcal{B}\rightarrow\Bbbk
\end{equation}
is defined by
\begin{equation}
\langle b_1,b_2\rangle=a_1a_2^\vee+a_1^\vee a_2,
\end{equation}
where $b_i=(a_i,a_i^\vee)$ with $a_i\in\mathcal{A}$ and $a_i^\vee\in\mathcal{A}^\vee[-n]$ for $i=1,2$. This inner product is induced from the compact Calabi-Yau structure on $\mathcal{B}$.

It is proved by Tradler in $\cite{tt}$ that there is an algebraically defined BV operator
\begin{equation}
\Delta_{cyc}:\mathit{HH}^\ast(\mathcal{B},\mathcal{B})\rightarrow\mathit{HH}^{\ast-1}(\mathcal{B},\mathcal{B})
\end{equation}
on the Hochschild cohomology of any cyclic $A_\infty$-algebra $\mathcal{B}$, which has bidegree $(-1,0)$. For our purposes here, we shall omit the full formulae, and concentrate on its simplest piece on Hochschild cochains with pure degree $(1,0)$:
\begin{equation}
\langle\Delta_{cyc}(c),b\rangle=\langle c(b),\mathrm{id}_\mathcal{B}\rangle,
\end{equation}
where $b\in\mathcal{B}$, $c\in\mathit{CC}^1(\mathcal{B},\mathcal{B}[0])$ and $\mathrm{id}_\mathcal{B}$ is the identity of $\mathcal{B}$.
\bigskip

Our main result in this subsection the following:
\begin{proposition}\label{proposition:quasi-dilation}
Let $\mathcal{B}$ be the n-cyclic completion of a one-way algebra $\mathcal{A}$ over $\Bbbk$, then
\begin{equation}
\Delta_{cyc}\left(\frac{1}{n}\mathit{eu}_\mathcal{B}\right)=1.
\end{equation}
\end{proposition}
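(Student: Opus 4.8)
The plan is to reduce the identity to a direct computation with the pairing $\langle\cdot,\cdot\rangle$, exploiting the fact that $\mathit{eu}_\mathcal{B}$ is a Hochschild cochain of pure bidegree $(1,0)$. Since $\Delta_{cyc}$ has bidegree $(-1,0)$, the class $\Delta_{cyc}(\mathit{eu}_\mathcal{B})$ lies in $\mathit{HH}^0(\mathcal{B},\mathcal{B})$ and is represented by an element of $\mathcal{B}^0$; my goal is to identify this element with $n\cdot\mathrm{id}_\mathcal{B}$, after which dividing by $n$ yields the claim. Because the inner product is non-degenerate, it suffices to check that $\Delta_{cyc}(\mathit{eu}_\mathcal{B})$ and $n\cdot\mathrm{id}_\mathcal{B}$ pair identically against every homogeneous $b\in\mathcal{B}$.

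First I would apply the displayed formula for $\Delta_{cyc}$ on pure-degree-$(1,0)$ cochains, which gives
\begin{equation}
\langle\Delta_{cyc}(\mathit{eu}_\mathcal{B}),b\rangle=\langle\mathit{eu}_\mathcal{B}(b),\mathrm{id}_\mathcal{B}\rangle=|b|\cdot\langle b,\mathrm{id}_\mathcal{B}\rangle
\end{equation}
for homogeneous $b$, using that $\mathit{eu}_\mathcal{B}(b)=|b|\,b$. The next step is to analyze $\langle b,\mathrm{id}_\mathcal{B}\rangle$ via the explicit inner product: writing $b=(a_b,a_b^\vee)$ and $\mathrm{id}_\mathcal{B}=(1_\mathcal{A},0)$, the formula $\langle b_1,b_2\rangle=a_1a_2^\vee+a_1^\vee a_2$ gives $\langle b,\mathrm{id}_\mathcal{B}\rangle=a_b^\vee(1_\mathcal{A})$. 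Since the unit $1_\mathcal{A}$ sits in degree $0$ and the dual summand $\mathcal{A}^\vee[-n]$ places the functionals dual to $\mathcal{A}^0$ in degree $n$, this evaluation is nonzero only when $b$ has a nonzero component in degree $n$. Hence the factor $|b|$ in the computation above can contribute only when $|b|=n$, where it equals the constant $n$.

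Combining these observations, for every homogeneous $b$ I obtain $\langle\Delta_{cyc}(\mathit{eu}_\mathcal{B}),b\rangle=n\langle b,\mathrm{id}_\mathcal{B}\rangle$, and graded symmetry of the pairing (the relevant Koszul sign $(-1)^{n\cdot0}$ being trivial) rewrites this as $\langle n\cdot\mathrm{id}_\mathcal{B},b\rangle$. Non-degeneracy of $\langle\cdot,\cdot\rangle$ then forces $\Delta_{cyc}(\mathit{eu}_\mathcal{B})=n\cdot\mathrm{id}_\mathcal{B}$ in $\mathit{HH}^0(\mathcal{B},\mathcal{B})$, so that $\Delta_{cyc}\bigl(\tfrac{1}{n}\mathit{eu}_\mathcal{B}\bigr)=\mathrm{id}_\mathcal{B}=1$.

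The main obstacle is the degree bookkeeping in the middle step: one must be certain that the only elements pairing nontrivially with $\mathrm{id}_\mathcal{B}$ sit in degree exactly $n$, so that the degree factor $|b|$ produced by the Euler field can be replaced by the global constant $n$. This is precisely the place where the structure of the $n$-cyclic completion enters---the shift $[-n]$ on the dual summand is what pins the dual of the degree-zero unit to degree $n$. Beyond this, the only things to verify are routine: that $\mathit{eu}_\mathcal{B}$ is a genuine Hochschild cocycle (immediate from additivity of degree under multiplication, as already noted before the statement) and that the Koszul signs in the symmetric pairing do not spoil the final identification.
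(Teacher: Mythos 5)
Your proof is correct, and it takes a genuinely different route from the paper's. You pin down the cochain-level element $\Delta_{cyc}(\mathit{eu}_\mathcal{B})$ directly: pair it against every homogeneous $b$ using $\langle\Delta_{cyc}(c),b\rangle=\langle c(b),\mathrm{id}_\mathcal{B}\rangle$, observe that $\langle b,\mathrm{id}_\mathcal{B}\rangle=a_b^\vee(1_\mathcal{A})$ is nonzero only when $|b|=n$ (this is exactly where the shift $[-n]$ enters), so the Euler factor $|b|$ may be replaced by the constant $n$ throughout, and conclude by non-degeneracy of the pairing. The paper argues in two stages instead: it first uses the bidegree $(-1,0)$ of $\Delta_{cyc}$ to place the class in the centre $Z(\mathcal{B})\subset\mathit{HH}^0(\mathcal{B},\mathcal{B})$, then invokes the structure of centres of trivial extensions, $Z(\mathcal{B})=Z(\mathcal{A})\ltimes\mathit{Ann}_{\mathcal{A}^\vee}(C(\mathcal{A}))$ from $\cite{bhz}$, together with $Z(\mathcal{A})\cong\mathbb{K}$ (this is where the one-way hypothesis is used), to write the class as $(\lambda\cdot\mathrm{id}_\mathcal{A},\alpha^\vee[-n])$; the same pairing computation, carried out only against the degree-$n$ elements $(0,a^\vee[-n])$ and with $a$ chosen in $C(\mathcal{A})$ to kill the annihilator term, then forces $\lambda=1$ and $\alpha^\vee=0$. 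Your route is more elementary and, notably, never uses the one-way hypothesis or the input from $\cite{bhz}$, so it actually proves the identity for the $n$-cyclic completion of any finite-dimensional trivially graded algebra over $\Bbbk$; the paper's route needs the hypothesis but only ever evaluates the BV formula on the degree-$n$ part of $\mathcal{B}$, where $\mathit{eu}_\mathcal{B}$ acts by exactly $n$, and it records explicitly that the class lies in $Z(\mathcal{B})$, which is the form used downstream. One point worth making explicit in your write-up: you are computing at cochain level, which is legitimate here because $\mathit{eu}_\mathcal{B}$ is a cocycle and $\mathit{CC}^0(\mathcal{B},\mathcal{B})$ contains no coboundaries, so the element $n\cdot\mathrm{id}_\mathcal{B}$ you produce already represents the cohomology class.
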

\begin{proof}
Let $a$ be a generator of $\mathcal{A}$. Since $\mathcal{A}$ is trivially graded, $|(a,0)|=0$ in $\mathcal{B}$. It follows from the definition of $\mathcal{B}$ that $|(0,a^\vee[-n])|=n$ in $\mathcal{B}$. Since $\mathit{eu}_\mathcal{B}$ has pure degree $(1,0)$, from the definition of the BV operator $\Delta_{cyc}$ it follows that
\begin{equation}\label{eq:exp1}
\left\langle\Delta_{cyc}\left(\frac{1}{n}\mathit{eu}_\mathcal{B}\right),(0,a^\vee[-n])\right\rangle=\left\langle\frac{1}{n}\mathit{eu}_\mathcal{B}(0,a^\vee[-n]),\mathrm{id}_\mathcal{B}\right\rangle,
\end{equation}
By definition of the derivation $\frac{1}{n}\mathit{eu}_\mathcal{B}$ and the symmetric pairing $\langle\cdot,\cdot\rangle$ on a trivial extension algebra, the right-hand side of (\ref{eq:exp1}) is equal to
\begin{equation}\label{eq:int1}
a^\vee[-n]\cdot\mathrm{id}_\mathcal{A}.
\end{equation}

For any trivial extension algebra $\mathcal{B}$ of $\mathcal{A}$, it is easy to find that
\begin{equation}\label{eq:center}
Z(\mathcal{B})=Z(\mathcal{A})\ltimes\mathit{Ann}_{\mathcal{A}^\vee}(C(\mathcal{A})),
\end{equation}
where $Z(\mathcal{A})$ and $Z(\mathcal{B})$ denote the (ungraded) centres of $\mathcal{A}$ and $\mathcal{B}$ respectively, $C(\mathcal{A})\subset\mathcal{A}$ is the subspace of commutators, and
\begin{equation}
\mathit{Ann}_{\mathcal{A}^\vee}(V):=\{a^\vee\in\mathcal{A}^\vee|a^\vee(V)=0\}
\end{equation}
for any subspace $V\subset\mathcal{A}$, see $\cite{bhz}$.

The assumption that $\mathcal{A}$ is a one-way algebra implies easily that $Z(\mathcal{A})\cong\mathbb{K}$. On the other hand, since the BV operator $\Delta_{cyc}$ has bidegree $(-1,0)$,
\begin{equation}
\Delta_{cyc}\left(\frac{1}{n}\mathit{eu}_\mathcal{B}\right)\in Z(\mathcal{B})\subset\mathit{HH}^0(\mathcal{B},\mathcal{B}).
\end{equation}
By (\ref{eq:center}), $\Delta_{cyc}\left(\frac{1}{n}\mathit{eu}_\mathcal{B}\right)$ can be expressed as $(\lambda\cdot\mathrm{id}_\mathcal{A},\alpha^\vee[-n])$, where $\lambda\in\mathbb{K}$ and $\alpha^\vee\in\mathit{Ann}_{\mathcal{A}^\vee}(C(\mathcal{A}))$. Using this expression, the left-hand side of (\ref{eq:exp1}) becomes $(\lambda a^\vee[-n]+a\cdot\alpha^\vee[-n])\cdot\mathrm{id}_\mathcal{A}$, from which we get
\begin{equation}\label{eq:int2}
\lambda a^\vee[-n]+a\cdot\alpha^\vee[-n]=a^\vee[-n].
\end{equation}

Since we may choose $a\in C(\mathcal{A})$ in (\ref{eq:int2}), by definition of $\alpha^\vee$ we deduce $\lambda=1$ in the above. It follows that $a\cdot\alpha^\vee[-n]$ for all $a\in\mathcal{A}$, which forces $\alpha^\vee[-n]=0$. Thus we have proved that $\Delta_{cyc}(\frac{1}{n}\mathit{eu}_\mathcal{B})=1\in\mathit{HH}^0(\mathcal{B},\mathcal{B})$.
\end{proof}

\paragraph{Remark} A related result has been obtained by Schedler in $\cite{ts}$ for preprojective algebras $\Pi_Q$ associated to a non-Dynkin quiver $Q$. Since $\Pi_Q$ is 2-Calabi-Yau, the half-Euler vector field plays an important role there.
\bigskip

Suppose two Calabi-Yau algebras $\mathcal{B}$ and $\mathcal{G}$ are Koszul dual as $\mathbb{Z}$-graded $A_\infty$-algebras, it is first proved in $\cite{cyz}$ for classical Koszul Calabi-Yau algebras and later generalized to the chain level in $\cite{eh}$ that there is a BV algebra isomorphism
\begin{equation}
\mathit{HH}^\ast(\mathcal{B},\mathcal{B})\cong\mathit{HH}^\ast(\mathcal{G},\mathcal{G}).
\end{equation}
When $\mathcal{B}=\mathcal{A}\oplus\mathcal{A}^\vee[-n]$ is a cyclic completion of a one-way algebra, Proposition \ref{proposition:quasi-dilation} then implies the existence of a cohomology class in $\mathit{HH}^1(\mathcal{G},\mathcal{G})$, which we denote by $\frac{1}{n}\mathit{eu}_\mathcal{G}$. satisfying
\begin{equation}
\Delta_{CY}\left(\frac{1}{n}\mathit{eu}_\mathcal{G}\right)=1,
\end{equation}
where $\Delta_{CY}$ denotes the BV operator on $\mathit{HH}^\ast(\mathcal{G},\mathcal{G})$ defined by the Calabi-Yau structure on $\mathcal{G}$.
\bigskip

Back to our case, it follows from Lemma \ref{lemma:Adams} that the Koszul duality between the $\mathbb{Z}$-graded $A_\infty$-algebras $\mathcal{B}_{p,q,r}$ and $\mathcal{G}_{p,q,r}$ implies the existence of a cohomology class $\frac{1}{3}\mathit{eu}_{p,q,r}\in\mathit{HH}^1(\mathcal{G}_{p,q,r},\mathcal{G}_{p,q,r})$ which is mapped to the identity by the BV operator $\Delta_{CY}$, where $\Delta_{CY}$ is defined using the smooth 3-Calabi-Yau structure on the Ginzburg dg algebra $\mathcal{G}_{p,q,r}$. On the other hand, although the quasi-isomorphism
\begin{equation}
\mathcal{W}_{p,q,r}\cong\mathcal{G}_{p,q,r}
\end{equation}
induces an isomorphism between Hochschild cohomologies
\begin{equation}\label{eq:Gerstanhaber}
\mathit{HH}^\ast(\mathcal{W}(M_{p,q,r}),\mathcal{W}(M_{p,q,r}))\cong\mathit{HH}^\ast(\mathcal{G}_{p,q,r},\mathcal{G}_{p,q,r})
\end{equation}
as Gerstenhaber algebras, it is not in general true that (\ref{eq:Gerstanhaber}) preserves the underlying BV structures, as the Calabi-Yau structure on the wrapped Fukaya category $\mathcal{W}(M_{p,q,r})$ coming from symplectic geometry may differ from the Calabi-Yau structure on the Ginzburg dg algebra $\mathcal{G}_{p,q,r}$. The variation of the Calabi-Yau structure changes the BV operator $\Delta_{CY}$ on $\mathit{HH}^\ast(\mathcal{G}_{p,q,r},\mathcal{G}_{p,q,r})$ by the conjugate action of an invertible element
\begin{equation}
u\in\mathit{HH}^0(\mathcal{G}_{p,q,r},\mathcal{G}_{p,q,r})^\times.
\end{equation}
By the BV algebra isomorphism
\begin{equation}
\mathit{SH}^\ast(M_{p,q,r})\cong\mathit{HH}^\ast(\mathcal{W}(M_{p,q,r}),\mathcal{W}(M_{p,q,r}))
\end{equation}
proved by Ganatra $\cite{sg}$, we conclude the existence of a quasi-dilation $(\frac{1}{3}\mathit{eu}_{p,q,r},u)\in\mathit{SH}^1(M_{p,q,r})\times\mathit{SH}^0(M_{p,q,r})^\times$.

\paragraph{Remark} Another way of proving that $M_{p,q,r}$ admits a quasi-dilations over a field $\mathbb{K}$ with $\mathrm{char}(\mathbb{K})\neq2$ is to use the Lefschetz fibrations
\begin{equation}
\pi_{p,q,r}:M_{p,q,r}\rightarrow\mathbb{C}.
\end{equation}
Since the smooth fiber of $\pi_{p,q,r}$ is symplectomorphic to the 4-dimensional $D_4$ type Milnor fiber, one can apply Proposition \ref{proposition:quasi-dilation} to the zig-zag algebra $B_T$ with the tree $T=D_4$ to see that the fiber of $\pi_{p,q,r}$ admits a quasi-dilation. Together with Seidel-Solomon's inductive argument based on Lefschetz fibration techniques $\cite{ss}$, this implies that the total space $M_{p,q,r}$ also admits a quasi-dilation. However, this argument does not apply when $\mathbb{K}=\mathbb{Z}/2$, since in this case Koszul duality does not hold for $D_4$ Milnor fibers, see Theorem 14 of $\cite{etl1}$ for details.

\section{From Lefschetz fibrations to Legendrian fronts}

We describe a Lefschetz fibration, whose construction is essentially due to Keating $\cite{ak2}$, on $M_{p,q,r}$. With Casals-Murphy recipe $\cite{cm}$, we are able to get the Legendrian front presentation of this fibration, which, after simplifications, gives us the Legendrian attaching link $\Lambda_{p,q,r}\subset S^5$ of the Weinstein manifold $M_{p,q,r}$.

\subsection{A Lefschetz fibration on $M_{p,q,r}$}\label{section:Lefschetz 6-fold}

We start by recalling a Lefschetz fibration on the Milnor fibers $T_{p,q,r}\subset\mathbb{C}^3$ (defined by $(\ref{eq:4DMilnor})$) constructed by Keating in $\cite{ak2}$. The construction is divided into three steps.\bigskip

First, consider the Milnor fiber of two variables $F_{p,q}\subset\mathbb{C}^2$ defined by the equation
\begin{equation}
g_{p,q}(x,y):=(x^{p-2}-y^2)(x^2-\lambda y^{q-2})-1.
\end{equation}
Using A'Campo's method, the vanishing cycles of $F_{p,q}$ can be explicitly described by a divide of $\mathbb{R}^2$ induced by a real deformation $\tilde{g}_{p,q}$ of the polynomial $g_{p,q}$. After some mutations, we get a basis of vanishing cycles labelled by
\begin{equation}
A,B,P_1,\dots,P_{p-1},Q_1,\dots,Q_{q-1},R,
\end{equation}
see Figure \ref{fig:vsf}.
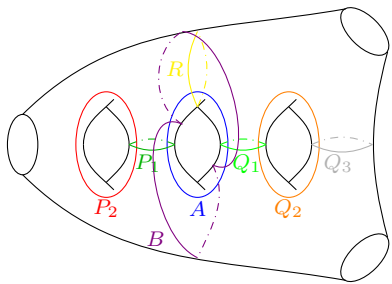
\begin{figure}
	\centering
	\begin{tikzpicture}
	\draw (0,0) ellipse (0.2cm and 0.4cm);
	\draw [rotate around={45:(4.5,1.5)}] (4.5,1.5) ellipse (0.2cm and 0.4cm);
	\draw [rotate around={-45:(4.5,-1.5)}] (4.5,-1.5) ellipse (0.2cm and 0.4cm);
	\draw (0,0.4) to [in=-170,out=45] (4.25,1.8);
	\draw (0,-0.4) to [in=170,out=-45] (4.25,-1.8);
	\draw (4.8,1.23) to [in=105,out=-105] (4.8,-1.23);
	
	\draw (1.2,0.6) to [in=90,out=-135] (0.8,0);
	\draw (0.8,0) to [in=135,out=-90] (1.2,-0.6);
	\draw (1.1,0.5) to [in=90,out=-45] (1.4,0);
	\draw (1.4,0) to [in=45,out=-90] (1.1,-0.5);
	
	\draw (2.4,0.6) to [in=90,out=-135] (2,0);
	\draw (2,0) to [in=135,out=-90] (2.4,-0.6);
	\draw (2.3,0.5) to [in=90,out=-45] (2.6,0);
	\draw (2.6,0) to [in=45,out=-90] (2.3,-0.5);
	
	\draw (3.6,0.6) to [in=90,out=-135] (3.2,0);
	\draw (3.2,0) to [in=135,out=-90] (3.6,-0.6);
	\draw (3.5,0.5) to [in=90,out=-45] (3.8,0);
	\draw (3.8,0) to [in=45,out=-90] (3.5,-0.5);
	
	\draw [red] (1.1,0) ellipse (0.4cm and 0.7cm);
	\node [red] at (1.1,-0.85) {\footnotesize $P_2$};
	\draw [blue] (2.3,0) ellipse (0.4cm and 0.7cm);
	\node [blue] at (2.3,-0.85) {\footnotesize $A$};
	\draw [orange] (3.5,0) ellipse (0.4cm and 0.7cm);
	\node [orange] at (3.5,-0.85) {\footnotesize $Q_2$};
	
	\draw [green] (2.6,0) [in=-155,out=-25] to (3.2,0);
	\node [green] at (2.95,-0.25) {\footnotesize $Q_1$};
	\draw [green,dash dot] (2.6,0) [in=155,out=25] to (3.2,0);
	
	\draw [black!30!green] (1.4,0) [in=-155,out=-25] to (2,0);
	\node [black!30!green] at (1.65,-0.25) {\footnotesize $P_1$};
	\draw [black!30!green,dash dot] (1.4,0) [in=155,out=25] to (2,0);
	
	\draw [black!30!white] (3.8,0) [in=-155,out=-25] to (4.6,0);
	\node [black!30!white] at (4.15,-0.25) {\footnotesize $Q_3$};
	\draw [black!30!white,dash dot] (3.8,0) [in=155,out=25] to (4.6,0);
	
	\draw [yellow] (2.3,0.5) [in=-115,out=115] to (2.3,1.5);
	\node [yellow] at (2,1) {\footnotesize $R$};
	\draw [yellow,dash dot] (2.3,0.5) [in=-65,out=65] to (2.3,1.5);
	
	\draw [violet,dash dot] (2.5,-0.27) [in=65,out=-65] to (2.3,-1.5);
	\draw [violet] (2.3,-1.5) [in=155,out=155] to (2.1,0.27);
	\draw [violet,dash dot] (2.1,0.27) [in=-150,out=150] to (2.1,1.5);
	\draw [violet] (2.1,1.5) [in=-25,out=-5] to (2.5,-0.27);
	\node [violet] at (1.75,-1.25) {\footnotesize $B$};
	\end{tikzpicture}
	\caption{The vanishing cycles of $F_{3,4}$}
	\label{fig:vsf}
\end{figure}
\bigskip

The second step is to produce a Lefschetz fibration on $T_{p,q,r}$. In this case, the Morsification of the polynomial
\begin{equation}
\tilde{g}_{p,q}(x,y)+z^r
\end{equation}
defines a generalized Milnor fiber $\widehat{T}_{p,q,r}$, and $T_{p,q,r}^\mathrm{in}$ embeds in it as a Liouville sub-domain. There is a Lefschetz fibration $\hat{\pi}_T:\widehat{T}_{p,q,r}\rightarrow\mathbb{C}$ defined by a Morsification of $z^r$. To get a Lefschetz fibration on $T_{p,q,r}$, one chooses carefully a 1-parameter family of polynomials $m_t(x,y,z)$, such that $m_0=\tilde{g}_{p,q}+\tilde{z}^r$, with $\tilde{z}^r$ being a Morsification of $z^r$, and the smooth affine surface defined by $m_1$ is symplectomorphic to $T_{p,q,r}$. During the deformation procedure, some of the critical values of $m_t$ disappear and the remaining ones correspond to the vanishing paths for the Lefschetz fibration on $\mathbb{C}^3$ defined by $\tilde{t}_{p,q,r}$. Similarly, the corresponding matching paths of the Lefschetz fibration $\hat{\pi}_T$ on $\widehat{T}_{p,q,r}$ also disappear under the deformation when $t\rightarrow1$, and the remaining ones are the matching paths for the Lefschetz fibration that we want on $T_{p,q,r}$. In this way, one gets the description of $T_{p,q,r}$ as the total space of a Lefschetz fibration $\pi_T:T_{p,q,r}\rightarrow\mathbb{C}$, whose smooth fiber is symplectomorphic to $F_{p,q}$.
\bigskip

The third step is to apply a sequence of destabilizations to the Lefschetz fibration $\pi_T$. We briefly recall the general construction of a stabilization. Let $\pi:M^\mathrm{in}\rightarrow D^2$ be an exact symplectic Lefschetz fibration on a 4-dimensional Liouville domain $M^\mathrm{in}$ with smooth fibers $F^\mathrm{in}$ a Riemann surface with boundary. Denote by $V_1,\dots,V_n\subset F^\mathrm{in}$ the vanishing cycles of $\pi$. Given an embedded arc $\gamma\subset F^\mathrm{in}$ with $\partial\gamma\subset\partial F^\mathrm{in}$ such that $\gamma$ is an exact Lagrangian submanifold in $F$ relative to its boundary $\partial\gamma$, one can construct a new Lefschetz fibration $\pi^s:M^\mathrm{in}\rightarrow D^2$, called the \textit{stabilization} of $\pi$, as follows:
\begin{itemize}
	\item replace $F^\mathrm{in}$ with the Riemann surface $(F')^\mathrm{in}$, which is $F^\mathrm{in}$ with a 1-handle attached along the endpoints of $\gamma$, so that the exact Lagrangian submanifold with boundary $\gamma\subset F^\mathrm{in}$ becomes a closed curve $\gamma'\subset (F')^\mathrm{in}$;
	\item add a new critical point to the base $D^2$ of the Lefschetz fibration $\pi$ corresponding to the new vanishing cycle $\gamma'$.
\end{itemize}
Applying the stabilization construction reversely to $\pi_T$ results in a Lefschetz fibration $\pi_T^{-s}:T_{p,q,r}\rightarrow\mathbb{C}$, whose smooth fiber $F^{-s}$ is symplectomorphic to a thrice-punctured torus, which can be regarded as a plumbing of four copies of $T^\ast S^1$ according to a $D_4$-tree. Denote the zero sections of these cotangent bundles by $P,Q,R$ and $T$ respectively, see Figure \ref{fig:3-punctured}, they form a Lagrangian skeleton of the Liouville domain $F^{-s}$.
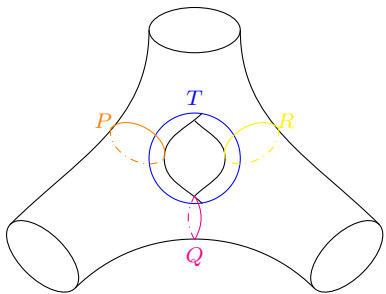
\begin{figure}
	\centering
	\begin{tikzpicture}
	\draw (0,0) ellipse (0.6cm and 0.3cm);
	\draw [rotate around={-45:(-2,-3)}] (-2,-3) ellipse (0.6cm and 0.3cm);
	\draw [rotate around={45:(2,-3)}] (2,-3) ellipse (0.6cm and 0.3cm);
	\draw (-0.6,0) to[in=45,out=-90] (-2.43,-2.58);
	\draw (0.6,0) to[in=135,out=-90] (2.43,-2.58);
	\draw (-1.57,-3.42) to [in=135,out=45] (1.57,-3.42);
	\draw (0.1,-1.1) to [in=90,out=-135] (-0.4,-1.7);
	\draw (-0.4,-1.7) to [in=135,out=-90] (0.1,-2.3);
	\draw (0,-1.2) to [in=90,out=-45] (0.4,-1.7);
	\draw (0.4,-1.7) to [in=45,out=-90] (0,-2.2);
	\draw [blue] (0,-1.7) circle (0.6);
	\node [blue] at (0,-0.9) {\footnotesize $T$};
	\draw [orange] (-0.4,-1.7) to [in=45, out=75] (-1.1,-1.3);
	\draw [orange,dash dot] (-1.1,-1.3) to [in=-135,out=-105] (-0.4,-1.7);
	\node [orange] at (-1.2,-1.2) {\footnotesize $P$};
	\draw [yellow] (0.4,-1.7) to [in=135, out=105] (1.1,-1.3);
	\draw [yellow,dash dot] (1.1,-1.3) to [in=-45,out=-75] (0.4,-1.7);
	\node [yellow] at (1.2,-1.2) {\footnotesize $R$};
	\draw [magenta] (0,-2.2) [in=60,out=-60] to (0,-2.78);
	\draw [magenta,dash dot] (0,-2.78) [in=-120,out=120] to (0,-2.2);
	\node [magenta] at (0,-3) {\footnotesize $Q$};
	\end{tikzpicture}
	\caption{Lagrangian skeleton of the fiber of $\pi_T^{-s}$}
	\label{fig:3-punctured}
\end{figure}
\bigskip

Finally, the Milnor fiber $M_{p,q,r}$ is defined by adding a quadratic term $w^2$ to the defining equation of $T_{p,q,r}$, so obtaining a Lefschetz fibration on $M_{p,q,r}$ from the Lefschetz fibration $\pi_T^{-s}$ on $T_{p,q,r}$ can be realized as the case $r=2$ of the situation described in the second step above. In fact, one can check that $\pi_T+w^2$ defines a Lefschetz fibration on $M_{p,q,r}$, which we denote as
\begin{equation}
\pi_{p,q,r}:M_{p,q,r}\rightarrow\mathbb{C}.
\end{equation}
It is clear from its definition that a smooth fiber of $\pi_{p,q,r}$ is symplectomorphic to 4-dimensional $D_4$ Milnor fiber. By abuse of notations, the compact cores of these $D_4$-plumbings of $T^\ast S^2$'s will still be denoted by $P,Q,R$ and $T$.
\bigskip

After some mutations, the vanishing cycles of $\pi_{p,q,r}$ are given by
\begin{equation}
T,\tau_P^2\tau_Q^2\tau_R^2(T),\tau_P\tau_Q\tau_R(T)
\end{equation}
together with $p$ copies of $P$, $q$ copies of $Q$ and $r$ copies of $R$, where $\tau_V$ denotes the Dehn twist along the vanishing cycle $V$.

\subsection{Casals-Murphy recipe}

We recall how to obtain the Legendrian front description of a Weinstein manifold $M$ starting from a symplectic Lefschetz fibration. This is written down systematically by Casals-Murphy in $\cite{cm}$.
\bigskip

Let $\pi:M\rightarrow\mathbb{C}$ be a Lefschetz fibration with smooth fiber $F_T$ which is a plumbing of $T^\ast S^{n-1}$'s according to some tree $T$. Given these data, Casals and Murphy suggests in $\cite{cm}$ the following procedure to obtain a Legendrian handle body decomposition of $M$.
\begin{itemize}
\item Draw $r$ $(n-1)$-handles which correspond to the zero sections $L_1,\dots,L_r$ of $T^\ast S^{n-1}$ in the plumbing $F_T$.
\item Find a Lefschetz fibration $\pi_F:F_T\rightarrow\mathbb{C}$ so that the Lagrangian spheres $\{L_i\}$ appear as matching cycles of $\pi_F$ with matching paths $\gamma_1,\dots,\gamma_r\subset\mathbb{C}$.
\item For any vanishing cycle $V_j\subset F_T$ of $\pi$, draw the embedded path $\beta_j\subset\mathbb{C}$ under the projection of $\pi_F$.
\item Express each matching path $\beta_j$ of $V_j$ as a word in half-twists along the paths in $\{\gamma_i\}$. The vanishing cycles $\{V_j\}$ are thus expressed in terms of words in Dehn twists along the Lagrangian spheres in $\{L_i\}$.
\item Using handle slides, one is able to draw the front projections of their Legendrian lifts $\{\Lambda_j\}$ in the contact boundary $\partial(F_T\times D^2)$.
\item The above step produces a Legendrian link $\Lambda=\bigcup_j\Lambda_j\subset\partial(F_T\times D^2)$ going through the $(n-1)$-handles. We then push each component $\Lambda_j$ of $\Lambda$ in the Reeb direction of $\partial(F_T\times D^2)$ by height $i$.
\item Simplify the Legendrian front projection of $\Lambda$ using Reidemeister moves and handle cancellations.
\end{itemize}

Casals-Murphy recipe is extremely useful in obtaining Legendrian frontal descriptions of Weinstein manifolds $M$ with $\dim_\mathbb{R}(M)\geq6$, since the existence of a Lefschetz fibration on $M$ is proved by Giroux-Pardon in $\cite{gp}$. In the special case when $M$ is obtained by stabilizing a 4-dimensional Milnor fiber $T$ by adding quadratic terms to its defining equation, the Legendrian surgery picture of $M$ is realized locally as an $S^{n-2}$ spin of that of $T$.

\paragraph{Remark} By definition, there is an obvious Lefschetz fibration $\pi:M_{p,q,r}\rightarrow\mathbb{C}$ given by projecting to the $w$ coordinate plane, whose fiber is symplectomorphic to $T_{p,q,r}$. However, $T_{p,q,r}$ is not a plumbing of $T^\ast S^2$'s. In fact, it is proved by Keating in $\cite{ak1}$ that the compact Fukaya category $\mathcal{F}(T_{p,q,r})$ is not split-generated by vanishing cycles over any field $\mathbb{K}$ with $\mathrm{char}(\mathbb{K})\neq2$. This explains why we choose to apply Casals-Murphy recipe to the destabilized Lefschetz fibration $\pi_{p,q,r}$ in Section \ref{section:surgery} to get the Legendrian front associated to $M_{p,q,r}$.

\subsection{Legendrian surgery presentation of $M_{p,q,r}$}\label{section:surgery}

We now apply Casals-Murphy recipe to the Lefschetz fibration $\pi_{p,q,r}:M_{p,q,r}\rightarrow\mathbb{C}$ described in Section \ref{section:Lefschetz 6-fold}. Label the vanishing cycles of $\pi_{p,q,r}$ by
\begin{equation}
\begin{split}
&V_{-1}=\tau_P^2\tau_Q^2\tau_R^2(T),V_0=\tau_P\tau_Q\tau_R(T),V_1=\dots=V_p=P,\\
&V_{p+1}=\dots=V_{p+q}=Q,V_{p+q+1}=\dots=V_{p+q+r}=R,V_{p+q+r+1}=T.
\end{split}
\end{equation}
This enables us to draw the Legendrian frontal presentation of $M_{p,q,r}$ based on the data given by the Lefschetz fibration $\pi_{p,q,r}$. The picture consists of four 2-handles labelled by $P,Q,R$ and $T$ and $(p+q+r+3)$ 3-handles corresponding to the vanishing cycles $V_{-1},\dots,V_{p+q+r+1}$. The two non-trivial 3-handles corresponding to $V_{-1}$ and $V_0$ are depicted in Figures \ref{fig:A} and \ref{fig:B}, where the thick dots in the Figure represent cone singularities. As Legendrian surfaces, they are denoted respectively by $\Lambda_A$ and $\Lambda_B$. All the other Legendrian attaching spheres $\Lambda_{P_i}$, $\Lambda_{Q_j}$, $\Lambda_{P_k}$ and $\Lambda_T$ are just parallel strands which go through a single handle $P,Q,R$ and $T$ respectively. Their interactions with the Legendrian attaching spheres $\Lambda_A$ and $\Lambda_B$ are illustrated in the Figures \ref{fig:A} and \ref{fig:B}, where each set of Legendrian spheres $\{\Lambda_{P_i}\}$, $\{\Lambda_{Q_j}\}$, $\{\Lambda_{R_k}\}$ is represented by a single Legendrian sphere $\Lambda_P$, $\Lambda_Q$ and $\Lambda_R$.
\bigskip

\begin{figure}[h!]
	\centering
	\begin{tikzpicture}[scale=1,auto=left,every node/.style={circle}]
	\tikzset{->-/.style={decoration={ markings, mark=at position #1 with {\arrow{>}}},postaction={decorate}}}
	
	\draw (-2,3) circle (0.5);
	\draw (8,3) circle (0.5);
	\draw (-2,0) circle (0.5);
	\draw (0,0) circle (0.5);
	\draw (2,0) circle (0.5);
	\draw (4,0) circle (0.5);
	\draw (6,0) circle (0.5);
	\draw (8,0) circle (0.5);

	\draw [green] (-1.6,0.3) to[in=180,out=45] (1,2);
	\draw [green] (1,2) to[in=135,out=0] (3.6,0.3);
	\draw [green] (2.4,0.3) to[in=180,out=45] (5,2);
	\draw [green] (5,2) to[in=135,out=0] (7.6,0.3);
	\draw [green] (-0.4,0.3) to[in=-45,out=135] (-1.6,2.7);
	\draw [green] (6.4,0.3) to[in=225,out=45] (7.6,2.7);
	
	\draw [green] (-0.87,1.07) node[circle,fill,inner sep=1pt]{};
	\draw [green] (6.87,1.07) node[circle,fill,inner sep=1pt]{};
	\draw [green] (3,0.93) node[circle,fill,inner sep=1pt]{};
	
	\draw [orange] (-1.7,0.4) to [in=180,out=45] (-1,1.3);
	\draw [orange] (-1,1.3) to [in=135,out=0] (-0.3,0.4);
	
	\draw [magenta] (2.3,0.4) to [in=180,out=45] (3,1.3);
	\draw [magenta] (3,1.3) to [in=135,out=0] (3.7,0.4);
	
	\draw [yellow] (6.3,0.4) to [in=180,out=45] (7,1.3);
	\draw [yellow] (7,1.3) to [in=135,out=0] (7.7,0.4);
	
	\node [green] at (1,2.2) {\footnotesize $\Lambda_A$};
	\node [orange] at (-1.5,1.2) {\footnotesize $\Lambda_P$};
	\node [magenta] at (3,1.5) {\footnotesize $\Lambda_Q$};
	\node [yellow] at (7.5,1.2) {\footnotesize $\Lambda_R$};
	
	\node at (-2,0) {\footnotesize $P$};
	\node at (0,0) {\footnotesize $P$};
	\node at (2,0) {\footnotesize $Q$};
	\node at (4,0) {\footnotesize $Q$};
	\node at (6,0) {\footnotesize $R$};
	\node at (8,0) {\footnotesize $R$};
	\node at (-2,3) {\footnotesize $T$};
	\node at (8,3) {\footnotesize $T$};
	
	\end{tikzpicture}
	\caption{Front projection of the components $\Lambda_A$, $\Lambda_P$, $\Lambda_Q$ and $\Lambda_R$}
	\label{fig:A}
\end{figure}

\begin{figure}[h!]
	\centering
	\begin{tikzpicture}[scale=1,auto=left,every node/.style={circle}]
	\tikzset{->-/.style={decoration={ markings, mark=at position #1 with {\arrow{>}}},postaction={decorate}}}
	
	\draw (-2,3) circle (0.4);
	\draw (8.1,3) circle (0.4);
	\draw (-2,0) circle (0.4);
	\draw (0.7,0) circle (0.4);
	\draw (1.7,0) circle (0.4);
	\draw (4.4,0) circle (0.4);
	\draw (5.4,0) circle (0.4);
	\draw (8.1,0) circle (0.4);
	
	\draw [violet] (0.4,0.3) to[in=0,out=180] (-0.1,0);
	\draw [violet] (0.4,-0.3) to[in=0,out=180] (-0.1,0);
	\draw [violet] (4.1,0.3) to[in=0,out=180] (3.6,0);
	\draw [violet] (4.1,-0.3) to[in=0,out=180] (3.6,0);
	\draw [violet] (7.8,0.3) to[in=0,out=180] (7.3,0);
	\draw [violet] (7.8,-0.3) to[in=0,out=180] (7.3,0);
	\draw [violet] (-1.7,2.7) to[in=180,out=0] (-0.8,-0.3);
	\draw [violet] (-0.8,-0.3) to[in=180,out=0] (-0.4,0);
	\draw [violet] (-0.4,0) to[in=0,out=180] (-1.05,0.5);
	\draw [violet] (-1.05,0.5) to[in=0,out=180] (-1.7,0.3);
	\draw [violet] (-1.7,-0.3) to[in=180,out=0] (1.2,2);
	\draw [violet] (1.2,2) to[in=180,out=0] (2.9,-0.3);
	\draw [violet] (2.9,-0.3) to[in=180,out=0] (3.3,0);
	\draw [violet] (3.3,0) to[in=0,out=180] (2.65,0.5);
	\draw [violet] (2.65,0.5) to[in=0,out=180] (2,0.3);
	\draw [violet] (2,-0.3) to[in=180,out=0] (4.9,2);
	\draw [violet] (4.9,2) to[in=180,out=0] (6.6,-0.3);
	\draw [violet] (6.6,-0.3) to[in=180,out=0] (7,0);
	\draw [violet] (7,0) to[in=0,out=180] (6.35,0.5);
	\draw [violet] (6.35,0.5) to[in=0,out=180] (5.7,0.3);
	\draw [violet] (5.7,-0.3) to[in=180,out=0] (7.8,2.7);
	
	\draw [violet] (-1.15,-0.17) node[circle,fill,inner sep=1pt]{};
	\draw [violet] (2.53,-0.18) node[circle,fill,inner sep=1pt]{};
	\draw [violet] (6.18,-0.14) node[circle,fill,inner sep=1pt]{};
	
	\draw [orange] (-1.85,0.4) to [in=180,out=45] (-0.65,1);
	\draw [orange] (-0.65,1) to [in=135,out=0] (0.55,0.4);
	
	\draw [magenta] (1.85,0.4) to [in=180,out=45] (3.05,1);
	\draw [magenta] (3.05,1) to [in=135,out=0] (4.25,0.4);
	
	\draw [yellow] (5.55,0.4) to [in=180,out=45] (6.75,1);
	\draw [yellow] (6.75,1) to [in=135,out=0] (7.95,0.4);
	
	\node [violet] at (1.2,2.2) {\footnotesize $\Lambda_B$};
	\node [orange] at (-0.65,1.2) {\footnotesize $\Lambda_P$};
	\node [magenta] at (3.05,1.2) {\footnotesize $\Lambda_Q$};
	\node [yellow] at (6.75,1.2) {\footnotesize $\Lambda_R$};
	
	\node at (-2,0) {\footnotesize $P$};
	\node at (0.7,0) {\footnotesize $P$};
	\node at (1.7,0) {\footnotesize $Q$};
	\node at (4.4,0) {\footnotesize $Q$};
	\node at (5.4,0) {\footnotesize $R$};
	\node at (8.1,0) {\footnotesize $R$};
	\node at (-2,3) {\footnotesize $T$};
	\node at (8.1,3) {\footnotesize $T$};
	
	\end{tikzpicture}
	\caption{Front projection of the components $\Lambda_B$, $\Lambda_P$, $\Lambda_Q$ and $\Lambda_R$}
	\label{fig:B}
\end{figure}

\begin{proposition}\label{proposition:link}
The Weinstein 6-manifold $M_{p,q,r}$ is obtained by attaching Weinstein 3-handles to $D^6$ along the link of 2-dimensional Legendrian unknots
\begin{equation}
\Lambda_{p,q,r}=\Lambda_A\cup\Lambda_B\cup\bigcup_{i=1}^{p-1}\Lambda_{P_i}\cup\bigcup_{j=1}^{q-1}\Lambda_{Q_j}\cup\bigcup_{k=1}^{r-1}\Lambda_{R_k}.
\end{equation}
In particular, when $p=q=r=2$, $\Lambda_{2,2,2}$ is Legendrian isotopic to a link of Legendrian surfaces whose front projection is depicted in Figure \ref{fig:front-without-2-handles}. In general, one can obtain the Legendrian front by replacing the component $\Lambda_P$ (resp. $\Lambda_Q$ and $\Lambda_R$) in Figure \ref{fig:front-without-2-handles} by an $A_{p-1}$ (resp. $A_{q-1}$ and $A_{r-1}$) chain of standard unknots which are parallel to each other. Moreover, $\Lambda_{P_1}$, $\Lambda_{Q_1}$ and $\Lambda_{R_1}$ are the only Legendrian spheres in the sets $\{\Lambda_{P_i}\}$, $\{\Lambda_{Q_j}\}$ and $\{\Lambda_{R_k}\}$ whose fronts have non-trivial intersections with the fronts of $\Lambda_A$ and $\Lambda_B$.
\begin{figure}
	\centering
	\begin{tikzpicture}[scale=1,auto=left,every node/.style={circle}]
	
	\draw [green] (-2,-4.5) to [in=180,out=30] (1,-1);
	\draw [green] (1,-1) to [in=180,out=0] (2.6,-2.6);
	\draw [green] (2.6,-2.6) to [in=180,out=0] (4,-2.8);
	\draw [green] (4,-2.8) to [in=180,out=0] (5.4,-2.6);
	\draw [green] (5.4,-2.6) to [in=180,out=0] (7,-1);
	\draw [green] (7,-1) to [in=150,out=0] (10,-4.5);
	\draw [green] (-2,-4.5) to [in=180,out=-35] (4,-7);
	\draw [green] (4,-7) to [in=-145,out=0] (10,-4.5);
	\node [green] at (4,-6.8) {\footnotesize -2};
	\node [green] at (4,-2.65) {\footnotesize -1};
	\node [green] at (10.3,-4.5) {$\Lambda_A$};
	
	\draw [yellow] (-2,-2.5) to [in=180,out=45] (4,1.7);
	\draw [yellow] (4,1.7) to [in=135,out=0] (10,-2.5);
	\draw [yellow] (-2,-2.5) to [in=180,out=-25] (4,-5.7);
	\draw [yellow] (4,-5.7) to [in=-155,out=0] (10,-2.5);
	\node [yellow] at (4,1.85) {\footnotesize 0};
	\node [yellow] at (4,-5.55) {\footnotesize -1};
	\node [yellow] at (10.3,-2.5) {$\Lambda_R$};
	
	\draw [violet] (-2,0) to [in=180,out=0] (1,-2.6);
	\draw [violet] (1,-2.6) to [in=180,out=0] (2.6,-1.6);
	\draw [violet] (2.6,-1.6) to [in=180,out=0] (4,-1.8);
	\draw [violet] (4,-1.8) to [in=180,out=0] (5.4,-1.6);
	\draw [violet] (5.4,-1.6) to [in=180,out=0] (7,-2.6);
	\draw [violet] (7,-2.6) to [in=180,out=0] (10,0);
	\draw [violet] (-2,0) to [in=180,out=0] (4,3);
	\draw [violet] (4,3) to [in=180,out=0] (10,0);
	\node [violet] at (4,3.15) {\footnotesize 1};
	\node [violet] at (4,-1.65) {\footnotesize 0};
	\node [violet] at (10.3,0) {$\Lambda_B$};
	
	\draw [orange] (-0.7,-1.8) to [in=180,out=45] (1,-0.2);
	\draw [orange] (1,-0.2) to [in=135,out=0] (2.7,-1.8);
	\draw [orange] (-0.7,-1.8) to [in=180,out=-45] (1,-3.4);
	\draw [orange] (1,-3.4) to [in=-135,out=0] (2.7,-1.8);
	\node [orange] at (1,-0.05) {\footnotesize 0};
	\node [orange] at (1,-3.25) {\footnotesize -1};
	\node [orange] at (3,-1.8) {$\Lambda_P$};
	
	\draw [magenta] (5.3,-1.8) to [in=180,out=45] (7,-0.2);
	\draw [magenta] (7,-0.2) to [in=135,out=0] (8.7,-1.8);
	\draw [magenta] (5.3,-1.8) to [in=180,out=-45] (7,-3.4);
	\draw [magenta] (7,-3.4) to [in=-135,out=0] (8.7,-1.8);
	\node [magenta] at (7,-0.05) {\footnotesize 0};
	\node [magenta] at (7,-3.25) {\footnotesize -1};
	\node [magenta] at (9,-1.8) {$\Lambda_Q$};
	
	\end{tikzpicture}
	\caption{Legendrian front of $\Lambda_{2,2,2}$ after handle cancellation, where the numbers above the sheets are values taken by a Maslov potential $\mu_{2,2,2}:\Lambda_{2,2,2}\rightarrow\mathbb{Z}$}
	\label{fig:front-without-2-handles}
\end{figure}
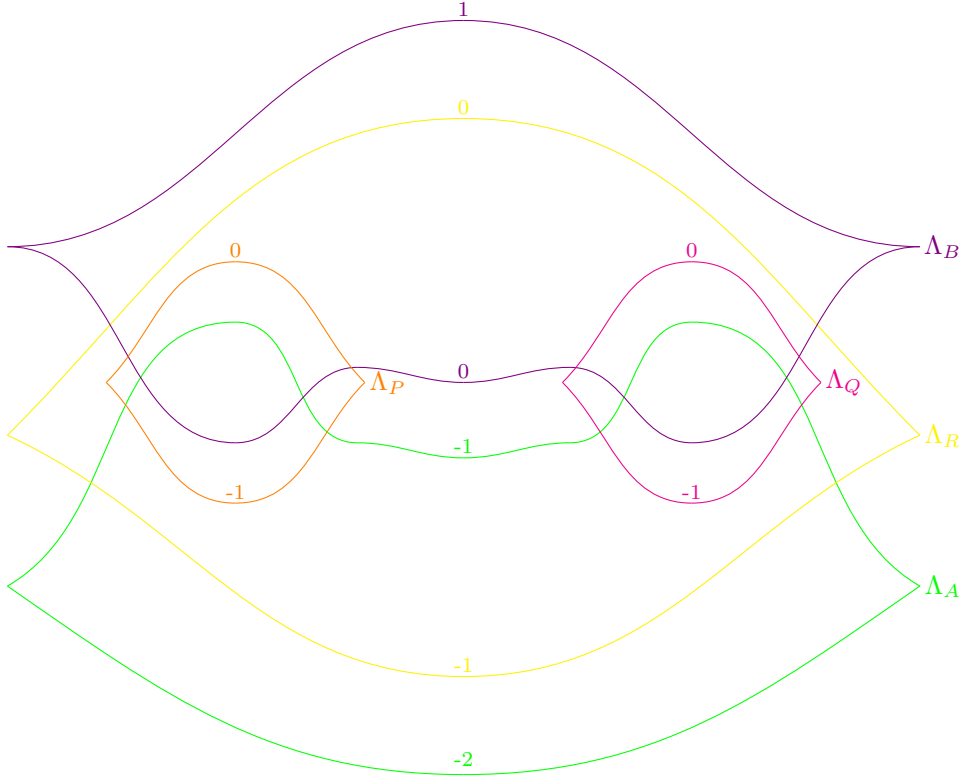
\end{proposition}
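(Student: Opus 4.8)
The plan is to apply the Casals--Murphy recipe recalled in Section~\ref{section:surgery} directly to the Lefschetz fibration $\pi_{p,q,r}:M_{p,q,r}\to\mathbb{C}$ built in Section~\ref{section:Lefschetz 6-fold}, and then to simplify the resulting front by handle cancellations and Reidemeister moves. Since the smooth fiber of $\pi_{p,q,r}$ is the four-dimensional $D_4$ Milnor fiber, that is the plumbing of four copies of $T^\ast S^2$ with compact cores $P,Q,R,T$, the first step is to fix an auxiliary Lefschetz fibration $\pi_F:F_T\to\mathbb{C}$ realizing $P,Q,R,T$ as matching cycles along matching paths $\gamma_P,\gamma_Q,\gamma_R,\gamma_T\subset\mathbb{C}$; this yields four $2$-handles in the frontal picture, one for each core. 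I would then read off the Legendrian lifts of the vanishing cycles listed at the end of Section~\ref{section:Lefschetz 6-fold}: the simple cycles, namely the $p$ copies of $P$, the $q$ copies of $Q$, the $r$ copies of $R$, and the single copy of $T$, lift to standard unknotted Legendrian $2$-spheres that thread once through the corresponding $2$-handle and sit as parallel strands.

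The genuinely computational step is to draw the lifts $\Lambda_A$ and $\Lambda_B$ of the two nontrivial vanishing cycles $V_{-1}=\tau_P^2\tau_Q^2\tau_R^2(T)$ and $V_0=\tau_P\tau_Q\tau_R(T)$. Here I would translate each Dehn-twist word into the corresponding word of half-twists of the matching path $\gamma_T$ along $\gamma_P,\gamma_Q,\gamma_R$, following the recipe's dictionary, and draw the resulting embedded paths $\beta_{-1},\beta_0\subset\mathbb{C}$ under $\pi_F$. Lifting these paths through the $2$-handles, with the front locally modeled on an $S^{n-2}=S^1$ spin as in the closing remark of Section~\ref{section:surgery}, produces the Legendrian surfaces of Figures~\ref{fig:A} and~\ref{fig:B}; the marked points in those figures are the cone singularities created by the spinning. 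Getting the braiding of the strands over $P$, $Q$, $R$ correct from the half-twist words is the heart of the argument and the step most prone to error.

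Next I would perform the handle cancellations. The single copy of $T$ meets the belt sphere of the $T$-handle transversally once, so its $3$-handle cancels the $2$-handle $T$ and erases the $\Lambda_T$ strand; likewise one copy each of $P,Q,R$ cancels the corresponding $2$-handle, leaving $p-1$, $q-1$, $r-1$ parallel copies, which become the $\Lambda_{P_i},\Lambda_{Q_j},\Lambda_{R_k}$. After these four cancellations no subcritical ($2$-)handles remain, in agreement with the handlebody description in the introduction, and the surviving components are exactly $\Lambda_A,\Lambda_B$ together with the $A_{p-1}\sqcup A_{q-1}\sqcup A_{r-1}$ chains, with the correct count $2+(p-1)+(q-1)+(r-1)=p+q+r-1$. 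Each such component is a standard Legendrian unknot because neither the lift nor the cancellation introduces interior singularities beyond the cone points, which are removed by Reidemeister moves.

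Finally, specializing to $p=q=r=2$ and carrying out the planar isotopies and Reidemeister moves that slide the fronts into standard position yields Figure~\ref{fig:front-without-2-handles}, and recording a Maslov potential confirms the gradings displayed there. The general case follows by inserting the extra parallel unknots into the three $A$-chains, and the claim that only $\Lambda_{P_1},\Lambda_{Q_1},\Lambda_{R_1}$ meet $\Lambda_A,\Lambda_B$ nontrivially is then read off directly: only the innermost copy in each chain lies within the range of the half-twists defining $V_{-1}$ and $V_0$, the remaining copies being pushed outward and separated from $\Lambda_A,\Lambda_B$ by the cancellation. The principal obstacle throughout is the bookkeeping of the front under the half-twist words and the subsequent handle slides; everything else is routine diagrammatic simplification.
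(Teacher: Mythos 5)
Your proposal follows essentially the same route as the paper's proof: apply the Casals--Murphy recipe to $\pi_{p,q,r}$, cancel the four subcritical $2$-handles against the simple vanishing cycles $V_1,V_{p+1},V_{p+q+1},V_{p+q+r+1}$ (one copy each of $P,Q,R$ and the copy of $T$), and then simplify the surviving fronts by Reidemeister moves, including the $S^1$-rotated Reidemeister I move that removes the residual cone singularities (the paper's Figures \ref{fig:B1} and \ref{fig:R1}). The one place where you genuinely diverge is the justification of the intersection pattern of the fronts --- that the fronts of $\Lambda_A$ and $\Lambda_B$ meet along two circles, and that only $\Lambda_{P_1},\Lambda_{Q_1},\Lambda_{R_1}$ among the chains meet $\Lambda_A,\Lambda_B$ at all. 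You propose to read this off from the half-twist words, a step you yourself flag as ``most prone to error''; the paper instead sidesteps that bookkeeping by a dimension-reduction argument: the front of $\Lambda_{p,q,r}$ is locally an $S^1$-symmetric rotation of the front of the link of unknots $K_{p,q,r}\subset(\mathbb{R}^3,\xi_\mathit{std})$ associated to the $4$-dimensional Milnor fiber $T_{p,q,r}$, so the intersections of the $2$-dimensional fronts are forced by those of the $1$-dimensional ones, which in turn are determined by intersection numbers of the Lagrangian fillings (vanishing cycles of $T_{p,q,r}$, known from Keating's work) --- e.g.\ $L_A$ and $L_B$ meet transversely in two points, each point contributes two Reeb chords, hence four front crossings of $K_A$ with $K_B$ and two circles of intersection upstairs. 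This replaces exactly the braid-word computation you were worried about with checkable Floer-theoretic data; apart from that, your outline and the paper's proof coincide.
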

\begin{proof}
We apply handle cancellation to the Legendrian surgery diagram associated to the Lefschetz fibration $\pi_{p,q,r}:M_{p,q,r}\rightarrow\mathbb{C}$. The vanishing cycles
\begin{equation}
V_1,V_{p+1},V_{p+q+1},V_{p+q+r+1}
\end{equation}
of $\pi_{p,q,r}$ are all $(-1)$-Legendrian spheres in the vertical boundary $\partial^\mathrm{v} M_{p,q,r}^\mathrm{in}$ which intersect the belt spheres of the Weinstein 2-handles labelled by $P,Q,R$ and $T$, where by $(-1)$-Legendrian spheres we mean the Legendrian spheres in the boundary of the pre-surgery Weinstein manifold $F^\mathrm{in}_{D_4}\times D^2$ along which the Weinstein 3-handles are attached. In particular, each of them can be cancelled with the corresponding subcritical handle, so that we obtain a Legendrian surgery diagram without 2-handles.

For the component $\Lambda_B$ as pictured in Figure \ref{fig:B}, after cancelling all the 2-handles with $V_1$, $V_{p+1}$, $V_{p+q+1}$ and $V_{p+q+r+1}$, we get a Legendrian knot as pictured in the upper half of Figure \ref{fig:B1}. After a sequence of Reidemeister I and Reidemeister II moves, one can simplify it to a Legendrian 2-sphere with three cone singularities, as is shown in the lower left of Figure \ref{fig:B1}. This Legendrian 2-sphere can be seen to be Legendrian isotopic to the standard unknot by applying the Legendrian isotopy pictured in Figure \ref{fig:R1}. Note that the move in Figure \ref{fig:R1} is a Legendrian isotopy since it is the $S^1$-rotation of the Reidemeister I move with respect to the axis which passes through the crossing point.

Similarly, the Legendrian knot $\Lambda_A$ can be seen to be Hamiltonian isotopic to the standard unknot after cancelling all the 2-handles. The fact that after handle cancellation, the components $\Lambda_{P_i}$, $\Lambda_{Q_j}$ and $\Lambda_{R_k}$ are isotopic to the standard unknots is obvious.

\begin{figure}
	\centering
	\begin{tikzpicture}[scale=1,auto=left,every node/.style={circle}]
	\draw [violet] (1.85,2.3) to[in=0,out=180] (1.35,2);
	\draw [violet] (1.85,1.7) to[in=0,out=180] (1.35,2);
	\draw [violet] (5.55,2.3) to[in=0,out=180] (5.05,2);
	\draw [violet] (5.55,1.7) to[in=0,out=180] (5.05,2);
	\draw [violet] (9.25,2.3) to[in=0,out=180] (8.75,2);
	\draw [violet] (9.25,1.7) to[in=0,out=180] (8.75,2);
	\draw [violet] (-0.25,4.7) to[in=180,out=0] (0.65,1.7);
	\draw [violet] (0.65,1.7) to[in=180,out=0] (1.05,2);
	\draw [violet] (1.05,2) to[in=0,out=180] (0.4,2.5);
	\draw [violet] (0.4,2.5) to[in=0,out=180] (-0.25,2.3);
	\draw [violet] (-0.25,1.7) to[in=180,out=0] (2.65,4);
	\draw [violet] (2.65,4) to[in=180,out=0] (4.35,1.7);
	\draw [violet] (4.35,1.7) to[in=180,out=0] (4.75,2);
	\draw [violet] (4.75,2) to[in=0,out=180] (4.1,2.5);
	\draw [violet] (4.1,2.5) to[in=0,out=180] (3.45,2.3);
	\draw [violet] (3.45,1.7) to[in=180,out=0] (6.35,4);
	\draw [violet] (6.35,4) to[in=180,out=0] (8.05,1.7);
	\draw [violet] (8.05,1.7) to[in=180,out=0] (8.45,2);
	\draw [violet] (8.45,2) to[in=0,out=180] (7.8,2.5);
	\draw [violet] (7.8,2.5) to[in=0,out=180] (7.15,2.3);
	\draw [violet] (7.15,1.7) to[in=180,out=0] (9.25,4.7);
	\draw [violet] (-0.25,4.7) to [in=180,out=0] (4.5,5);
	\draw [violet] (4.5,5) to [in=180,out=0] (9.25,4.7);
	\draw [violet] (-0.25,1.7) to [in=180,out=0] (0.9,1.4);
	\draw [violet] (0.9,1.4) to [in=180,out=0] (1.85,1.7);
	\draw [violet] (3.45,1.7) to [in=180,out=0] (4.6,1.4);
	\draw [violet] (4.6,1.4) to [in=180,out=0] (5.55,1.7);
	\draw [violet] (7.15,1.7) to [in=180,out=0] (8.3,1.4);
	\draw [violet] (8.3,1.4) to [in=180,out=0] (9.25,1.7);
	\draw [violet] (-0.25,2.3) to [in=180,out=0] (1.15,1.8);
	\draw [violet] (1.15,1.8) to [in=-135,out=0] (1.85,2.3);
	\draw [violet] (3.45,2.3) to [in=180,out=0] (4.85,1.8);
	\draw [violet] (4.85,1.8) to [in=-135,out=0] (5.55,2.3);
	\draw [violet] (7.15,2.3) to [in=180,out=0] (8.55,1.8);
	\draw [violet] (8.55,1.8) to [in=-135,out=0] (9.25,2.3);
	
	\draw [violet] (0.3,1.83) node[circle,fill,inner sep=1pt]{};
	\draw [violet] (3.98,1.82) node[circle,fill,inner sep=1pt]{};
	\draw [violet] (7.63,1.86) node[circle,fill,inner sep=1pt]{};
	
	\draw [violet] (0,0) to [in=180,out=0] (0.5,0.4);
	\draw [violet] (0.5,0.4) to [in=180,out=0] (1,0);
	\draw [violet] (1,0) to (-0.6,-1.6);
	\draw [violet] (0,0) to [in=180,out=-45] (1.25,-0.8);
	\draw [violet] (1.25,-0.8) to [in=-135,out=0] (2.5,0);
	\draw [violet] (1.5,0) to [in=180,out=0] (2,0.4);
	\draw [violet] (2,0.4) to [in=180,out=0] (2.5,0);
	\draw [violet] (1.5,0) to [in=180,out=-45] (2.75,-0.8);
	\draw [violet] (2.75,-0.8) to [in=-135,out=0] (4,0);
	\draw [violet] (3,0) to [in=180,out=0] (3.5,0.4);
	\draw [violet] (3.5,0.4) to [in=180,out=0] (4,0);
	\draw [violet] (3,0) to (4.6,-1.6);
	\draw [violet] (-0.6,-1.6) to [in=180,out=-35] (2,-2.4);
	\draw [violet] (2,-2.4) to [in=-145,out=0] (4.6,-1.6);
	
	\draw [violet] (0.5,-0.5) node[circle,fill,inner sep=1pt]{};
	\draw [violet] (2,-0.5) node[circle,fill,inner sep=1pt]{};
	\draw [violet] (3.5,-0.5) node[circle,fill,inner sep=1pt]{};
	
	\draw [->] (5,-1.2) to (6,-1.2);
	\draw [->] (4.5,1.2) to (2.5,0.3);
	\draw [violet] (6.4,-1.2) to [in=180,out=0] (8,0);
	\draw [violet] (8,0) to [in=180,out=0] (9.6,-1.2);
	\draw [violet] (6.4,-1.2) to [in=180,out=0] (8,-2.4);
	\draw [violet] (8,-2.4) to [in=180,out=0] (9.6,-1.2);
	\end{tikzpicture}
	\caption{Front projection of $\Lambda_B$ after handle cancellation, and its simplifications}
	\label{fig:B1}
\end{figure}
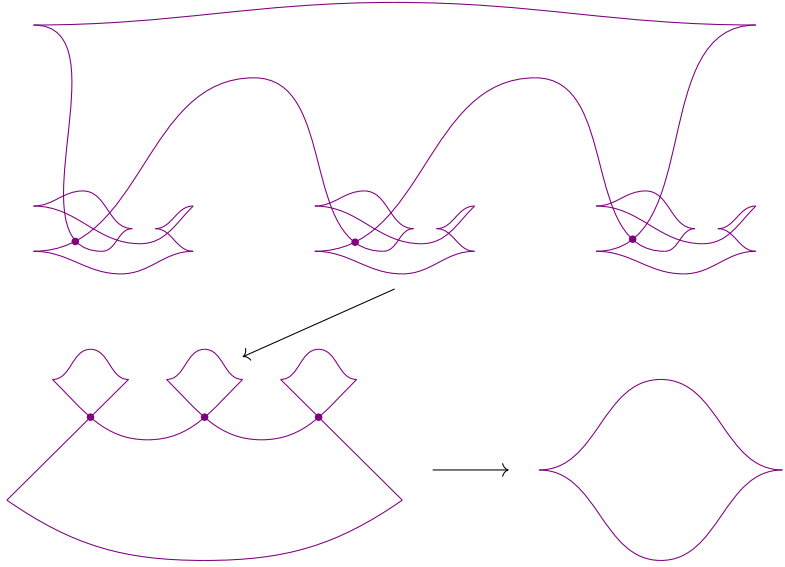

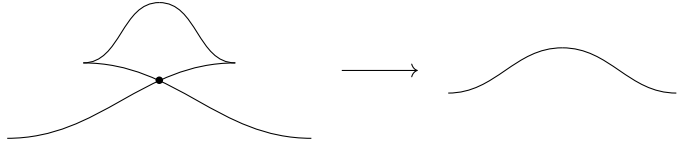
\begin{figure}
	\centering
	\begin{tikzpicture}[scale=1,auto=left,every node/.style={circle}]
    \draw (2.6,0) to [in=180,out=0] (3.6,0.8);
    \draw (3.6,0.8) to [in=180,out=0] (4.6,0);
    \draw (4.6,0) to [in=0,out=180] (1.6,-1);
    \draw (2.6,0) to [in=180,out=0] (5.6,-1);
    \draw [->] (6,-0.1) to (7,-0.1);
    \draw (7.4,-0.4) to [in=180,out=0] (8.9,0.2);
    \draw (8.9,0.2) to [in=180,out=0] (10.4,-0.4);
    \draw  (3.6,-0.23) node[circle,fill,inner sep=1pt]{};
	\end{tikzpicture}
	\caption{Symmetric rotation of the Reidemeister I move}
	\label{fig:R1}
\end{figure}

One dimensional lower, the Legendrian front of the affine surface $T_{p,q,r}$ is a link of unknots $K_{p,q,r}\subset(\mathbb{R}^3,\xi_\mathit{std})$. The linking numbers between the components
\begin{equation}
K_A,K_B,K_{P_i},K_{Q_j},K_{R_k}\subset K_{p,q,r}
\end{equation}
are determined by the intersection numbers of their Lagrangian fillings, which, after capping off using the core discs of the 2-handles attached along $K_{p,q,r}$, becomes a basis of vanishing cycles of the Milnor fiber $T_{p,q,r}$. For example, the Lagrangian fillings of $K_A$ and $K_B$ are Lagrangian discs intersecting transversely at two points, which implies that the corresponding Lagrangian cocore discs $L_A$ and $L_B$ of the 2-handles attached to $K_A$ and $K_B$ also have transversal intersections at two points. Since each of the intersection points in $L_A\cap L_B$ gives rise to two Reeb chords, one from $K_A$ to $K_B$, and the other one from $K_B$ to $K_A$, we see that the front projections of $K_A$ and $K_B$ intersect at 4 points. Since the Legendrian front of $\Lambda_{p,q,r}$ is given locally by an $S^1$-symmetric rotation of $K_{p,q,r}$, the intersections of the different components of $\Lambda_{p,q,r}$ under the front projection is determined by that of $K_{p,q,r}$. In particular, the fronts of $\Lambda_A$ and $\Lambda_B$ intersect along two circles.
\end{proof}

\section{Cellular dg algebra}\label{section:cellular}

This section is an exposition of the paper $\cite{rs1}$ by Rutherford-Sullivan. We will be mainly focusing on the definitions and results that are relevant to our computations in Section \ref{section:combi}. In this section, we fix the coefficient field $\mathbb{K}$ to be $\mathbb{Z}/2$.

\subsection{The definition}\label{section:def-cell}

We recall here the definition of the cellular dg algebra associated to a closed Legendrian surface $\Lambda\subset(S^5,\xi_\mathit{std})$, with the additional assumption that there is no swallowtail singularity in the front projection of $\Lambda$.

Let $S$ be a surface. A \textit{polygonal decomposition} of $S$ is a decomposition of $S$ into CW complexes
\begin{equation}S=\bigsqcup_{i=0}^2\bigsqcup_\alpha e_\alpha^i\end{equation}
equipped with characteristic maps $c_\alpha^i:D^i\rightarrow S$ which satisfy the following two properties:
\begin{itemize}
\item $c_\alpha^1$ are smooth for all $\alpha$.
\item For any 2-cell $e_\alpha^2$, pre-images of 0-cells divide $\partial D^2$ into intervals that are mapped homeomorphically to 1-cells by $c_\alpha^2$.
\end{itemize}

Denote by $J^1(S):=T^\ast S\times\mathbb{R}$ the 1-jet space. Let $\Lambda\subset J^1(S)$ be a Legendrian surface. Then there are two natural projections, namely the front projection
\begin{equation}p_{x,z}:J^1(S)\rightarrow S\times\mathbb{R}\end{equation}
and the base projection
\begin{equation}p_x:J^1(S)\rightarrow S,\end{equation}
where $x=(x_1,x_2)$ denotes the local coordinates on $S$ and $z$ is coordinate in the Reeb direction $\mathbb{R}$.

Suppose that $\Lambda$ has generic front projection, denote by $\Sigma\subset S$ the image of the singular set of $\Lambda$ under the base projection $p_x$, then it decomposes as $\Sigma=\Sigma_1\sqcup\Sigma_2$, where $\Sigma_i$ denotes the base projection of the set of codimension $i$ singularities. A polygonal decomposition of $S$ is $\Lambda$-\textit{compatible} if $\Sigma$ is contained in the 1-skeleton $\bigcup_\alpha e_\alpha^1$ of the polygonal decomposition. It then follows that $\Sigma_2$ is contained in the 0-skeleton $\bigsqcup_\alpha e_\alpha^0$.\bigskip

We now proceed to define the cellular dg algebra
\begin{equation}(\mathcal{C}(\Lambda),d_\mathcal{C})\end{equation}
associated to the Legendrian surface $\Lambda\subset J^1(S)$.
\bigskip

We first describe the set of generators of $\mathcal{C}(\Lambda)$. Given a $\Lambda$-compatible polygonal decomposition of $S$, which has $e_\alpha^i$ as one of its cells, the connected components $\Lambda\cap p_x^{-1}(e_\alpha^i)$ which are not contained in any cusp edge will be referred to as \textit{sheets} of $\Lambda$ above $e_\alpha^i$. Denote by $\Lambda(e_\alpha^i)$ the set of sheets of $\Lambda$ above $e_\alpha^i$, using their $z$-coordinates we can equip $\Lambda(e_\alpha^i)$ with a partial ordering $\prec$. More precisely, for $S_1,S_2\in\Lambda(e_\alpha^i)$, $S_1\prec S_2$ if $z(S_1)>z(S_2)$ above $e_\alpha^i$. Note that two sheets are incomparable if and only if they meet in a crossing arc above $e_\alpha^i$ in the front projection of $\Lambda$.

For each cell $e_\alpha^i$ in the $\Lambda$-compatible polygonal decomposition, we associate one generator of $\mathcal{C}(\Lambda)$ for each pair of sheets $S_m,S_n\in\Lambda(e_\alpha^i)$ with $S_m\prec S_n$. These generators will be denoted by $a^{m,n}_\alpha$, $b^{m,n}_\alpha$ and $c_\alpha^{m,n}$ when the corresponding cells are 0-dimensional, 1-dimensional and 2-dimensional respectively. As a graded algebra, $\mathcal{C}(\Lambda)$ is freely generated by these generators. 

We shall be interested in the case when $\Lambda=\bigsqcup_v\Lambda_v$ is a link of Legendrian surfaces.  In this case, $\mathcal{C}(\Lambda)$ carries the structure of a $\Bbbk$-bimodule, with $\Bbbk=\bigoplus_v\mathbb{K}e_v$, which we describe as follows. First, consider the vector spaces $\mathbb{K}\langle a^{m,n}_\alpha,b^{m,n}_\alpha,c^{m,n}_\alpha\rangle$ spanned by the generators in $A_\alpha,B_\alpha$ and $C_\alpha$, we can endow it with a $\Bbbk$-bimodule structure by declaring
\begin{equation}\label{eq:ss}
e_w\{a^{m,n}_\alpha,b^{m,n}_\alpha,c^{m,n}_\alpha\}e_v
\end{equation}
to be the set of generators associated to the pair of sheets $(S_m,S_n)$ with $S_m\subset\Lambda_v$ and $S_n\subset\Lambda_w$. As a $\Bbbk$-bimodule, $\mathcal{C}(\Lambda)$ is the tensor algebra over $\Bbbk$ defined by 
\begin{equation}\mathcal{C}(\Lambda):=\bigoplus_{i=0}^\infty\mathbb{K}\langle a^{m,n}_\alpha,b^{m,n}_\alpha,c^{m,n}_\alpha\rangle^{\otimes_\Bbbk i}.\end{equation}

We shall assume from now on that the Maslov class of $\Lambda$ vanishes. Since we will deal only with the case when $\Lambda$ is a disjoint union of Legendrian spheres, this assumption is automatically satisfied for all the examples studied in this paper. This allows us to endow $\mathcal{C}(\Lambda)$ with a $\mathbb{Z}$-grading. Fix a \textit{Maslov potential}
\begin{equation}\mu:\Lambda\rightarrow\mathbb{Z},\end{equation}
which is a locally constant function whose value increases by 1 when passing from the lower sheet to the upper sheet at a cusp edge. Each of the generators $a^{m,n}_\alpha,b^{m,n}_\alpha,c^{m,n}_\alpha$ is homogeneous in $\mathcal{C}(\Lambda)$, and their degrees are specified as follows:
\begin{equation}|a_\alpha^{m,n}|=\mu(S_n)-\mu(S_m)+1,|b_\alpha^{m,n}|=\mu(S_n)-\mu(S_m),|c_\alpha^{m,n}|=\mu(S_n)-\mu(S_m)-1.\end{equation}
Note that our convention here is different from $\cite{rs1}$ as we shall use a cohomological grading on $\mathcal{C}(\Lambda)$.\bigskip

In order to define the differential $d_\mathcal{C}$, we require $d_\mathcal{C}(1)=0$, and specify the effect of $d_\mathcal{C}$ on the generators $a^{m,n}_\alpha$, $b^{m,n}_\alpha$ and $c^{m,n}_\alpha$ of $\mathcal{C}(\Lambda)$ separately.\bigskip

Consider a 0-cell $e_\alpha^0$. By extending the partial ordering $\prec$ on $\Lambda(e_\alpha^0)$ we can define a total ordering on the set of sheets above $e_\alpha^0$, which can be equivalently described by a bijection $\rho:\{1,\dots,r\}\rightarrow I_\alpha$, where $I_\alpha$ is the index set recording the subscripts of the sheets in $\Lambda(e_\alpha^0)$. This total ordering on $\Lambda(e_\alpha^0)$ enables us to assemble the generators $a_\alpha^{m,n}$ in a strictly upper triangular matrix $A_\alpha$, with its $(i,j)$-th entry given by $a_\alpha^{\rho(i),\rho(j)}$ if $S_{\rho(i)}\prec S_{\rho(j)}$, and 0 otherwise. $d_\mathcal{C}$ on the generators $a_\alpha^{m,n}$ is then determined by the matrix equation
\begin{equation}\label{eq:diff1}
d_\mathcal{C} A_\alpha=A_\alpha^2,
\end{equation}
where on the left-hand side $d_\mathcal{C}$ is applied entrywisely. It is proved in $\cite{rs1}$ that $d_\mathcal{C} a_\alpha^{m,n}$ is independent of the choice of the total ordering on $\Lambda(e_\alpha^0)$ which extends $\prec$.\bigskip

In the case of a 1-cell $e_\alpha^1$, we can again enhance the partially ordered set $(\Lambda(e_\alpha^1),\prec)$ to get a totally ordered set by specifying a bijection $\rho$ as above. This then gives us a strictly upper triangular $r\times r$ matrix $B_\alpha$ whose $(i,j)$-th entry equals $b_\alpha^{\rho(i),\rho(j)}$ if $S_{\rho(i)}\prec S_{\rho(j)}$, and $b_\alpha^{^{\rho(i),\rho(j)}}=0$ otherwise. Since the structure of a polygonal decomposition of $S$ includes as its data a set of characteristic maps $c_\alpha^i$, we are allowed to distinguish between the initial and terminal 0-cells $e_{\alpha,+}^0$ and $e_{\alpha,-}^0$. Notice however that a 1-cell can have identical initial and terminal points. For the 0-cells $e_{\alpha,+}^0$ and $e_{\alpha,-}^0$, we can associate to them two $r\times r$ matrices $A_{\alpha,+}$ and $A_{\alpha,-}$ as follows.

Each sheet above the 0-cell $e_{\alpha,+}^0$ belongs to the closure of a unique sheet in $\Lambda(e_\alpha^1)$. Under the bijection $\rho$, this induces an order-preserving injective map
\begin{equation}\label{eq:inj}
\iota:\Lambda(e_{\alpha,+}^0)\hookrightarrow\{1,\dots,r\}.
\end{equation}
Those sheets of $e_\alpha^1$ not in the image of $\iota$ meet in pairs at cusp points above $e_{\alpha,+}^0$. The $(i,j)$-th entry of $A_{\alpha,+}$ is defined to be $a_{\alpha,+}^{m,n}$ if $S_m\prec S_n$ and $\iota(m)=i,\iota(n)=j$. The $(k,k+1)$-th entry of $A_{\alpha,+}$ is 1 if the sheets numbered $k$ and $k+1$ of $\Lambda(e_\alpha^1)$ under $\rho$ meet at a cusp singularity above $e_{\alpha,+}^0$. All the other entries of $A_{\alpha,+}$ are set to be 0. Alternatively, one can use the total ordering specified by $\iota$ to form a matrix out of the generators $a_{\alpha,+}^{m,n}$, and then insert $2\times 2$ blocks $\left[\begin{array}{ll}0 & 1\\0 & 0\end{array}\right]$ along the diagonal for each pair of sheets of $\Lambda(e_\alpha^1)$ that meet at a cusp edge above $e_{\alpha,+}^0$. The definition of $A_{\alpha,-}$ is completely identical.

With these matrices at hand, $d_\mathcal{C} b_\alpha^{m,n}$ can be defined by the following matrix equation:
\begin{equation}\label{eq:diff2}
d_\mathcal{C} B_\alpha=A_{\alpha,+}(E+B_\alpha)+(E+B_\alpha)A_{\alpha,-},
\end{equation}
with $E$ being the identity matrix. Again, $d_\mathcal{C} b_\alpha^{m,n}$ is independent of the choice of the total ordering on $\Lambda(e_\alpha^1)$.\bigskip

When it comes to a 2-cell $e_\alpha^2$, the partial ordering on $\Lambda(e_\alpha^2)$ is already a total ordering, so we can label the sheets in $\Lambda(e_\alpha^2)$ directly by $S_1,\dots,S_r$ so that $z(S_1)>\dots>z(S_r)$. The sheets above the edges and vertices in $\partial e_\alpha^2$ are therefore naturally identified with subsets of $\{1,\dots,r\}$. For each such edge or vertex we can define a strictly upper triangular $r\times r$ matrix by using the corresponding generators $b_\alpha^{m,n}$ or $a_\alpha^{m,n}$ and $2\times2$ blocks
\begin{equation}\label{eq:block}
O:=\left[\begin{array}{ll}0 & 0\\0 & 0\end{array}\right](\text{for an edge}), N:=\left[\begin{array}{ll}0 & 1\\0 & 0\end{array}\right](\text{for a vertex})
\end{equation}
inserted in the diagonal whenever $S_k,S_{k+1}$ meet at a cusp singularity above the edge or vertex. Just as what we have done for $A_{\alpha,+}$ and $A_{\alpha,-}$ above. Notice that in the case when $\Lambda=\bigsqcup_v\Lambda_v$ and the cusp singularity formed by $S_k,S_{k+1}$ belongs to the component $\Lambda_v$, then the non-zero entry in the matrix $N$ above should be replaced by the idempotent $e_v$ in the semisimple ring $\Bbbk$.

For each 2-cell $e_\alpha^2$, the characteristic map $c_\alpha^2:D^2\rightarrow S$ determines the initial and terminal vertices $v_-^\alpha,v_+^\alpha\in\partial D^2$, whose associated matrices defined in the last paragraph will again be denoted by $A_{\alpha,-}$ and $A_{\alpha,+}$. Let $\gamma_+$ and $\gamma_-$ be the arcs in $\partial D^2$ that go counterclockwisely and clockwisely from $v_-$ to $v_+$ respectively. Note that these paths can be constant or the entire circle. Consider the image of $c_\alpha^2\circ\gamma_+$, it contains a set of successive 1-cells $e_{\alpha,1}^1,\dots,e_{\alpha,n_+}^1$, whose associated matrices defined in the last paragraph will be denoted by $B_{\alpha,1},\dots,B_{\alpha,n_+}$. Similarly, for the path $\gamma_-$ we get another set of 1-cells $e_{\alpha,n_++1}^1,\dots,e_{\alpha,n_++n_-}^1$ with associated matrices $B_{\alpha,n_++1},\dots,B_{\alpha,n_++n_-}$. Finally, similar to the cases of 0-cells and 1-cells, we can form the matrix $C_\alpha$ using the generators $c_\alpha^{m,n}$ corresponding to the 2-cell $e_\alpha^2$. Now the differential $d_\mathcal{C}$ on $c_\alpha^{m,n}$ is defined via the matrix equation
\begin{equation}\label{eq:c}
\begin{split}
d_\mathcal{C} C_\alpha&=A_{\alpha,+}C_\alpha+C_\alpha A_{\alpha,-}+(E+B_{\alpha,n_+})^{\varepsilon_{n_+}}\dots(E+B_{\alpha,1})^{\varepsilon_1} \\
& +(E+B_{\alpha,n_++n_-})^{\varepsilon_{n_++n_-}}\dots(E+B_{\alpha,n_++1})^{\varepsilon_{n_++1}},
\end{split}
\end{equation}
where $\varepsilon_i=1$ if the orientation on the 1-cell $e_{\alpha,i}^+$ as an edge of $e_\alpha^2$ coincides with the orientation determined by the characteristic map of the 1-cell, otherwise $\varepsilon_i=-1$.
\bigskip

In all cases, it can be checked that $d_\mathcal{C}^2=0$. This defines the cellular dg algebra $(\mathcal{C}(\Lambda),d_\mathcal{C})$ when there is no swallowtail points in the front projection of $\Lambda$. When $\Lambda=\bigsqcup_v\Lambda_v$ is a link of Legendrian surfaces, one can check that the differential $d_\mathcal{C}$ defined above is compatible with the $\Bbbk$-bimodule structure (\ref{eq:ss}) on $\mathcal{C}(\Lambda)$, which shows that $\mathcal{C}(\Lambda)$ is a dg algebra over $\Bbbk$. The definition of $\mathcal{C}(\Lambda)$ can be extended to the case when swallowtail singularities present in the front projection of $\Lambda$, with some modifications to the matrices $A_\alpha$, $B_\beta$ and $C_\gamma$. Since this will not be used for later computations, its definition will not be recalled here. See Section 3.11 of $\cite{rs1}$ for details.
\bigskip

Up to quasi-isomorphism, the dg algebra $\mathcal{C}(\Lambda)$ is independent of the choice of the polygonal decomposition. Together with the quasi-isomorphism (\ref{eq:RS2}) established in $\cite{rs2}$, we see that the quasi-isomorphism type of $\mathcal{C}(\Lambda)$ defines an invariant of $\Lambda$ under Legendrian isotopy.

\subsection{Non-genericity}\label{section:non-gen}

For computational convenience, we will allow another type of non-genericity of Legendrian fronts, namely when there are multiple crossings or cusp edges above a 1-cell in the cellular decomposition. In these cases, we can modify slightly our original definitions of the cellular dg algebra $(\mathcal{C}(\Lambda),d_\mathcal{C})$ to get a (usually simpler) dg algebra $(\mathcal{C}^\ell(\Lambda),d_\mathcal{C}^\ell)$, whose definition we will recall below.\bigskip

We deal first with the case when multiple crossing arcs appear above some subset of $\Sigma_1\subset S$. To be precise, let $\Lambda\subset J^1(S)$ be a Legendrian surface. Consider a polygonal decomposition of $p_x(\Lambda)$ which is $\Lambda$-compatible except near a 2-sided simple closed curve $\ell\subset S$. Suppose that in a neighborhood $\overline{U}\cong S^1\times[0,1]$ of $\ell:=S^1\times\{\frac{1}{2}\}$, there are several crossing arcs of $\Lambda$ which project to small shifts of $\ell$ in the normal direction, and no other crossings or cusp edges. In this case, we can assume that all the crossing arcs near $\ell$ project precisely to $\ell$ and define the generators in the dg algebra $\mathcal{C}^\ell(\Lambda)$ associated to this incompatible cellular decomposition as follows. See Figure \ref{fig:non-generic} for an illustration.

More precisely, label the sheets of $\Lambda$ above one side of the neighborhood $\overline{U}$ as $S_1,\dots,S_{r}$, so that $z(S_1)>\cdots>z(S_{r})$ above that side of $\overline{U}$. The key point is that there exists a permutation $\sigma$ on the set $\{1,\dots,r\}$ such that the sheet $S_i$ appears as the sheet $S_{\sigma(i)}$ on the other side of the small neighbourhood $\overline{U}$, so that all the crossings of the sheets of $\Lambda$ happened in $\overline{U}$ (or equivalently, one can treat them as crossings over $\ell\subset\overline{U}$ ) are recorded by this permutation.

To each 1-cell $e_\beta^1$ (resp. 0-cell $e_\alpha^0$) of $\ell$, assign generators $b^{m,n}_\beta$ (resp. $a^{m,n}_\alpha$) for all $m<n$ with $\sigma(m)<\sigma(n)$, so that there are multiple zeros in the corresponding matrices $B_\beta$ (resp. $A_\alpha$) of generators, since sheets which cross with each other in $\overline{U}$ will then satisfy $\sigma(m)>\sigma(n)$. The differential $d_\mathcal{C}^\ell$ is defined by the same formulas (\ref{eq:diff1}), (\ref{eq:diff2}) and (\ref{eq:c}) as in the case of a usual cellular dg algebra $\mathcal{C}(\Lambda)$. Note that if $B_\beta$ is the matrix of generators associated to the sheets labelled by $S_1,\dots,S_r$, then the corresponding matrix of generators associated to the sheets labelled by $S_{\sigma^{-1}(1)},\dots,S_{\sigma^{-1}(r)}$ is $Q_\sigma^{-1}B_\beta Q_\sigma$, with $Q_\sigma=\sum_{m=1}^r\Delta_{\sigma(m),m}$,and $\sigma$ being a composition of transpositions on $\{1,\dots,r\}$.
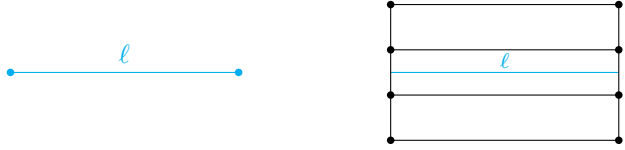
\begin{figure}[htb!]
	\centering
	\begin{tikzpicture}
	\draw [cyan] (0,-0.9) node[circle,fill,inner sep=1pt](a){} -- (3,-0.9) node[circle,fill,inner sep=1pt](b){};
	\node [cyan] at (1.5,-0.65) {$\ell$};
	\draw (5,0) node[circle,fill,inner sep=1pt](a){} to (8,0) node[circle,fill,inner sep=1pt](b){};
	\draw (5,-0.6) node[circle,fill,inner sep=1pt](a){} to (8,-0.6) node[circle,fill,inner sep=1pt](b){};
	\draw (5,-1.2) node[circle,fill,inner sep=1pt](a){} to (8,-1.2) node[circle,fill,inner sep=1pt](b){};
	\draw (5,-1.8) node[circle,fill,inner sep=1pt](a){} to (8,-1.8) node[circle,fill,inner sep=1pt](b){};
	\draw [cyan] (5,-0.9) to (8,-0.9);
	\node [cyan] at (6.5,-0.75) {\footnotesize $\ell$};
	\draw (5,0) to (5,-1.8);
	\draw (8,0) to (8,-1.8);
	\end{tikzpicture}
	\caption{Local picture of a $\Lambda$-compatible polygonal decomposition associated to a non-generic Legendrian front of $\Lambda$, which projects several crossings/cusp edges to the unique 1-cell $\ell$ (left), and the neighbourhood $\overline{U}$ of a $\Lambda$-compatible polygonal decomposition associated to a generic Legendrian front of $\Lambda$, where different crossings/cusp edges are projected to multiple 1-cells, the dg algebra $(\mathcal{C}^\ell(\Lambda),d_\mathcal{C}^\ell)$ is defined by assembling these 1-cells to $\ell$ by making the front of $\Lambda$ non-generic (right)}
	\label{fig:non-generic}
\end{figure}
\bigskip

When several cusp edges appear above the curve $\ell$, the situation is much simpler. For each 0-cell (resp. 1-cell) of $\ell$, it follows from (\ref{eq:block}) that we only need to insert multiple copies of $2\times2$ nilpotent blocks $N$ (resp. zero blocks $O$) in the construction of the matrices $A_\alpha$ (resp. $B_\beta$). For example, let $\Lambda\subset(\mathbb{R}^5,\xi_\mathit{std})$ be a Legendrian surface so that locally its base projection $p_x(\Lambda)$ is depicted on the left-hand side of Figure \ref{fig:non-gen-cusp}. Suppose that the Legendrian front of $\Lambda$ above the 0-cell $e_\alpha^0$ (resp. 1-cell $e_\beta^1$) consists only of cusp edges formed by couples of sheets $(S_1,S_2),\cdots,(S_{2r-1},S_{2r})$, and the ordering of these sheets is chosen so that $z(S_1)>\cdots>z(S_{2r})$ locally in a small neighborhood on the right-hand side of the solid arc labelled $e_\beta^1$. In this case, one can take
\begin{equation}\label{eq:cusps}
A_\alpha=\sum_{i=1}^{r}\Delta_{2i-1,2i},B_\beta=0.
\end{equation}
Again, the relevant differentials are defined by the same formulas as in the front generic case. This completes the definition of $(\mathcal{C}^\ell(\Lambda),d_\mathcal{C}^\ell)$.

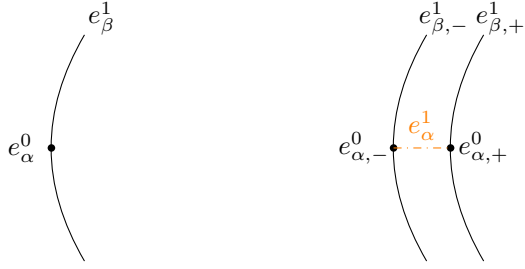
\begin{figure}
	\centering
	\begin{tikzpicture}[scale=1.5]
	\draw (0,-1) to [in=-120,out=120] (0,1);
	\draw (-0.29,0) node[circle,fill,inner sep=1pt] {};
	\node at (-0.55,0) {$e_\alpha^0$};
	\node at (0.15,1.15) {$e_\beta^1$};
	
	\draw (3,-1) to [in=-120,out=120] (3,1);
	\draw (2.71,0) node[circle,fill,inner sep=1pt] {};
	\node at (2.45,0) {$e_{\alpha,-}^0$};
	\node at (3.15,1.15) {$e_{\beta,-}^1$};
	\draw (3.5,-1) to [in=-120,out=120] (3.5,1);
	\draw (3.21,0) node[circle,fill,inner sep=1pt] {};
	\node at (3.5,0) {$e_{\alpha,+}^0$};
	\node at (3.65,1.15) {$e_{\beta,+}^1$};
	\draw [orange,dash dot] (2.71,0) to (3.21,0);
	\node [orange] at (2.96,0.2) {$e_\alpha^1$};
	\end{tikzpicture}
	\caption{The base projections $p_x$ of a Legendrian surface $\Lambda$ near its cusp edges, where the images of $\Lambda$ under $p_x$ appear on the right-hand side of the leftmost solid arcs in both of the figures. On the left hand side, all the cusp edges in the front of $\Lambda$ formed by the sheets $(S_1,S_2),\cdots,(S_{2r-1},S_{2r})$ are projected to the same solid arc labelled as the 1-cell $e_\beta^1$. On the right hand side, we are in the special case when $r=2$, and a small Legendrian isotopy of $\Lambda$ has been chosen so that the base projections of the cusp edges formed by $(S_3,S_4)$ and $(S_1,S_2)$ are projected to different solid arcs labelled respectively by $e_{\beta,-}^1$ and $e_{\beta,+}^1$.}
	\label{fig:non-gen-cusp}
\end{figure}
\bigskip

The following proposition can be proved by applying small isotopies to $\Lambda$ to make it front generic, and then simplifying $\mathcal{C}(\Lambda)$ using stable tame isomorphisms. The latter step can be achieved by applying Lemma \ref{lemma:tame}, which will be recalled in the next subsection, to the original definition of $\mathcal{C}(\Lambda)$ recalled in Section \ref{section:def-cell}. 
\begin{proposition}[Proposition 5.5 of $\cite{rs1}$]\label{propositioN:non-gen}
The dg algebra $\mathcal{C}^\ell(\Lambda)$ defined above using a $\Lambda$-compatible polygonal decomposition associated to a non-generic Legendrian front of $\Lambda$ is quasi-isomorphic to the cellular dg algebra $\mathcal{C}(\Lambda)$ defined using any $\Lambda$-compatible cellular decomposition associated to a generic front projection of $\Lambda$.
\end{proposition}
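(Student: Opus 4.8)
The plan is to reduce the statement to the already-established invariance of $\mathcal{C}(\Lambda)$ under the choice of a generic $\Lambda$-compatible polygonal decomposition, by exhibiting an explicit stable tame isomorphism between the collapsed algebra $\mathcal{C}^\ell(\Lambda)$ and a generic model. First I would perturb $\Lambda$ by a $C^\infty$-small Legendrian isotopy supported in the neighbourhood $\overline{U}\cong S^1\times[0,1]$ of $\ell$, pushing apart the crossing arcs (respectively cusp edges) that were projected onto $\ell$ so that they now project to finitely many disjoint, mutually parallel copies $\ell_1,\dots,\ell_N$ of $\ell$. Refining the polygonal decomposition so that each $\ell_i$ lies in the $1$-skeleton yields a genuinely generic $\Lambda$-compatible decomposition, and hence the honest cellular dg algebra $\mathcal{C}(\Lambda)$ of Section~\ref{section:def-cell}; since the front has only been moved by a Legendrian isotopy, this represents the correct quasi-isomorphism type.

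Next I would compare the two sets of generators. In the generic model, crossing the strip $\overline{U}$ is resolved one transposition at a time: each elementary crossing curve $\ell_i$ contributes its own band of $0$-, $1$- and $2$-cells, and hence its own generators $a_\bullet^{m,n}$, $b_\bullet^{m,n}$, $c_\bullet^{m,n}$, whereas in $\mathcal{C}^\ell(\Lambda)$ all of these crossings are recorded simultaneously by the single permutation $\sigma$ and by the extra zeros in the matrices $A_\alpha$, $B_\beta$ attached to $\ell$. The decisive point is that the matrix equations (\ref{eq:diff1}) and (\ref{eq:diff2}) force the differential of each newly created generator to contain the next one as a linear term: at an elementary crossing two adjacent sheets swap, and the corresponding generator $b$ has $d_\mathcal{C}b$ equal to a generator coming from the neighbouring cell plus terms of strictly higher word length. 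This is exactly the hypothesis of the cancellation form of Lemma~\ref{lemma:tame}.

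I would then run the cancellations. Applying Lemma~\ref{lemma:tame} iteratively to the acyclic pairs attached to the intermediate curves $\ell_1,\dots,\ell_{N-1}$ removes all the auxiliary generators, leaving only the outer cells—those that survive when the band is collapsed back to $\ell$. Reading off the differential induced on the surviving generators from (\ref{eq:diff1})--(\ref{eq:c}) and composing the transpositions in the order in which the crossings occur, one recovers precisely the collapsed matrices, including the conjugation rule $Q_\sigma^{-1}B_\beta Q_\sigma$ with $Q_\sigma=\sum_{m}\Delta_{\sigma(m),m}$ dictated by $\sigma$. For the purely cuspidal case the argument is lighter: separated cusp edges do not interact, so removing the intermediate cells via Lemma~\ref{lemma:tame} leaves exactly the block sum $A_\alpha=\sum_{i}\Delta_{2i-1,2i}$ and $B_\beta=0$ prescribed in (\ref{eq:cusps}). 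Throughout, the coefficient field is $\mathbb{Z}/2$, so there are no signs to track and every tame isomorphism respects the $\Bbbk$-bimodule structure (\ref{eq:ss}).

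The main obstacle will be the combinatorial bookkeeping in the crossing case rather than any analytic input: one must check that the composite of the elementary transpositions realised by $\ell_1,\dots,\ell_N$ equals $\sigma$, and that the successive Gaussian eliminations assemble the local matrices of the generic model into the single conjugated matrix appearing in the definition of $\mathcal{C}^\ell(\Lambda)$. Once this matching is verified—most transparently by checking it for a single transposition and then arguing by induction on $N$—the quasi-isomorphism $\mathcal{C}^\ell(\Lambda)\cong\mathcal{C}(\Lambda)$ follows from the invariance of stable tame isomorphism type.
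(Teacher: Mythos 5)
Your proposal follows essentially the same route as the paper: perturb $\Lambda$ by a small Legendrian isotopy so the crossings/cusps above $\ell$ separate into distinct 1-cells of a genuinely generic $\Lambda$-compatible decomposition, then cancel the auxiliary generators of the resulting cellular dg algebra against one another via the Gaussian-elimination Lemma \ref{lemma:tame} (using that equation (\ref{eq:diff2}) produces the required linear leading terms), recovering the collapsed matrices of $\mathcal{C}^\ell(\Lambda)$ up to stable tame isomorphism. This matches the paper's argument, which is illustrated there in the two-cusp model case and extends by exactly the iteration you describe.
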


To see the how Proposition \ref{propositioN:non-gen} can be used to simplify our computations, we consider the simplest case when the non-generic Legendrian front of $\Lambda$ above the 1-cell $e_\beta^1$ consists of two cusp edges formed by the pairs of sheets $(S_1,S_2)$ and $(S_3,S_4)$ respectively, see the left-hand side of Figure \ref{fig:non-gen-cusp}, where as before we require that $z(S_1)>z(S_2)>z(S_3)>z(S_4)$ locally on the right-hand side of $e_\beta^1$. By (\ref{eq:cusps}), we see that $A_\alpha=\left[\begin{array}{ll}N & 0\\0 & N\end{array}\right]$ in the dg algebra $\mathcal{C}^\ell(\Lambda)$. However, after applying a small isotopy to make $\Lambda$ front generic, the 0-cell $e_\alpha^0$ in the original cellular decomposition is replaced by two 0-cells $e_{\alpha,-}^0$ and $e_{\alpha,+}^0$ in the $\Lambda$-compatible cellular decomposition, and correspondingly the 1-cell $e_\beta^1$ is replaced by the 1-cells $e_{\beta,-}^1$ and $e_{\beta,+}^1$, see the right-hand side of Figure \ref{fig:non-gen-cusp}. More explicitly, we require that above the 1-cell $e_{\beta,+}^1$, the top two sheets meet above the cusp edge. By definition of the cellular dg algebra $\mathcal{C}(\Lambda)$ in Section \ref{section:def-cell}, $A_{\alpha,-}=N$. To find the values of $A_{\alpha,+}$, recall from (\ref{eq:inj}) the definition of the map $\iota:\Lambda(e_{\alpha,+}^0)\rightarrow\{1,2,3,4\}$. Since in this case the sheets numbered 1 and 2 of $\Lambda(e_{\beta,+}^1)$ meet at a cusp edge, we see that 
\begin{equation}
A_{\alpha,+}=\begin{bmatrix}
0 & 1 & 0 & 0 \\
0 & 0 & 0 & 0 \\
0 & 0 & 0 & a_{\alpha,+}^{3,4} \\
0 & 0 & 0 & 0
\end{bmatrix}
\end{equation}
Consider the 1-cell $e_\alpha^1$ which connects $e_{\alpha,-}^0$ to $e_{\alpha,+}^0$ on the right-hand side of Figure \ref{fig:non-gen-cusp}, it follows from (\ref{eq:diff2}) that
\begin{equation}
d_\mathcal{C}b_\alpha^{3,4}=a_{\alpha,+}^{3,4}+1,
\end{equation}
which shows the existence of a quasi-isomorphism $\mathcal{C}(\Lambda)\cong\mathcal{C}(\Lambda)/\langle a_{\alpha,+}^{3,4}+1,b_\alpha^{3,4}\rangle$, under which $A_{\alpha,+}$ simplifies to $A_\alpha$.

In general, Proposition \ref{propositioN:non-gen} says that by applying Lemma \ref{lemma:tame} to the formula (\ref{eq:diff2}), $A_{\alpha,+}$ in the cellular dg algebra $\mathcal{C}(\Lambda)$ can finally be simplified to the form of $A_\alpha$ in the dg algebra $\mathcal{C}^\ell(\Lambda)$, which is quasi-isomorphic to $\mathcal{C}(\Lambda)$. It is therefore more convenient to start with a non-generic front of $\Lambda$ and replace $\mathcal{C}(\Lambda)$ with its quotient dg algebra $\mathcal{C}^\ell(\Lambda)$. In this way, the formal generators in the matrix $A_{\alpha,+}$ are therefore ignored, and one can work directly with the matrix $A_\alpha$, whose entries are purely scalars. This simplification will be used frequently below, in various computations and arguments in Section \ref{section:combi}.

\subsection{Suspension of a dg algebra}\label{section:susp-dga}

Let $(\mathcal{A},d)$ be a \textit{based} dg algebra over $\mathbb{K}$, which means that it is freely generated over $\mathbb{K}$ by a set of homogeneous elements $a_1,\dots,a_n$. Fix an absolute ordering $a_1<\cdots<a_n$ on the generating set $\{a_1,\dots,a_n\}$, there is then a natural filtration $F^\bullet$ on $\mathcal{A}$ with $F^0\mathcal{A}=\mathbb{K}$ and $F^i\mathcal{A}\subset\mathcal{A}$ is the subalgebra generated by $a_1,\dots,a_i$. We remark that in general, this filtration $F^\bullet$ has nothing to do with the grading on $\mathcal{A}$. Recall that the differential $d$ is said to be \textit{triangular} if for all $1\leq i\leq n$, $da_i\in F^{i-1}\mathcal{A}$.\bigskip

A map between based dg algebras $\phi:\mathcal{A}\rightarrow\mathcal{B}$ is a \textit{tame isomorphism} if it is the composition of a sequence of elementary automorphisms of $\mathcal{A}$ followed by an identification between the sets of generators of $\mathcal{A}$ and $\mathcal{B}$. By an \textit{elementary tame automorphism} of $\mathcal{A}$ we mean a dg algebra automorphism which sends a fixed generator $a_i\in\mathcal{A}$ to $a_i+v$, where $v$ belongs to the dg subalgebra of $\mathcal{A}$ generated by $a_1,\dots,a_{i-1},a_{i+1},\dots,a_n$. In general, any tame isomorphism $\phi$ can be expressed as the composition of a sequence of elementary tame automorphisms of $\mathcal{A}$ followed by an identification between the sets of generators of $\mathcal{A}$ and $\mathcal{B}$.
\bigskip

Denote by $(S_i\mathcal{A},d_i)$ the \textit{degree} $i$ \textit{stabilization} of $\mathcal{A}$. This is the dg algebra with generating set consisting of the original generators of $\mathcal{A}$, together with two additional elements $a,b\in S_i\mathcal{A}$ such that $|a|=|b|+1=i$, which satisfy
\begin{equation}
d_ia=b,d_ib=0
\end{equation}
and $d_i$ coincides with $d$ when restricted to the dg subalgebra $\mathcal{A}\subset S_i\mathcal{A}$. Two dg algebras $\mathcal{A}$ and $\mathcal{B}$ are \textit{stable tame isomorphic} if after stabilizing $\mathcal{A}$ and $\mathcal{B}$ for (possibly different) finite number of times, they become tame isomorphic to each other. We record the following simple result, whose proof dates back to the fundamental paper of Chekanov $\cite{yc}$, which is useful in simplifying dg algebras up to stable tame isomorphism.
\begin{lemma}\label{lemma:tame}
Let $(\mathcal{A},d)$ be a based dg algebra over $\mathbb{K}$ such that $d$ is triangular with respect to the ordered generating set $\{a_1,\dots,a_n\}$, and
\begin{equation}da_i=a_j+v,v\in F^{j-1}\mathcal{A}.\end{equation}
Then we have a stable tame isomorphism $(\mathcal{A},d)\cong(\mathcal{A}/\mathcal{I},d)$ between based dg algebras, where $\mathcal{I}\subset\mathcal{A}$ is the 2-sided ideal generated by $a_i$ and $da_i$.
\end{lemma}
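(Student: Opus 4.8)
The plan is to reduce to the normalized situation $d a_i=a_j$ and then cancel the resulting acyclic pair $(a_i,a_j)$, exhibiting $(\mathcal{A},d)$ as a single stabilization of $(\mathcal{A}/\mathcal{I},d)$; since a tame isomorphism obtained after stabilizing finitely many times is exactly a stable tame isomorphism, this yields the claim. Throughout I use the two moves recalled above: elementary tame automorphisms (sending one generator $g\mapsto g+u$ with $u$ in the subalgebra generated by the remaining generators) and the degree $m$ stabilization $S_m$, which adjoins a pair $a,b$ with $|a|=|b|+1=m$, $d a=b$, $d b=0$.

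First I would normalize the differential of $a_i$. Since $d a_i=a_j+v$ with $v\in F^{j-1}\mathcal{A}$, the element $v$ lies in the subalgebra generated by $a_1,\dots,a_{j-1}$ and in particular involves neither $a_i$ nor $a_j$. Hence the assignment $a_j\mapsto a_j+v$, fixing every other generator, defines an elementary tame automorphism $\phi$ of $\mathcal{A}$, with inverse $a_j\mapsto a_j-v$, and in the corresponding new presentation one has $d a_i=a_j$ and consequently $d a_j=d(d a_i)=0$. Since $\phi$ carries the two-sided differential ideal $\langle a_i,\,d a_i\rangle$ to $\langle a_i,a_j\rangle=\langle a_i,\,d a_i\rangle$, it identifies $\mathcal{A}/\mathcal{I}$ with the corresponding quotient in the normalized presentation, so it suffices to prove the claim assuming $d a_i=a_j$ and $d a_j=0$. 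In this case $\mathcal{A}/\mathcal{I}$ is simply the free algebra on $\{a_k:k\neq i,j\}$ equipped with the induced differential (obtained by setting $a_i=a_j=0$).

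Next I would clear the pair $(a_i,a_j)$ from the differentials of the surviving generators. The aim is to produce, by a finite sequence of further elementary tame automorphisms, a generating set in which no $a_k$ with $k\neq i$ has $a_i$ or $a_j$ in its differential; once this holds, the pair $a_i\mapsto a_j$ is decoupled from the rest and $(\mathcal{A},d)$ becomes literally the stabilization $S_{|a_i|}(\mathcal{A}/\mathcal{I})$. The mechanism exploits $a_j=d a_i$: whenever a monomial $\alpha\,a_j\,\beta$ occurs in some $d a_k$, one modifies $a_k$ by $a_k\mapsto a_k\mp\alpha\,a_i\,\beta$, so that the graded Leibniz rule replaces the offending $a_j$-term by terms in which $a_j$ has been traded for $a_i$, together with correction terms involving $d\alpha$ and $d\beta$. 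Interleaving this with the analogous elimination of $a_i$, the triangularity of $d$ (so that $d a_k\in F^{k-1}\mathcal{A}$) ensures that every correction is supported on strictly earlier generators, so that the procedure terminates. This is precisely Chekanov's cancellation argument from \cite{yc}.

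The main obstacle is this last step: one must verify that the successive elementary automorphisms used to purge $a_i$ and $a_j$ from the surviving differentials do not reintroduce these generators in an uncontrolled way, and that the process genuinely terminates. I would organize this as an induction along the generator ordering underlying the filtration $F^\bullet$, eliminating $a_j$ and $a_i$ from $d a_k$ in order of increasing $k$ and using that each correction term lands in $F^{k-1}\mathcal{A}$; the bookkeeping of signs coming from the grading (with $|a_j|=|a_i|-1$) is routine and can be absorbed into the $\pm$ conventions. Once termination is established, the identification of the transformed $(\mathcal{A},d)$ with $S_{|a_i|}(\mathcal{A}/\mathcal{I})$ is immediate, and the stable tame isomorphism $(\mathcal{A},d)\cong(\mathcal{A}/\mathcal{I},d)$ follows.
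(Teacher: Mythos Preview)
Your proposal is correct and matches the paper's approach: the paper does not actually supply a proof of this lemma but simply records it as a standard fact ``whose proof dates back to the fundamental paper of Chekanov \cite{yc}.'' Your sketch---normalize via the elementary tame automorphism $a_j\mapsto a_j+v$ so that $da_i=a_j$, then use further tame automorphisms to decouple the pair $(a_i,a_j)$ from the remaining generators, exhibiting $(\mathcal{A},d)$ as a single stabilization of $\mathcal{A}/\mathcal{I}$---is exactly Chekanov's cancellation argument, and you correctly identify the termination of the decoupling procedure as the only point requiring care and defer it to \cite{yc}.

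One small remark on phrasing: your ``analogous elimination of $a_i$'' is not quite symmetric to the elimination of $a_j$, since there is no generator whose differential equals $a_i$. In the standard organization one only trades $a_j$ for $a_i$ via $a_j=da_i$; the resulting $a_i$-terms are then handled by the inductive structure (processing generators in increasing filtration order and using $d^2=0$), rather than by a second ``analogous'' substitution. This does not affect the correctness of your outline, and the full bookkeeping is indeed in \cite{yc} as you say.
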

The above lemma will be frequently applied to the cellular dg algebra $\mathcal{C}(\Lambda)$ in Section \ref{section:combi} to cancel excessive generators, and we will denote by $\mathcal{C}'(\Lambda)$ the simplified dg algebra obtained by applying all the possible cancellations to $\mathcal{C}(\Lambda)$ in the sense of Lemma \ref{lemma:tame}. Note that this does not in general lead to a model of the Chekanov-Eliashberg algebra $\mathcal{C}E(\Lambda_{p,q,r})$ that is minimal, especially when $\mathcal{C}E(\Lambda_{p,q,r})$ is formal as an $A_\infty$-algebra. The Legendrian surface $\Lambda_{1,1,0}$ mentioned in Section \ref{section:exceptional} is such an example.
\bigskip

The following notion is useful in the computation of the Chekanov-Eliashberg dg algebras for Legendrian surfaces which can be realized as front spins of Legendrian knots, or more generally, Legendrian arcs. Its geometric application is discussed in Section \ref{section:spin}.

\begin{definition}[Section 5.5.1 of \cite{rs1}]\label{definition:susp}
Let $(\mathcal{A},d)$ be a dg algebra over $\mathbb{K}$ freely generated by the ordered set $\{a_1,\dots,a_n\}$, and $d$ is triangular with respect to these generators. We define the suspension $(\mathcal{A}^s,d^s)$ of $(\mathcal{A},d)$ to be the dg algebra freely generated by $\{a_i,\check{a}_i\}_{i=1,\dots,n}$ such that
\begin{itemize}
\item $|\check{a}_i|=|a_i|-1$, where the grading of $a_i$ in $\mathcal{A}^s$ is the same as its grading in $\mathcal{A}$.
\item Let $D:\mathcal{A}\rightarrow\mathcal{A}^s$ be the derivation determined by $D(a_i)=\check{a}_i$, then the differential $d^s$ on $\mathcal{A}^s$ is determined by
\begin{equation}d^s(a_i)=d(a_i),d^s(\check{a}_i)=D(d(a_i)).\end{equation}
\end{itemize}
Let $\mathcal{O}\subset\mathcal{A}$ be a based dg subalgebra generated by some subset of $\{a_1,\dots,a_n\}$. Define the suspension of $\mathcal{A}$ relative to $\mathcal{O}$ to be the dg algebra $\mathcal{A}^s$ as above except that we set $\check{o}=0$ for all $o\in\mathcal{O}$.
\end{definition}

\subsection{Spinning a Legendrian arc}\label{section:spin}

Let $J^1(\mathbb{R})$ denote $\mathbb{R}^3$ with its standard contact structure. The corresponding front and base projections will be denoted by $p_{x_1,z}$ and $p_{x_1}$ . Given a Legendrian submanifold $K\subset J^1(\mathbb{R})$ which is a union of a number of knots in $\{x_1<0\}$ and a number of arcs in $\{x_1\leq0\}$ whose endpoints under the front projection lie on the $z$-axis. In addition, we shall require that when we reflect $p_{x_1,z}(K)\subset\mathbb{R}^2$ with respect to the $z$-axis, the image should be the front projection of a Legendrian link. This can actually be achieved for any Legendrian embedding $K\subset\mathbb{R}^3$ whose connected components are knots and arcs by a suitable Legendrian isotopy. By rotating along the $z$-axis, such a $K\subset J^1(\mathbb{R})$ gives rise to a Legendrian surface $\Lambda_K\subset J^1(\mathbb{R}^2)$, whose front $p_{x,z}(\Lambda_K)\subset\mathbb{R}^3$ is also the rotation of $p_{x_1,z}(K)\subset\mathbb{R}^2$ along the same axis. We call $\Lambda_K$ the \textit{front spin} of $K$. It is easy to see that the surface $\Lambda_K$ is a disjoint union of Legendrian tori and Legendrian spheres. Note that when two endpoints of a Legendrian arc coincide with each other on the $z$-axis under the front projection, then its front spin will have a cone singularity. In this case, although $\Lambda_K$ is not front generic, its cellular dg algebra $\mathcal{C}(\Lambda_K)$ can still be defined and computed with only slight modifications. However, we will not need to deal with cone singularities in the present paper, since they have already been cancelled in the Legendrian front of $\Lambda_{p,q,r}$ using higher dimensional Reidemeister moves in the proof of Proposition \ref{proposition:link}.
\bigskip

Analogous to the surface case, we can consider the $K$-compatible polygonal decomposition of the real line $\mathbb{R}$. When $K$ has an arc component, this decomposition will include the origin of $\mathbb{R}$ as a 0-cell. For each cell $e$ in the cellular decomposition of $p_{x_1}(K)$, we can associate to it two cells $e$ and $\check{e}$ in the $\Lambda_K$-compatible polygonal decomposition of $\mathbb{R}^2$. The first one $e$ can be identified with the original cell through the embedding of the $x_1$-axis into $\mathbb{R}^2_{x_1,x_2}$. The second one $\check{e}$ is the spinning of $e$ around the $z$-axis. Note that $e$ and $\check{e}$ are the same if $e$ is the origin $\{x_1=0\}$, and we denote by $o$ the unique 0-cell it induces in the cellular decomposition of $p_x(\Lambda_K)$.
\bigskip

From the above we get two cellular dg algebras: one is associated to the cellular decomposition of $p_{x_1}(K)$, and the other one is associated to the $\Lambda_K$-compatible polygonal decomposition of $\mathbb{R}^2$ induced by the cellular decomposition of $p_{x_1}(K)$. We denote these dg algebras by $(\mathcal{C}(K),d_\mathcal{C})$ and $(\mathcal{C}(\Lambda_K),d_\mathcal{C}^s)$ respectively. We remark that explicitly the differential $d_\mathcal{C}$ of the former cellular dg algebra $\mathcal{C}(K)$ is defined using (\ref{eq:diff1}) and (\ref{eq:diff2}). In fact, $\mathcal{C}(K)$ is quasi-isomorphic to the bordered Chekanov-Eliashberg algebra introduced by Sivek $\cite{ssi}$. The way that we used to obtain the cellular decomposition of $p_x(\Lambda_K)$ suggests that algebraically these two dg algebras are related to each other through the suspension construction of Section \ref{section:susp-dga}. In fact, we have the following result.

\begin{proposition}[Proposition 5.2 of $\cite{rs1}$]\label{proposition:front-spin}
Suppose $K\subset J^1(\mathbb{R})$ has arc components whose front projections have distinct endpoints, then the cellular dg algebra $(\mathcal{C}(\Lambda_K),d_\mathcal{C}^s)$ is the suspension of $(\mathcal{C}(K),d_\mathcal{C})$ relative to the dg subalgebra $\mathcal{O}(K)$ associated to the 0-cell $e_o^0$ in the cellular decomposition of $p_x(\Lambda_K)$. If $K$ has two arc components whose front projections have the same end point, so that its front spin $\Lambda_K$ contains a unique cone singularity above $e_o^0$, $\mathcal{C}(\Lambda_K)$ is the suspension of $\mathcal{C}(K)$ relative to $\mathcal{O}(K)$ with the modification that in this case
\begin{equation}
D(A_o)=\sum_{m<k}a_o^{m,k+1}\Delta_{m,k}+\sum_{k+1<n}a_o^{k,n}\Delta_{k+1,n}
\end{equation}
instead of $D(A_o)=0$, where $A_o$ is the matrix of generators associated to the 0-cell $e_o^0$, and the cone point over $e_o^0$ connects the sheets $S_k$ and $S_{k+1}$.
\end{proposition}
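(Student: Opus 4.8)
The plan is to set up an explicit cell-by-cell correspondence between the polygonal decomposition of $p_x(\Lambda_K)$ and the cellular decomposition of $p_{x_1}(K)$, read off the induced bijection of generators, and then check that the differentials agree term by term against Definition \ref{definition:susp}. Since the coefficient field is fixed to be $\mathbb{Z}/2$ in this section, all signs and orientation exponents in the matrix equations (\ref{eq:diff1}), (\ref{eq:diff2}) and (\ref{eq:c}) drop out, which is what makes the comparison tractable. Concretely, each cell $e$ of $K$ in $\{x_1<0\}$ contributes two cells to the decomposition of $\mathbb{R}^2$: the copy $e$ on the $x_1$-axis, and its spin $\check e$, one dimension higher. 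Thus a $0$-cell $e^0$ spins to a circle $\check e^0$ (a $1$-cell) and a $1$-cell $e^1$ spins to an annulus $\check e^1$ (a $2$-cell). Above both $e$ and $\check e$ the sheets of $\Lambda_K$ are in natural bijection with the sheets of $K$ above $e$, so the generators on $e$ are identified with the generators $a_i$ of $\mathcal{C}(K)$ and those on $\check e$ with the suspended generators $\check a_i$. The grading conventions of Section \ref{section:def-cell} give $|b^{m,n}|=|a^{m,n}|-1$ and $|c^{m,n}|=|b^{m,n}|-1$, which is exactly the shift $|\check a_i|=|a_i|-1$ required by Definition \ref{definition:susp}.

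On the un-checked generators the differentials agree at once: in the slice $\{x_2=0\}$ the front of $\Lambda_K$ is literally the front of $K$, so evaluating (\ref{eq:diff1}) and (\ref{eq:diff2}) on the cells $e$ reproduces $d_\mathcal{C}$ on $\mathcal{C}(K)$, i.e. $d_\mathcal{C}^s(a_i)=d(a_i)$. For the checked generators one exploits that $D$ is a derivation. For a spun $0$-cell, the circle $\check e^0$ is a $1$-cell whose initial and terminal $0$-cells both equal $e^0$, so $A_{\check e,+}=A_{\check e,-}=A_e$; substituting $B_{\check e}=\check A_e$ into (\ref{eq:diff2}) gives, over $\mathbb{Z}/2$,
\begin{equation}
d_\mathcal{C} B_{\check e}=A_e(E+\check A_e)+(E+\check A_e)A_e=A_e\check A_e+\check A_e A_e=D(A_e^2)=D(dA_e),
\end{equation}
which is precisely $d_\mathcal{C}^s(\check a_i)=D(d(a_i))$.

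For a spun $1$-cell I would present the annulus $\check e^1$ as a square by cutting along the axis slice: its two $0$-cell corners are the endpoints $e_\pm^0$ of $e^1$, its two vertical edges are the arc $e^1$ (matrix $B_e$), and its horizontal edges are the circles $\check e_\pm^0$ (matrices $\check A_{e,\pm}$). Feeding this boundary word into (\ref{eq:c}) with $C_{\check e}=\check B_e$, the two path products contribute $\check A_{e,+}(E+B_e)+(E+B_e)\check A_{e,-}$ after collapsing over $\mathbb{Z}/2$, and the vertex terms contribute $A_{e,+}\check B_e+\check B_e A_{e,-}$, so that
\begin{equation}
d_\mathcal{C}\check B_e=\check A_{e,+}(E+B_e)+A_{e,+}\check B_e+\check B_e A_{e,-}+(E+B_e)\check A_{e,-}=D(dB_e).
\end{equation}
This is exactly the prescription $d_\mathcal{C}^s(\check b_i)=D(d(b_i))$, so away from the origin the two differentials coincide on all generators.

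The relative aspect of the statement is forced by the fixed locus of the rotation: any cell of $K$ whose front meets the $z$-axis, in particular the origin $0$-cell $e_o^0$, is pointwise fixed by the spinning and so has no companion cell $\check e$. Hence no checked generators are created over $e_o^0$, which is precisely the rule $\check o=0$ for $o\in\mathcal{O}(K)$ defining the relative suspension. The step I expect to be the main obstacle is the cone-singularity case, where two arc endpoints coincide at the origin: there $\Lambda_K$ is no longer front generic above $e_o^0$, and by (\ref{eq:block}) the matrix $A_o$ acquires the nilpotent cusp blocks $N$ recording the cone point joining sheets $S_k$ and $S_{k+1}$. The task is a careful local analysis showing that spinning converts the naive $D(A_o)=0$ into the index-shifted derivation $D(A_o)=\sum_{m<k}a_o^{m,k+1}\Delta_{m,k}+\sum_{k+1<n}a_o^{k,n}\Delta_{k+1,n}$, the shift arising because the sheet lying above one side of the cone is identified with its neighbour on the other side. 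Checking that this modified $D$ remains a differential compatible with (\ref{eq:diff2}) in a neighbourhood of the cone, and that it glues to the suspension differential computed away from $e_o^0$, is the delicate point; the rest is a direct, sign-free matching of the matrix equations.
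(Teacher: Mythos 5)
Your attempt cannot be checked against an in-paper argument, because the paper does not prove Proposition \ref{proposition:front-spin} at all: it is quoted directly from Proposition 5.2 of $\cite{rs1}$. As a reconstruction, your cell-by-cell matching is the right strategy, and your treatment of the first assertion (all arc endpoints distinct) is essentially correct: the un-checked cells literally reproduce $(\mathcal{C}(K),d_\mathcal{C})$, the spun $0$-cell computation $d_\mathcal{C}\check{A}_e=A_e\check{A}_e+\check{A}_eA_e=D(A_e^2)$ is valid since both endpoints of the $1$-cell $\check{e}^0$ coincide with $e^0$, and the identity $d_\mathcal{C}\check{B}_e=D(d_\mathcal{C}B_e)$ for spun $1$-cells does hold. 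Two points, however, are glossed over. First, the square presentation of the annulus has \emph{four} corners (two copies each of $e^0_\pm$), and the collapse of the path products in (\ref{eq:c}) to $\check{A}_{e,+}(E+B_e)+(E+B_e)\check{A}_{e,-}$ only happens if the initial and terminal vertices are taken at \emph{diagonally opposite} corners, so that each of the two boundary paths contains exactly one copy of the slit $e^1$ and one circle; with adjacent corners one path is $e^1$ alone and the other is a product of three factors, leaving the uncancelled term $\check{A}_{e,+}(E+B_e)\check{A}_{e,-}$. Second, your claim that all orientation exponents drop out over $\mathbb{Z}/2$ is not literally true, since $(E+B)^{-1}=E+B+B^2+\cdots\neq E+B$; one must choose the characteristic maps so that every $\varepsilon_i=+1$, which is possible here but is a choice that must be made, not a consequence of the coefficient field. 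Similarly, for a $1$-cell ending at the origin the spun cell is a disc, not an annulus, and verifying that (\ref{eq:c}) for this degenerate polygon implements the relative rule $D(A_o)=0$ is a (short) computation, not merely the observation that no checked generators exist over $e_o^0$.

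The genuine gap is the second assertion of the proposition, which you explicitly defer: when two arc endpoints coincide and $\Lambda_K$ has a cone singularity over $e_o^0$, you never establish the stated formula $D(A_o)=\sum_{m<k}a_o^{m,k+1}\Delta_{m,k}+\sum_{k+1<n}a_o^{k,n}\Delta_{k+1,n}$. This is not a routine continuation of the generic-case computation. Above the cone point the front of $\Lambda_K$ fails to be generic in a way not covered by Section \ref{section:def-cell} or \ref{section:non-gen}: the two sheets $S_k,S_{k+1}$ meet at an isolated point rather than along a crossing arc or cusp edge, so the very definition of the matrix $A_o$ and of the vertex insertions entering (\ref{eq:diff2}) and (\ref{eq:c}) must first be modified (this is the ``slight modification'' alluded to in Section \ref{section:spin}) before any differential can be computed. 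One must then recompute the differentials of the checked generators of the cells adjacent to $e_o^0$ and check that the index shift $k\leftrightarrow k+1$ — encoding that the sheet $S_k$ on one side of the cone continues as $S_{k+1}$ on the other side — produces exactly the displayed $D(A_o)$, including the restriction of the index ranges to $m<k$ and $k+1<n$. Since this cone case is precisely the one the paper later invokes (for $\Lambda_{1,1,0}$ in Section \ref{section:exceptional}), labelling it ``the delicate point'' and stopping leaves the proposition unproved in the only situation where its second clause is used.
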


\paragraph{Remark} When doing computations of $\mathcal{C}(\Lambda_K)$ using Proposition \ref{proposition:front-spin}, one is supposed to start with a Maslov potential $\mu_K:K\rightarrow\mathbb{Z}$ on the Legendrian arc $K$, and then appeal to the grading convention of Definition \ref{definition:susp} to determine the gradings on $\mathcal{C}(\Lambda_K)$. However, care must be taken as a disconnected Legendrian arc $K$ may become connected after spinning it around. As an example, we have the Legendrian surface $\Lambda_{1,1,0}$ obtained by spinning a two-component Legendrian arc $K_{1,1,0}$, see Figure \ref{fig:trefoil}. In this situation, we need to make sure that the Maslov potential $\mu_K$ that we use in the computation of the dg algebra $\mathcal{C}(K)$ is induced from some well-defined potential $\mu_\Lambda:\Lambda_K\rightarrow\mathbb{Z}$.

\section{Combinatorial computations}\label{section:combi}

The main purpose of this section is to compute the Chekanov-Eliashberg dg algebras for the 2-dimensional Legendrian links $\Lambda_{p,q,r}\subset J^1(\mathbb{R}^2)$. For the computations in this section, we will work over $\mathbb{K}=\mathbb{Z}/2$, which enables us to adopt the cellular model introduced in the last subsection. As an illustration to more complicated computations, we will first compute in Section \ref{section:example} the cellular dg algebra of the $A_r$ type Legendrian attaching link $\Lambda_r$ of standard unknots, which also enables us to reduce the computation for a general link $\Lambda_{p,q,r}$ to the special case when $p=q=r=2$. The cellular dg algebra $\mathcal{C}(\Lambda_{2,2,2})$ will be computed in Section \ref{section:computation}. For the links of Legendrian surfaces $\Lambda_r$, their Legendrian fronts can be obtained by spinning a Legendrian arc, the algebraic construction in Section \ref{section:spin} will be used in order to simplify the computations.

\subsection{The $A_r$ link}\label{section:example}

We compute the cellular dg algebra $\mathcal{C}(\Lambda_r)$ of the Legendrian link $\Lambda_r\subset(S^5,\xi_\mathit{std})$, which is the Legendrian front associated to the 3-dimensional $A_r$ Milnor fiber
\begin{equation}
\{x^2+y^2+z^2+w^{r+1}=1\}\subset\mathbb{C}^4.
\end{equation}
This is aimed at helping the readers understand our computation in the more complicated case in Section \ref{section:computation}. Meanwhile, the computation here also provides an alternative proof of the Koszul duality result due to Ekholm-Lekili $\cite{ekl}$ for tree plumbings of $T^\ast S^3$'s in the special case when the plumbing tree $T=A_r$. The same method can be easily generalized to verify Koszul duality between the Fukaya categories of plumbings of $T^\ast S^3$'s according to any tree $T$.
\bigskip

Our proof uses the description of the Legendrian front of $\Lambda_r$ as the front spin of a Legendrian arc $K_r\subset J^1(\mathbb{R})$, see Figure \ref{figure:K3}. The computation of the cellular dg algebra $\mathcal{C}(\Lambda_r)$ can then be reduced to the computation of $\mathcal{C}(K_r)$ by Proposition \ref{proposition:front-spin}.

\begin{figure}[htb!]
	\centering
	\begin{tikzpicture}
	\draw [red] (0,0.15) -- (0,3.95);
	\draw (0,3.5) to [out=180,in=0] ++ (-1.2,-0.6);
	\draw (-1.2,2.9) to [out=0,in=180] ++ (1.2,-0.6);
	\draw (0,2.7) to [out=180,in=0] ++ (-1.2,-0.6);
	\draw (-1.2,2.1) to [out=0,in=180] ++ (1.2,-0.6);
	\draw (0,1.9) to [out=180,in=0] ++ (-1.2,-0.6);
	\draw (-1.2,1.3) to [out=0,in=180] ++ (1.2,-0.6);
	\end{tikzpicture}
	\caption{The Legendrian arc $K_3$, together with the axis $x_1=0$, coloured in red}
	\label{figure:K3}
\end{figure}
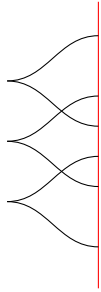
\bigskip

Consider the Legendrian arc $K_r$, which consists of $r$ connected components, and crossings happen only between the nearby strands labelled by $i$ and $i+1$. Without changing the Legendrian isotopy class, we can arrange that all the crossing points in the front of $K_r$ have the same base projection in $\mathbb{R}_{x_1}$. This violation of the genericity of the front projection is justified by the discussions in Section \ref{section:non-gen}. A $K_r$-compatible polygonal decomposition associated to such a non-generic front projection of $K_r$ is depicted in Figure \ref{figure:decomposition1}.
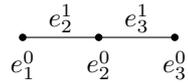
\begin{figure}[htb!]
	\centering
	\begin{tikzpicture}
	\draw (0,0) node[circle,fill,inner sep=1pt,label=below:$e^0_1$](a){} -- (1,0)
	node[circle,fill,inner sep=1pt,label=below:$e^0_2$](b){};
	\draw (1,0) -- (2,0) node[circle,fill,inner sep=1pt,label=below:$e^0_3$](c){};
	\node at (0.5,0.25) {$e^1_2$};
	\node at (1.5,0.25) {$e^1_3$};
	\end{tikzpicture}
	\caption{Cellular decomposition associated to $K_r$}
	\label{figure:decomposition1}
\end{figure}
\bigskip

Denote by $A_1,A_2,A_3,B_2,B_3$ the matrices of generators in $\mathcal{C}(K_r)$ associated to the cells $e^0_1,e^0_2,e_3^0,e_2^1,e^1_3$ in the above cellular decomposition. To start, we want to apply the formula (\ref{eq:diff2}) to the generators in the matrix $B_2$ associated to the 1-cell $e_2^1$. In this case, the role of matrix $A_{\alpha,-}$ in (\ref{eq:diff2}) is played by $A_1$, and the role of the matrix $A_{\alpha,+}$ is played by $A_2$. Since the cusps appear above the 0-cell $e_1^0$ are formed between the strands of $K_r$ labelled by
\begin{equation}
(1,2),(3,4),\dots,(2r-1,2r)
\end{equation}
above the 1-cell $e_2^1$, by (\ref{eq:cusps}) we have
\begin{equation}\label{eq:A1}
A_1=\sum_{i=1}^r\Delta_{2i-1,2i}.
\end{equation}
Since the strands of $K_r$ labelled by
\begin{equation}
(2,3),(4,5),\dots,(2r-2,2r-1)
\end{equation}
above the 0-cell $e_2^0$ cross with each other, it follows by definition of the matrix $A_2$ recalled in Section \ref{section:def-cell} that
\begin{equation}\label{eq:A2=0}
a_2^{2,3}=a_2^{4,5}=\dots=a_2^{2r-2,2r-1}=0.
\end{equation}
For the other non-zero entries $a_2^{m,n}$ in the strictly upper-triangular matrix $A_2$, we can cancel them using the formula of $d_\mathcal{C}B_2$. More precisely, we have by (\ref{eq:diff2}) that
\begin{equation}\label{eq:db2}
d_\mathcal{C}b_2^{m,n}=a_2^{m,n}+a_1^{m,n}+\sum_{m<k<n}a_2^{m,k}b_2^{k,n}+\sum_{m<k<n}b_2^{m,k}a_1^{k,n}.
\end{equation}
By Lemma \ref{lemma:tame}, we can define a filtration $F^\bullet$ on $\mathcal{C}(K_r)$ which respects the increasing order of $n-m$, and cancel the generators $b_2^{m,n}$ with $a_2^{m,n}$ according to the filtration $F^\bullet$ when $a_2^{m,n}\neq0$. After the cancellation process, we get a quotient dg algebra of $\mathcal{C}(K_r)$ which is quasi-isomorphic to $\mathcal{C}(K_r)$. In particular, in the quotient dg algebra $\mathcal{C}'(K_r)$, the non-zero generators in $B_2$ are
\begin{equation}\label{eq:B2remain}
b_2^{2,3},b_2^{4,5},\dots,b_2^{2r-2,2r-1},
\end{equation}
and the identities
\begin{equation}\label{eq:a2mn}
a_2^{m,n}=a_1^{m,n}+\sum_{m<k<n}a_2^{m,k}b_2^{k,n}+\sum_{m<k<n}b_2^{m,k}a_1^{k,n}
\end{equation}
hold. In fact, for generators $b_2^{m,n}$ which has been cancelled with $a_2^{m,n}$, since they vanish in the quotient dg algebra $\mathcal{C}'(K_r)$, it is clear that we have (\ref{eq:a2mn}) in $\mathcal{C}'(K_r)$ for any $(m,n)\neq(k,k+1)$, with $2\leq k\leq 2r-2$. When $(m,n)=(k,k+1)$ for some $k$, it follows from (\ref{eq:A1}) and (\ref{eq:A2=0}) that $a_1^{k,k+1}=a_2^{k,k+1}=0$, therefore by (\ref{eq:db2}) we have $d_\mathcal{C}b_2^{k,k+1}=0$, which shows that (\ref{eq:a2mn}) is still true. Using (\ref{eq:A1}) and (\ref{eq:B2remain}), we can then compute $A_2$ inside the quotient dg algebra $\mathcal{C}'(K_r)$, and deduce that
\begin{equation}\label{eq:a_2q}
A_2=\begin{bmatrix}
0 & 1 & b_2^{2,3} & 0 & 0 & 0 & \dots & 0 & 0 & 0 \\
0 & 0 & 0 & b_2^{2,3} & 0 & 0 & \dots & 0 & 0 & 0 \\
0 & 0 & 0 & 1 & b_2^{4,5} & 0 & \dots & 0 & 0 & 0 \\
0 & 0 & 0 & 0 & 0 & b_2^{4,5} & \dots & 0 & 0 & 0 \\
0 & 0 & 0 & 0 & 0 & 1 & \dots & 0 & 0 & 0 \\
\vdots & \vdots & \vdots & \vdots & \vdots & \vdots & \ddots & \vdots & \vdots & \vdots \\
0 & 0 & 0 & 0 & 0 & 0 & \dots & 1 & b_2^{2r-2,2r-1} & 0 \\
0 & 0 & 0 & 0 & 0 & 0 & \dots & 0 & 0 & b_2^{2r-2,2r-1} \\
0 & 0 & 0 & 0 & 0 & 0 & \dots & 0 & 0 & 1 \\
0 & 0 & 0 & 0 & 0 & 0 & \dots & 0 & 0 & 0
\end{bmatrix}
\end{equation}
in $\mathcal{C}'(K_r)$.

Since there is no crossing above the 1-cell $e_3^1$, none of the entries above the diagonal of $A_3$ is zero in $\mathcal{C}(K_r)$. Arguing similarly as above, we see that all the generators in the matrix $B_3$ can be cancelled with the generators in $A_3$. In $\mathcal{C}'(K_r)$, we have the following identity:
\begin{equation}\label{eq:a_3}
a_3^{m,n}=a_2^{\sigma^{-1}(m),\sigma^{-1}(n)}+\sum_{m<k<n}a_3^{m,k}b_3^{k,n}+\sum_{m<k<n}b_3^{m,k}a_2^{\sigma^{-1}(k),\sigma^{-1}(n)},
\end{equation}
where it follows from our discussions in Section \ref{section:non-gen} that the permutation $\sigma$ is given by
\begin{equation}
\sigma=(2,3)(4,5)\dots(2r-2,2r-1).
\end{equation}
It follows that in $\mathcal{C}'(K_r)$,
\begin{equation}\label{eq:a_3q}
A_3=\begin{bmatrix}
0 & b_2^{2,3} & 1 & 0 & 0 & 0 & 0 & \dots & 0 & 0 & 0 \\
0 & 0 & 0 & b_2^{4,5} & 1 & 0 & 0 & \dots & 0 & 0 & 0 \\
0 & 0 & 0 & 0 & b_2^{2,3} & 0 & 0 & \dots & 0 & 0 & 0 \\
0 & 0 & 0 & 0 & 0 & b_2^{6,7} & 1 & \dots & 0 & 0 & 0 \\
0 & 0 & 0 & 0 & 0 & 0 & b_2^{4,5} & \dots & 0 & 0 & 0 \\
\vdots & \vdots & \vdots & \vdots & \vdots & \vdots & \vdots & \ddots & \vdots & \vdots & \vdots \\
0 & 0 & 0 & 0 & 0 & 0 & 0 & \dots & b_2^{2r-2,2r-1} & 1 & 0 \\
0 & 0 & 0 & 0 & 0 & 0 & 0 & \dots & 0 & b_2^{2r-4,2r-3} & 0 \\
0 & 0 & 0 & 0 & 0 & 0 & 0 & \dots & 0 & 0 & 1 \\
0 & 0 & 0 & 0 & 0 & 0 & 0 & \dots & 0 & 0 & b_2^{2r-2,2r-1} \\
0 & 0 & 0 & 0 & 0 & 0 & 0 & \dots & 0 & 0 & 0
\end{bmatrix}
\end{equation}
By Proposition \ref{proposition:front-spin}, the cellular dg algebra $\mathcal{C}(\Lambda_r)$ is the suspension of $\mathcal{C}(K_r)$ relative to the dg subalgebra $\mathcal{O}(K_r)$ generated by generators in $A_3$. Denote by $D_r:\mathcal{C}(K_r)\rightarrow\mathcal{C}(\Lambda_r)$ the derivation appearing in the definition of the suspension of $\mathcal{C}(K_r)$, which acts on the generators by $D_r(a_i^{m,n})=\check{a}_i^{m,n}$ and $D_r(b_j^{m,n})=\check{b}_j^{m,n}$. Further cancellations among the generators in $\mathcal{C}(\Lambda_r)$ imply
\begin{proposition}\label{proposition:An}
$\mathcal{C}(\Lambda_r)$ is quasi-isomorphic to a dg algebra generated by
\begin{equation}\label{eq:genAn}
\begin{split}
&b_2^{2,3},b_2^{4,5},\dots,b_2^{2r-2,2r-1},\\
&\check{b}_3^{1,3},\check{b}_3^{2,5},\check{b}_3^{4,7},\dots,\check{b}_3^{2r-4,2r-1},\check{b}_3^{2r-2,2r},\\
&\check{b}_3^{2,3},\check{b}_3^{4,5},\dots,\check{b}_3^{2r-2,2r-1}.
\end{split}
\end{equation}
\end{proposition}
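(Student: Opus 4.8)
The plan is to deduce the statement directly from Proposition \ref{proposition:front-spin}, which identifies $\mathcal{C}(\Lambda_r)$ with the suspension of $\mathcal{C}(K_r)$ relative to the subalgebra $\mathcal{O}(K_r)$ generated by the entries of $A_3$. Concretely, the generators of this suspension are those of $\mathcal{C}(K_r)$ together with a check-copy $\check{x}$ of every generator $x\notin\mathcal{O}(K_r)$, and the suspension differential obeys $d^s_{\mathcal{C}}\check{x}=D(d_{\mathcal{C}}x)$ for the derivation $D$ of Definition \ref{definition:susp}, with $\check{a}_3^{m,n}=0$. My first step is to \emph{lift} the reduction $\mathcal{C}(K_r)\rightsquigarrow\mathcal{C}'(K_r)$ already carried out above to the level of the suspension. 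The point is that whenever Lemma \ref{lemma:tame} cancels a pair $(x,y)$ in $\mathcal{C}(K_r)$ with $d_{\mathcal{C}}x=y+v$ and $v$ of lower filtration, the relation $d^s_{\mathcal{C}}\check{x}=\check{y}+D(v)$ lets one cancel the check-pair $(\check{x},\check{y})$ as well, \emph{provided} $y\notin\mathcal{O}(K_r)$ so that $\check{y}\neq0$.

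Carrying this out produces the following picture. The cancellations of the non-crossing pairs $(b_2^{m,n},a_2^{m,n})$ use $a_2\notin\mathcal{O}(K_r)$, so they remove $a_2^{m,n},b_2^{m,n}$ together with $\check{a}_2^{m,n},\check{b}_2^{m,n}$, leaving only the crossing generators $b_2^{2i,2i+1}$ and their checks $\check{b}_2^{2i,2i+1}$ (see (\ref{eq:A2=0})--(\ref{eq:a2mn})). By contrast the cancellations $(b_3^{m,n},a_3^{m,n})$ involve $a_3\in\mathcal{O}(K_r)$, so $\check{a}_3^{m,n}=0$: the uncheck pair still cancels, eliminating all of $A_3$ and $B_3$, but every $\check{b}_3^{m,n}$ is now \emph{orphaned} and survives. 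After this stage the surviving generators are $b_2^{2i,2i+1}$, $\check{b}_2^{2i,2i+1}$ $(1\le i\le r-1)$ and the full family $\check{b}_3^{m,n}$.

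The final step is to compute the differentials of these surviving check-generators and cancel once more. Applying $D$ to the matrix relation (\ref{eq:diff2}) for $B_3$, with $A_{3,+}=A_3$ and $A_{3,-}=Q_\sigma^{-1}A_2Q_\sigma$, and substituting the reduced expressions (\ref{eq:a_2q}), (\ref{eq:a_3q}), one finds $d^s_{\mathcal{C}}\check{b}_3^{m,n}=\check{a}_2^{\sigma^{-1}(m),\sigma^{-1}(n)}+\cdots$, where $\check{a}_2^{\bullet}$ is re-expressed through the surviving $\check{b}_2^{2i,2i+1}$ via the image under $D$ of (\ref{eq:a2mn}). I then identify, for each $i$, a distinguished $\check{b}_3^{m,n}$ whose differential has $\check{b}_2^{2i,2i+1}$ as its lowest-filtration term, and cancel these $r-1$ pairs by Lemma \ref{lemma:tame}. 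This removes every $\check{b}_2^{2i,2i+1}$ together with $r-1$ of the $\check{b}_3$'s, and a short bookkeeping check shows the surviving $\check{b}_3$'s are exactly the two families $\check{b}_3^{1,3},\check{b}_3^{2,5},\ldots,\check{b}_3^{2r-2,2r}$ and $\check{b}_3^{2,3},\ldots,\check{b}_3^{2r-2,2r-1}$ listed in (\ref{eq:genAn}); together with the $b_2^{2i,2i+1}$ this yields $3r-2$ generators, matching the expected $A_r$ Ginzburg algebra.

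The main obstacle will be the index bookkeeping in this last step: one must track precisely how the permutation $\sigma=(2,3)(4,5)\cdots(2r-2,2r-1)$ and the $2\times2$ cusp blocks (\ref{eq:cusps}) propagate when $D$ is applied to the off-diagonal entries of $A_{3,-}$, and verify at each cancellation that the triangularity hypothesis of Lemma \ref{lemma:tame} holds with respect to a filtration refining $n-m$, so that the process terminates exactly at the generating set (\ref{eq:genAn}). A secondary point to confirm is that the gradings assigned by Definition \ref{definition:susp} give the surviving $\check{b}_3$'s the degrees $-1$ and $-2$ of the reverse arrows and loops of the $A_r$ quiver, so that the reduced dg algebra is genuinely the Ginzburg algebra of the $A_r$ plumbing.
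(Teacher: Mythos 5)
Your overall strategy coincides with the paper's: identify $\mathcal{C}(\Lambda_r)$ with the suspension of $\mathcal{C}(K_r)$ relative to $\mathcal{O}(K_r)$ via Proposition \ref{proposition:front-spin}, lift the cancellations already performed in $\mathcal{C}(K_r)$ to the check-generators (your observation that a cancelling pair $(x,y)$ lifts to $(\check{x},\check{y})$ exactly when $y\notin\mathcal{O}(K_r)$, while the $\check{b}_3^{m,n}$ are orphaned because $\check{a}_3^{m,n}=0$, is precisely what the paper does by applying $D_r$ to (\ref{eq:db2}) and (\ref{eq:a_3})), and then cancel the surviving crossing generators $\check{b}_2^{2i,2i+1}$ against distinguished $\check{b}_3$'s --- in the paper these are $\check{b}_3^{1,2},\check{b}_3^{2,4},\dots,\check{b}_3^{2r-4,2r-2}$.

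However, your final step has a genuine gap, and it is visible already at the level of counting. The family $\{\check{b}_3^{m,n}\}$ has $r(2r-1)$ members (one for each pair of the $2r$ sheets above $e_3^1$), whereas the target list (\ref{eq:genAn}) retains only $2r-1$ of them. Your last round of cancellation removes only the $r-1$ generators $\check{b}_3$ paired with the $\check{b}_2^{2i,2i+1}$, which leaves $2(r-1)^2$ further check-generators that your ``short bookkeeping check'' does not account for: already for $r=2$ the generators $\check{b}_3^{1,4}$ and $\check{b}_3^{3,4}$ survive your procedure in addition to the three listed ones, and for general $r$ the discrepancy is quadratic versus linear in $r$. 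What is missing is the pairwise cancellation of the $\check{b}_3$'s \emph{among themselves}, which is the bulk of the paper's proof. The mechanism is that the reduced matrices (\ref{eq:a_2q}) and (\ref{eq:a_3q}) contain unit entries --- $a_3^{1,3}=1$, $a_3^{2j,2j+3}=1$ for $1\leq j\leq r-2$, and $a_3^{2r-2,2r}=1$ --- so that (\ref{eq:differential}) acquires linear terms: for instance $a_3^{1,3}=1$ gives $d_\mathcal{C}\check{b}_3^{1,n}=\check{b}_3^{3,n}+\dots$ for $n\geq4$, which produces the cancelling pairs (\ref{eq:cancel1}), and the remaining unit entries drive the cascade recorded in the paper, ending with (\ref{eq:cancel2}). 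Note that the survivors are \emph{determined} by this cascade --- the second family in (\ref{eq:genAn}) sits exactly at the positions of the unit entries of (\ref{eq:a_3q}) --- so identifying which $\check{b}_3$'s remain cannot be decoupled from organizing these cancellations; this step must be carried out, not asserted.
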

\begin{proof}
We have already seen how the generators in $\mathcal{C}(K_r)$ are cancelled, so it suffices to consider which one of the generators $\check{b}_2^{m,n}$ and $\check{b}_3^{m,n}$ remains in the quotient dg algebra $\mathcal{C}(\Lambda_r)$ after cancellation. Applying the derivation $D_r$ to (\ref{eq:db2}) we see that all the generators $\check{b}_2^{m,n}$ can be cancelled with $\check{a}_2^{m,n}$ except for
\begin{equation}\label{eq:b_2}
\check{b}_2^{2,3},\check{b}_2^{4,5},\dots,\check{b}_2^{2r-2,2r-1}.
\end{equation}
Using the fact that $\check{a}_3^{m,n}=0$ and $b_3^{m,n}=0$, $D_r$ applied to (\ref{eq:a_3}) implies that
\begin{equation}\label{eq:differential}
d_\mathcal{C}\check{b}_3^{m,n}=\check{a}_2^{\sigma^{-1}(m),\sigma^{-1}(n)}+\sum_{m<k<n}a_3^{m,k}\check{b}_3^{k,n}+\sum_{m<k<n}\check{b}_3^{m,k}a_2^{\sigma^{-1}(k),\sigma^{-1}(n)}.
\end{equation}
In the quotient dg algebra $\mathcal{C}'(\Lambda_r)$, the values of $a_3^{m,k}$ and $a_2^{\sigma^{-1}(k),\sigma^{-1}(n)}$ in the above formula have been determined in (\ref{eq:a_2q}) and (\ref{eq:a_3q}), from which we see that the generators in (\ref{eq:b_2}) can be cancelled with
\begin{equation}
\check{b}_3^{1,2},\check{b}_3^{2,4},\dots,\check{b}_3^{2r-4,2r-2}.
\end{equation}
On the other hand, using the entries of $A_3$ which are equal to 1 in $\mathcal{C}'(\Lambda_r)$ as specified by (\ref{eq:a_3q}), we can cancel most of the generators of the form $\check{b}_3^{m,n}$ with each other as follows.

Since $a_3^{1,3}=1$, by (\ref{eq:differential}) we can cancel the following generators with each other:
\begin{equation}\label{eq:cancel1}
(\check{b}_3^{1,4},\check{b}_3^{3,4}),(\check{b}_3^{1,5},\check{b}_3^{3,5}),\dots,(\check{b}_3^{1,2r},\check{b}_3^{3,2r}).
\end{equation}
It follows immediately that the only remaining generator in the first row of $\check{B}_3$ in the quotient dg algebra $\mathcal{C}'(\Lambda_r)$ is $\check{b}_3^{1,3}$. 

Similarly, since $a_3^{2,5}=a_3^{4,7}=\dots=a_3^{2r-4,2r-1}=1$, we get the following cancelling pairs of generators in $\mathcal{C}(\Lambda_r)$:
\begin{equation}
\begin{split}
&(\check{b}_3^{1,2j+3},\check{b}_3^{1,2j}),(\check{b}_3^{2,2j+3},\check{b}_3^{2,2j}),\cdots,(\check{b}_3^{2j-1,2j+3},\check{b}_3^{2j-1,2j}),\\
&(\check{b}_3^{2j,2j+4},\check{b}_3^{2j+3,2j+4}),(\check{b}_3^{2j,2j+5},\check{b}_3^{2j+3,2j+5}),\cdots,(\check{b}_3^{2j,2r},\check{b}_3^{2j+3,2r}),
\end{split}
\end{equation}
where $j\leq r-2$. For any integer $k$ with $1\leq k\leq r-1$, by considering the cancellations between $\check{b}_3^{2k,2j+3}$ and $\check{b}_3^{2k,2j}$ for $k+1\leq j\leq r-2$, we see that in the $2k$-th row of $\check{B}_3$, only
\begin{equation}
\check{b}_3^{2k,2k+1},\check{b}_3^{2k,2k+3},\check{b}_3^{2k,2r-2},\check{b}_3^{2k,2r}
\end{equation}
remain in $\mathcal{C}'(\Lambda_r)$. On the other hand, the cancellation pairs $(\check{b}_3^{2k-1,2j+3},\check{b}_3^{2k-1,2j})$ in the above list shows that the only remaining generators in the $(2k-1)$-th row of $\check{B}_3$ are given by
\begin{equation}
\check{b}_3^{2k-1,2k+1},\check{b}_3^{2k-1,2r}.
\end{equation}

Finally, using the fact that $a_3^{2r-2,2r}=1$, it is not hard to see that we can further cancel the generators
\begin{equation}\label{eq:cancel2}
(\check{b}_3^{2k,2r-2},\check{b}_3^{2k,2r}),(\check{b}_3^{2k-1,2k+1},\check{b}_3^{2k-1,2r}),
\end{equation}
which completes the proof.
\end{proof}
To compute the gradings of the generators listed in (\ref{eq:genAn}), we need to specify a Maslov potential on $\Lambda_r$. In our case, it suffices to define a Maslov potential $\mu_r:K_r\rightarrow\mathbb{Z}$ on the Legendrian arc $K_r$, and then apply the grading formula in Definition \ref{definition:susp}. Denote by $S_1,\dots,S_{2r}$ the strands of $K_r$ above the 1-cell $e_3^1$, we can define $\mu_r$ by setting
\begin{equation}\label{eq:MasAn}
\mu_n(S_{2r})=0,\mu_n(S_{2r-1})=\mu_n(S_{2r-2})=1,\dots,\mu_n(S_3)=\mu_n(S_2)=n-1,\mu(S_1)=n.
\end{equation}
This implies that
\begin{equation}
|b_2^{2,3}|=|b_2^{4,5}|=\dots=|b_2^{2r-2,2r-1}|=0,
\end{equation}
\begin{equation}
|\check{b}_3^{2,3}|=|\check{b}_3^{4,5}|=\dots=|\check{b}_3^{2r-2,2r-1}|=-1,
\end{equation}
and
\begin{equation}
|\check{b}_3^{2,3}|=|\check{b}_3^{4,5}|=\dots=|\check{b}_3^{2r-2,2r-1}|=-2.
\end{equation}

It remains to compute the differentials $d_\mathcal{C}$ on the generators of $\mathcal{C}'(\Lambda_r)$. From the cancellation procedure of the generators in $\mathcal{C}(K_r)$ we can extract the following:
\begin{equation}
d_\mathcal{C}b_2^{2,3}=d_\mathcal{C}b_2^{4,5}=\dots=d_\mathcal{C}b_2^{2r-2,2r-1}=0.
\end{equation}
Using (\ref{eq:differential}) it is easy to deduce that
\begin{equation}
d_\mathcal{C}\check{b}_3^{2,3}=d_\mathcal{C}\check{b}_3^{4,5}=\dots=d_\mathcal{C}\check{b}_3^{2r-2,2r-1}=0,
\end{equation}
and
\begin{equation}
d_\mathcal{C}\check{b}_3^{1,3}=b_2^{2,3}\check{b}_3^{2,3},d_\mathcal{C}\check{b}_3^{2r-2,2r}=\check{b}_3^{2r-2,2r-1}b_2^{2r-2,2r-1},
\end{equation}
\begin{equation}
d_\mathcal{C}\check{b}_3^{2j,2j+3}=\check{b}_3^{2j,2j+1}b_2^{2j,2j+1}+b_2^{2j+2,2j+3}\check{b}_3^{2j+2,2j+3},
\end{equation}
for any $j$ with $1\leq j\leq r-2$.
\bigskip

The computations above can be summarized as follows:
\begin{proposition}
The cellular dg algebra $\mathcal{C}(\Lambda_r)$ is quasi-isomorphic to the Ginzburg dg algebra associated to the $A_r$ quiver depicted in Figure \ref{fig:An}.
\end{proposition}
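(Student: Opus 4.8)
The plan is to match the explicit semifree model $\mathcal{C}'(\Lambda_r)$ assembled above, generator by generator, with the Ginzburg dg algebra $\mathcal{G}(A_r)$ of the linearly oriented $A_r$ quiver. Since the quiver is a tree it carries no nontrivial cyclic paths, so its potential vanishes and $\mathcal{G}(A_r)$ is the $3$-Calabi-Yau completion of the path algebra $\mathbb{K}A_r$. Because we work over $\mathbb{K}=\mathbb{Z}/2$, all signs are irrelevant, and both dg algebras are freely generated over $\Bbbk$; hence it suffices to produce a degree-preserving bijection between the two sets of generators under which the differentials agree, and this will extend uniquely to an isomorphism of dg algebras, \emph{a fortiori} a quasi-isomorphism.

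First I would record the surviving generators of $\mathcal{C}'(\Lambda_r)$ by degree. The grading computation above yields the $r-1$ degree-$0$ generators $b_2^{2k,2k+1}$ and the $r-1$ degree-$(-1)$ generators $\check{b}_3^{2k,2k+1}$, for $1\le k\le r-1$, together with the $r$ degree-$(-2)$ generators $\check{b}_3^{1,3}$, $\check{b}_3^{2j,2j+3}$ for $1\le j\le r-2$, and $\check{b}_3^{2r-2,2r}$. These match precisely the $r-1$ arrows, $r-1$ reversed arrows, and $r$ vertex loops of $\mathcal{G}(A_r)$, and this forces the correspondence $b_2^{2k,2k+1}\mapsto a_k$, $\check{b}_3^{2k,2k+1}\mapsto a_k^\ast$, $\check{b}_3^{1,3}\mapsto z_1$, $\check{b}_3^{2j,2j+3}\mapsto z_{j+1}$, and $\check{b}_3^{2r-2,2r}\mapsto z_r$, where $a_k\colon v_k\to v_{k+1}$ are the arrows of $A_r$.

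Next I would verify compatibility with the differentials. On arrows and reversed arrows both sides vanish: we computed $d_\mathcal{C}b_2^{2k,2k+1}=d_\mathcal{C}\check{b}_3^{2k,2k+1}=0$, while in $\mathcal{G}(A_r)$ one has $da_k=0$ and $da_k^\ast={}^\circ\partial\tilde{w}/\partial a_k=0$ since $w=0$. All of the content is therefore concentrated on the loops, where the Ginzburg rule $d(\sum_v z_v)=\sum_a[a,a^\ast]$ gives, upon extracting the component at each vertex, $dz_1=a_1a_1^\ast$, $dz_{j+1}=a_j^\ast a_j+a_{j+1}a_{j+1}^\ast$, and $dz_r=a_{r-1}^\ast a_{r-1}$ over $\mathbb{Z}/2$. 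Under the bijection these are exactly the formulas $d_\mathcal{C}\check{b}_3^{1,3}=b_2^{2,3}\check{b}_3^{2,3}$, $d_\mathcal{C}\check{b}_3^{2j,2j+3}=\check{b}_3^{2j,2j+1}b_2^{2j,2j+1}+b_2^{2j+2,2j+3}\check{b}_3^{2j+2,2j+3}$, and $d_\mathcal{C}\check{b}_3^{2r-2,2r}=\check{b}_3^{2r-2,2r-1}b_2^{2r-2,2r-1}$ established above.

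The only delicate point is the bookkeeping in this last step: I must check vertex by vertex that the interior loop $z_{j+1}$ receives a contribution from exactly the two arrows $a_j$ and $a_{j+1}$ incident to $v_{j+1}$, with the incoming arrow yielding $a_j^\ast a_j$ and the outgoing arrow yielding $a_{j+1}a_{j+1}^\ast$, whereas the two extremal loops $z_1$ and $z_r$ each collect only a single such term. Tracking the head--tail conventions carefully shows these agree with the two summands of $d_\mathcal{C}\check{b}_3^{2j,2j+3}$ and with the single terms of $d_\mathcal{C}\check{b}_3^{1,3}$ and $d_\mathcal{C}\check{b}_3^{2r-2,2r}$, respectively. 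Combining this isomorphism with the quasi-isomorphism $\mathcal{C}(\Lambda_r)\cong\mathcal{C}'(\Lambda_r)$ supplied by the cancellations of Lemma \ref{lemma:tame} completes the argument; as a consistency check, via Theorem \ref{theorem:gem} and Proposition \ref{proposition:CY-completion} it reproduces the Koszul duality of Ekholm--Lekili for the $A_r$ plumbing of $T^\ast S^3$'s.
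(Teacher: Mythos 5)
Your proof is correct and takes essentially the same route as the paper: the paper simply prefaces the proposition with ``the computations above can be summarized as follows,'' the intended argument being exactly your generator-by-generator matching of the semifree model $\mathcal{C}'(\Lambda_r)$ (degree $0$ generators $b_2^{2k,2k+1}$ as arrows, degree $-1$ generators $\check{b}_3^{2k,2k+1}$ as reversed arrows, degree $-2$ generators $\check{b}_3^{1,3}$, $\check{b}_3^{2j,2j+3}$, $\check{b}_3^{2r-2,2r}$ as loops) with the Ginzburg algebra of the linearly oriented $A_r$ quiver, whose potential vanishes since the quiver has no cycles, all over $\mathbb{K}=\mathbb{Z}/2$ so that signs play no role. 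Your explicit verification that the loop differentials $d(\sum_v z_v)=\sum_a[a,a^\ast]$ reproduce the computed formulas for $d_\mathcal{C}\check{b}_3^{1,3}$, $d_\mathcal{C}\check{b}_3^{2j,2j+3}$ and $d_\mathcal{C}\check{b}_3^{2r-2,2r}$ is exactly the content the paper leaves implicit (and you correctly sort out the degree assignments, which contain a typographical slip in the paper's grading display).
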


\begin{figure}
	\centering
	\begin{tikzpicture}
	\node[circle,draw, fill, minimum size = 2pt,inner sep=1pt] at (0,0) {};
	\node[circle,draw, fill, minimum size = 2pt,inner sep=1pt] at (8,0) {};
	\node[circle,draw, fill, minimum size = 2pt,inner sep=1pt] at (1.5,0) {};
	\node[circle,draw, fill, minimum size = 2pt,inner sep=1pt] at (3,0) {};
	
	\draw[->,shorten >=8pt, shorten <=8pt] (0,0) to (1.5,0);
	\draw[->,shorten >=8pt, shorten <=8pt] (1.5,0) to (3,0);
	
	\path (3,0) to node {\dots} (8,0);
	\node [shape=circle,minimum size=2pt, inner sep=1pt] at (4.5,0) {};
	\draw[->,shorten >=8pt, shorten <=8pt] (3,0) to (4.5,0);
	
	\node [shape=circle,minimum size=2pt, inner sep=1pt] at (6.5,0) {};
	\draw[->,shorten >=8pt, shorten <=8pt] (6.5,0) to (8,0);
	\end{tikzpicture}
	\caption{$A_r$ quiver}
	\label{fig:An}
\end{figure}
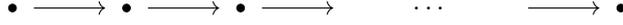

\paragraph{Remark} The orientation data of the holomorphic polygons involved in the definition of the Chekanov-Eliashberg algebra $\mathcal{C}E(\Lambda_r)$ can be recovered by applying directly the Koszul duality result of Ekholm-Lekili $\cite{ekl}$, or by identifying the differentials in $\mathcal{C}'(\Lambda_r)$ with the enumerations of Morse flow trees, and then applying the machinery recalled in Section \ref{section:subo}.

\subsection{The link $\Lambda_{2,2,2}$}\label{section:computation}

This subsection is devoted to the computation of the cellular dg algebra of the link of Legendrian surfaces $\Lambda_{2,2,2}\subset J^1(\mathbb{R}^2)$, whose front projection $p_{x,z}(\Lambda_{2,2,2})$ is described by Figure \ref{fig:front-without-2-handles}. Note that it is not hard to get from Figure \ref{fig:front-without-2-handles} the Legendrian front of the general link $\Lambda_{p,q,r}$, by replacing the standard unknot $\Lambda_P$ with an $A_{p-1}$-chain of parallel unknots, $\Lambda_Q$ with an $A_{q-1}$-chain of unknots, and $\Lambda_R$ with an $A_{r-1}$-chain of unknots. As we have remarked, in order to compute $\mathcal{C}(\Lambda_{p,q,r})$, it suffices to compute $\mathcal{C}(\Lambda_{2,2,2})$, and then combine the computations here with that in Section \ref{section:example}.\bigskip

A $\Lambda_{2,2,2}$-compatible polygonal decomposition of $p_x(\Lambda_{2,2,2})$ associated to the Legendrian front depicted in Figure \ref{fig:front-without-2-handles} is given in Figure \ref{figure:decomposition(2,2,2)}. In that figure, the 1-cells associated to cusp edges are represented by solid curves, and the 1-cells associated to crossing arcs are represented by dashed arcs. As a convention, all the cells $e^0_i,e^1_j$ and $e^2_k$ in the polygonal decomposition are labelled by their associated matrices of generators $A_i,B_j$ and $C_k$, respectively. Note that above the largest solid circle in Figure \ref{figure:decomposition(2,2,2)}, one can find only the cusp edges of the components $\Lambda_A,\Lambda_B$ and $\Lambda_R$, so there is no generator in $\mathcal{C}(\Lambda_{2,2,2})$ associated to it. In particular, the Legendrian front we are using for $\Lambda_{2,2,2}$ is non-generic, and such a choice is justified by our discussions in Section \ref{section:non-gen}. Besides the projection $\Sigma_{2,2,2}\subset\mathbb{R}^2$ of the set of singularities of $p_{x,z}(\Lambda_{2,2,2})$, we have added in the cellular decomposition the 0-cells labelled by the matrices
\begin{equation}
A_1,\dots,A_{12};
\end{equation}
and the 1-cells labelled by the matrices
\begin{equation}
B_2,\dots,B_7,B_9,\dots,B_{12},
\end{equation}
so that the non-simply-connected regions in $p_x(\Lambda_{2,2,2})$ are divided into polygons after these cells are added. All the other 1-cells correspond to singular loci in the Legendrian front of $\Lambda_{2,2,2}$, and their geometric meanings are recorded in the following table.
\begin{center}
	\begin{tabular}{|c|c|} 
		\hline
		Labelling & Front above the 1-cell  \\ 
		\hline
		$B_1,B_8$ & $p_{x,z}(\Lambda_A)\cap p_{x,z}(\Lambda_B)$ \\
		\hline
		$B_{13}$ & $p_{x,z}(\Lambda_B)\cap p_{x,z}(\Lambda_P)$  \\ 
		\hline
		$B_{14}$ & $p_{x,z}(\Lambda_A)\cap p_{x,z}(\Lambda_P)$ \\
		\hline
		$B_{15}$ & cusp edge of $\Lambda_P$ \\
		\hline
		$B_{16}$ & $p_{x,z}(\Lambda_B)\cap p_{x,z}(\Lambda_Q)$ \\
		\hline
		$B_{17}$ & $p_{x,z}(\Lambda_A)\cap p_{x,z}(\Lambda_Q)$ \\
		\hline
		$B_{18}$ & cusp edge of $\Lambda_Q$ \\
		\hline
		$B_{19},B_{20}$ & $p_{x,z}(\Lambda_B)\cap p_{x,z}(\Lambda_R)$ \\
		\hline
		$B_{21}$ & $p_{x,z}(\Lambda_A)\cap p_{x,z}(\Lambda_R)$ \\
		\hline
	\end{tabular}
\end{center}
\bigskip

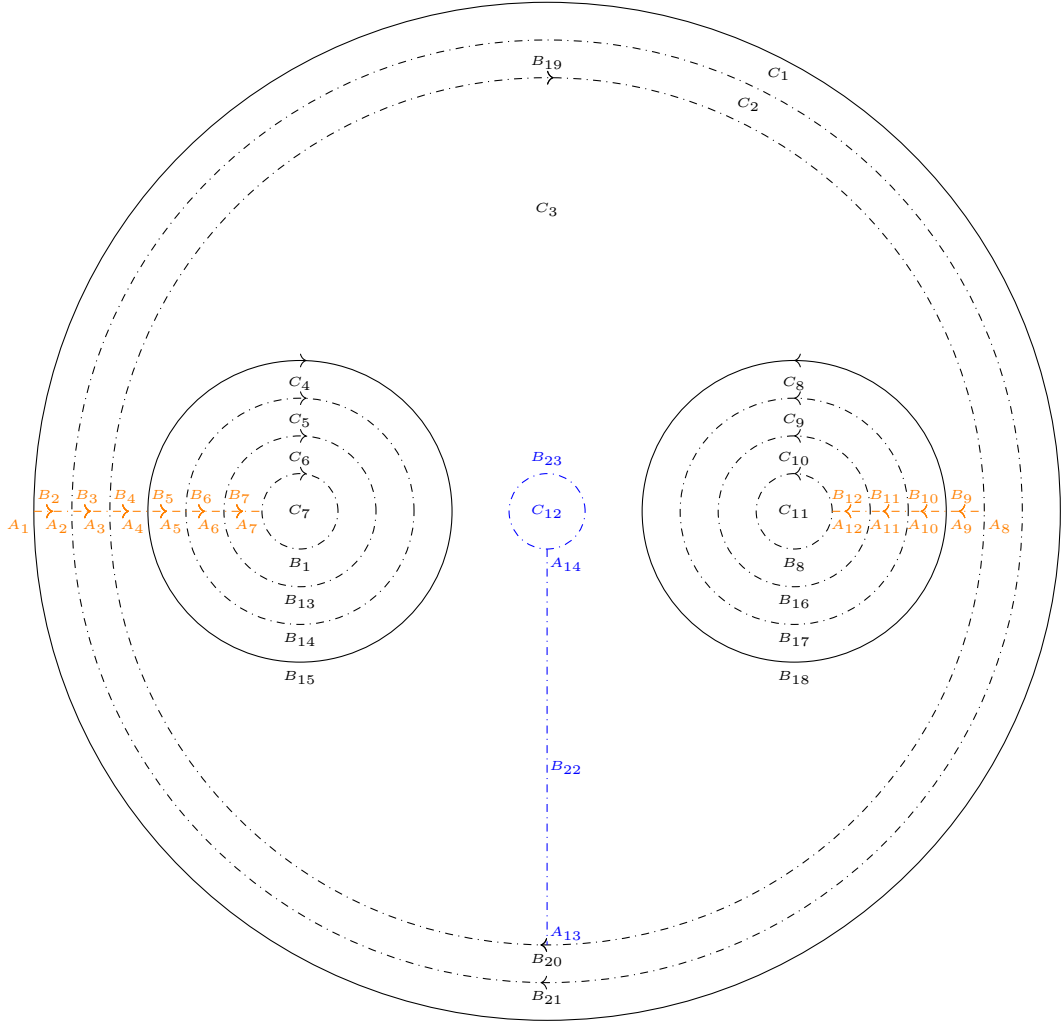
\begin{figure}[htb!]
	\centering
	\begin{tikzpicture}[scale=2]
	\draw [dash dot] [decoration={markings, mark=at position 0.25 with {\arrow{<}}},postaction={decorate}] (-0.25,0) circle [radius=0.25];
	\draw [dash dot] [decoration={markings, mark=at position 0.25 with {\arrow{>}}},postaction={decorate}] (3,0) circle [radius=0.25];
	\draw [dash dot] [decoration={markings, mark=at position 0.25 with {\arrow{<}}},postaction={decorate}] (-0.25,0) circle [radius=0.5];
	\draw [dash dot] [decoration={markings, mark=at position 0.25 with {\arrow{>}}},postaction={decorate}] (3,0) circle [radius=0.5];
	\draw [dash dot] [decoration={markings, mark=at position 0.25 with {\arrow{<}}},postaction={decorate}] (-0.25,0) circle [radius=0.75];
	\draw [dash dot] [decoration={markings, mark=at position 0.25 with {\arrow{>}}},postaction={decorate}] (3,0) circle [radius=0.75];
	\draw [decoration={markings, mark=at position 0.25 with {\arrow{<}}},postaction={decorate}] (-0.25,0) circle [radius=1];
	\draw [decoration={markings, mark=at position 0.25 with {\arrow{>}}},postaction={decorate}] (3,0) circle [radius=1];
	\draw [dash dot] (1.375,0)
	[decoration={markings, mark=at position 0.25 with {\arrow{<}}},postaction={decorate}]
	[decoration={markings, mark=at position 0.75 with {\arrow{<}}},postaction={decorate}]
	circle [radius=2.875];
	\draw [dash dot] [decoration={markings, mark=at position 0.75 with {\arrow{<}}},postaction={decorate}] (1.375,0) circle [radius=3.125];
	\draw (1.375,0) circle [radius=3.375];
	
	\draw [blue,dash dot] (1.375,0) circle [radius=0.25];
	
	\draw [orange,dash dot]
	[decoration={markings, mark=at position 1/12 with {\arrow{>}}},postaction={decorate}]
	[decoration={markings, mark=at position 3/12 with {\arrow{>}}},postaction={decorate}]
	[decoration={markings, mark=at position 5/12 with {\arrow{>}}},postaction={decorate}]
	[decoration={markings, mark=at position 7/12 with {\arrow{>}}},postaction={decorate}]
	[decoration={markings, mark=at position 9/12 with {\arrow{>}}},postaction={decorate}]
	[decoration={markings, mark=at position 11/12 with {\arrow{>}}},postaction={decorate}]
	(-2,0)--(-0.5,0);
	
	\draw [blue,dash dot] (1.375,-0.25) to (1.375,-2.875);
	
	\draw [orange,dash dot]
	[decoration={markings, mark=at position 1/8 with {\arrow{<}}},postaction={decorate}]
	[decoration={markings, mark=at position 3/8 with {\arrow{<}}},postaction={decorate}]
	[decoration={markings, mark=at position 5/8 with {\arrow{<}}},postaction={decorate}]
	[decoration={markings, mark=at position 7/8 with {\arrow{<}}},postaction={decorate}]
	(3.25,0)--(4.25,0);
	
	\node [orange] at (-2.1,-0.1) {\tiny $A_1$};
	\node [orange] at (-1.85,-0.1) {\tiny $A_2$};
	\node [orange] at (-1.6,-0.1) {\tiny $A_3$};
	\node [orange] at (-1.35,-0.1) {\tiny $A_4$};
	\node [orange] at (-1.1,-0.1) {\tiny $A_5$};
	\node [orange] at (-0.85,-0.1) {\tiny $A_6$};
	\node [orange] at (-0.6,-0.1) {\tiny $A_7$};
	\node [orange] at (4.35,-0.1) {\tiny $A_8$};
	\node [orange] at (4.1,-0.1) {\tiny $A_9$};
	\node [orange] at (3.85,-0.1) {\tiny $A_{10}$};
	\node [orange] at (3.6,-0.1) {\tiny $A_{11}$};
	\node [orange] at (3.35,-0.1) {\tiny $A_{12}$};
	\node [blue] at (1.5,-2.8) {\tiny $A_{13}$};
	\node [blue] at (1.5,-0.35) {\tiny $A_{14}$};
	
	\node at (-0.25,-0.35) {\tiny $B_1$};
	\node [orange] at (-1.9,0.1) {\tiny $B_2$};
	\node [orange] at (-1.65,0.1) {\tiny $B_3$};
	\node [orange] at (-1.4,0.1) {\tiny $B_4$};
	\node [orange] at (-1.15,0.1) {\tiny $B_5$};
	\node [orange] at (-0.9,0.1) {\tiny $B_6$};
	\node [orange] at (-0.65,0.1) {\tiny $B_7$};
	\node at (3,-0.35) {\tiny $B_8$};
	\node [orange] at (4.1,0.1) {\tiny $B_9$};
	\node [orange] at (3.85,0.1) {\tiny $B_{10}$};
	\node [orange] at (3.6,0.1) {\tiny $B_{11}$};
	\node [orange] at (3.35,0.1) {\tiny $B_{12}$};
	\node at (-0.25,-0.6) {\tiny $B_{13}$};
	\node at (-0.25,-0.85) {\tiny $B_{14}$};
	\node at (-0.25,-1.1) {\tiny $B_{15}$};
	\node at (3,-0.6) {\tiny $B_{16}$};
	\node at (3,-0.85) {\tiny $B_{17}$};
	\node at (3,-1.1) {\tiny $B_{18}$};
	\node at (1.375,2.975) {\tiny $B_{19}$};
	\node at (1.375,-2.975) {\tiny $B_{20}$};
	\node at (1.375,-3.225) {\tiny $B_{21}$};
	\node [blue] at (1.5,-1.7) {\tiny $B_{22}$};
	\node [blue] at (1.375,0.35) {\tiny $B_{23}$};
	
	\node at (2.9,2.9) {\tiny $C_1$};
	\node at (2.7,2.7) {\tiny $C_2$};
	\node at (1.375,2) {\tiny $C_3$};
	\node at (-0.25,0.85) {\tiny $C_4$};
	\node at (-0.25,0.6) {\tiny $C_5$};
	\node at (-0.25,0.35) {\tiny $C_6$};
	\node at (-0.25,0) {\tiny $C_7$};
	\node at (3,0.85) {\tiny $C_8$};
	\node at (3,0.6) {\tiny $C_9$};
	\node at (3,0.35) {\tiny $C_{10}$};
	\node at (3,0) {\tiny $C_{11}$};
	\node [blue] at (1.375,0) {\tiny $C_{12}$};
	\end{tikzpicture}
	\caption{Cellular decomposition associated to $\Lambda_{2,2,2}$ (coloured in black and orange) and $\Lambda_{p,q,r}$ (with additional cells coloured in blue)}
	\label{figure:decomposition(2,2,2)}
\end{figure}

Denote by $J=\{1,\dots,21\}$ the set of subscripts of the matrices $B_j$ associated to the 1-cells in the cellular decomposition of $p_x(\Lambda_{2,2,2})$ specified above. Let
\begin{equation}
J_1=\{2,\dots,7,9,\dots,12,19,20\}\subset J.
\end{equation}
For any 1-cell $e^1_j$ with $j\in J_1$, it has two distinct endpoints, which are labelled as the 0-cells $e_+^0$ and $e_-^0$, by (\ref{eq:diff2}) we have
\begin{equation}\label{eq:cancel3}
d_\mathcal{C}b_j^{m,n}=a_+^{m,n}+a_-^{\sigma_j(m),\sigma_j(n)}+\sum_{m<k<n}a_+^{m,k}b_j^{k,n}+\sum_{m<k<n}b_j^{m,k}a_-^{\sigma_j(k),\sigma_j(n)},
\end{equation}
where the permutation $\sigma_j$ is a transposition determined by the crossing arc of $p_{x,z}(\Lambda_{2,2,2})$ above the 0-cell $e_-^0$.

As we have already seen in Section \ref{section:example}, one can cancel the generators $b_j^{m,n}$ with $a_j^{m,n}$ using the formulas in (\ref{eq:cancel3}). In particular, we see that the remaining generators in $B_j$ with $j\in J_1$ in the quotient dg algebra $\mathcal{C}'(\Lambda_{2,2,2})$ are
\begin{eqnarray}\label{eq:remain}
b_2^{4,5},b_3^{2,3},b_5^{5,6},b_6^{3,4},b_7^{4,5},b_{10}^{5,6},b_{11}^{3,4},b_{12}^{4,5},
\end{eqnarray}
together with all the generators in the strictly upper triangular matrix $B_{20}$ except for $b_{20}^{2,3}$, which is equal to 0 by definition.

Let $J_2=J\setminus J_1$. The 1-cells $e_j^1$ with $j\in J_2$ have the same 0-cell as their initial and terminal point, whose associated matrix of generators will be denoted by $A_{t_j}$. By (\ref{eq:diff2}), the differentials of generators in $B_j$ are given by
\begin{equation}\label{eq:B-circ}
d_\mathcal{C}b_j^{m,n}=\sum_{m<k<n}a_{t_j}^{m,k}b_j^{k,n}+\sum_{m<k<n}b_j^{m,k}a_{t_j}^{k,n}.
\end{equation}
In particular, these generators are not cancelled with any generators associated to 0-cells.
\bigskip

For the index set $I$ of the 2-cells in the cellular decomposition of $\Lambda_{2,2,2}$, denote by $I_1\subset I$ the subset
\begin{equation}
I_1=\{1,4,5,6,8,9,10\}.
\end{equation}
For any $i\in I_1$, the corresponding 2-cell $e_i^2$ is an annulus bounded by two circles, whose inner circle has generators assembled in the matrix $B_{s_i}$ and whose outer circle is labelled by the matrix $B_{t_i}$, together with an additional cutting edge $e_{r_i}^1$, whose endpoints have associated matrices of generators $A_{i,-}$ and $A_{i,+}$, see Figure \ref{fig:annulus}. By (\ref{eq:c}) we can write down the formulas of their differentials:
\begin{equation}\label{eq:ann}
d_\mathcal{C}C_i=A_{i,+}C_i+C_iQ_{\sigma_{r_i}}A_{i,-}Q_{\sigma_{r_i}}+B_{s_i}(E+B_{r_i})+(E+B_{r_i})Q_{\sigma_{r_i}}B_{t_i}Q_{\sigma_{r_i}},
\end{equation}
where
\begin{equation}
Q_{\sigma_{r_i}}=\sum_m\Delta_{\sigma_{r_i}(m),m}
\end{equation}
is the permutation matrix. Note that when $i=1$, $B_{t_1}=0$ in the above formula. In particular, (\ref{eq:ann}) shows that all the generators in $C_i$ with $i\in I_1$ can be cancelled with that in $B_{s_i}$, except for
\begin{equation}
c_1^{4,5},c_4^{5,6},c_5^{3,4},c_6^{4,5},c_8^{5,6},c_9^{3,4},c_{10}^{4,5}.
\end{equation}

\begin{figure}[htb!]
	\centering
	\begin{tikzpicture}
	\draw [decoration={markings, mark=at position 3/4 with {\arrow{<}}},postaction={decorate}] (0,0) circle [radius=0.8];
	\draw [decoration={markings, mark=at position 3/4 with {\arrow{<}}},postaction={decorate}] (0,0) circle [radius=2];
	\draw [decoration={markings, mark=at position 1/2 with {\arrow{>}}},postaction={decorate}] (-2,0) to (-0.8,0);
	\node at (1.4,0) {\footnotesize $C_i$};
	\node at (-1.4,0.15) {\footnotesize $B_{r_i}$};
	\node at (-2.3,0) {\footnotesize $A_{i,-}$};
	\node at (-0.5,0) {\footnotesize $A_{i,+}$};
	\node at (0,0.95) {\footnotesize $B_{s_i}$};
	\node at (0,2.15) {\footnotesize $B_{t_i}$};
	\end{tikzpicture}
	\caption{A type $I_1$ 2-cell labelled by $C_i$}
	\label{fig:annulus}
\end{figure}
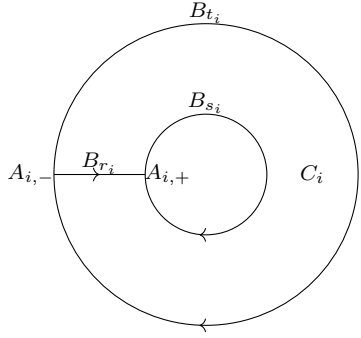

Similarly, the 2-cell $e_2^2$ is also an annulus, with the cutting edge $e_3^1$. The only difference is that the small circle which bounds the annulus $e_2^2$ now consists of two 1-cells, namely $e_{19}^1$ and $e_{20}^1$, and the large circle has the associated 1-cell $e_{21}^1$. As a consequence,
\begin{equation}
d_\mathcal{C}C_2=A_3C_2+C_2Q_{\sigma_3}A_2Q_{\sigma_3}+(B_{19}+B_{20}+B_{20}B_{19})(E+B_3)+(E+B_3)Q_{\sigma_3}B_{21}Q_{\sigma_3}.
\end{equation}
Since $B_{19}=0$ after cancelling it with $A_8$, the above formula simplifies to
\begin{equation}\label{eq:C2}
d_\mathcal{C}C_2=A_3C_2+C_2Q_{\sigma_3}A_2Q_{\sigma_3}+B_{20}(E+B_3)+(E+B_3)Q_{\sigma_3}B_{21}Q_{\sigma_3}
\end{equation}
in $\mathcal{C}'(\Lambda_{2,2,2})$. Since $b_{20}^{2,3}=0$, we see that the generator $c_2^{2,3}$ is the only generator in the matrix $C_2$ that cannot be cancelled via the above formula.

On the generators in $C_7$ and $C_{11}$, differential $d_\mathcal{C}$ takes the form
\begin{equation}\label{eq:C7}
d_\mathcal{C}C_7=Q_0A_7Q_0C_7+C_7Q_0A_7Q_0+Q_0B_1Q_0
\end{equation}
and
\begin{equation}
d_\mathcal{C}C_{11}=Q_0A_{12}Q_0C_{11}+C_{11}Q_0A_{12}Q_0+Q_0B_8Q_0,
\end{equation}
where $Q_0$ is the permutation matrix associated to the transposition $\sigma_0=(4,5)$.

For the remaining 2-cell labelled by $e_3^2$, its boundary consists of the 1-cells
\begin{equation}
e_4^1,e_9^1,e_{15}^1,e_{18}^1,e_{19}^1,e_{20}^1.
\end{equation}
Choosing $e_4^0$ as the initial vertex, and $e_9^0$ as the terminal vertex, we have
\begin{equation}
\begin{split}
d_\mathcal{C}C_3&=A_9C_3+C_3A_4+(E+B_9)(E+Q_{\sigma_4}B_{19}Q_{\sigma_4})(E+B_4)^{-1}(E+B_{15})\\
&+(E+B_{18})(E+B_9)(E+Q_{\sigma_4}B_{20}Q_{\sigma_4})^{-1}(E+B_4)^{-1}.
\end{split}
\end{equation}
Since $B_4=B_9=B_{19}=0$ after cancelling their generators with $A_4,A_9$ and $A_8$, the above formula simplifies to
\begin{equation}\label{eq:C3}
d_\mathcal{C}C_3=A_9C_3+C_3A_4+E+B_{15}+(E+B_{18})(E+Q_{\sigma_4}B_{20}Q_{\sigma_4})^{-1}.
\end{equation}
In particular, all the generators in $C_3$ are cancelled with that of $B_{15}$.
\bigskip

\begin{proposition}\label{proposition:222}
	The cellular dg algebra $\mathcal{C}(\Lambda_{2,2,2})$ is quasi-isomorphic to a dg algebra $\mathcal{C}'(\Lambda_{2,2,2})$ generated by
	\begin{equation}
	\begin{split}
	&b_2^{4,5},b_3^{2,3},b_5^{5,6},b_6^{3,4},b_7^{4,5},b_{10}^{5,6},b_{11}^{3,4},b_{12}^{4,5};\\
	&c_7^{4,5},c_{11}^{4,5},c_7^{3,5},c_{11}^{3,5},c_7^{2,5}+c_{11}^{2,5},c_7^{4,6},c_{11}^{4,6},c_7^{4,7}+c_{11}^{4,7};\\
	&c_7^{4,8}+c_{11}^{4,8},c_7^{1,5}+c_{11}^{1,5},c_7^{3,6},c_{11}^{3,6},c_7^{2,7}+c_{11}^{2,7},
	\end{split}
	\end{equation}
	with gradings
	\begin{equation}\label{eq:grading}
	\begin{split}
	&|b_2^{4,5}|=|b_3^{2,3}|=|b_5^{5,6}|=|b_6^{3,4}|=|c_7^{4,5}|=|b_{10}^{5,6}|=|b_{11}^{3,4}|=|c_{11}^{4,5}|=0,\\
	&|b_7^{4,5}|=|b_{12}^{4,5}|=|c_7^{3,5}|=|c_{11}^{3,5}|=|c_7^{2,5}+c_{11}^{2,5}|=|c_7^{4,6}|=|c_{11}^{4,6}|=|c_7^{4,7}+c_{11}^{4,7}|=-1,\\
	&|c_7^{4,8}+c_{11}^{4,8}|=|c_7^{1,5}+c_{11}^{1,5}|=|c_7^{3,6}|=|c_{11}^{3,6}|=|c_7^{2,7}+c_{11}^{2,7}|=-2,
	\end{split}
	\end{equation}
	and differentials
	\begin{equation}
	d_\mathcal{C}b_2^{4,5}=d_\mathcal{C}b_3^{2,3}=d_\mathcal{C}b_5^{5,6}=d_\mathcal{C}b_6^{3,4}=d_\mathcal{C}c_7^{4,5}=d_\mathcal{C}b_{10}^{5,6}=d_\mathcal{C}b_{11}^{3,4}=d_\mathcal{C}c_{11}^{4,5}=0,
	\end{equation}
	\begin{equation}
	d_\mathcal{C}b_7^{4,5}=b_6^{3,4}b_5^{5,6}+b_3^{2,3}b_2^{4,5},
	\end{equation}
	\begin{equation}
	d_\mathcal{C}b_{12}^{4,5}=b_{11}^{3,4}b_{10}^{5,6}+b_3^{2,3}b_2^{4,5},
	\end{equation}
	\begin{equation}
	d_\mathcal{C}c_7^{3,5}=b_5^{5,6}c_7^{4,5},
	\end{equation}
	\begin{equation}
	d_\mathcal{C}c_{11}^{3,5}=b_{10}^{5,6}c_{11}^{4,5},
	\end{equation}
	\begin{equation}
	d_\mathcal{C}(c_7^{2,5}+c_{11}^{2,5})=b_2^{4,5}c_{11}^{4,5}+b_2^{4,5}c_7^{4,5},
	\end{equation}
	\begin{equation}
	d_\mathcal{C}c_7^{4,6}=c_7^{4,5}b_6^{3,4},
	\end{equation}
	\begin{equation}
	d_\mathcal{C}c_{11}^{4,6}=c_{11}^{4,5}b_{11}^{3,4},
	\end{equation}
	\begin{equation}
	d_\mathcal{C}(c_7^{4,7}+c_{11}^{4,7})=c_{11}^{4,5}b_3^{2,3}+c_7^{4,5}b_3^{2,3},
	\end{equation}
	\begin{equation}
	d_\mathcal{C}(c_7^{4,8}+c_{11}^{4,8})=c_{11}^{4,5}b_{12}^{4,5}+c_7^{4,5}b_7^{4,5}+c_7^{4,6}b_5^{5,6}+c_{11}^{4,6}b_{10}^{5,6}+(c_7^{4,7}+c_{11}^{4,7})b_2^{4,5},
	\end{equation}
	\begin{equation}
	d_\mathcal{C}(c_7^{1,5}+c_{11}^{1,5})=b_6^{3,4}c_7^{3,5}+b_{11}^{3,4}c_{11}^{3,5}+b_3^{2,3}(c_7^{2,5}+c_{11}^{2,5})+b_{12}^{4,5}c_{11}^{4,5}+b_7^{4,5}c_7^{4,5},
	\end{equation}
	\begin{equation}
	d_\mathcal{C}c_7^{3,6}=b_5^{5,6}c_7^{4,6}+c_7^{3,5}b_6^{3,4},
	\end{equation}
	\begin{equation}
	d_\mathcal{C}c_{11}^{3,6}=b_{10}^{5,6}c_{11}^{4,6}+c_{11}^{3,5}b_{11}^{3,4},
	\end{equation}
	\begin{equation}
	d_\mathcal{C}(c_7^{2,7}+c_{11}^{2,7})=b_2^{4,5}(c_7^{4,7}+c_{11}^{4,7})+(c_7^{2,5}+c_{11}^{2,5})b_3^{2,3}.
	\end{equation}
\end{proposition}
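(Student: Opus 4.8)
The plan is to exhibit $\mathcal{C}'(\Lambda_{2,2,2})$ as the quotient of the full cellular dg algebra $\mathcal{C}(\Lambda_{2,2,2})$ obtained by iterating every cancellation permitted by Lemma \ref{lemma:tame} against the differential formulas (\ref{eq:cancel3}), (\ref{eq:B-circ}), (\ref{eq:ann}), (\ref{eq:C2}), (\ref{eq:C7}) and (\ref{eq:C3}) already recorded above. As in the $A_r$ computation of Section \ref{section:example}, I would first fix one global numbering of the sheets over each cell together with a word-length filtration $F^\bullet$ refining the order of $n-m$, so that the hypothesis $v\in F^{j-1}\mathcal{A}$ of Lemma \ref{lemma:tame} is met at every step and the cancellations may be carried out in rounds.

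In the first round I would cancel the $0$-cell generators against the $1$-cell generators for $j\in J_1$: by (\ref{eq:cancel3}) the leading term of $d_\mathcal{C} b_j^{m,n}$ is an $A$-entry $a_+^{m,n}$, so Lemma \ref{lemma:tame} removes the pair $(b_j^{m,n},a_+^{m,n})$ unless the crossing permutation $\sigma_j$ forces $a_+^{m,n}=0$. This leaves exactly the generators (\ref{eq:remain}) and expresses every surviving $A$-entry as a word in the surviving $b$'s, the analogue of (\ref{eq:a2mn}); these substituted expressions are what later produce $d_\mathcal{C} b_7^{4,5}=b_6^{3,4}b_5^{5,6}+b_3^{2,3}b_2^{4,5}$ and $d_\mathcal{C} b_{12}^{4,5}=b_{11}^{3,4}b_{10}^{5,6}+b_3^{2,3}b_2^{4,5}$. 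Still in this round, (\ref{eq:ann}) exhibits the inner-circle matrix $B_{s_i}$ as a summand of $d_\mathcal{C} C_i$ for each annular $2$-cell $i\in I_1$, so $C_i$ cancels against $B_{s_i}$ down to the intermediate survivors $c_1^{4,5},c_4^{5,6},\dots,c_{10}^{4,5}$; likewise (\ref{eq:C2}) leaves only $c_2^{2,3}$, while (\ref{eq:C3}) cancels $C_3$ completely against $B_{15}$.

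The heart of the argument is a second round that funnels all remaining $2$-cell and $J_2$-type $1$-cell generators into the two central cells $e_7^2,e_{11}^2$ lying over the crossing circles $p_{x,z}(\Lambda_A)\cap p_{x,z}(\Lambda_B)$ recorded by $B_1$ and $B_8$. Formula (\ref{eq:C7}) and its counterpart show that $d_\mathcal{C} c_7^{m,n}$ and $d_\mathcal{C} c_{11}^{m,n}$ contain the conjugated entries of $B_1,B_8$; the transposition $\sigma_0=(4,5)$ forces the entries across the two interacting sheets to vanish, so precisely the $c_7^{m,n},c_{11}^{m,n}$ named in the statement can survive, exactly as in the $A_r$ case. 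I would then use the global cells bridging the two crossing circles, the annulus $C_2$ together with the matrices $B_{20},B_{21}$, to cancel the leftover intermediate generators $c_1^{4,5},\dots,c_{10}^{4,5},c_2^{2,3}$ and the $J_2$-matrices $B_1,B_8,B_{13},\dots,B_{21}$, identifying the contributions of the symmetric left and right regions; this is the step that decides, for each index pair $(m,n)$, whether only the symmetric sum $c_7^{m,n}+c_{11}^{m,n}$ survives or whether both $c_7^{m,n}$ and $c_{11}^{m,n}$ remain independent. Substituting the $A$-expressions of the first round into the residual differentials of the survivors then reproduces the full differential list, while the gradings follow by reading $\mu_{2,2,2}$ off Figure \ref{fig:front-without-2-handles} and applying $|a_\alpha^{m,n}|=\mu(S_n)-\mu(S_m)+1$, $|b_\alpha^{m,n}|=\mu(S_n)-\mu(S_m)$, $|c_\alpha^{m,n}|=\mu(S_n)-\mu(S_m)-1$.

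The main obstacle is not any single step but maintaining global consistency across the roughly two dozen generator matrices: I must track how each Lemma \ref{lemma:tame} cancellation rewrites the differentials of the still-present generators, in particular the propagation of the cancelled $A$-entries into the $C$-differentials and the correct placement of the permutation matrices $Q_{\sigma_j}$, so that the simplified forms of (\ref{eq:C2}) and (\ref{eq:C3}) genuinely hold and so that the delicate distinction between surviving sums and surviving independent $c_7,c_{11}$ classes comes out right. Verifying $d_\mathcal{C}^2=0$ on the final generating set is the natural safeguard against bookkeeping errors and at the same time pins down the residual differentials listed in the proposition.
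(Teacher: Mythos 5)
Your first round coincides exactly with the paper's cancellation scheme: the $J_1$-type $b$'s against the $0$-cell $a$'s via (\ref{eq:cancel3}), the annular $C_i$, $i\in I_1$, against their inner circles $B_{s_i}$ via (\ref{eq:ann}), $C_2$ against $B_{20}$, and $C_3$ against $B_{15}$, leaving (\ref{eq:remain}) together with the intermediate survivors $c_1^{4,5},c_4^{5,6},c_5^{3,4},c_6^{4,5},c_8^{5,6},c_9^{3,4},c_{10}^{4,5},c_2^{2,3}$. The genuine gap lies in your second round, where the only two structural claims you commit to are incorrect, and these are precisely the claims that would have to stand in for the paper's long computation. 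First, the transposition $\sigma_0=(4,5)$ does not force the survivors of $C_7$ and $C_{11}$ to be the generators named in the statement: the only vanishing it forces is that of the single entries $b_1^{4,5}$ and $b_8^{4,5}$, which accounts for $c_7^{4,5}$ and $c_{11}^{4,5}$ and nothing else. In the actual argument $B_1$ and $B_8$ have already been cancelled (against $C_6$ and $C_{10}$), so (\ref{eq:C7}) must be evaluated only after pushing the substitutions down the chains (\ref{eq:B1}), (\ref{eq:B13}), (\ref{eq:B14}) and (\ref{eq:C3}); whether a given $c_7^{m,n}$ survives is then decided by whether its rewritten differential still has a linear leading term, which is only visible through the induction on $n-m$ (compare $d_\mathcal{C}c_7^{3,4}=c_4^{5,6}$, which cancels, with $d_\mathcal{C}c_7^{3,5}=b_5^{5,6}c_7^{4,5}$, which survives). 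There is no analogue here of the clean $A_r$ pattern.

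Second, your proposed bridge between the two regions is the wrong one. By your own first round, the only $J_2$-matrix still alive is $B_{18}$; the cells $C_1$ and $C_2$ merely express $B_{21}$ and $B_{20}$ in terms of $c_1^{4,5}$ and $c_2^{2,3}$ ((\ref{eq:B21}), (\ref{eq:B20})), and those intermediates are later absorbed by $c_7^{7,8}$ and $c_7^{5,7}$. What actually couples $c_7$ to $c_{11}$ is the big annulus $C_3$: its cancellation against $B_{15}$ in (\ref{eq:C3}) channels the cusp-edge matrix $B_{18}$ of $\Lambda_Q$ into the left-hand chain, while the right-hand chain $B_8\rightarrow B_{16}\rightarrow B_{17}$ reaches $B_{18}$ directly. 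Consequently $d_\mathcal{C}c_7^{2,5}$ and $d_\mathcal{C}c_{11}^{2,5}$ contain the \emph{same} generator $b_{18}^{2,3}$ ((\ref{eq:c725}) and (\ref{eq:c1125})), and likewise the pairs indexed by $(4,7)$, $(1,5)$, $(4,8)$, $(2,7)$ share $b_{18}^{4,5}$, $b_{18}^{1,3}$, $b_{18}^{4,6}$, $b_{18}^{2,5}$; only one member of each pair can be cancelled against the shared $B_{18}$ entry, and this is what makes exactly the five symmetric sums survive. Since this proposition \emph{is} the computation, a plan that defers the bookkeeping and, where it does name the decisive mechanisms, points at the wrong cells, does not establish the stated generators, gradings and differentials; verifying $d_\mathcal{C}^2=0$ at the end is no safeguard, since a differently organized cancellation produces a quasi-isomorphic but different presentation, not the one claimed.
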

\begin{proof}
	Turning back to the differentials $d_\mathcal{C}b_j^{m,n}$ for $j\in J_1$, we need to record the values of $A_i=(a_i^{m,n})$ in the quotient dg algebra $\mathcal{C}'(\Lambda_{2,2,2})$ after the cancellation process between generators.
	
	Since above the 0-cell $e_1^0$ are the cusp edges of the Legendrian fronts of the components $\Lambda_A$, $\Lambda_B$ and $\Lambda_R$ of the link $\Lambda_{2,2,2}$, by (\ref{eq:cusps}), $A_1$ is a block diagonal matrix with three $N$ blocks on its diagonal, i.e.
	\begin{equation}
	A_1=\begin{bmatrix}
	0 & 1 & 0 & 0 & 0 & 0 \\
	0 & 0 & 0 & 0 & 0 & 0 \\
	0 & 0 & 0 & 1 & 0 & 0 \\
	0 & 0 & 0 & 0 & 0 & 0 \\
	0 & 0 & 0 & 0 & 0 & 1 \\
	0 & 0 & 0 & 0 & 0 & 0
	\end{bmatrix}
	\end{equation}
	After cancellation, $b_2^{4,5}$ is the only remaining generator in $B_2$, from this fact and (\ref{eq:cancel3}) we get
	\begin{equation}\label{eq:A2}
	A_2=\begin{bmatrix}
	0 & 1 & 0 & 0 & 0 & 0 \\
	0 & 0 & 0 & 0 & 0 & 0 \\
	0 & 0 & 0 & 1 & b_2^{4,5} & 0 \\
	0 & 0 & 0 & 0 & 0 & b_2^{4,5} \\
	0 & 0 & 0 & 0 & 0 & 1 \\
	0 & 0 & 0 & 0 & 0 & 0
	\end{bmatrix}
	\end{equation}
	The only remaining generator in $B_3$ after cancelling it with $A_3$ is $b_3^{2,3}$, combining with $\sigma_3=(4,5)$ we obtain
	\begin{equation}\label{eq:A3}
	A_3=\begin{bmatrix}
	0 & 1 & b_3^{2,3} & 0 & 0 & 0 \\
	0 & 0 & 0 & b_3^{2,3}b_2^{4,5} & b_3^{2,3} & 0 \\
	0 & 0 & 0 & b_2^{4,5} & 1 & 0 \\
	0 & 0 & 0 & 0 & 0 & 1 \\
	0 & 0 & 0 & 0 & 0 & b_2^{4,5} \\
	0 & 0 & 0 & 0 & 0 & 0
	\end{bmatrix}
	\end{equation}
	$B_4=0$ after cancelling it with the generators in $A_4$. Since there is a cusp edge of $p_{x,z}(\Lambda_P)$ above the 0-cell $e^0_5$, in addition to applying the transposition $\sigma_4=(2,3)$, we need to add an $N$ block in the fourth and fifth rows in order to obtain $A_4$ in $\mathcal{C}'(\Lambda_{2,2,2})$. This gives:
	\begin{equation}
	A_4=\begin{bmatrix}
	0 & b_3^{2,3} & 1 & 0 & 0 & 0 & 0 & 0 \\
	0 & 0 & 0 & 0 & 0 & b_2^{4,5} & 1 & 0 \\
	0 & 0 & 0 & 0 & 0 & b_3^{2,3}b_2^{4,5} & b_3^{2,3} & 0 \\
	0 & 0 & 0 & 0 & 1 & 0 & 0 & 0 \\
	0 & 0 & 0 & 0 & 0 & 0 & 0 & 0 \\
	0 & 0 & 0 & 0 & 0 & 0 & 0 & 1 \\
	0 & 0 & 0 & 0 & 0 & 0 & 0 & b_2^{4,5} \\
	0 & 0 & 0 & 0 & 0 & 0 & 0 & 0
	\end{bmatrix}
	\end{equation}
	To determine the value of $A_5$ in $\mathcal{C}'(\Lambda_{2,2,2})$, we notice that the $b_5^{5,6}$ remains after cancelling $B_5$ with $A_5$:
	\begin{equation}
	A_5=\begin{bmatrix}
	0 & b_3^{2,3} & 1 & 0 & 0 & 0 & 0 & 0 \\
	0 & 0 & 0 & 0 & 0 & b_2^{4,5} & 1 & 0 \\
	0 & 0 & 0 & 0 & 0 & b_3^{2,3}b_2^{4,5} & b_3^{2,3} & 0 \\
	0 & 0 & 0 & 0 & 1 & b_5^{5,6} & 0 & 0 \\
	0 & 0 & 0 & 0 & 0 & 0 & 0 & b_5^{5,6} \\
	0 & 0 & 0 & 0 & 0 & 0 & 0 & 1 \\
	0 & 0 & 0 & 0 & 0 & 0 & 0 & b_2^{4,5} \\
	0 & 0 & 0 & 0 & 0 & 0 & 0 & 0
	\end{bmatrix}
	\end{equation}
	$b_6^{3,4}$ is the only generator in the matrix $B_6$ that is not cancelled with the generators in $A_6$. As $\sigma_6=(5,6)$, it enables us to compute:
	\begin{equation}
	A_6=\begin{bmatrix}
	0 & b_3^{2,3} & 1 & b_6^{3,4} & 0 & 0 & 0 & 0 \\
	0 & 0 & 0 & 0 & b_2^{4,5} & 0 & 1 & 0 \\
	0 & 0 & 0 & 0 & b_3^{2,3}b_2^{4,5}+b_6^{3,4}b_5^{5,6} & b_6^{3,4} & b_3^{2,3} & 0 \\
	0 & 0 & 0 & 0 & b_5^{5,6} & 1 & 0 & 0 \\
	0 & 0 & 0 & 0 & 0 & 0 & 0 & 1 \\
	0 & 0 & 0 & 0 & 0 & 0 & 0 & b_5^{5,6} \\
	0 & 0 & 0 & 0 & 0 & 0 & 0 & b_2^{4,5} \\
	0 & 0 & 0 & 0 & 0 & 0 & 0 & 0
	\end{bmatrix}
	\end{equation}
	Finally, note that $b_7^{4,5}$ can not be cancelled with any of the generators in $A_7$. Since $\sigma_7=(3,4)$, it follows that
	\begin{equation}
	A_7=\begin{bmatrix}
	0 & b_3^{2,3} & b_6^{3,4} & 1 & b_7^{4,5} & 0 & 0 & 0 \\
	0 & 0 & 0 & 0 & b_2^{4,5} & 0 & 1 & 0 \\
	0 & 0 & 0 & 0 & b_5^{5,6} & 1 & 0 & 0 \\
	0 & 0 & 0 & 0 & 0 & b_6^{3,4} & b_3^{2,3} & b_7^{4,5} \\
	0 & 0 & 0 & 0 & 0 & 0 & 0 & 1 \\
	0 & 0 & 0 & 0 & 0 & 0 & 0 & b_5^{5,6} \\
	0 & 0 & 0 & 0 & 0 & 0 & 0 & b_2^{4,5} \\
	0 & 0 & 0 & 0 & 0 & 0 & 0 & 0
	\end{bmatrix}
	\end{equation}
	By (\ref{eq:cancel3}) applied to the case when $j=7$, we deduce from the computations for $A_6$ and $A_7$ above that
	\begin{equation}
	\boxed{d_\mathcal{C}b_7^{4,5}=b_3^{2,3}b_2^{4,5}+b_6^{3,4}b_5^{5,6}.}
	\end{equation}
	\bigskip
	
	\textit{As a convention, we will box the formulas which contribute to the non-trivial differentials of the generators in the dg algebra} $\mathcal{C}'(\Lambda_{2,2,2})$.
	\bigskip
	
	Applying (\ref{eq:ann}) when $i=1$ implies that
	\begin{equation}
	B_{21}=A_2C_1+C_1A_1+B_{21}B_2
	\end{equation}
	holds in $\mathcal{C}'(\Lambda_{2,2,2})$. From this one deduces that
	\begin{equation}\label{eq:B21}
	B_{21}=\begin{bmatrix}
	0 & 0 & 0 & 0 & 0 & 0 \\
	0 & 0 & 0 & 0 & 0 & 0 \\
	0 & 0 & 0 & 0 & b_2^{4,5}c_1^{4,5} & 0 \\
	0 & 0 & 0 & 0 & 0 & c_1^{4,5} \\
	0 & 0 & 0 & 0 & 0 & 0 \\
	0 & 0 & 0 & 0 & 0 & 0
	\end{bmatrix}
	\end{equation}
	Since all the generators in $C_2$ except for $c_2^{2,3}$ have been cancelled in $\mathcal{C}'(\Lambda_{2,2,2})$, it follows from (\ref{eq:C2}) that
	\begin{equation}
	B_{20}=Q_{\sigma_3}B_{21}Q_{\sigma_3}+A_3C_2+C_2Q_{\sigma_3}A_2Q_{\sigma_3}+B_{20}B_3+B_3Q_{\sigma_3}B_{21}Q_{\sigma_3}.
	\end{equation}
	Combining with (\ref{eq:B21}), (\ref{eq:A2}) and (\ref{eq:A3}), one can deduce that
	\begin{equation}\label{eq:B20}
	B_{20}=\begin{bmatrix}
	0 & 0 & c_2^{2,3} & 0 & 0 & 0 \\
	0 & 0 & 0 & c_2^{2,3}b_2^{4,5}+b_3^{2,3}b_2^{4,5}c_1^{4,5} & c_2^{2,3} & 0 \\
	0 & 0 & 0 & b_2^{4,5}c_1^{4,5} & 0 & 0 \\
	0 & 0 & 0 & 0 & 0 & 0 \\
	0 & 0 & 0 & 0 & 0 & c_1^{4,5} \\
	0 & 0 & 0 & 0 & 0 & 0
	\end{bmatrix}
	\end{equation}
	It is straightforward to verify that
	\begin{equation}
	(E+Q_{\sigma_4}B_{20}Q_{\sigma_4})^{-1}=E+Q_{\sigma_4}B_{20}Q_{\sigma_4}.
	\end{equation}
	Since $C_3=0$ in $\mathcal{C}'(\Lambda_{2,2,2})$, we get from (\ref{eq:C3}) that
	\begin{equation}
	B_{15}+B_{18}=Q_{\sigma_4}B_{20}Q_{\sigma_4}+B_{18}Q_{\sigma_4}B_{20}Q_{\sigma_4}.
	\end{equation}
	When computing $d_\mathcal{C}C_4$, since the cusp edge of $\Lambda_P$ lies above the 1-cell with labelling $B_{15}$, by (\ref{eq:block}) we need to replace $B_{15}$ with an $8\times 8$ matrix $\widetilde{B}_{15}$ by adding a $2\times2$ zero block in the fourth and fifth rows and columns of $B_{15}$. By (\ref{eq:ann}) it follows that
	\begin{equation}\label{eq:B14}
	B_{14}=\widetilde{B}_{15}+A_5C_4+C_4A_4+B_{14}B_5+B_5\widetilde{B}_{15},
	\end{equation}
	Applying (\ref{eq:ann}) when $i=5,6$ implies that
	\begin{equation}\label{eq:B13}
	B_{13}=Q_{\sigma_6}B_{14}Q_{\sigma_6}+A_6C_5+C_5Q_{\sigma_6}A_5Q_{\sigma_6}+B_{13}B_6+B_6Q_{\sigma_6}B_{14}Q_{\sigma_6}
	\end{equation}
	and
	\begin{equation}\label{eq:B1}
	B_1=Q_{\sigma_7}B_{13}Q_{\sigma_7}+A_7C_6+C_6Q_{\sigma_7}A_6Q_{\sigma_7}+B_1B_7+B_7Q_{\sigma_7}B_{13}Q_{\sigma_7}
	\end{equation}
	hold in the quotient dg algebra $\mathcal{C}'(\Lambda_{2,2,2})$.
	
	From now on we compute the differentials of the generators in $C_7$ according to the filtration $F^\bullet$ on the cellular dg algebra $\mathcal{C}(\Lambda_{2,2,2})$ defined according to the increasing order of $n-m$. 
	
	For $n=m+1$, by (\ref{eq:C7}) and the cancellations made above we see that the differentials in $\mathcal{C}'(\Lambda_{2,2,2})$ are
	\begin{equation}
	d_\mathcal{C}c_7^{1,2}=b_{18}^{1,2},
	\end{equation}
	\begin{equation}
	d_\mathcal{C}c_7^{2,3}=b_{18}^{2,3}b_6^{3,4},
	\end{equation}
	\begin{equation}\label{eq:c734}
	d_\mathcal{C}c_7^{3,4}=c_4^{5,6},
	\end{equation}
	\begin{equation}
	\boxed{d_\mathcal{C}c_7^{4,5}=0,}
	\end{equation}
	\begin{equation}\label{eq:c756}
	d_\mathcal{C}c_7^{5,6}=c_5^{3,4},
	\end{equation}
	\begin{equation}
	d_\mathcal{C}c_7^{6,7}=b_5^{5,6}b_{18}^{4,5},
	\end{equation}
	\begin{equation}\label{eq:c778}
	d_\mathcal{C}c_7^{7,8}=c_1^{4,5}+b_{18}^{5,6}.
	\end{equation}
	In particular, by (\ref{eq:c734}), (\ref{eq:c756}) and (\ref{eq:c778}) the generators
	\begin{equation}
	(c_7^{3,4},c_4^{5,6}),(c_7^{5,6},c_5^{3,4}),(c_7^{7,8},c_1^{4,5})
	\end{equation}
	can be cancelled with each other, and $C_4=C_5=0$ in $\mathcal{C}'(\Lambda_{2,2,2})$. Thus one can further simplify (\ref{eq:B14}) and (\ref{eq:B13}) respectively to
	\begin{equation}
	B_{14}=\widetilde{B}_{15}+B_{14}B_5+B_5\widetilde{B}_{15}
	\end{equation}
	and
	\begin{equation}
	B_{13}=Q_{\sigma_6}B_{14}Q_{\sigma_6}+B_{13}B_6+B_6Q_{\sigma_6}B_{14}Q_{\sigma_6}.
	\end{equation}
	
	For $n=m+2$, taking into account of the above computations, one can deduce
	\begin{equation}
	d_\mathcal{C}c_7^{1,3}=b_3^{2,3}c_7^{2,3}+b_{18}^{1,3}b_6^{3,4},
	\end{equation}
	\begin{equation}
	d_\mathcal{C}c_7^{2,4}=c_7^{2,3}b_5^{5,6}+b_{18}^{2,4}+b_2^{4,5}b_{18}^{5,6}+b_{18}^{2,3}b_7^{4,5}+b_{18}^{2,3}c_2^{2,3}b_2^{4,5}+b_{18}^{2,3}b_3^{2,3}b_2^{4,5}b_{18}^{5,6},
	\end{equation}
	\begin{equation}
	\boxed{d_\mathcal{C}c_7^{3,5}=b_5^{5,6}c_7^{4,5},}
	\end{equation}
	\begin{equation}
	\boxed{d_\mathcal{C}c_7^{4,6}=c_7^{4,5}b_6^{3,4},}
	\end{equation}
	\begin{equation}\label{eq:c757}
	d_\mathcal{C}c_7^{5,7}=b_6^{3,4}c_7^{6,7}+c_2^{2,3}+b_{18}^{3,5},
	\end{equation}
	\begin{equation}
	d_\mathcal{C}c_7^{6,8}=c_7^{6,7}b_2^{4,5}+b_5^{5,6}b_{18}^{4,6}+b_5^{5,6}b_{18}^{4,5}b_{18}^{5,6}.
	\end{equation}
	Among the above formulas, (\ref{eq:c757}) implies that the generators
	\begin{equation}
	(c_7^{5,7},c_2^{2,3})
	\end{equation}
	can be cancelled with each other.
	
	For $n=m+3$, based on the above computations we have
	\begin{equation}\label{eq:c714}
	d_\mathcal{C}c_7^{1,4}=b_3^{2,3}c_7^{2,4}+c_7^{1,2}b_2^{4,5}+c_7^{1,3}b_5^{5,6}+c_6^{4,5}+\dots,
	\end{equation}
	where the ellipsis on the right-hand side above stands for some additional terms which do not play essential roles in our argument,
	\begin{equation}\label{eq:c725}
	\boxed{d_\mathcal{C}c_7^{2,5}=b_2^{4,5}c_7^{4,5}+b_{18}^{2,3},}
	\end{equation}
	\begin{equation}
	\boxed{d_\mathcal{C}c_7^{3,6}=b_5^{5,6}c_7^{4,6}+c_7^{3,5}b_6^{3,4},}
	\end{equation}
	\begin{equation}\label{eq:c747}
	\boxed{d_\mathcal{C}c_7^{4,7}=c_7^{4,5}b_3^{2,3}+b_{18}^{4,5},}
	\end{equation}
	\begin{equation}
	d_\mathcal{C}c_7^{5,8}=b_6^{3,4}c_7^{6,8}+c_7^{5,7}b_2^{4,5}+c_6^{4,5}+\dots.
	\end{equation}
	From (\ref{eq:c714}) we see that the generators
	\begin{equation}
	(c_7^{1,4},c_6^{4,5})
	\end{equation}
	can be cancelled with each other.
	
	For $n=m+4$, based on the simplifications made above we deduce
	\begin{equation}\label{eq:c715}
	\boxed{d_\mathcal{C}c_7^{1,5}=b_3^{2,3}c_7^{2,5}+b_6^{3,4}c_7^{3,5}+b_7^{4,5}c_7^{4,5}+b_{18}^{1,3},}
	\end{equation}
	\begin{equation}\label{eq:c726}
	d_\mathcal{C}c_7^{2,6}=b_2^{4,5}c_7^{4,6}+c_7^{2,3}+c_7^{2,5}b_6^{3,4}+\dots,
	\end{equation}
	\begin{equation}\label{eq:c737}
	d_\mathcal{C}c_7^{3,7}=b_5^{5,6}c_7^{4,7}+c_7^{6,7}+c_7^{3,5}b_3^{2,3}+\dots,
	\end{equation}
	\begin{equation}\label{eq:c748}
	\boxed{d_\mathcal{C}c_7^{4,8}=c_7^{4,5}b_7^{4,5}+c_7^{4,6}b_5^{5,6}+c_7^{4,7}b_2^{4,5}+b_{18}^{4,6}+b_{18}^{4,5}b_{18}^{5,6}.}
	\end{equation}
	(\ref{eq:c726}) and (\ref{eq:c737}) imply that the generators
	\begin{equation}
	(c_7^{2,6},c_7^{2,3}),(c_7^{3,7},c_7^{6,7})
	\end{equation}
	can be cancelled with each other.
	
	For $n=m+5$, we have
	\begin{equation}\label{eq:c716}
	d_\mathcal{C}c_7^{1,6}=b_6^{3,4}c_7^{3,6}+b_7^{4,5}c_7^{4,6}+c_7^{1,3}+c_7^{1,5}b_6^{3,4}+\dots,
	\end{equation}
	\begin{equation}\label{eq:c727}
	\boxed{d_\mathcal{C}c_7^{2,7}=b_2^{4,5}c_7^{4,7}+c_7^{2,5}b_3^{2,3}+b_{18}^{2,5}+b_{18}^{2,3}b_6^{3,4}c_7^{6,7}+b_{18}^{2,3}b_{18}^{3,5},}
	\end{equation}
	\begin{equation}\label{eq:c738}
	d_\mathcal{C}c_7^{3,8}=b_5^{5,6}c_7^{4,8}+c_7^{6,8}+c_7^{3,5}b_7^{4,5}+c_7^{3,6}b_5^{5,6}.
	\end{equation}
	From (\ref{eq:c716}) and (\ref{eq:c738}) we see that the generators
	\begin{equation}
	(c_7^{1,6},c_7^{1,3}),(c_7^{3,8},c_7^{6,8})
	\end{equation}
	can be cancelled in pair.
	
	For $n=m+6$, we get from the above that
	\begin{equation}
	d_\mathcal{C}c_7^{1,7}=b_3^{2,3}c_7^{2,7}+b_7^{4,5}c_7^{4,7}+c_7^{1,2}+\dots,
	\end{equation}
	\begin{equation}
	d_\mathcal{C}c_7^{2,8}=b_2^{4,5}c_7^{4,8}+c_7^{2,4}+\dots.
	\end{equation}
	This enables us to cancel the generators
	\begin{equation}
	(c_7^{1,7},c_7^{1,2}),(c_7^{2,8},c_7^{2,4})
	\end{equation}
	with each other.
	
	Finally, we have
	\begin{equation}
	d_\mathcal{C}c_7^{1,8}=b_7^{4,5}c_7^{4,8}+c_7^{5,8}+c_7^{1,5}b_7^{4,5}+\dots,
	\end{equation}
	which implies that
	\begin{equation}
	(c_7^{1,8},c_7^{5,8})
	\end{equation}
	can be cancelled with each other. We conclude from the above computations that the remaining generators of $C_7$ in the quotient dg algebra $\mathcal{C}'(\Lambda_{2,2,2})$ are
	\begin{equation}
	c_7^{1,5},c_7^{2,5},c_7^{2,7},c_7^{3,5},c_7^{3,6},c_7^{4,5},c_7^{4,6},c_7^{4,7},c_7^{4,8}.
	\end{equation}
	
	Since $B_{19}=0$, by (\ref{eq:cancel3}) applied to $j=19$, we deduce that $A_3=A_8$ in in $\mathcal{C}'(\Lambda_{2,2,2})$. This enables us to compute
	\begin{equation}
	\boxed{d_\mathcal{C}b_{11}^{4,5}=b_3^{2,3}b_2^{4,5}+b_{11}^{3,4}b_{10}^{5,6},}
	\end{equation}
	together the differentials of the generators in $C_{11}$. The latter computation is in some sense symmetric to the computation of $d_\mathcal{C}c_7^{m,n}$. More explicitly, we have
	\begin{equation}\label{eq:c1134}
	d_\mathcal{C}c_{11}^{3,4}=c_8^{5,6},
	\end{equation}
	\begin{equation}
	\boxed{d_\mathcal{C}c_{11}^{4,5}=0,}
	\end{equation}
	\begin{equation}\label{eq:c1156}
	d_\mathcal{C}c_{11}^{5,6}=c_9^{3,4},
	\end{equation}
	\begin{equation}\label{eq:c1178}
	d_\mathcal{C}c_{11}^{7,8}=b_{18}^{5,6},
	\end{equation}
	\begin{equation}
	\boxed{d_\mathcal{C}c_{11}^{3,5}=b_{10}^{5,6}c_{11}^{4,5},}
	\end{equation}
	\begin{equation}
	\boxed{d_\mathcal{C}c_{11}^{4,6}=c_{11}^{4,5}b_{11}^{3,4},}
	\end{equation}
	\begin{equation}\label{eq:c1157}
	d_\mathcal{C}c_{11}^{5,7}=b_6^{3,4}c_7^{6,7}+b_{18}^{3,5},
	\end{equation}
	\begin{equation}\label{eq:c1114}
	d_\mathcal{C}c_{11}^{1,4}=b_3^{2,3}c_{11}^{2,4}+c_{11}^{1,2}b_2^{4,5}+c_{11}^{1,3}b_{10}^{5,6}+c_{10}^{4,5}+\dots,
	\end{equation}
	\begin{equation}\label{eq:c1125}
	\boxed{d_\mathcal{C}c_{11}^{2,5}=b_2^{4,5}c_{11}^{4,5}+b_{18}^{2,3},}
	\end{equation}
	\begin{equation}
	\boxed{d_\mathcal{C}c_{11}^{3,6}=b_{10}^{5,6}c_{11}^{4,6}+c_{11}^{3,5}b_{11}^{3,4},}
	\end{equation}
	\begin{equation}\label{eq:c1147}
	\boxed{d_\mathcal{C}c_{11}^{4,7}=c_{11}^{4,5}b_3^{2,3}+b_{18}^{4,5},}
	\end{equation}
	\begin{equation}\label{eq:c1115}
	\boxed{d_\mathcal{C}c_{11}^{1,5}=b_3^{2,3}c_{11}^{2,5}+b_{11}^{3,4}c_{11}^{3,5}+b_{12}^{4,5}c_{11}^{4,5}+b_{18}^{1,3},}
	\end{equation}
	\begin{equation}\label{eq:c1126}
	d_\mathcal{C}c_{11}^{2,6}=b_2^{4,5}c_{11}^{4,6}+c_{11}^{2,3}+c_{11}^{2,5}b_{11}^{3,4}+\dots,
	\end{equation}
	\begin{equation}\label{eq:c1137}
	d_\mathcal{C}c_{11}^{3,7}=b_{10}^{5,6}c_{11}^{4,7}+c_{11}^{6,7}+c_{11}^{3,5}b_3^{2,3}+\dots,
	\end{equation}
	\begin{equation}\label{eq:c1148}
	\boxed{d_\mathcal{C}c_{11}^{4,8}=c_{11}^{4,5}b_{12}^{4,5}+c_{11}^{4,6}b_{10}^{5,6}+c_{11}^{4,7}b_2^{4,5}+b_{18}^{4,6},}
	\end{equation}
	\begin{equation}\label{eq:c1116}
	d_\mathcal{C}c_{11}^{1,6}=b_{11}^{3,4}c_{11}^{3,6}+b_{12}^{4,5}c_{11}^{4,6}+c_{11}^{1,3}+c_{11}^{1,5}b_{11}^{3,4}+\dots,
	\end{equation}
	\begin{equation}\label{eq:c1127}
	\boxed{d_\mathcal{C}c_{11}^{2,7}=b_2^{4,5}c_{11}^{4,7}+c_{11}^{2,5}b_3^{2,3}+b_{18}^{2,5},}
	\end{equation}
	\begin{equation}\label{eq:c1138}
	d_\mathcal{C}c_{11}^{3,8}=b_{10}^{5,6}c_{11}^{4,8}+c_{11}^{6,8}+c_{11}^{3,5}b_{12}^{4,5}+c_{11}^{3,6}b_{10}^{5,6},
	\end{equation}
	\begin{equation}\label{eq:c1117}
	d_\mathcal{C}c_{11}^{1,7}=b_3^{2,3}c_{11}^{2,7}+b_{12}^{4,5}c_{11}^{4,7}+c_{11}^{1,2}+\dots,
	\end{equation}
	\begin{equation}\label{eq:c1124}
	d_\mathcal{C}c_{11}^{2,8}=b_2^{4,5}c_{11}^{4,8}+c_{11}^{2,4}+\dots,
	\end{equation}
	\begin{equation}\label{eq:c1118}
	d_\mathcal{C}c_{11}^{1,8}=b_{12}^{4,5}c_{11}^{4,8}+c_{11}^{5,8}+c_{11}^{1,5}b_{12}^{4,5}+\dots.
	\end{equation}
	Arguing similarly as in the case of $C_7$, from (\ref{eq:c1134}), (\ref{eq:c1156}), (\ref{eq:c1178}), (\ref{eq:c1157}), (\ref{eq:c1114}), (\ref{eq:c1126}), (\ref{eq:c1137}), (\ref{eq:c1116}), (\ref{eq:c1138}), (\ref{eq:c1117}), (\ref{eq:c1124}), (\ref{eq:c1118}), we see that the following pairings of generators can be cancelled with each other:
	\begin{equation}
	\begin{aligned}
	&(c_{11}^{3,4},c_8^{5,6}),(c_{11}^{5,6},c_9^{3,4}),(c_{11}^{7,8},b_{18}^{5,6}),(c_{11}^{5,7},b_{18}^{3,5}), \\
	&(c_{11}^{1,4},c_{10}^{4,5}),(c_{11}^{2,6},c_{11}^{2,3}),(c_{11}^{3,7},c_{11}^{6,7}),(c_{11}^{1,6},c_{11}^{1,3}), \\
	&(c_{11}^{3,8},c_{11}^{6,8}),(c_{11}^{1,7},c_{11}^{1,2}),(c_{11}^{2,8},c_{11}^{2,4}),(c_{11}^{1,8},c_{11}^{5,8}).
	\end{aligned}
	\end{equation}
	We conclude that the remaining generators in $C_{11}$ are
	\begin{equation}
	c_{11}^{1,5},c_{11}^{2,5},c_{11}^{2,7},c_{11}^{3,5},c_{11}^{3,6},c_{11}^{4,5},c_{11}^{4,6},c_{11}^{4,7},c_{11}^{4,8}.
	\end{equation}
	Combining (\ref{eq:c725}) and (\ref{eq:c1125}) we see that one of the generators $c_7^{2,5}$ and $c_{11}^{2,5}$ can be cancelled with $b_{18}^{2,3}$. To be symmetric, we will denote the remaining generator in $\mathcal{C}'(\Lambda_{2,2,2})$ by $c_7^{2,5}+c_{11}^{2,5}$. It follows that
	\begin{equation}
	\boxed{d_\mathcal{C}(c_7^{2,5}+c_{11}^{2,5})=b_2^{4,5}c_7^{4,5}+b_2^{4,5}c_{11}^{4,5}.}
	\end{equation}
	Similarly, from (\ref{eq:c747}) and (\ref{eq:c1147}), we obtain
	\begin{equation}
	\boxed{d_\mathcal{C}(c_7^{4,7}+c_{11}^{4,7})=c_7^{4,5}b_3^{2,3}+c_{11}^{4,5}b_3^{2,3}.}
	\end{equation}
	Meanwhile, $b_{18}^{4,5}$ is cancelled. \\
	Also, (\ref{eq:c715}) together with (\ref{eq:c1115}) implies that one can cancel one of $c_7^{1,5}$ and $c_{11}^{1,5}$ with $b_{18}^{1,3}$. As before, use $c_7^{1,5}+c_{11}^{1,5}$ to stand for the remaining generator in $\mathcal{C}'(\Lambda_{2,2,2})$, one has
	\begin{equation}
	\boxed{d_\mathcal{C}(c_7^{1,5}+c_{11}^{1,5})=b_3^{2,3}(c_7^{2,5}+c_{11}^{2,5})+b_6^{3,4}c_7^{3,5}+b_7^{4,5}c_7^{4,5}+b_{11}^{3,4}c_{11}^{4,5}+b_{12}^{4,5}c_{11}^{4,5}.}
	\end{equation}
	We know from (\ref{eq:c1178}) that $b_{18}^{5,6}=0$ in the dg algebra $\mathcal{C}'(\Lambda_{2,2,2})$. Combining this fact with (\ref{eq:c748}) and (\ref{eq:c1148}) we deduce that
	\begin{equation}
	\boxed{d_\mathcal{C}(c_7^{4,8}+c_{11}^{4,8})=c_7^{4,5}b_7^{4,5}+c_7^{4,6}b_5^{5,6}+(c_7^{4,7}+c_{11}^{4,7})b_2^{4,5}+c_{11}^{4,5}b_{12}^{4,5}+c_{11}^{4,6}b_{10}^{5,6},}
	\end{equation}
	and $b_{18}^{4,6}$ has been cancelled. \\
	By (\ref{eq:c1157}), we know that
	\begin{equation}
	b_{18}^{3,5}=b_6^{3,4}c_7^{6,7}
	\end{equation}
	holds in $\mathcal{C}'(\Lambda_{2,2,2})$. This combined with (\ref{eq:c727}) and (\ref{eq:c1127}) implies that
	\begin{equation}
	\boxed{d_\mathcal{C}(c_7^{2,7}+c_{11}^{2,7})=b_2^{4,5}(c_7^{4,7}+c_{11}^{4,7})+(c_7^{2,5}+c_{11}^{2,5})b_3^{2,3},}
	\end{equation}
	and $b_{18}^{2,5}$ is cancelled.
	
	We have obtained all formulas of the differentials of the relevant generators in $\mathcal{C}'(\Lambda_{2,2,2})$. It remains to cancel all of the generators in $B_{18}$. Note that in the above, we have already cancelled $b_{18}^{5,6}$, $b_{18}^{3,5}$, $b_{18}^{2,3}$, $b_{18}^{4,5}$, $b_{18}^{1,3}$, $b_{18}^{4,6}$ and $b_{18}^{2,5}$. To cancel the remaining ones, apply (\ref{eq:B-circ}) to the case when $j=18$. Using the fact that $A_8=A_3$ in $\mathcal{C}'(\Lambda_{2,2,2})$, we have
	\begin{equation}\label{eq:b18}
	d_\mathcal{C}b_{18}^{m,n}=\sum_{m<k<n}a_3^{\sigma_4(m),\sigma_4(k)}b_{18}^{k,n}+\sum_{m<k<n}b_{18}^{m,k}a_3^{\sigma(k),\sigma(n)}.
	\end{equation}
	Since $a_3^{1,2}=a_3^{3,5}=a_3^{4,6}=1$, we deduce from (\ref{eq:b18}) that the generators
	\begin{equation}
	(b_{18}^{1,4},b_{18}^{3,4}),(b_{18}^{1,6},b_{18}^{3,6}),(b_{18}^{1,5},b_{18}^{1,2}),(b_{18}^{2,6},b_{18}^{2,4})
	\end{equation}
	can be cancelled with each other.
	
	The final step is to compute the gradings of the generators in $\mathcal{C}'(\Lambda_{2,2,2})$. To do this, we equip the sheets in the Legendrian surface $\Lambda_{2,2,2}$ with the Maslov potential $\mu_{2,2,2}:\Lambda_{2,2,2}\rightarrow\mathbb{Z}$ as specified in Figure \ref{fig:front-without-2-handles}. It is then straightforward to check that the gradings of the remaining generators in $\mathcal{C}'(\Lambda_{2,2,2})$ are exactly as in (\ref{eq:grading}).
\end{proof}

\subsection{Calabi-Yau completions as wrapped Fukaya categories}\label{section:comparison}

In this subsection, we complete the proof of Theorem \ref{theorem:main}. We begin by recalling the definition of the Chekanov-Eliashberg dg algebra $\mathcal{C}E(\Lambda)$ over $\mathbb{K}=\mathbb{Z}/2$ of a Legendrian surface $\Lambda\subset J^1(\mathbb{R}^2)$. More details can be found in $\cite{ees}$. As in Section \ref{section:def-cell}, assume that $\Lambda$ has vanishing Maslov class.
\bigskip

Denote by $\mathcal{R}$ the set of transverse double points in the Lagrangian projection $p_{xy}(\Lambda)$. Without loss of generality, we can assume that $\Lambda$ is \textit{chord generic}, namely $\mathcal{R}$ is a finite set. As a graded $\mathbb{K}$-algebra,
\begin{equation}
\mathcal{C}E(\Lambda):=\bigoplus_{i=0}^\infty\mathbb{K}\langle\mathcal{R}\rangle^{\otimes_\mathbb{K} i}.
\end{equation}
As explained in Section \ref{section:def-cell}, when $\Lambda=\bigsqcup\Lambda_v$ is a Legendrian link, $\mathcal{C}E(\Lambda)$ can be regarded as a dg algebra over $\Bbbk$, such that $e_w\mathcal{R}e_v$ consists of the Reeb chords from $\Lambda_w$ to $\Lambda_v$. With respect to the differential $\partial$ defined below, this endows $\mathcal{C}E(\Lambda)$ with the structure of a dg algebra over $\Bbbk$.

For any transverse double point $c\in\mathcal{R}$, its pre-image consists of two points, $c_+$ and $c_-$, where $z(c_+)>z(c_-)$. By slight abuse of notations, their images under the front projection $p_{xz}$ will still be denoted by $c_+$ and $c_-$. By assumption, they project to the same point $x_c\in\mathbb{R}^2$. Let $f_+$ and $f_-$ be the local defining functions of the sheets of $\Lambda$ which contain the points $c_+$ and $c_-$ respectively. $x_c$ is a non-degenerate critical point of the local difference function $f:=f_+-f_-$, so it has an associated Morse index $\mathit{ind}(x_c)$. Choose a path $\gamma$ from $c_+$ to $c_-$, which is transverse to the singular set of $p_{xz}(\Lambda)$. It follows that $\gamma$ intersects the cusp edges of $p_{xz}(\Lambda)$ in a finite number of points. Denote by $d(\gamma)$ the number of times that $\gamma$ crosses from the upper sheet to the lower sheet, and by $u(\gamma)$ the number of times that $\gamma$ crosses from the lower sheet to the upper sheet, the grading of $c$ in the dg algebra $\mathcal{C}E(\Lambda)$ is defined to be
\begin{equation}
|c|=u(\gamma)+1-\mathit{ind}(x_c)-d(\gamma).
\end{equation}

For generators $a;b_1,\dots,b_l\in\mathcal{R}$, we can define a moduli space $\mathcal{M}_\Lambda(a;b_1,\dots,b_l)$ which parametrizes holomorphic maps
\begin{equation}
u:(\Delta_{l+1},\partial\Delta_{l+1})\rightarrow(\mathbb{C}^2,p_{xy}(\Lambda)),
\end{equation}
where $\Delta_{l+1}$ is an $(l+1)$-punctured disc with punctures labelled counterclockwisely by $q_0,\dots,q_l$ on the boundary. As in the case of a Legendrian link in $(\mathbb{R}^3,\xi_\mathit{std})$, we need to introduce the \textit{Reeb sign} for the punctures. For a Reeb chord $c\in\mathcal{R}$, pick small neighborhoods $S_\pm\subset\Lambda$ of $c_\pm$ that are mapped injectively by $p_{xy}$ into $\mathbb{C}^2$. If $u(q_i)=c$, we say that $q_i$ has \textit{positive} (resp. \textit{negative}) Reeb sign if $u$ maps points clockwise of $p_i$ on $\partial\Delta_{l+1}$ to the lower (resp. upper) sheet of $p_{xy}(\Lambda)$, and points counterclockwise of $p_i$ on $\partial\Delta_{l+1}$ to the upper (resp. lower) sheet of $p_{xy}(\Lambda)$. See Figure \ref{fig:Reeb}. Furthermore, $u$ is required to satisfy the following boundary and asymptotic conditions:
\begin{itemize}
	\item $u$ maps the boundary components of the punctured disc $\Delta_{l+1}$ to $p_{xy}(\Lambda)\subset\mathbb{C}^2$;
	\item $u(q_0)=a$ has positive Reeb sign and $u(q_i)=b_i$ has negative Reeb sign for $i=1,\dots,l$.
\end{itemize}

\begin{figure}[htb!]
	\centering
	\begin{tikzpicture}
	\filldraw[draw=black,color={black!15}] (-2.4,-2) rectangle (0,0);
	\draw (0,0) ellipse (2.4cm and 0.8cm);
	\draw (0,-2) ellipse (2.4cm and 0.8cm);
	\draw [orange] [decoration={markings, mark=at position 1/2 with {\arrow{<}}},postaction={decorate}] (0,0) to (0,-2);
	\draw (-2.4,0) to (0,0);
	\draw (-2.4,-2) to (0,-2);
	\node [orange] at (0.2,-1) {$c$};
	\node at (0.2,0.1) {$c_+$};
	\node at (0.2,-2.1) {$c_-$};
	\node at (1.2,0) {$S_+$};
	\node at (1.2,-2) {$S_-$};
	\draw [scale=0.5] [red] [->] (-2.4,-2) ++(140:5mm) arc (-220:40:5mm);
	\end{tikzpicture}
	\caption{A positive puncture lifted to $\mathbb{R}^5$, where the shaded region is the image of a holomorphic disc}
	\label{fig:Reeb}
\end{figure}
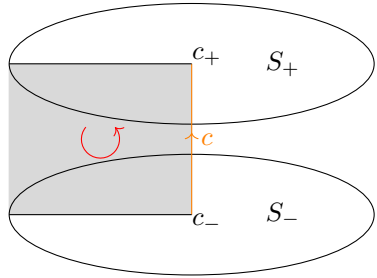

The differential $\partial$ in $\mathcal{C}E(\Lambda)$ is defined as
\begin{equation}
\partial a=\sum(\#_2\mathcal{M}_\Lambda(a;b_1,\dots,b_l))b_1\dots b_l,
\end{equation}
where the sum on the right-hand side above is taken over all words $b_1\dots b_l$ of Reeb chords for which $\dim(\mathcal{M}_\Lambda(a;b_1,\dots,b_l))=0$ and $\#_2$ denotes the mod 2 count of the rigid elements in $\mathcal{M}_\Lambda(a;b_1,\dots,b_l)$. In order to define $\mathcal{C}E(\Lambda)$ over an arbitrary field $\mathbb{K}$, we need to take into consideration the orientations of the Morse flow trees which correspond to rigid holomorphic discs in $\mathcal{M}(a;b_1,\dots,b_l)$, see Section \ref{section:subo}.
\bigskip

Recall that it is proved by Rutherford-Sullivan in $\cite{rs2}$ that the cellular dg algebra $\mathcal{C}(\Lambda)$ is quasi-isomorphic to its Chekanov-Eliashberg algebra $\mathcal{C}E(\Lambda)$ defined over $\mathbb{K}=\mathbb{Z}/2$. In particular, we have a quasi-isomorphism
\begin{equation}\label{eq:RS2}
\mathcal{C}(\Lambda_{p,q,r})\cong\mathcal{C}E(\Lambda_{p,q,r}).
\end{equation}
Note that this quasi-isomorphism preserves the $\Bbbk$-bimodule structures on both sides.

It remains to identify the cellular dg algebra $\mathcal{C}(\Lambda_{p,q,r})$ with the Ginzburg algebra $\mathcal{G}_{p,q,r}$. This follows essentially from our computations in Sections \ref{section:example} and \ref{section:computation}.

\begin{proposition}\label{proposition:pqr}
Let $p,q$ and $r$ be integers satisfying $p\geq2,q\geq2,r\geq2$. We have a quasi-isomorphism
\begin{equation}
\mathcal{C}(\Lambda_{p,q,r})\cong\mathcal{G}_{p,q,r}
\end{equation}
between the cellular dg algebra and the Ginzburg dg algebra defined over $\mathbb{K}=\mathbb{Z}/2$.
\end{proposition}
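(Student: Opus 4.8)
The plan is to produce an explicit isomorphism of dg algebras over $\Bbbk$ between the reduced cellular dg algebra $\mathcal{C}'(\Lambda_{p,q,r})$ obtained in Section \ref{section:example} and Proposition \ref{proposition:222} and the Ginzburg algebra $\mathcal{G}_{p,q,r}=\mathcal{G}(Q_{p,q,r},w_{p,q,r})$. Both algebras are free over $\Bbbk$ on explicitly listed homogeneous generators, and by Lemma \ref{lemma:tame} together with (\ref{eq:RS2}) the reduced algebra $\mathcal{C}'(\Lambda_{p,q,r})$ is quasi-isomorphic to $\mathcal{C}(\Lambda_{p,q,r})$; hence it suffices to exhibit a grading-preserving, $\Bbbk$-bimodule-compatible bijection of generating sets intertwining the two differentials.

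First I would dispose of the base case $p=q=r=2$. The Ginzburg algebra $\mathcal{G}_{2,2,2}$ is freely generated over $\Bbbk$ by the eight degree-$0$ arrows $a_1,a_2,b_1,b_2,b_3,c_1,c_2,c_3$ of (\ref{eq:quiver}), their eight degree-$(-1)$ opposites $a_i^\ast,b_j^\ast,c_k^\ast$, and the five degree-$(-2)$ loops $z_A,z_B,z_{P_1},z_{Q_1},z_{R_1}$, matching exactly the $(8,8,5)$ generator count of $\mathcal{C}'(\Lambda_{2,2,2})$ in Proposition \ref{proposition:222}. The matching is forced by the $\Bbbk$-bimodule structure (i.e.\ which pair of components $\Lambda_\bullet$ a generator connects) together with the differential formulas: for instance $d_\mathcal{C}c_7^{3,5}=b_5^{5,6}c_7^{4,5}$ compared with $db_2^\ast=c_2a_1$ forces $c_7^{4,5}\mapsto a_1$, $b_5^{5,6}\mapsto c_2$, $c_7^{3,5}\mapsto b_2^\ast$, while the two differentials $d_\mathcal{C}b_7^{4,5}$ and $d_\mathcal{C}b_{12}^{4,5}$ sharing the common term $b_3^{2,3}b_2^{4,5}$ mirror the shared term $b_3c_3$ in $da_1^\ast$ and $da_2^\ast$. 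Propagating these identifications completes the dictionary on arrows and opposite arrows; I would then check that the five degree-$(-2)$ generators map to the loops. Concretely $d_\mathcal{C}(c_7^{4,8}+c_{11}^{4,8})$ translates under the dictionary to $a_1a_1^\ast+a_2a_2^\ast+c_1^\ast c_1+c_2^\ast c_2+c_3^\ast c_3$, which is precisely the vertex-$A$ component of $\sum_\alpha[\alpha,\alpha^\ast]$, so that generator corresponds to $z_A$; the remaining four degree-$(-2)$ generators match $z_B,z_{P_1},z_{Q_1},z_{R_1}$ in the same way.

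For general $(p,q,r)$ I would combine the central computation above with the $A_r$ computation of Section \ref{section:example}. By Proposition \ref{proposition:link} the extra components $\Lambda_{P_i},\Lambda_{Q_j},\Lambda_{R_k}$ with $i,j,k\geq2$ are parallel standard unknots forming $A$-chains whose fronts do not meet those of $\Lambda_A$ and $\Lambda_B$; correspondingly the chain arrows $x_i,y_j,z_k$ of $Q_{p,q,r}$ do not occur in the cubic potential $w_{p,q,r}$. Thus, up to the cancellations of Lemma \ref{lemma:tame}, the cellular dg algebra splits into the central piece already matched with the cubic part of $\mathcal{G}_{p,q,r}$ and three $A$-type pieces, each identified with the Ginzburg algebra of an $A_n$ quiver with trivial potential exactly as in Section \ref{section:example}. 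This yields $\mathcal{C}(\Lambda_{p,q,r})\cong\mathcal{G}_{p,q,r}$.

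The main obstacle is the bookkeeping at the three junction vertices $P_1,Q_1,R_1$, where the chain contribution and the central contribution must be glued. At such a vertex the loop differential in $\mathcal{G}_{p,q,r}$ acquires an extra commutator $[x_1,x_1^\ast]$ (resp.\ $[y_1,y_1^\ast]$, $[z_1,z_1^\ast]$) beyond the terms $[b_1,b_1^\ast]+[c_1,c_1^\ast]$ already present in the $p=q=r=2$ case, and one must verify that the full cellular differential of the corresponding degree-$(-2)$ generator in $\mathcal{C}(\Lambda_{p,q,r})$ produces exactly this combined expression. I expect this junction analysis — reconciling the $A$-chain generators of Section \ref{section:example} with the interacting generators near $\Lambda_A,\Lambda_B$ and tracking which cancellations survive — to be the only delicate point; everything else is the routine verification that the dictionary above is a chain map.
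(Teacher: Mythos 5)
Your proposal follows essentially the same route as the paper's proof: an explicit generator-by-generator dictionary between $\mathcal{C}'(\Lambda_{2,2,2})$ from Proposition \ref{proposition:222} and $\mathcal{G}_{2,2,2}$, then adjoining the $A$-chains of unknots via the computation of Section \ref{section:example}; your dictionary differs from the paper's (\ref{eq:222}) only by the quiver symmetry exchanging the indices $1$ and $2$, which is immaterial over $\mathbb{Z}/2$. The ``junction'' analysis you flag as the delicate point is exactly what the paper records: the differentials of $z_{P_1},z_{Q_1},z_{R_1}$ acquire the extra terms $x_1x_1^\ast,y_1y_1^\ast,z_1z_1^\ast$, while all other new generators and differentials match (\ref{eq:add1}) and (\ref{eq:add2}).
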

\begin{proof}
By definition, the Ginzburg algebra $\mathcal{G}_{p,q,r}$ is the semi-free dg algebra generated by the arrows (which have degree 0)
\begin{equation}
a_1,a_2,b_1,b_2,b_3,c_1,c_2,c_3,x_i,y_j,z_k;
\end{equation}
the reversed arrows (which have degree -1)
\begin{equation}
a_1^\ast,a_2^\ast,b_1^\ast,b_2^\ast,b_3^\ast,c_1^\ast,c_2^\ast,c_3^\ast,x_i^\ast,y_j^\ast,z_k^\ast;
\end{equation}
together with the loops (which have degree -2)
\begin{equation}
z_A,z_B,z_{P_1},z_{Q_1},z_{R_1},z_{P_{i+1}},z_{Q_{j+1}},z_{R_{k+1}},
\end{equation}
The differential $d:\mathcal{G}_{p,q,r}\rightarrow\mathcal{G}_{p,q,r}[1]$ is determined by its action on the generators listed above. More explicitly,
\begin{equation}
da_1=da_2=db_1=db_2=db_3=dc_1=dc_2=dc_3=dx_i=dy_j=dz_k=0,
\end{equation}
\begin{equation}
dx_1=\dots=dx_{p-1}=dy_1=\dots=dy_{q-1}=dz_1=\dots=dz_{r-1}=0,
\end{equation}
\begin{equation}
da_1^\ast=b_2c_2+b_3c_3,da_2^\ast=b_1c_1+b_3c_3,
\end{equation}
\begin{equation}
db_1^\ast=c_1a_2,db_2^\ast=c_2a_1,db_3^\ast=c_3a_1+c_3a_2,
\end{equation}
\begin{equation}
dc_1^\ast=a_2b_1,dc_2^\ast=a_1b_2,dc_3^\ast=a_1b_3+a_2b_3,
\end{equation}
\begin{equation}
dx_1^\ast=\dots=dx^\ast_{p-1}=dy_1^\ast=\dots=dy_{q-1}^\ast=dz_1^\ast=\dots=dz_{r-1}^\ast=0,
\end{equation}
\begin{equation}
dz_A=a_1a_1^\ast+a_2a_2^\ast+c_1^\ast c_1+c_2^\ast c_2+c_3^\ast c_3,
\end{equation}
\begin{equation}
dz_B=a_1^\ast a_1+a_2^\ast a_2+b_1b_1^\ast+b_2b_2^\ast+b_3b_3^\ast,
\end{equation}
\begin{equation}
dz_{P_1}=c_1c_1^\ast+b_1^\ast b_1+x_1x_1^\ast,dz_{Q_1}=c_2c_2^\ast+b_2b_2^\ast+y_1y_1^\ast,dz_{R_1}=c_3c_3^\ast+b_3^\ast b_3+z_1z_1^\ast,
\end{equation}
\begin{equation}\label{eq:add1}
dz_{P_i}=x_{i-1}^\ast x_{i-1}+x_ix_i^\ast,dz_{Q_j}=y_{j-1}^\ast y_{j-1}+y_jy_j^\ast,dz_{R_k}=z_{k-1}^\ast z_{k-1}+z_kz_k^\ast,
\end{equation}
where $2\leq i\leq p-2$, $2\leq j\leq q-2$, $2\leq k\leq r-2$, and finally
\begin{equation}\label{eq:add2}
dz_{P_{p-1}}=x_{p-2}^\ast x_{p-2},dz_{Q_{q-1}}=y_{q-2}^\ast y_{q-2},dz_{R_{r-1}}=z_{r-2}^\ast z_{r-2}.
\end{equation}

On the other hand, by Proposition \ref{proposition:222}, it is clear that the map $\Phi_{2,2,2}:\mathcal{C}(\Lambda_{2,2,2})\rightarrow\mathcal{G}_{2,2,2}$ defined by
\begin{equation}\label{eq:222}
\begin{split}
&c_{12}^{4,5}\mapsto a_1,c_7^{4,5}\mapsto a_2,b_6^{3,4}\mapsto b_1,b_{11}^{3,4}\mapsto b_2,b_3^{2,3}\mapsto b_3,b_5^{5,6}\mapsto c_1,b_{10}^{5,6}\mapsto c_2,b_2^{4,5}\mapsto c_3;\\
&b_{12}^{4,5}\mapsto a_1^\ast,b_7^{4,5}\mapsto a_2^\ast,c_7^{3,5}\mapsto b_1^\ast,c_{11}^{3,5}\mapsto b_2^\ast,\\
&c_7^{2,5}+c_{11}^{2,5}\mapsto b_3^\ast,c_7^{4,6}\mapsto c_1^\ast,c_{11}^{4,6}\mapsto c_2^\ast,c_7^{4,7}+c_{11}^{4,7}\mapsto c_3^\ast;\\
&c_7^{1,5}+c_{11}^{1,5}\mapsto z_A,c_7^{4,8}+c_{11}^{4,8}\mapsto z_B,c_7^{3,6}\mapsto z_P,c_{11}^{3,6}\mapsto z_Q,c_7^{2,7}+c_{11}^{2,7}\mapsto z_R
\end{split}
\end{equation}
defines an identification between the generators of the dg algebras, and it is straightforward to check that this map is compatible with the differentials. In particular, $\Phi_{2,2,2}$ is a quasi-isomorphism.

After replacing the components $\Lambda_P,\Lambda_Q$ and $\Lambda_R$ of $\Lambda_{2,2,2}$ with the corresponding $A_{p-1}$, $A_{q-1}$ and $A_{r-1}$ chains of unknots $\{\Lambda_{P_i}\}$, $\{\Lambda_{Q_j}\}$ and $\{\Lambda_{R_k}\}$, one can arrange so that the base projection $p_x(\Lambda_{p,q,r})$ is as shown in Figure \ref{figure:decomposition(2,2,2)}, where the additional cells in the cellular decomposition of $p_x(\Lambda_{p,q,r})$ are coloured in blue. Note that we have arranged, by using a non-generic front projection of $\Lambda_{p,q,r}$, that the base projections of the crossing arcs in the $A_{p-1}$ and $A_{q-1}$ chains of unknots $\{\Lambda_{P_i}\}$ and $\{\Lambda_{Q_j}\}$ coincide precisely with the 1-cells $e_1^1$ and $e_8^1$, while the crossing arcs of the $A_{r-1}$-chain of unknots lie over $e_{14}^1$. The base projections of the cusp edges of $p_{x,z}(\Lambda_{P_i})$, $p_{x,z}(\Lambda_{Q_j})$ and $p_{x,z}(\Lambda_{R_k})$ are given respectively by the 1-cells $e_{15}^1$, $e_{18}^1$ and the largest circle, so there are no newly created 1-cells for these additional cusp edges in the Legendrian front of $\Lambda_{p,q,r}$. This arrangement of the Legendrian front of $\Lambda_{p,q,r}$ is justified by our discussions in Section \ref{section:non-gen}. Note that since we have to add the additional cells labelled by $A_{13},A_{14},B_{22}$ to divide the 2-cell labelled by $C_3$ into a polygon, the original 1-cell $e_{20}^1$ in the cellular decomposition of $p_x(\Lambda_{2,2,2})$ is divided into two 1-cells, say $e_{20,+}^1$ and $e_{20,-}^1$. However, since we can cancel the generators in one of the matrices $B_{20,+}$ and $B_{20,-}$ with the generators in $A_{13}$, it causes no additional complexity in our computations. Using the same cancellation arguments as in the proof of Proposition \ref{proposition:222}, it is not hard to see that passing from $\mathcal{C}'(\Lambda_{2,2,2})$ to $\mathcal{C}'(\Lambda_{p,q,r})$ adds new generators in the matrices $B_7,C_7,B_{12},C_{11},B_{22}$ and $C_{12}$. All the other new generators involved in the original definition of the cellular dg algebra $\mathcal{C}(\Lambda_{p,q,r})$, including those coming from the newly added cells in the base projection of $\Lambda_{p,q,r}$, can be cancelled out. 

For the new generators created by the parallel copies of $\Lambda_P$, $\Lambda_Q$ and $\Lambda_R$, namely $\Lambda_{P_i}$, $\Lambda_{Q_j}$ and $\Lambda_{R_k}$ for $i,j,k\geq 2$, we can apply exactly the same cancellation procedure as in the proof of Proposition \ref{proposition:An}. The upshot is that up to quasi-isomorphism, the additional generators in $\mathcal{C}'(\Lambda_{p,q,r})$ can be identified to be
\begin{equation}\label{eq:newgen}
x_i,x_i^\ast,y_j,y_j^\ast,z_k,z_k^\ast,z_{P_2},\dots,z_{P_{p-1}},
\end{equation}
where $1\leq i\leq p-1$, $1\leq j\leq q-1$, $1\leq k\leq r-1$, with the differentials given exactly as in (\ref{eq:add1}) and (\ref{eq:add2}) above. Furthermore, in addition to the new generators in (\ref{eq:newgen}), there are additional terms $x_1x_1^\ast,y_1y_1^\ast$ and $z_1z_1^\ast$ appearing in the differentials of $z_{P_1},z_{Q_1}$ and $z_{R_1}$ respectively. 

Finally, notice that the Maslov potential $\mu_{2,2,2}$ on the Legendrian link $\Lambda_{2,2,2}$ extends naturally to a Maslov potential $\mu_{p,q,r}:\Lambda_{p,q,r}\rightarrow\mathbb{Z}$, by equipping the additional unknots in $\Lambda_{p,q,r}$ with a Maslov potential as in (\ref{eq:MasAn}). This shows that the grading on $\mathcal{C}(\Lambda_{p,q,r})$ matches with that on $\mathcal{G}_{p,q,r}$, which completes the proof.
\end{proof}

By (\ref{eq:RS2}) we get a quasi-isomorphism
\begin{equation}
\mathcal{C}E(\Lambda_{p,q,r})\cong\Pi_3(\mathcal{A}_{p,q,r})
\end{equation}
between the Chekanov-Eliashberg algebra and the 3-Calabi-Yau completion of the directed $A_\infty$-algebra $\mathcal{A}_{p,q,r}$. With our definitions, it is straightforward to see that this quasi-isomorphism is compatible with the $\Bbbk$-bimodule structures on both sides. This proves Theorem \ref{theorem:main} for $\mathbb{K}=\mathbb{Z}/2$. The general case is proved by combining the computations of the signs of the relevant Morse flow trees, which is carried out in Section \ref{section:sign}.
\bigskip

For related results which identify certain (partially) wrapped Fukaya categories with relative Calabi-Yau completions, see $\cite{etl2}$ and $\cite{wy}$.

\subsection{Degenerate triples}\label{section:exceptional}

Let the polynomial $t_{p,q,r}(x,y,z)$ be as in (\ref{eq:4DMilnor}), it gives rise to a symplectic Landau-Ginzburg model $(\mathbb{C}^3,t_{p,q,r})$. Without loss of generality, we assume that $p\geq p\geq r\geq0$. It turned out that these Landau-Ginzburg models have 1-dimensional mirrors.
\bigskip

When $r\geq2$, the Morsification $\tilde{t}_{p,q,r}$ defines a Lefschetz fibration on $\mathbb{C}^3$, and the mirror of $(\mathbb{C}^3,\tilde{t}_{p,q,r})$ is the weighted projective line $\mathbb{P}^1_{p,q,r}$. The Fukaya categories $\mathcal{F}(M_{p,q,r})$ and $\mathcal{W}(M_{p,q,r})$ of the corresponding Wesintein manifold $M_{p,q,r}$ have been studied in the above. As we have proved, they are $A_\infty$-Koszul dual to each other as $\mathbb{Z}$-graded $A_\infty$-categories.
\bigskip

When $r=1$, the situation is a simplification of the previous case. The mirror of $(\mathbb{C}^3,\tilde{t}_{p,q,1})$ is $\mathbb{P}^1_{p,q}$. When $q\geq2$, the Fukaya categories of $M_{p,q,1}$ are described by the quiver $Q_{p,q,1}$
\begin{equation}\label{eq:simquiver}
\begin{tikzcd}
& & \bullet_{P_1} \arrow[r,"x_1"] \arrow[dll,orange,bend right,"c_1"]
& \bullet_{P_2} \arrow[r,"x_2"] & \bullet\dots\bullet \arrow[r,"x_{p-2}"] & \bullet_{P_{p-1}} \\
\bullet_A \arrow[r,bend left,"a_1"] \arrow[r,bend right,"a_2"] & \bullet_B
\arrow[ur,"b_1"]
\arrow[dr,"b_2"] \\
& & \bullet_{Q_1} \arrow[r,"y_1"] \arrow[ull,orange,bend left,"c_2"]
& \bullet_{Q_2} \arrow[r,"y_2"] & \bullet\dots\bullet \arrow[r,"y_{q-2}"] & \bullet_{Q_{q-1}}
\end{tikzcd}
\end{equation}
with potential
\begin{equation}
w_{p,q,1}=a_1b_2c_2+a_2b_1c_1.
\end{equation}
In the simplest case when $p=q=r=1$, $Q_{1,1,1}$ is just the Kronecker quiver, and the potential $w_{1,1,1}=0$. For $M_{p,q,1}$, one can still find a Lefschetz fibration $\pi_{p,q,1}:M_{p,q,1}\rightarrow\mathbb{C}$ (although the construction of $\pi_{p,q,1}$ does not follow from the general method described in Section \ref{section:Lefschetz 6-fold}) and use the Casals-Murphy recipe to draw its Legendrian front. It turns out that one can cancel all the subcritical handles in the original frontal description of $M_{p,q,1}$ to get a Legendrian surface $\Lambda_{p,q,1}\subset(\mathbb{R}^5,\xi_\mathit{std})$ and the Chekanov-Eliashberg algebra $\mathcal{C}E(\Lambda_{p,q,1})$ can still be computed through its cellular model $\mathcal{C}(\Lambda_{p,q,1})$. The proof of Koszul duality between the endomorphism algebras of $\mathcal{F}(M_{p,q,1})$ and $\mathcal{W}(M_{p,q,1})$ is completely analogous to the previous case.
\bigskip

When $r=0$ and $q\geq1$, the map $t_{p,q,0}:\mathbb{C}^3\rightarrow\mathbb{C}$ cannot be Morsified to produce a Lefschetz fibration, and the mirror of $(\mathbb{C}^3,t_{p,q,0})$ is $\mathbb{A}^1_{p,q}:=\mathbb{P}^1_{p,q}\setminus\{\infty\}$. When $p=q=1$, the map $t_{1,1,0}:\mathbb{C}^3\rightarrow\mathbb{C}$ does not have any critical point. Instead, there is a special fiber over the origin, which is topologically different from all the other fibers. Correspondingly, its mirror $\mathbb{A}^1$ is Floer theoretically trivial over $\mathbb{K}=\mathbb{C}$. The Landau-Ginzburg model $(\mathbb{C}^3,t_{1,1,0})$ has already been studied in Example 2.4 of $\cite{aak}$, where it is equivalently interpreted as a 1-dimensional Landau-Ginzburg model $(\mathbb{C}^\ast,x)$. In fact, let $L_\infty\subset\mathbb{C}^\ast$ be a properly embedded arc which connects $+\infty$ to itself by passing around the origin, then
\begin{equation}
\mathcal{A}_{1,1,0}\cong\mathit{CF}^\ast(L_\infty,L_\infty)\cong H^\ast(S^1;\mathbb{K}),
\end{equation}
which shows that $L_\infty$ is mirror to the skyscraper sheaf at the origin of $\mathbb{A}^1$. $\mathcal{A}_{1,1,0}$ should be regarded as the endomorphism algebra of the \textit{infinitesimally wrapped Fukaya category} (cf. $\cite{nz}$) $\mathcal{A}(t_{1,1,0})$. In this case, it still makes sense to consider the trivial extension $\mathcal{A}_{1,1,0}\oplus\mathcal{A}_{1,1,0}^\vee[-3]$, since one can construct an unobstructed (but non-exact) Lagrangian submanifold $L_\sigma\subset M$, which is diffeomorphic to $S^1\times S^2$ and has trivial Maslov class. Now the trivial extension $\mathcal{A}_{1,1,0}\oplus\mathcal{A}_{1,1,0}^\vee[-3]$ can be regarded as the endomorphism algebra $\mathit{CF}^\ast(L_\sigma,L_\sigma)$ of $L_\sigma$ in the Fukaya category $\mathcal{F}(M_{1,1,0})$ (which extends the usual definition by allowing all the closed Lagrangian submanifolds $L\subset M_{1,1,0}$ with trivial obstructions $\mathfrak{m}_0(L)=0$ as its objects). This suggests the existence of a generalization of the suspension construction of Lefschetz fibrations explained in Section \ref{section:suspension} to more general symplectic Landau-Ginzburg models.

From Section 4.1 of $\cite{cm}$, we see that $M_{1,1,0}$ is obtained by attaching a Weinstein 3-handle to $D^6$ along the Legendrian surface $\Lambda_{1,1,0}$ depicted in Figure \ref{fig:trefoil}. Although in $\cite{cm}$, the Legendrian front $\Lambda_{1,1,0}$ of $M_{1,1,0}$ is obtained using a different Lefschetz fibration, one will end up with the same front by starting from our general framework in Section \ref{section:surgery}, and cancelling the 2-handle $R$ with the 3-handle corresponding to the Legendrian sphere $\Lambda_A$, see Figure \ref{fig:A}. In particular, this shows that for a general Weinstein manifold $M_{p,q,0}$, one can cancel all the subcritical handles in its frontal description, and the endomorphism algebra of its wrapped Fukaya category $\mathcal{W}(M_{p,q,0})$ is quasi-isomorphic to the Chekanov-Eliashberg algebra of a Legendrian surface $\Lambda_{p,q,0}\subset(\mathbb{R}^5,\xi_\mathit{std})$, which is of course computable in terms of the cellular dg algebra $\mathcal{C}(\Lambda_{p,q,0})$. Concretely, $\Lambda_{p,q,0}$ is a link of Legendrian 2-spheres obtained by attaching standard unknots to the surface $\Lambda_{1,1,0}$.

\begin{figure}
	\centering
	\begin{tikzpicture}[scale=2,auto=left,every node/.style={circle}]
	\draw (0,0) to[in=180,out=0] (1,0.6);
	\draw (1,0.6) to[in=180,out=0] (2,0);
	\draw (2,0) to[in=180,out=0] (3,0.6);
	\draw (3,0.6) to[in=180,out=0] (4,0);
	\draw (1,0) to[in=180,out=0] (2,0.6);
	\draw (2,0.6) to[in=180,out=0] (3,0);
	\draw (0,0) to[in=180,out=0] (1.5,-0.8);
	\draw (1.5,-0.8) to[in=180,out=0] (3,0);
	\draw (1,0) to[in=180,out=0] (2.5,-0.8);
	\draw (2.5,-0.8) to[in=180,out=0] (4,0);
	
	\draw (2,-0.62) node[circle,fill,inner sep=1pt]{};
	\node at (2,0.7)   {2};
	\node at (2,0.1)   {1};
	\node at (2,-0.5)   {1};
	\node at (2,-0.75)   {0};
	\end{tikzpicture}
	\caption{Legendrian front of $\Lambda_{1,1,0}$, which is the $S^1$-symmetric rotation of a right-handed trefoil knot with respect to the vertical axis of symmetry. The numbers above strands are values taken by the Maslov potential $\mu_{1,1,0}:\Lambda_{1,1,0}\rightarrow\mathbb{Z}$.}
	\label{fig:trefoil}
\end{figure}
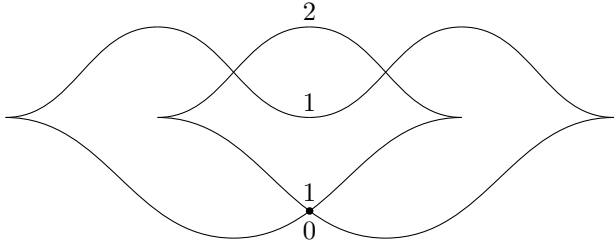

Since the Legendrian surface $\Lambda_{1,1,0}\subset J^1(\mathbb{R}^2)$ can be constructed by spinning the Legendrian front of a right-handed Legendrian trefoil knot in $(\mathbb{R}^3,\xi_\mathit{std})$ along an axis which passes through one of the crossing points, its front projection contains a cone singularity, which is indicated by the thick dot in Figure \ref{fig:trefoil}. Its Chekanov-Eliashberg dg algebra $\mathcal{C}E(\Lambda_{1,1,0})$ can be computed by applying Proposition \ref{proposition:front-spin}, and there is a quasi-isomorphism
\begin{equation}
\mathcal{C}E(\Lambda_{1,1,0})\cong\mathbb{Z}/2[x_1,x_2],|x_1|=1,|x_2|=-2
\end{equation}
over $\mathbb{Z}/2$. In particular, this implies that $\mathcal{C}E(\Lambda_{1,1,0})$ is not Koszul dual to $\mathit{CF}^\ast(L_\sigma,L_\sigma)$ over $\mathbb{K}=\mathbb{Z}/2$.
\bigskip

When $q=r=0$, and $p\geq1$, the Landau-Ginzburg model $(\mathbb{C}^3,t_{p,0,0})$ is mirror to $\mathbb{K}_p^\times:=\mathbb{P}^1_p\setminus\{0,\infty\}$. In the simplest case when $p=1$, the map $t_{1,0,0}:\mathbb{C}^3\rightarrow\mathbb{C}$ defines a Morse-Bott fibration with critical locus isomorphic to $\mathbb{C}^\ast$. The main difficulty in answering whether the Fukaya categories $\mathcal{F}(M_{p,0,0})$ and $\mathcal{W}(M_{p,0,0})$ are Koszul dual comes from the fact that the Legendrian frontal description of $M_{p,0,0}$ involves a 2-handle which cannot be cancelled with any of the critical handles, therefore the calculation of the wrapped Fukaya category $\mathcal{W}(M_{p,0,0})$ requires a generalization of the work of Rutherford-Sullivan $\cite{rs1,rs2}$ for Legendrian surfaces in the contact connected sums $\#_n S^2\times S^3$. In dimension 4, the corresponding generalization has been obtained by Ekholm-Ng $\cite{en}$.
\bigskip

The Landau-Ginzburg model $(\mathbb{C}^3,t_{0,0,0})$ has been studied prominently in the literature, see for example $\cite{aak,dn}$. Its mirror is given by the pair-of-pants $\mathbb{P}^1\setminus\{0,1,\infty\}$. The Fukaya category of $(\mathbb{C}^3,t_{0,0,0})$ has been calculated by Nadler in $\cite{dn}$ in terms of microlocal sheaves, which turns out to be quasi-equivalent to $\mathcal{A}(t_{0,0,0}):=\mathit{Coh}_\mathit{tor}(\mathbb{P}^1\setminus\{0,1,\infty\})$,  the bounded dg category of finitely-generated torsion complexes on $\mathbb{P}^1\setminus\{0,1,\infty\}$. The full $A_\infty$-subcategory $\mathcal{V}(M_{0,0,0})\subset\mathcal{F}(M_{0,0,0})$ which is relevant for Koszul duality should be the trivial extension $\mathcal{A}(t_{0,0,0})\oplus\mathcal{A}(t_{0,0,0})^\vee[-3]$. However, as in the case of $M_{p,0,0}$, the Legendrian front of $M_{0,0,0}$ necessarily involves 2-handles.

\section{Orientations}\label{section:o}

In this section we take into account the orientation issue in the definition of Legendrian contact homology and obtain a computation of the Chekanov-Eliashberg algebra $\mathcal{C}E(\Lambda_{p,q,r})$ over any field $\mathbb{K}$.

\subsection{Orientations of Morse flow trees}\label{section:subo}

This subsection reviews the works $\cite{ck1,ck2}$ of Karlsson, which allow us to define the Chekanov-Eliashberg algebra $\mathcal{C}E(\Lambda)$ over $\mathbb{Z}$ for a Legendrian submanifold $\Lambda\subset J^1(S)$. Given a rigid Morse flow tree $\Gamma$, the definition of the sign $\varepsilon(\Gamma)$ involves four independent signs, namely the sign $\nu_\mathit{triv}(\Gamma)$ which depends on the choice of $\mathit{Spin}$ structures on $\Lambda$; the sign $\nu_\mathit{int}(\Gamma)$ which records the intersection orientation of the flow-outs of the sub flow trees of $\Gamma$; the sign $\nu_\mathit{end}(\Gamma)$ which encodes the information of $e$-vertices; and the sign $\nu_\mathit{stab}(\Gamma)$ which comes from capping orientations of pseudoholomorphic discs associated to Morse flow trees. Since we are only interested in the case when $\Lambda\subset\mathbb{R}^5$ is a disjoint union of Legendrian 2-spheres, the first sign $\nu_\mathit{triv}(\Gamma)$ is irrelevant for us, therefore we only recall here the definitions of $\nu_\mathit{int}(\Gamma)$, $\nu_\mathit{end}(\Gamma)$ and $\nu_\mathit{stab}(\Gamma)$.
\bigskip

As in Section \ref{section:cellular}, let $\Lambda\subset J^1(S)$ be a Legendrian surface which is $\mathit{Spin}$, where $S$ is an orientable surface equipped with a Riemannian metric $g$. Let $S_i$ and $S_j$ be two sheets of $\Lambda$ over a small open subset $U\subset S$, and assume that $S_i$ lies above $S_j$, namely $z(S_i)>z(S_j)$. Denote by $f_i$ and $f_j$ the local defining functions of $S_i$ and $S_j$ respectively, and set $f_{ij}:=f_i-f_j$. We refer to the fundamental paper $\cite{te}$ for background materials concerning Morse flow trees.

\begin{definition}
Let $\ell$ be a flow line of the local difference function $f_{ij}$, and let $K\subset S$ be a subset with $K\cap\ell\neq\emptyset$. The flow-out of $K$ along $\ell$ is the union of all maximal flow lines of $-\nabla f_{ij}$ that intersect $K$.
\end{definition}
Let $c$ be a puncture of the rigid Morse flow tree $\Gamma$ with $f_i>f_j$, denote by $\mathcal{U}(c)$ and $\mathcal{S}(c)$ respectively the unstable and stable manifold of $-\nabla f_{ij}$. Given a sub flow tree $\Gamma'\subset\Gamma$, denote by $s$ its special puncture. There is an edge $\ell\subset\Gamma'$ ending at $s$, with the other end point given by some true vertex $t$ of $\Gamma$. Denote by $\mathcal{P}^+,\mathcal{P}^-,\mathcal{E},\mathcal{P}^2$ and $\mathcal{Y}_0$ the set of 1-valent positive punctures, 1-valent negative punctures, $e$-vertices, negative 2-valent punctures, and $Y_0$-vertices respectively. The \textit{flow-out of} $\Gamma'$ \textit{at} $s$, denoted by $\mathit{FO}_s(\Gamma')$, is defined as follows.
\begin{itemize}
	\item When $t\in\mathcal{P}^+$, $\mathit{FO}_s(\Gamma')$ is the flow-out of $t$ along $\ell$. In particular, we have an identification between the tangent spaces $T_s\mathit{FO}_s(\Gamma')\cong T_s\mathcal{U}(t)$.
	\item When $t\in\mathcal{P}^-$, $\mathit{FO}_s(\Gamma')$ is the flow-out of $t$ along $\ell$. In particular, $T_s\mathit{FO}_s(\Gamma')\cong T_s\mathcal{S}(t)$.
	\item When $t\in\mathcal{E}$, let $I_t$ be an open interval centred at $t$ which is transverse to $\ell$, $\mathit{FO}_s(\Gamma')$ is the flow out of $I_t$ along $\ell$.
	\item When $t\in\mathcal{Y}_0$, and $s$ is a special positive puncture of the sub flow tree $\Gamma'$, the definition of $\mathit{FO}_s(\Gamma')$ is given inductively. Define the \textit{intersection manifold}
	\begin{equation}
	\mathit{IM}_t(\Gamma'):=\mathit{FO}_t(\Gamma_1')\cap\mathit{FO}_t(\Gamma_2'),
	\end{equation}
	where $\Gamma_1'$ and $\Gamma_2'$ are the sub flow trees of $\Gamma$ with $t$ as their common special positive puncture. $\mathit{FO}_s(\Gamma')$ is defined to be the flow-out of $\mathit{IM}_t(\Gamma')\cap I_t$ along $\ell$;
	\item When $t\in\mathcal{P}^2$, the intersection manifold $\mathit{IM}_t(\Gamma'):=\{t\}$ and $\mathit{FO}_s(\Gamma')=\ell$.
\end{itemize}
The cases when $t$ is a switch or a $Y_1$-vertex are not recalled here, as we will not encounter rigid Morse flow trees with such kind of internal vertices in the computation of $\mathcal{C}E(\Lambda_{p,q,r})$ in Section \ref{section:sign}. To simplify the discussions, assume from now on that there is \textit{no} switch or $Y_1$-vertex in $\Gamma$.

\paragraph{Convention} We should emphasize that the sub flow trees $\Gamma_1'$ and $\Gamma_2'$ are numbered so that the standard domain of $\Gamma_1'$ corresponds to the \textit{lower} part in the standard domain of $\Gamma'$.
\bigskip

To define the intersection orientation sign $\nu_\mathit{int}$, we start from some basic facts in linear algebra. Let $V_1,V_2\subset\mathbb{R}^n$ be subspaces, and $V=V_1\cap V_2$. There is a short exact sequence
\begin{equation}
0\rightarrow V\xrightarrow{\delta} V_1\oplus V_2\xrightarrow{\eta}\mathbb{R}^n\rightarrow 0,
\end{equation}
where
\begin{equation}
\delta(v)=(v,v),\eta(u,v)=v-u.
\end{equation}
For fixed choices of orientations on $V_1$ and $V_2$, there is an orientation $o(V)$ on their intersections such that the orientation induced by $V_i$ on its quotient $V_i/V$, together with $o(V)$, coincides with the original orientation on $V_i$. Define $\nu\in\{0,1\}$ to be the sign which satisfies
\begin{equation}
V\oplus(V_1/V)\oplus(V_2/V)\cong(-1)^\nu\cdot\mathbb{R}^n
\end{equation}
as oriented vector spaces.
\begin{definition}
The intersection orientation $o_\mathit{int}(V)$ is given by
\begin{equation}
o_\mathit{int}(V):=o_\mathit{int}(V_1,V_2)=(-1)^{\nu+\dim V_1\cdot(1+\dim V)}o(V).
\end{equation}
\end{definition}
Back to our specific geometric set up, let $o_\mathit{cap}(\mathcal{U}(t))$ denote the initial choice of the orientation of $\mathcal{U}(t)$, given as a wedge product of an oriented basis of $T_s(\mathcal{U}(t))$. It induces an orientation $o_\mathit{cap}(T_s\mathcal{S}(t))$ on the stable manifold such that
\begin{equation}
o_\mathit{cap}(T_t\mathcal{U}(t))\wedge o_\mathit{cap}(T_t\mathcal{S}(t))
\end{equation}
gives the original orientation on $T_tS$.

The orientations are defined inductively as follows:
\begin{itemize}
	\item if $t\in\mathcal{P}^+$, then
	\begin{equation}
	o(\mathit{FO}_s(\Gamma')):=o_\mathit{cap}(T_s\mathcal{U}(t));
	\end{equation}
	\item if $t\in\mathcal{P}^-$, then
	\begin{equation}
	o(\mathit{FO}_s(\Gamma')):=o_\mathit{cap}(T_s\mathcal{S}(t));
	\end{equation}
	\item if $t\in\mathcal{E}$, then
	\begin{equation}
	o(\mathit{FO}_s(\Gamma'))=o(T_sS);
	\end{equation}
	\item if $t\in\mathcal{P}^2$, then
	\begin{equation}
	o(\mathit{IM}_t(\Gamma'))=o_\mathit{int}(o(\mathit{FO}_t(\Gamma_1')),o_\mathit{cap}(\mathcal{U}(t)));
	\end{equation}
	\item if $t\in\mathcal{Y}_0$, $s$ is a special positive puncture of $\Gamma'$, then
	\begin{equation}
	o(\mathit{IM}_t(\Gamma'))=o_\mathit{int}(o(\mathit{FO}_t(\Gamma_1')),o(\mathit{FO}_t(\Gamma_2'))).
	\end{equation}
\end{itemize}
In the above, the orientations of flow-outs when $t$ is a 2-valent puncture or a $Y_0$-vertex are defined in terms of intersection manifolds. To recover the orientation of the flow-out $\mathit{FO}_s(\Gamma')$, let $v_s$ be the tangent vector of $\ell$ at $s$, pointing in the direction against the defining gradient vector field, and define the orientation of the flow-out along $\ell$ as
\begin{equation}
o(\mathit{FO}_s(\Gamma'))=o(\mathit{IM}_t(\Gamma'))\wedge v_s,
\end{equation}
where we have used parallel transport over the elementary regions to identify tangent spaces of $S$.
\bigskip

Assume that the positive puncture $a$ of $\Gamma$ is 1-valent. In this case, the sign $\nu_\mathit{int}$ can be defined as follows. Let $c$ denote the first vertex that we meet when going along $\Gamma$ from $a$, and let $\ell\subset\Gamma$ be an edge which starts at $c$, orient it so that it points toward $a$. Pick a point $s\in\ell$ which is contained in the same elementary region as $c$. Cutting at $s$ we obtain two sub flow trees
\begin{equation}
\Gamma=\Gamma_1'\cup\Gamma_2',
\end{equation}
where $\Gamma_1'$ has $s$ as its positive special puncture, and since we have assumed that there is no switch in $\Gamma$, the unique true vertex of $\Gamma_2'$ is the positive puncture $a$.

If the standard domain $\Delta(\Gamma)$ has no slit, namely when $\Gamma_1'$ has a unique true vertex $t$, assume that the flow orientation of the edge $\ell$ connecting $s$ to $t$ is $\frac{\partial}{\partial x_1}$. The sign $\nu_\mathit{int}(\Gamma)$ is determined by the formula
\begin{equation}\label{eq:int0}
o_\mathit{int}(o(\mathit{FO}_t(\Gamma_1')),o(\mathit{FO}_t(\Gamma_2')))=\nu_\mathit{int}(\Gamma)\cdot\frac{\partial}{\partial x_1}.
\end{equation}

When the standard domain $\Delta(\Gamma)$ has at least one slit, which corresponds to the vertex $c\in\Gamma$. In the above, we have inductively defined the orientations $o(\mathit{IM}_c(\Gamma_1'))$ and $o(\mathit{FO}_s(\Gamma_2'))$. Using flat coordinates along the edge of $\Gamma$ connecting $c$ to $s$, we can identify $T_s\mathit{IM}_c(\Gamma_1')$ with $T_c\mathit{IM}_c(\Gamma_1')$, and $\nu_\mathit{int}(\Gamma)$ is defined via the formula
\begin{equation}\label{eq:int}
T_sS\wedge T_sS=\nu_\mathit{int}(\Gamma)\cdot o(\mathit{IM}_c(\Gamma_1'))\wedge o(\mathit{FO}_s(\Gamma_2')).
\end{equation}
\bigskip

The sign $\nu_\mathit{end}$ records the information of $e$-vertices in $\Gamma$. Since its explicit form will not be needed for our purposes, we are not going to recall its definition, see $\cite{ck2}$ for details.
\bigskip

The sign $\nu_\mathit{stab}$ is not defined explicitly, but can be determined by referring to the capping exact sequence of Morse flow trees, see Section 6.2 of $\cite{ck1}$. Basically, it is defined as a sum of contributions from punctures and internal vertices of different types. More precisely, since we have assumed the non-existence of switches and $Y_1$-vertices, we only need to consider the sets $\mathcal{P}^+,\mathcal{P}^-,\mathcal{P}^2$, and $\mathcal{Y}_0$. For any vertex $c$ in these sets, there is a well-defined sign $\sigma_{\mathcal{P}^+}(c), \sigma_{\mathcal{P}^-}(c), \sigma_{\mathcal{P}^2}(c)$ and $\sigma_{\mathcal{Y}_0}(c)$ respectively. With these notations, the sign $\nu_\mathit{stab}$ is given by
\begin{equation}\label{eq:stab}
\nu_\mathit{stab}(\Gamma)=(-1)^{\sigma_{\mathcal{P}^+}(a)+\sum_{c\in\mathcal{P}^-}\sigma_{\mathcal{P}^-}(c)+\sum_{c\in\mathcal{P}^2}\sigma_{\mathcal{P}^2}(c)+\sum_{c\in\mathcal{Y}_0}\sigma_{\mathcal{Y}_0}(c)}.
\end{equation}
The precise forms of the signs $\sigma_{\mathcal{P}^+}$, $\sigma_{\mathcal{P}^-}$, and $\sigma_{\mathcal{Y}_0}$ will be recalled below.
\bigskip

Fix a Maslov potential $\mu:\Lambda\rightarrow\mathbb{Z}$. For $c\in\Gamma$ a puncture or a special puncture, let $|\mu(c)|\in\mathbb{Z}/2$ denote the parity of its Maslov index. When $c$ is a critical point of some local difference function $f_{ij}$, we have a well-defined Morse index $\mathit{ind}(c)$.

If $\Gamma'\subset\Gamma$ is a sub flow tree with special puncture at $s\in\Gamma$, denote by $\mathit{bm}(\Gamma')$ the number of boundary minima in the standard domain $\Delta(\Gamma')$. If $\mathit{bm}(\Gamma')>0$, let $\mathit{ord}(\Gamma')$ be the order of the boundary minimum of $\Delta(\Gamma')$ with the smallest $\kappa$-value. When $\mathit{bm}(\Gamma')=0$, we set $\mathit{ord}(\Gamma')=0$.

We introduce a subspace $\ker_s(\Gamma')\subset T_s\mathit{FO}_s(\Gamma')$, called the \textit{true kernel} of $\Gamma'$, which is defined inductively as follows:
\begin{itemize}
	\item when $t\in\mathcal{P}^+$, $\ker_s(\Gamma')=T_s\mathit{FO}_s(\Gamma')$;
	\item when $t\in\mathcal{P}^-$, $\ker_s(\Gamma')=T_s\mathit{FO}_s(\Gamma')$;
	\item when $t\in\mathcal{E}$, $\ker_s(\Gamma')=T_s\mathit{FO}_s(\Gamma')$;
	\item when $t\in\mathcal{P}^2$, $\ker_s(\Gamma')=0$;
	\item when $t\in\mathcal{Y}_0$, $\ker_s(\Gamma')=\ker_t(\Gamma_1')\cap\ker_t(\Gamma_2')$.
\end{itemize}
In the above, $t$ is the true vertex of $\Gamma'$ which is connected to the special puncture $s$ by an edge $\ell\subset\Gamma'$, $\Gamma_1',\Gamma_2'\subset\Gamma$ are sub flow trees starting at the $Y_0$-vertex $t$, and we have used parallel transport to identify tangent spaces of flow-outs at $s$ and $t$. As usual, we have only recalled the definitions in the cases that are relevant to us.
\bigskip

Having introduced the notions above, the term $\sigma_{\mathcal{P}^-}(c)$ appeared in (\ref{eq:stab}) is a $\mathbb{Z}/2$-valued function which depends only on the Morse index and the parity of the Maslov index, namely
\begin{equation}\label{eq:p-}
\sigma_{\mathcal{P}^-}(c)=\sigma_{\mathcal{P}^-}(|\mu(c)|,\mathit{ind}(c)).
\end{equation}

For a $Y_0$-vertex $c$, we shall modify the sub flow trees $\Gamma_1'$ and $\Gamma_2'$ above by cutting them a little bit earlier at the special punctures $s_1$ and $s_2$. There is an associated function $\sigma_0:\mathbb{Z}/2\times\mathbb{Z}/2\rightarrow\mathbb{Z}/2$ depending on the parity of Maslov indices of $s_1$ and $s_2$ such that
\begin{equation}\label{eq:y0}
\sigma_{\mathcal{Y}_0}(c)=\sigma_0(|\mu(s_1)|,|\mu(s_2)|)+\eta+\mathit{ord}(\Gamma_1')+\mathit{ord}(\Gamma_2')+\mathit{bm}(\Gamma_1')+\mathit{bm}(\Gamma_2'),
\end{equation}
where
\begin{equation}
\begin{aligned}
\eta={} & e(\Gamma_1')\cdot(\mathit{bm}(\Gamma_2')+e(\Gamma_2')+1)+\dim\mathit{FO}_c(\Gamma_1')\cdot(\mathit{bm}(\Gamma_2')+|\mu(s_2)|+1) \\
        & +\mathit{bm}(\Gamma_1')\cdot(|\mu(s_2)|+\dim\mathit{FO}_c(\Gamma_2')+1)+\dim\ker_a(\Gamma)+\dim\ker_{s_1}(\Gamma_1') \\
        & +\dim\ker_{s_2}(\Gamma_2')+|\mu(s_1)|\cdot(1+|\mu(s_2)|+\mathit{bm}(\Gamma_2')).
\end{aligned}
\end{equation}

The case when $c$ is a 2-valent negative puncture can be regarded as the special case of a $Y_0$-vertex with one of the sub flow trees $\Gamma_1'$ and $\Gamma_2'$ being constant. Without loss of generality, we assume that $\Gamma_2'$ is constant. As before, we cut $\Gamma_1'$ a little bit earlier at the special puncture $s_1$ instead of $c$, so that the negative punctures of $\Gamma_1'$ are given by $b_1,\dots,b_l$. If the punctures of $\Delta(\Gamma')$ are ordered as $s_1,b_1,\dots,b_l$, let $\mathit{tp}(c)=1$; if the punctures of $\Delta(\Gamma_1')$ are ordered as $b_1,\dots,b_l,s_1$, let $\mathit{tp}(c)=2$. There is a function $\sigma_1:\mathcal{P}^2\rightarrow\mathbb{Z}/2$ which depends on $|\mu(c)|$, $|\mu(s_1)|$ and $\mathit{tp}(c)$, with which the sign $\sigma_{\mathcal{P}^2}(c)$ has the form
\begin{equation}
\sigma_{\mathcal{P}^2}(c)=\mathit{bm}(\Gamma_1')\cdot|\mu(c)|+\dim\ker_{s_1}(\Gamma_1')+\sigma_1(c)+\mathit{ord}(\Gamma_1')+\mathit{bm}(\Gamma_1').
\end{equation}

Finally, for the positive puncture $a\in\mathcal{P}^+$, there is a number $\sigma_2(a)\in\mathbb{Z}/2$ which depends only on the Morse index and the parity of the Maslov index of $a$, namely
\begin{equation}\label{eq:2}
\sigma_2(a)=\sigma_2(|\mu(a)|,\mathit{ind}(a)).
\end{equation}
By our assumption, there is no switch in $\Gamma$. In the case when the sub flow tree $\Gamma_1'$ has only one true vertex, the definition of $\sigma_{\mathcal{P}^+}(a)$ simplifies to
\begin{equation}
\sigma_{\mathcal{P}^+}(a)=\sigma_2(a)+(\dim\mathcal{U}(a)+1)|\mu(a)|.
\end{equation}

Having recalled the definitions of the individual signs $\nu_\mathit{int},\nu_\mathit{end}$ and $\nu_\mathit{stab}$, the main result established in $\cite{ck1,ck2}$ implies the following:
\begin{theorem}[Karlsson]\label{theorem:sign}
Let $\Lambda$ be a link of Legendrian spheres in $J^1(S)$, and assume that we have fixed all the initial orientation choices. Let $\mathcal{M}_\Lambda$ be the moduli space of rigid Morse flow trees determined by $\Lambda$. Then there is a coherent orientation on $\mathcal{M}_\Lambda$, with respect to which the sign $\varepsilon(\Gamma)$ of $\Gamma\in\mathcal{M}_\Lambda$ is given by
\begin{equation}
\varepsilon(\Gamma)=\nu_\mathit{int}(\Gamma)\cdot\nu_\mathit{end}(\Gamma)\cdot\nu_\mathit{stab}(\Gamma).
\end{equation}
\end{theorem}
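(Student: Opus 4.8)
The plan is to deduce the product formula from Ekholm's flow tree correspondence together with the capping-operator orientation scheme for Legendrian contact homology, transporting all orientation data from pseudoholomorphic discs to Morse flow trees and then showing that the resulting comparison sign \emph{localizes} at the vertices of the tree. Since $\Lambda$ is a disjoint union of Legendrian $2$-spheres, the $\mathit{Spin}$ contribution $\nu_\mathit{triv}$ is trivial and may be discarded from the outset, which is why only three factors appear in the statement.

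First I would invoke the rigid flow tree theorem of $\cite{te}$: for a suitable one-parameter rescaling of the metric $g$ on $S$, the moduli space $\mathcal{M}_\Lambda(a;b_1,\dots,b_l)$ of rigid discs is in bijection with the moduli space of rigid Morse flow trees with the same asymptotics, and this bijection is compatible with the respective determinant line bundles. This reduces the theorem to computing the coherent orientation of $\det D_\Gamma$, the determinant line of the linearized $\bar\partial$-operator attached to the tree $\Gamma$. Because $\Gamma$ is rigid, $D_\Gamma$ has index zero with trivial cokernel, so $\det D_\Gamma$ is canonically trivialized; by definition $\varepsilon(\Gamma)\in\{\pm1\}$ is the comparison of the coherent orientation against this canonical trivialization. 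The task is therefore to express the coherent orientation through the combinatorics of $\Gamma$.

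The core step is to build the coherent orientation by capping and then to refine the gluing so that it mirrors the inductive structure of the tree. Capping each puncture with a fixed capping operator expresses $\det D_\Gamma$, via a linear gluing exact sequence, as a tensor product of the capped pieces. Cutting $\Gamma$ at special punctures into sub-trees $\Gamma_1',\Gamma_2'$ produces, at each internal vertex, a short exact sequence whose middle term is $\det D_\Gamma$ and whose outer terms involve $\det D_{\Gamma_1'}$, $\det D_{\Gamma_2'}$, and the intersection datum $T_s\mathit{FO}_s(\Gamma_1')\cap T_s\mathit{FO}_s(\Gamma_2')$. Orienting these sequences is precisely the content of the intersection orientation $o_\mathit{int}$ and of the inductive definitions of $o(\mathit{FO}_s(\Gamma'))$ and $o(\mathit{IM}_t(\Gamma'))$ recalled above. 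Running this induction from the negative leaves toward the positive puncture $a$ extracts the factor $\nu_\mathit{int}(\Gamma)$ from the intersection-orientation comparisons at $\mathcal{Y}_0$- and $\mathcal{P}^2$-vertices, via the formulas (\ref{eq:int0}) and (\ref{eq:int}), and the factor $\nu_\mathit{end}(\Gamma)$ from the transverse flow-out correction at $e$-vertices.

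The remaining discrepancy between the glued orientation and the canonical point orientation is the capping contribution $\nu_\mathit{stab}(\Gamma)$, computed from the capping exact sequence of $\cite{ck1}$. The key observation is that each capping operator sits over a single puncture or internal vertex, so its determinant contribution is a local parity; summing these yields the vertex-by-vertex formula (\ref{eq:stab}), with the index and Maslov-parity dependence packaged into $\sigma_{\mathcal{P}^+},\sigma_{\mathcal{P}^-},\sigma_{\mathcal{P}^2},\sigma_{\mathcal{Y}_0}$. The main obstacle is exactly to show that the total sign factors as a \emph{product} of local contributions with no residual global term: this demands verifying coherence, i.e. that the chosen capping orientations are compatible with the splitting of $\det D_\Gamma$ at every codimension-one boundary stratum of the compactified moduli space, and that the kernel- and boundary-minimum bookkeeping (the $\dim\ker_s$, $\mathit{bm}$, and $\mathit{ord}$ terms in (\ref{eq:y0})) correctly tracks the reordering of determinant factors under gluing. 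Once coherence is in hand, the product formula follows by induction on the number of internal vertices, the base case being a tree with a single true vertex, where $\varepsilon(\Gamma)$ reduces directly to the comparison (\ref{eq:int0}).
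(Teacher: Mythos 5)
You should first be aware that the paper does not prove this statement: it is quoted as Karlsson's theorem, with the entire proof residing in \cite{ck1,ck2}; Section \ref{section:subo} recalls the definitions of $\nu_\mathit{int}$, $\nu_\mathit{end}$ and $\nu_\mathit{stab}$ only so that the result can be imported and applied to $\Lambda_{p,q,r}$ in Section \ref{section:sign}. So there is no argument in the paper to compare yours against; the comparison has to be made with Karlsson's actual proof, whose broad architecture (transport of orientation data from the pseudoholomorphic disc side, capping operators and linear gluing exact sequences, localization of the comparison sign at punctures and internal vertices) your outline does reproduce, including the correct observation that $\nu_\mathit{triv}$ disappears for links of spheres.

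As a proof, however, your proposal has a genuine gap, and it occurs at the very first step. You invoke the rigid flow tree theorem of \cite{te} and assert that the disc/tree bijection is ``compatible with the respective determinant line bundles.'' Ekholm's correspondence in \cite{te} is an unoriented, $\mathbb{Z}/2$ statement: it identifies rigid discs with rigid flow trees but says nothing about orientations, and the flow-tree side has no a priori determinant line for the bijection to be compatible with. Constructing an orientation scheme on the flow-tree side and proving that it matches the coherent orientations of the disc moduli spaces is precisely the content of \cite{ck1,ck2} --- it is the theorem being proved, not an available input, so your reduction is circular. The remainder of the outline then defers exactly the technical core: that the glued orientation differs from the canonical point orientation by a product of purely \emph{local} vertex contributions, given by the specific functions $\sigma_{\mathcal{P}^+},\sigma_{\mathcal{P}^-},\sigma_{\mathcal{P}^2},\sigma_{\mathcal{Y}_0}$ together with the $\mathit{bm}$, $\mathit{ord}$ and true-kernel corrections of (\ref{eq:y0}), is asserted (``once coherence is in hand\dots'') rather than established. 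What you have written is a correct road map of where the difficulties lie, but every difficulty it names is left unresolved, so it does not constitute a proof.
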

\bigskip

As an illustration, we compute here the signs of two elementary Morse flow trees in $\mathbb{R}^2$. The first example is a rigid Morse flow tree $\Gamma\in\mathcal{M}_\Lambda(a;b_1,b_2)$ which has a unique internal $Y_0$-vertex $c$, see Figure \ref{fig:y0tree}. Furthermore, we assume that $a$ is a saddle point of some difference function $f_{ij}=f_i-f_j$, $b_1$ is a local minimum of some $f_{ik}$, while $b_2$ is a saddle point of some $f_{kj}$, where $f_i>f_k>f_j$.
	
In order to compute $\nu_\mathit{int}(\Gamma)$, we cut $\Gamma$ into two sub flow trees $\Gamma_1'$ and $\Gamma_2'$ at the special puncture $s$, where $s$ is contained in the same elementary region as $c$. Let $\ell_1\subset\Gamma_1'$ be the edge which connects $s$ to $c$, and let $\ell_2\subset\Gamma_2'$ be the edge that connects $a$ to $s$. As indicated in Figure \ref{fig:y0tree}, the flow orientation of the sub flow tree $\Gamma_2'$ is fixed to be $\frac{\partial}{\partial x_1}$ for convenience.

\begin{figure}
	\centering
	\begin{tikzpicture}
	\draw [red] [decoration={markings, mark=at position 1/2 with {\arrow{>}}},postaction={decorate}] (0,0) to (2,0);
	\draw [blue] [decoration={markings, mark=at position 1/2 with {\arrow{>}}},postaction={decorate}] (2,0) to (3,0);
	\draw [blue] [decoration={markings, mark=at position 1/2 with {\arrow{>}}},postaction={decorate}] (3,0) to (4.5,1.5);
	\draw [blue] [decoration={markings, mark=at position 1/2 with {\arrow{>}}},postaction={decorate}] (3,0) to (4.5,-1.5);
	\draw [red] (0,0) node[circle,fill,inner sep=1pt]{};
	\draw (2,0) node[circle,fill,inner sep=1pt]{};
	\draw [blue] (3,0) node[circle,fill,inner sep=1pt]{};
	\draw [blue] (4.5,1.5) node[circle,fill,inner sep=1pt]{};
	\draw [blue] (4.5,-1.5) node[circle,fill,inner sep=1pt]{};
	\node [red] at (0,-0.2) {$a$};
	\node at (2,-0.2) {$s$};
	\node [blue] at (3,-0.2) {$c$};
	\node [blue] at (4.75,1.5) {$b_2$};
	\node [blue] at (4.75,-1.5) {$b_1$};
	\node [red] at (1,-0.25) {$\Gamma_2'$};
	\node [blue] at (4.75,0) {$\Gamma_1'$};
	\node [blue] at (3.75,1.2) {$\Gamma_{12}'$};
	\node [blue] at (3.75,-1.2) {$\Gamma_{11}'$};
	\end{tikzpicture}
	\centering
	\caption{The Morse flow tree $\Gamma$ and its associated sub flow trees $\Gamma_1'$ and $\Gamma_2'$}
	\label{fig:y0tree}
\end{figure}
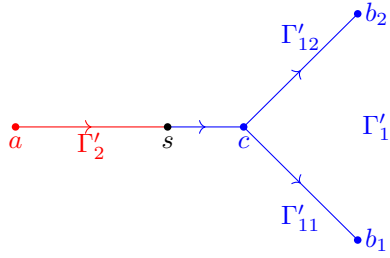

The sub flow tree $\Gamma_1'$ contains two sub flow trees $\Gamma_{11}'$ and $\Gamma_{12}'$, where $\Gamma_{11}'$ is the $-\nabla f_{ik}$ flow line from $c$ to $b_1$, and $\Gamma_{12}'$ is the $-\nabla f_{kj}$ flow line from $c$ to $b_2$, see Figure \ref{fig:y0tree}. Since the true vertex associated to $\Gamma_{11}'$ is $b_1\in\mathcal{P}^-$, the flow-out $\mathit{FO}_c(\Gamma_{11}')$ is by definition the flow out of $b_1$ along the flow line $\Gamma_{11}'$. Similarly, $\mathit{FO}_c(\Gamma_{12}')$ is the flow-out of $b_2$ along $\Gamma_{12}'$. Choosing an interval $I_c$ centering at $c$ which is transverse to $\ell_1$, by our assumption that $b_2$ is a saddle point, it follows that
\begin{equation}
\mathit{IM}_c(\Gamma_1')\cap I_c=\{c\}.
\end{equation}
The flow-out $\mathit{FO}_s(\Gamma_1')$ is then by definition $\ell_1\cup\ell_2$.
On the other hand, the associated true vertex of the sub flow tree $\Gamma_2'$ is $a\in\mathcal{P}^+$. Since $a$ is by assumption a saddle point, we have $\mathit{FO}_s(\Gamma_2')=\ell_1\cup\ell_2$.

Identify the tangent space $T_sS$ with $\mathbb{R}^2$ equipped with its standard orientation. By definition, $o(\mathit{FO}_c(\Gamma_{11}'))$ is the orientation of $T_c\mathcal{S}(b_1)\cong T_cS$, and $o(\mathit{FO}_c(\Gamma_{12}'))$ is the orientation of $T_c\mathcal{S}(b_2)$. This enables us to compute the orientation of the intersection manifold:
\begin{equation}
o(\mathit{IM}_c(\Gamma_1')):=o_\mathit{int}(o(\mathit{FO}_c(\Gamma_{11}')),o(\mathit{FO}_c(\Gamma_{12}')))=o(\Gamma_{12}'),
\end{equation}
where $o(\Gamma_{12}')$ is the flow orientation of $\Gamma_{12}'$ at $c$. Since the true vertex $a$ of $\Gamma_2'$ is a saddle point, by our convention
\begin{equation}
o(\mathit{FO}_s(\Gamma_2'))=o_\mathit{cap}(T_s\mathcal{U}(a))=\frac{\partial}{\partial x_1}.
\end{equation}
Since the standard domain $\Delta(\Gamma)$ has a unique slit, and $o(\mathit{IM}_c(\Gamma_1'))\wedge o(\mathit{FO}_s(\Gamma_2'))$ gives the standard orientation on $T_sS$, we get from (\ref{eq:int}) that
\begin{equation}
\nu_\mathit{int}(\Gamma)=-1.
\end{equation}
Note that if $\Gamma^\mathit{op}\in\mathcal{M}_\Lambda(a;b_2,b_1)$ is a rigid Morse flow tree obtained from $\Gamma$ by exchanging the labellings of $\Gamma_{11}'$ and $\Gamma_{12}'$, then $\nu_\mathit{int}(\Gamma^\mathit{op})=\nu_\mathit{int}(\Gamma)$.

We now compute $\nu_\mathit{stab}(\Gamma)$. It follows immediately from our assumptions that
\begin{equation}
\sigma_{\mathcal{P}^-}(b_1)=\sigma_{\mathcal{P}^-}(|\mu(b_1)|,0),\sigma_{\mathcal{P}^-}(b_2)=\sigma_{\mathcal{P}^-}(|\mu(b_2)|,1),
\end{equation}
and
\begin{equation}
\sigma_{\mathcal{P}^+}(a)=\sigma_2(|\mu(a)|,1)+2|\mu(a)|.
\end{equation}
In order to determine $\sigma_{\mathcal{Y}_0}(c)$, consider the sub flow trees $\Gamma_1''$ and $\Gamma_2''$ of $\Gamma$ depicted as in Figure \ref{fig:y0tree1}. Since the standard domains $\Delta(\Gamma_1'')$ and $\Delta(\Gamma_2'')$ do not contain any boundary minimum, we have
\begin{equation}\label{eq:bm}
\mathit{bm}(\Gamma_1'')=\mathit{bm}(\Gamma_2'')=0,
\end{equation}
and
\begin{equation}\label{eq:ord}
\mathit{ord}(\Gamma_1'')=\mathit{ord}(\Gamma_2'')=0.
\end{equation}
By definition of true kernels, we have
\begin{equation}
\ker_a(\Gamma)=\ker_c(\Gamma_{11}')\cap\ker_c(\Gamma_{12}').
\end{equation}
Since the true vertices associated to $\Gamma_{11}'$ and $\Gamma_{12}'$ are the negative punctures $b_1$ and $b_2$ respectively, we have
\begin{equation}
\ker_c(\Gamma_{11}')=T_c\mathit{FO}_c(b_1),\ker_c(\Gamma_{12}')=T_c\mathit{FO}_c(b_2).
\end{equation}
Similarly,
\begin{equation}
\ker_{s_1}(\Gamma_1'')=T_{s_1}\mathit{FO}_{s_1}(b_1),\ker_{s_2}(\Gamma_2'')=T_{s_2}\mathit{FO}_{s_2}(b_2).
\end{equation}
From the above we deduce that
\begin{equation}
\dim\ker_a(\Gamma)=1,\dim\ker_{s_1}(\Gamma_1'')=2,\dim\ker_{s_2}(\Gamma_2'')=1,
\end{equation}
and as a consequence,
\begin{equation}
\eta=2(|\mu(b_2)|+1)+4+|\mu(b_1)|\cdot(1+|\mu(b_2)|),
\end{equation}
where we have used the fact that
\begin{equation}
\mu(s_1)=\mu(b_1),\mu(s_2)=\mu(b_2).
\end{equation}
Combining with (\ref{eq:bm}) and (\ref{eq:ord}), we get
\begin{equation}
\sigma_{\mathcal{Y}_0}(c)=\sigma_0(|\mu(b_1)|,|\mu(b_2)|)+|\mu(b_1)|\cdot(1+|\mu(b_2)|).
\end{equation}
By (\ref{eq:stab}),
\begin{equation}
\nu_\mathit{stab}(\Gamma)=(-1)^{\sigma_0(|\mu(b_1)|,|\mu(b_2)|)+|\mu(b_1)|\cdot(1+|\mu(b_2)|)+\sigma_2(|\mu(a)|,1)+\sigma_{\mathcal{P}^-}(|\mu(b_1)|,0)+\sigma_{\mathcal{P}^-}(|\mu(b_2)|,1)}.
\end{equation}
\bigskip

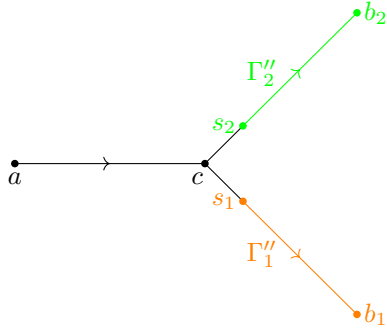
\begin{figure}
	\centering
	\begin{tikzpicture}
	\draw [decoration={markings, mark=at position 1/2 with {\arrow{>}}},postaction={decorate}] (0,0) to (2.5,0);
	\draw (2.5,0) to (3,0.5);
	\draw (2.5,0) to (3,-0.5);
	\draw [green] [decoration={markings, mark=at position 1/2 with {\arrow{>}}},postaction={decorate}] (3,0.5) to (4.5,2);
	\draw [orange] [decoration={markings, mark=at position 1/2 with {\arrow{>}}},postaction={decorate}] (3,-0.5) to (4.5,-2);
	\draw (0,0) node[circle,fill,inner sep=1pt]{};
	\draw (2.5,0) node[circle,fill,inner sep=1pt]{};
	\draw [green] (3,0.5) node[circle,fill,inner sep=1pt]{};
	\draw [orange] (3,-0.5) node[circle,fill,inner sep=1pt]{};
	\draw [green] (4.5,2) node[circle,fill,inner sep=1pt]{};
	\draw [orange] (4.5,-2) node[circle,fill,inner sep=1pt]{};
	\node at (0,-0.2) {$a$};
	\node at (2.4,-0.2) {$c$};
	\node [green] at (4.75,2) {$b_2$};
	\node [orange] at (4.75,-2) {$b_1$};
	\node [green] at (3.25,1.2) {$\Gamma_2''$};
	\node [orange] at (3.25,-1.2) {$\Gamma_1''$};
	\node [orange] at (2.75,-0.5) {$s_1$};
	\node [green] at (2.75,0.5) {$s_2$};
	\end{tikzpicture}
	\centering
	\caption{The sub flow trees $\Gamma_1''$ and $\Gamma_2''$}
	\label{fig:y0tree1}
\end{figure}

Our second example deals with the case of a rigid Morse flow tree $\Xi\in\mathcal{M}_\Lambda(a;b,b^\ast)$ with a negative 2-valent puncture $b^\ast$, see the left-hand side of Figure \ref{fig:y0tree2}. We assume that the positive puncture $a$ is a maximum of some Morse function $f_{ij}$, the negative 1-valent puncture $b$ is a minimum of $f_{ik}$, and $b^\ast$ is a maximum of $f_{kj}$.

In this case, the sub flow tree $\Xi_2'\subset\Xi$ is a $-\nabla f_{ij}$ flow line from $a$ to $s$, and the sub flow tree $\Xi_1'$ inherits the 2-valent puncture $b^\ast$. Since the standard domain $\Delta(\Xi)$ still has a unique slit, the intersection orientation is determined by requiring that
\begin{equation}
\nu_\mathit{int}(\Xi)\cdot o(\mathit{IM}_{b^\ast}(\Xi_1'))\wedge o(\mathit{FO}_s(\Xi_2'))
\end{equation}
recovers the orientation of $T_sS$. To compute $o(\mathit{IM}_{b^\ast}(\Xi_1'))$, consider the sub flow tree $\Xi_{11}'\subset\Xi_1'$, which is the $-\nabla f_{kj}$ flow line from $b^\ast$ to $b$. By definition, the flow out $\mathit{FO}_{b^\ast}(\Xi_{11}')$ is the flow out of $b$ along $\Xi_{11}'$. Since $b$ is a minimum, we have by definition that
\begin{equation}
o(\mathit{IM}_{b^\ast}(\Xi_1'))=o_\mathit{int}(o_\mathit{cap}(T_{b^\ast}\mathcal{S}(b))),o_\mathit{cap}(T_{b^\ast}\mathcal{U}(b^\ast))=o(T_{b^\ast}S).
\end{equation}
On the other hand, the true vertex of $\Xi_2'$ is the positive puncture $a$, which shows that
\begin{equation}
o(\mathit{FO}_s(\Xi_2'))=o(T_s\mathcal{U}(a))=o(T_s S).
\end{equation}
By (\ref{eq:int}), we have
\begin{equation}
\nu_\mathit{int}(\Xi)=1.
\end{equation}

The computation of $\nu_\mathit{stab}(\Xi)$ involves the determination of the individual signs $\sigma_{\mathcal{P}^+}(a)$, $\sigma_{\mathcal{P}^-}(b)$ and $\sigma_{\mathcal{P}^2}(b^\ast)$. Since $b$ is a minimum, we have
\begin{equation}
\sigma_{\mathcal{P}^-}(b)=\sigma_{\mathcal{P}^-}(|\mu(b)|,0).
\end{equation}
Similarly, since $a$ is a maximum, we get
\begin{equation}
\sigma_{\mathcal{P}^+}(a)=\sigma_2(|\mu(a)|,2)+3|\mu(a)|.
\end{equation}
In order to compute $\sigma_{\mathcal{P}^2}(b^\ast)$, consider the sub flow tree $\Xi_{11}'\subset\Xi_1'$. We cut $\Xi_{11}'$ a little bit earlier so that it starts at a special puncture $s_1$ instead of $b^\ast$, and denote the resulting sub flow tree by $\Xi_1''$. Since the standard domain $\Delta(\Gamma_{11}')$ has no slit, we have
\begin{equation}
\mathit{bm}(\Xi_1'')=\mathit{ord}(\Xi_1'')=0.
\end{equation}
Since $b^\ast\in\mathcal{P}^2$, it follows that $\ker_{s_1}(\Xi_1'')=0$, and
\begin{equation}
\sigma_{\mathcal{P}^2}=\sigma_1(b^\ast)=\sigma_1(|\mu(b^\ast)|,|\mu(b)|,2).
\end{equation}
In conclusion,
\begin{equation}
\nu_\mathit{stab}(\Xi)=(-1)^{\sigma_{\mathcal{P}^-}(|\mu(b)|,0)+\sigma_1(|\mu(b^\ast)|,|\mu(b)|,2)+\sigma_2(|\mu(a)|,2)+|\mu(a)|}.
\end{equation}

\begin{figure}
	\centering
	\begin{tikzpicture}
	\draw [red] [decoration={markings, mark=at position 1/2 with {\arrow{>}}},postaction={decorate}] (0,0) to (3,0);
	\draw [red,dash dot] [decoration={markings, mark=at position 1/2 with {\arrow{>}}},postaction={decorate}] (3,0) to (3.5,0.5);
	\draw [red] [decoration={markings, mark=at position 1/2 with {\arrow{>}}},postaction={decorate}] (3,0) to (4.5,-1.5);
	\draw [blue] [decoration={markings, mark=at position 1/2 with {\arrow{>}}},postaction={decorate}] (6,0) to (9,0);
	\draw [blue] [decoration={markings, mark=at position 1/2 with {\arrow{>}}},postaction={decorate}] (9,0) to (10.5,1.5);
	\draw [blue,dash dot] [decoration={markings, mark=at position 1/2 with {\arrow{>}}},postaction={decorate}] (9,0) to (9.5,-0.5);
	\node [red] at (0,-0.2) {$a$};
	\node [blue] at (6,-0.2) {$a$};
	\node [red] at (3.75,0.5) {$b^\ast$};
	\node [red] at (4.75,-1.5) {$b$};
	\node [blue] at (10.75,1.5) {$b$};
	\node [blue] at (9.75,-0.5) {$b^\ast$};
	\draw [red] (0,0) node[circle,fill,inner sep=1pt]{};
	\draw [red] (3,0) node[circle,fill,inner sep=1pt]{};
	\draw [red] (3.5,0.5) node[circle,fill,inner sep=1pt]{};
	\draw [red] (4.5,-1.5) node[circle,fill,inner sep=1pt]{};
	\draw [blue] (6,0) node[circle,fill,inner sep=1pt]{};
	\draw [blue] (9,0) node[circle,fill,inner sep=1pt]{};
	\draw [blue] (10.5,1.5) node[circle,fill,inner sep=1pt]{};
	\draw [blue] (9.5,-0.5) node[circle,fill,inner sep=1pt]{};
	\end{tikzpicture}
	\centering
	\caption{The Morse flow trees $\Xi$ and $\Xi^\mathit{op}$, where the dotted edges are mapped to constant}
	\label{fig:y0tree2}
\end{figure}
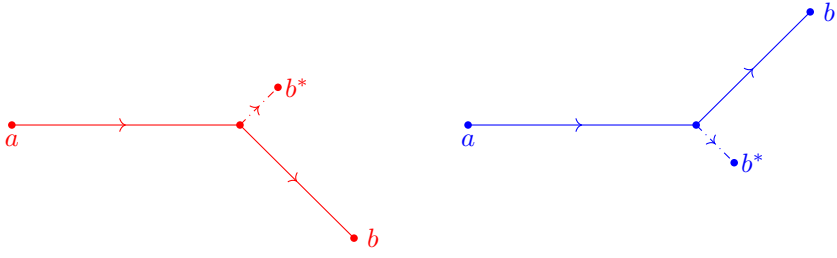

Let $\Xi^\mathit{op}\in\mathcal{M}_\Lambda(a;b^\ast,b)$ be the rigid Morse flow tree whose fundamental domain $\Delta(\Xi^\mathit{op})$ has $b^\ast$ as its lower puncture, see the right-hand side of Figure \ref{fig:y0tree2}. In this case $\mathit{tp}(b^\ast)=1$ and we conclude that $\varepsilon(\Xi)=-\varepsilon(\Xi^\mathit{op})$.

\subsection{Adaptation to the case of $\Lambda_{p,q,r}$}\label{section:sign}

Recall that in $\cite{rs2}$, the cellular dg algebra $\mathcal{C}(\Lambda)$ of $\Lambda\subset J^1(S)$ is shown to be quasi-isomorphic to the Chekanov-Eliashberg dg algebra $\mathcal{C}E(\Lambda)$ over $\mathbb{Z}/2$ by fixing a \textit{transverse square decomposition} $\mathcal{E}_\pitchfork$ of $S$ and doing local analysis of Morse flow trees for each elementary square. In our specific case when $\Lambda=\Lambda_{p,q,r}$ with $p,q,r\geq2$, it is not hard to see from Figure \ref{figure:decomposition(2,2,2)} that one can find a transverse square decomposition $\mathcal{E}_\pitchfork$ for $\Lambda_{p,q,r}$ such that only Type I, Type II, Type III, Type IV and Type IX squares appear in $\mathcal{E}_\pitchfork$ as elementary squares, see Figure \ref{fig:square}. From now on, fix such a transverse square decomposition.
\bigskip

\begin{figure}
	\centering
	\begin{tikzpicture}
	\draw (0,0) to (0,2);
	\draw (0,0) to (2,0);
	\draw (0,2) to (2,2);
	\draw (2,0) to (2,2);
	\node at (1,-0.2) {I};
	
	\draw (2.5,0) to (4.5,0);
	\draw (2.5,0) to (2.5,2);
	\draw (2.5,2) to (4.5,2);
	\draw (4.5,0) to (4.5,2);
	\draw [dash dot] (3.5,0) to (3.5,2);
	\node at (3.5,-0.2) {II};
	
	\draw (5,0) to (7,0);
	\draw (5,0) to (5,2);
	\draw (5,2) to (7,2);
	\draw (7,0) to (7,2);
	\draw [dash dot] (5.8,0) to (5,0.8);
	\node at (6,-0.2) {III};
	
	\draw (7.5,0) to (9.5,0);
	\draw (7.5,0) to (7.5,2);
	\draw (7.5,2) to (9.5,2);
	\draw (9.5,0) to (9.5,2);
	\draw [dash dot] (8.7,0) to (9.5,0.8);
	\node at (8.5,-0.2) {IV};
	
	\draw (10,0) to (12,0);
	\draw (10,0) to (10,2);
	\draw (12,0) to (12,2);
	\draw (10,2) to (12,2);
	\draw (11,0) to (11,2);
	\node at (11,-0.2) {IX};
	\end{tikzpicture}
	\centering
	\caption{The elementary squares of Types I, II, III, IV and IX}
	\label{fig:square}
\end{figure}
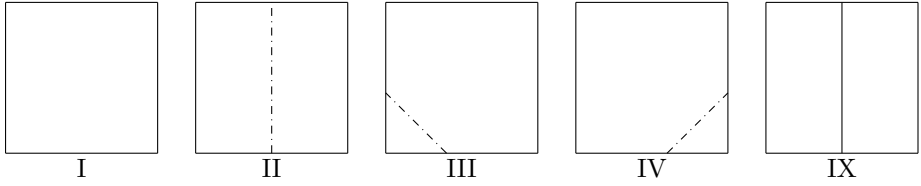

Figure \ref{fig:square} depicts the transverse square decomposition in a neighborhood of the 2-cell $e_7^2$ in the $\Lambda_{p,q,r}$-compatible polygonal decomposition of Figure \ref{figure:decomposition(2,2,2)}, which consists of a single Type I square $\square_7\cong[-1,1]^2$, and four Type II squares, two type III squares and two Type IV squares. Denote by $f_m:\square_7\rightarrow\mathbb{R}$, the defining functions of the sheets $S_m$ above $\square_7$, which are labelled to satisfy $f_1>\dots>f_8$. Let $f_{mn}=f_m-f_n$ when $m<n$. Consider defining functions $f_m$ which are of the form
\begin{equation}
f_m(x_1,x_2)=h_m(x_1)+h_m(x_2),
\end{equation}
where the $h_m$ are arranged so that $h_m-h_n$ has local minima at $-1$ and 1, and a single local maximum $\beta_{i,j}\in(-1,1)$, such that
\begin{equation}
\beta_{1,2}<\beta_{1,3}<\dots<\beta_{2,3}<\beta_{2,4}<\dots<\beta_{7,8}.
\end{equation}
The Reeb chords in $\square_7$ correspond to critical points of the functions $f_{mn}$. By abuse of notations, we shall denote these critical points by
\begin{equation}
a^{m,n}_{\pm,\pm},b_U^{m,n},b_R^{m,n},b_D^{m,n},b_L^{m,n},c_7^{m,n},
\end{equation}
where $a^{m,n}_{\pm,\pm}$ are the four corners of $\square_7$, and they are minima of $f_{mn}$, $b_U^{m,n},b_R^{m,n},b_D^{m,n},b_L^{m,n}$ are the saddle points of $f_{mn}$ located on the edges of $\square_7$, and $c_7^{m,n}$ is a local maximum of $f_{mn}$ lying in the interior of $\square_7$. By the analysis of $\cite{rs2}$, except for the $-\nabla f_{m,n}$ flow lines from $c_7^{m,n}$ to $b_U^{m,n},b_R^{m,n},b_D^{m,n}$ and $b_L^{m,n}$, there are four additional rigid Morse flow trees with positive puncture at $c_7^{m,n}$, see Figure \ref{fig:type1}. The first two Morse flow trees have a 2-valent negative puncture at $c_7^{k,n}$ and $c_7^{m,k}$ respectively, while the last two Morse flow trees have a unique $Y_0$-vertex. This implies that

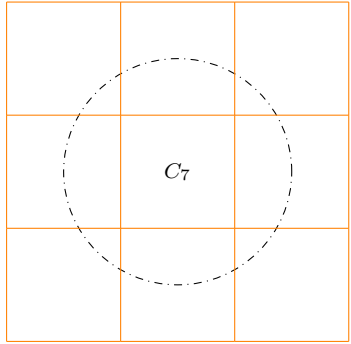
\begin{figure}
	\centering
	\begin{tikzpicture}
	\draw [dash dot] (0,0) circle [radius=1.5];
	\node at (0,0) {\footnotesize $C_7$};
	\draw [orange] (-0.75,-0.75) to (0.75,-0.75);
	\draw [orange] (-0.75,-0.75) to (-0.75,0.75);
	\draw [orange] (-0.75,0.75) to (0.75,0.75);
	\draw [orange] (0.75,-0.75) to (0.75,0.75);
    
    \draw [orange] (-2.25,-0.75) to (-0.75,-0.75);
    \draw [orange] (-2.25,-0.75) to (-2.25,0.75);
    \draw [orange] (-2.25,0.75) to (-0.75,0.75);
    
    \draw [orange] (-2.25,0.75) to (-2.25,2.25);
    \draw [orange] (-2.25,2.25) to (-0.75,2.25);
    \draw [orange] (-0.75,2.25) to (-0.75,0.75);
    
    \draw [orange] (-0.75,2.25) to (0.75,2.25);
    \draw [orange] (0.75,2.25) to (0.75,0.75);
    
    \draw [orange] (0.75,0.75) to (2.25,0.75);
    \draw [orange] (2.25,0.75) to (2.25,-0.75);
    \draw [orange] (0.75,-0.75) to (2.25,-0.75);
    
    \draw [orange] (2.25,0.75) to (2.25,2.25);
    \draw [orange] (2.25,2.25) to (0.75,2.25);
    
    \draw [orange] (-0.75,-0.75) to (-0.75,-2.25);
    \draw [orange] (-0.75,-2.25) to (0.75,-2.25);
    \draw [orange] (0.75,-2.25) to (0.75,-0.75);
    
    \draw [orange] (-0.75,-2.25) to (-2.25,-2.25);
    \draw [orange] (-2.25,-2.25) to (-2.25,-0.75);
    
    \draw [orange] (0.75,-2.25) to (2.25,-2.25);
    \draw [orange] (2.25,-2.25) to (2.25,-0.75);
	\end{tikzpicture}
	\centering
	\caption{The transverse square decomposition $\mathcal{E}_\pitchfork$ in a neighborhood of $e_7^2$}
\end{figure}

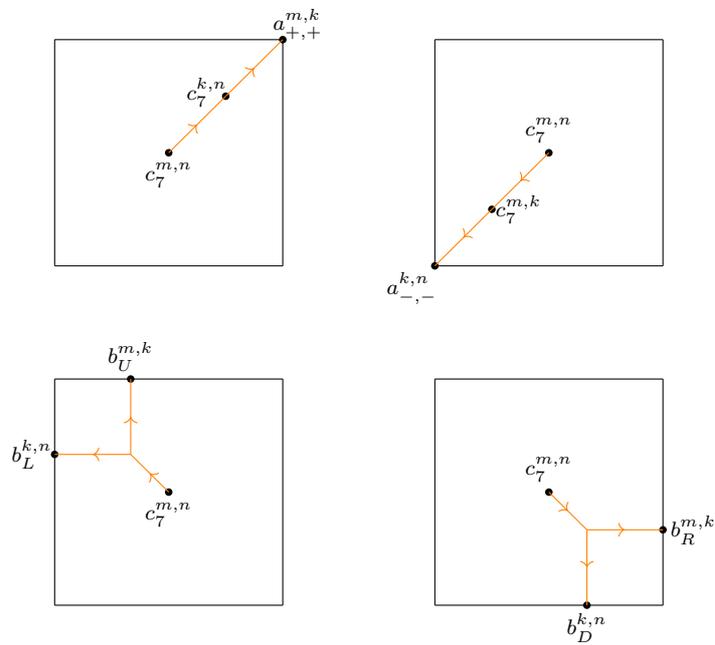
\begin{figure}
	\centering
	\begin{tikzpicture}
	\draw (0,0) to (3,0);
	\draw (0,0) to (0,3);
	\draw (3,0) to (3,3);
	\draw (0,3) to (3,3);
	\draw (1.5,1.5) node[circle,fill,inner sep=1pt]{};
	\draw (2.25,2.25) node[circle,fill,inner sep=1pt]{};
	\draw (3,3) node[circle,fill,inner sep=1pt]{};
	\draw [decoration={markings, mark=at position 1/2 with {\arrow{>}}},postaction={decorate}] [orange] (1.5,1.5) to (2.25,2.25);
	\draw [decoration={markings, mark=at position 1/2 with {\arrow{>}}},postaction={decorate}] [orange] (2.25,2.25) to (3,3);
	\node at (1.5,1.2) {\footnotesize $c_7^{m,n}$};
	\node at (2,2.3) {\footnotesize $c_7^{k,n}$};
	\node at (3.2,3.2) {\footnotesize $a_{+,+}^{m,k}$};
	
	\draw (5,0) to (8,0);
	\draw (5,0) to (5,3);
	\draw (8,0) to (8,3);
	\draw (5,3) to (8,3);
	\draw (5,0) node[circle,fill,inner sep=1pt]{};
	\draw (6.5,1.5) node[circle,fill,inner sep=1pt]{};
	\draw (5.75,0.75) node[circle,fill,inner sep=1pt]{};
	\node at (4.7,-0.3) {\footnotesize $a_{-,-}^{k,n}$};
	\draw [orange] [decoration={markings, mark=at position 1/2 with {\arrow{>}}},postaction={decorate}] (6.5,1.5) to (5.75,0.75);
	\draw [orange] [decoration={markings, mark=at position 1/2 with {\arrow{>}}},postaction={decorate}] (5.75,0.75) to (5,0);
	\node at (6.5,1.8) {\footnotesize $c_7^{m,n}$};
	\node at (6.1,0.75) {\footnotesize $c_7^{m,k}$};
	
	\draw (0,-1.5) to (3,-1.5);
	\draw (0,-1.5) to (0,-4.5);
	\draw (3,-1.5) to (3,-4.5);
	\draw (0,-4.5) to (3,-4.5); 
	\draw (1.5,-3) node[circle,fill,inner sep=1pt]{};
	\draw (1,-1.5) node[circle,fill,inner sep=1pt]{};
	\draw (0,-2.5) node[circle,fill,inner sep=1pt]{};
	\draw [orange] [decoration={markings, mark=at position 1/2 with {\arrow{>}}},postaction={decorate}] (1.5,-3) to (1,-2.5);
	\draw [orange] [decoration={markings, mark=at position 1/2 with {\arrow{>}}},postaction={decorate}] (1,-2.5) to (1,-1.5);
	\draw [orange] [decoration={markings, mark=at position 1/2 with {\arrow{>}}},postaction={decorate}] (1,-2.5) to (0,-2.5);
	\node at (1.5,-3.3) {\footnotesize $c_7^{m,n}$};
	\node at (1,-1.2) {\footnotesize $b_U^{m,k}$};
	\node at (-0.3,-2.5) {\footnotesize $b_L^{k,n}$};
	
	\draw (5,-1.5) to (8,-1.5);
	\draw (8,-1.5) to (8,-4.5);
	\draw (5,-1.5) to (5,-4.5);
	\draw (5,-4.5) to (8,-4.5);
	\draw (6.5,-3) node[circle,fill,inner sep=1pt]{};
	\draw (8,-3.5) node[circle,fill,inner sep=1pt]{};
	\draw (7,-4.5) node[circle,fill,inner sep=1pt]{};
	\draw [orange] [decoration={markings, mark=at position 1/2 with {\arrow{>}}},postaction={decorate}] (6.5,-3) to (7,-3.5);
	\draw [orange] [decoration={markings, mark=at position 1/2 with {\arrow{>}}},postaction={decorate}] (7,-3.5) to (7,-4.5);
	\draw [orange] [decoration={markings, mark=at position 1/2 with {\arrow{>}}},postaction={decorate}] (7,-3.5) to (8,-3.5);
	\node at (6.5,-2.7) {\footnotesize $c_7^{m,n}$};
	\node at (7,-4.8) {\footnotesize $b_D^{k,n}$};
	\node at (8.4,-3.5) {\footnotesize $b_R^{m,k}$};
	\end{tikzpicture}
	\centering
	\caption{Rigid Morse flow trees in a Type I square}
	\label{fig:type1}
\end{figure}

\begin{equation}\label{eq:CE-d}
\begin{aligned}
&\partial c_7^{m,n}=b_U^{m,n}+b_L^{m,n}+b_R^{m,n}+b_D^{m,n} \\
&+\sum_{m<k<n}a_{+,+}^{m,k}c_7^{k,n}+\sum_{m<k<n}c_7^{m,k}a_{-,-}^{k,n}+\sum_{m<k<n}b_U^{m,k}b_L^{k,n}+\sum_{m<k<n}b_R^{m,k}b_D^{k,n}
\end{aligned}
\end{equation}
in the Chekanov-Eliashberg algebra $\mathcal{C}E(\Lambda_{p,q,r})$ over $\mathbb{Z}/2$. Notice that one can recover from (\ref{eq:CE-d}) the formula (\ref{eq:C7}) in the cellular dg algebra $\mathcal{C}(\Lambda_{p,q,r})$. In fact, this follows from the relations
\begin{equation}\label{eq:rel1}
b_L^{m,n}=b_R^{m,n}=b_D^{m,n}=0,
\end{equation}
\begin{equation}\label{eq:rel2}
a_{+,+}^{m,n}=a_{-,-}^{m,n}.
\end{equation}
in the cellular dg algebra $\mathcal{C}_{||}(\Lambda_{p,q,r})$ associated to the cellular decomposition $\mathcal{E}_{||}$ obtained by shifting $p_x(\Sigma)$ into the borders of the elementary squares in the transverse square decomposition $\mathcal{E}_\pitchfork$, see Section 3.6 of $\cite{rs2}$. In particular, the crossing arc corresponding to the 1-cell $e_1^1$ is shifted into the edges of $\square_7$. In fact, the same relations hold in the quotient dg algebra $\mathcal{C}E'(\Lambda_{p,q,r})$ of $\mathcal{C}E(\Lambda_{p,q,r})$ obtained by repeated applications of Lemma \ref{lemma:tame}. This can be checked via the analysis of Morse flow trees with positive punctures at Reeb chords associated to 1-cells in $\mathcal{E}_\pitchfork$, which will be discussed below.

By (\ref{eq:rel1}) and (\ref{eq:rel2}), which also hold over an arbitrary field $\mathbb{K}$, we see that in order to determine $\partial c_7^{m,n}$ in $\mathcal{C}E'(\Lambda_{p,q,r})$ over $\mathbb{K}$, it suffices to determine the signs of the flow line from $c_7^{m,n}$ to $b_U^{m,n}$, and the first two rigid Morse flow trees in Figure \ref{fig:type1}. Denote by $\varepsilon_0(m,n)$, $\varepsilon_1(m,k,n)$ and $\varepsilon_2(m,k,n)$ the signs of these trees, we have in $\mathcal{C}E'(\Lambda_{p,q,r})$ the formula
\begin{equation}\label{eq:grC7}
\begin{aligned}
\partial c_7^{m,n}&=\varepsilon_0(m,n)b_1^{\sigma_0(m),\sigma_0(n)}+\sum_{m<k<n}\varepsilon_1(m,k,n)a_7^{\sigma_0(m),\sigma_0(k)}c_7^{k,n} \\
&+\sum_{m<k<n}\varepsilon_2(m,k,n)c_7^{m,k}a_7^{\sigma_0(k),\sigma_0(n)},
\end{aligned}
\end{equation}
where we have changed our notations from $b_U^{m,n}$ and $a_{+,+}^{m,n}=a_{-,-}^{m,n}$ to $b_1^{\sigma_0(m),\sigma_0(n)}$ and $a_7^{\sigma_0(m),\sigma_0(n)}$, so that it coincides with the ones used in (\ref{eq:C7}) for the cellular dg algebra. However, since we are dealing with the more general case of $\Lambda_{p,q,r}$ instead of $\Lambda_{2,2,2}$, the permutation $\sigma_0$ in the above formula is a composition of $(4,5)$ with a sequence of transpositions associated to the $A_{p-1}$-chain of unknots $\{\Lambda_{P_i}\}$, see Section \ref{section:example}. The signs $\varepsilon_0(m,n)$, $\varepsilon_1(m,k,n)$ and $\varepsilon_2(m,k,n)$ can be explicitly computed using our discussions in Section \ref{section:subo}.

One can do similar analysis for the elementary squares $\square_{11}$ and $\square_{12}$ in $\mathcal{E}_\pitchfork$.
\bigskip

We also need to consider rigid Morse flow trees associated to generators corresponding to 1-cells. Let $e_\alpha^1\cong[-1,1]$ be a 1-cell in the transverse square decomposition $\mathcal{E}_\pitchfork$, with endpoints at the 0-cells $e_-^0$ and $e_+^0$. For each 0-cell $e_\beta^0$ in $\mathcal{E}_\pitchfork$, let $U(e_\beta^0)$ be a closed disc centered at $e_\beta^0$ with radius $\frac{1}{16}$. We can arrange so that there is a unique Reeb chord $a_\beta^{m,n}$ associated to every pair of sheets $S_m$ and $S_n$ with $z(S_m)>z(S_n)$, which corresponds to a minimum of $f_{mn}$, and the gradient $-\nabla f_{mn}$ points inward along $\partial U(e_\beta^0)$.

Let $U(e_\alpha^1)$ be a neighborhood of $e_\alpha^1$, which is depicted in Figure \ref{fig:1-cell} as the region bounded by the brown solid curve. It can be realized as a union $U(e_-^0)\cup\widehat{U}(e_\alpha^1)\cup U(e_+^0)$, where $\widehat{U}(e_\alpha^1)$ consists of a portion of $e_\alpha^1$ away from the endpoints together with parts of two elementary squares in $\mathcal{E}_\pitchfork$ which have $e_\alpha^1$ as their common boundary. We require that the boundary of $\widehat{U}(e_\alpha^1)$, which consists of a union of two paths $\gamma_1$ and $\gamma_2$, is contained within a distance of $\frac{1}{32}$ from $e_\alpha^1$, and both of $\gamma_1$ and $\gamma_2$ are parallel to $e_\alpha^1$.

In order to simplify the analysis of the rigid Morse flow trees in $U(e_\alpha^1)$, one needs to choose carefully the local defining functions $f_m$ of the sheets $S_m\subset\Lambda_{p,q,r}$, see Section 11 of $\cite{rs2}$ for details. For any point $(x_1,x_2)\in\gamma_i$ such that $f_{mn}(x_1,x_2)>0$, we require that $-\nabla f_{mn}(x_1,x_2)$ is transverse to $\gamma_i$ at $(x_1,x_2)$ and is inward pointing, so that the Reeb chords in $U(e_\alpha^1)$ together with their differentials form a dg sub algebra of $\mathcal{C}E(\Lambda_{p,q,r})$. The only Reeb chords in $\widehat{U}(e_\alpha^1)$ are:
\begin{itemize}
	\item a Reeb chord with endpoints on the sheets $S_m$ and $S_n$ above $e_\alpha^1$, which corresponds to a saddle point $b_\alpha^{m,n}$ of $f_{mn}$, where $z(S_m)>z(S_n)$;
	\item if the sheets $S_m$ and $S_n$ cross above $e_\alpha^1$, then there is a Reeb chord $\tilde{b}^{n,m}_\alpha$.
\end{itemize}
With our choices of the defining functions $\{f_m\}$ of sheets $\{S_m\}$, one can further specify the locations of the Reeb chords $a_+^{m,n}, a_-^{m,n}, b_\alpha^{m,n}$ and $\tilde{b}^{n,m}_\alpha$. For details, we refer the readers to Section 5 of $\cite{rs2}$. There are three types of rigid Morse flow trees with positive puncture at $b_\alpha^{m,n}$, namely the $-\nabla f_{mn}$ flow lines from $b_\alpha^{m,n}$ to $a_-^{m,n}$ and $a_+^{m,n}$; the Morse flow trees with one of its negative punctures at $\tilde{b}^{\bullet,\bullet}$; and the Morse flow trees described in Figure \ref{fig:1-cell}, which have unique internal $Y_0$-vertices. This implies that

\begin{figure}
	\centering
\begin{tikzpicture}
\draw [brown] (0,0) arc [start angle=30, end angle=330, radius=1];
\draw [brown] (0,0) to (5,0);
\draw [brown] (0,-1) to (5,-1);
\draw [brown] (5,-1) arc[start angle=-150,end angle=150, radius=1];
\draw [brown, dash dot] (0,-1) arc [start angle=-30, end angle=30, radius=1];
\draw [brown, dash dot] (5,0) arc [start angle=150, end angle=210, radius=1];

\draw [brown] (0,-3) arc [start angle=30, end angle=330, radius=1];
\draw [brown] (0,-3) to (5,-3);
\draw [brown] (0,-4) to (5,-4);
\draw [brown] (5,-4) arc[start angle=-150,end angle=150, radius=1];
\draw [brown, dash dot] (0,-4) arc [start angle=-30, end angle=30, radius=1];
\draw [brown, dash dot] (5,-3) arc [start angle=150, end angle=210, radius=1];

\draw (-0.866,-0.5) node[circle,fill,inner sep=1pt]{};
\draw (5.866,-3.5) node[circle,fill,inner sep=1pt]{};
\draw (2,-0.1) node[circle,fill,inner sep=1pt]{};
\draw (3,-0.8) node[circle,fill,inner sep=1pt]{};
\draw (3,-3.8) node[circle,fill,inner sep=1pt]{};
\draw (4,-3.7) node[circle,fill,inner sep=1pt]{};

\draw [blue] [decoration={markings, mark=at position 1/2 with {\arrow{>}}},postaction={decorate}] (2,-0.7) to (2,-0.1);
\draw [blue] [decoration={markings, mark=at position 1/2 with {\arrow{>}}},postaction={decorate}] (3,-0.8) to [in=-45,out=180] (2,-0.7);
\draw [blue] [decoration={markings, mark=at position 1/2 with {\arrow{>}}},postaction={decorate}] (2,-0.7) to [in=0,out=180] (-0.866,-0.5);

\draw [blue] [decoration={markings, mark=at position 1/2 with {\arrow{>}}},postaction={decorate}] (3,-3.8) to [in=135,out=45] (4,-3.3);
\draw [blue] [decoration={markings, mark=at position 1/2 with {\arrow{>}}},postaction={decorate}] (4,-3.3) to (4,-3.7);
\draw [blue] [decoration={markings, mark=at position 1/2 with {\arrow{>}}},postaction={decorate}] (4,-3.3) to [in=180,out=0] (5.866,-3.5);

\node at (-0.866,-0.8) {\footnotesize $a_-^{k,n}$};
\node at (2,0.2) {\footnotesize $b_\alpha^{m,k}$};
\node at (3,-0.5) {\footnotesize $b_\alpha^{m,n}$};
\node at (5.866,-3.2) {\footnotesize $a_+^{m,k}$};
\node at (2.7,-3.8) {\footnotesize $b_\alpha^{m,n}$};
\node at (4.4,-3.7) {\footnotesize $b_\alpha^{k,n}$};
\end{tikzpicture}
\centering
\caption{Rigid Morse flow trees in neighborhoods of 1-cells}
\label{fig:1-cell}
\end{figure}
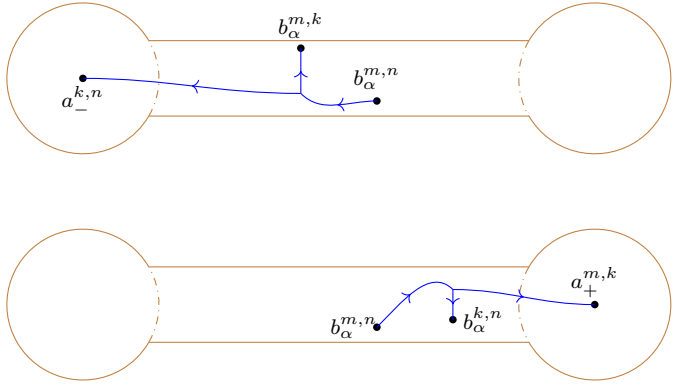

\begin{equation}
\partial b_\alpha^{m,n}=a_{m,n}^++a_{m,n}^-+\sum_{m<k<n}a^{m,k}_+b_\alpha^{k,n}+\sum_{m<k<n}b_\alpha^{m,k}a_-^{k,n}+x
\end{equation}
in $\mathcal{C}E(\Lambda_{p,q,r})$ over $\mathbb{Z}/2$, where the term $x$ corresponds to those flow trees with one of their negative punctures at $\tilde{b}^{\bullet,\bullet}$. Note that if the sheets $S_k$ and $S_{k+1}$ meet at a cusp edge above $e_\alpha^1$, then $a^{k,k+1}_-=1$ in the above formula, which corresponds to an $e$-vertex. By Lemma 8.5 of $\cite{rs2}$, we have $x=0$ in the quotient dg algebra $\mathcal{C}E'(\Lambda_{p,q,r})$. Again, what we have learned in Section \ref{section:subo} enables us to determine the signs of the rigid Morse flow trees in Figure \ref{fig:1-cell}, and we have
\begin{equation}\label{eq:grB}
\begin{aligned}
\partial b_\alpha^{m,n}&=\varepsilon_{\alpha,+}(m,n)a_{m,n}^++\varepsilon_{\alpha,-}(m,n)a_{m,n}^- \\
&+\sum_{m<k<n}\varepsilon_{\alpha,3}(m,k,n)a^{m,k}_+b_\alpha^{k,n}+\sum_{m<k<n}\varepsilon_{\alpha,4}(m,k,n)b_\alpha^{m,k}a_-^{k,n},
\end{aligned}
\end{equation}
where $\varepsilon_{\alpha,+}(m,n),\varepsilon_{\alpha,-}(m,n),\varepsilon_{\alpha,3}(m,k,n),\varepsilon_{\alpha,4}(m,k,n)\in\{\pm1\}$ depend on $m,k$ and $n$.
\bigskip

The considerations above show that our computations of the cellular dg algebra $\mathcal{C}(\Lambda_{p,q,r})$ in Sections \ref{section:computation} and \ref{section:comparison} already determine the Chekanov-Eliashberg algebra $\mathcal{C}E(\Lambda_{p,q,r})$ over any field $\mathbb{K}$ up to signs in front of each of the monomials in the differentials of the generators of $\mathcal{C}E'(\Lambda_{p,q,r})$. More precisely, denote by $\mathcal{G}_{p,q,r}^{\bm{\varepsilon}}$ the dg algebra whose underlying graded associative $\Bbbk$-algebra structure is the same as $\mathcal{G}_{p,q,r}$, but whose differentials of the generators differ from that of $\mathcal{G}_{p,q,r}$ in the sense that there is a sign $(-1)^{\varepsilon_i}$ before every term appearing in the non-trivial differentials of the generators of $\mathcal{G}_{p,q,r}$. For example, the non-trivial differentials of the generators in the dg algebra $\mathcal{G}_{2,2,2}^{\bm{\varepsilon}}$ are given by
\begin{equation}
da_1^\ast=(-1)^{\varepsilon_1}b_2c_2+(-1)^{\varepsilon_2}b_3c_3,da_2^\ast=(-1)^{\varepsilon_3}b_1c_1+(-1)^{\varepsilon_4}b_3c_3,
\end{equation}
\begin{equation}
db_1^\ast=(-1)^{\varepsilon_5}c_1a_2,db_2^\ast=(-1)^{\varepsilon_6}c_2a_1,db_3^\ast=(-1)^{\varepsilon_7}c_3a_1+(-1)^{\varepsilon_8}c_3a_2,
\end{equation}
\begin{equation}
dc_1^\ast=(-1)^{\varepsilon_9}a_2b_1,dc_2^\ast=(-1)^{\varepsilon_{10}}a_1b_2,dc_3^\ast=(-1)^{\varepsilon_{11}}a_1b_3+(-1)^{\varepsilon_{12}}a_2b_3,
\end{equation}
\begin{equation}
dz_A=(-1)^{\varepsilon_{13}}a_1a_1^\ast+(-1)^{\varepsilon_{14}}a_2a_2^\ast+(-1)^{\varepsilon_{15}}c_1^\ast c_1+(-1)^{\varepsilon_{16}}c_2^\ast c_2+(-1)^{\varepsilon_{17}}c_3^\ast c_3,
\end{equation}
\begin{equation}
dz_B=(-1)^{\varepsilon_{18}}a_1^\ast a_1+(-1)^{\varepsilon_{19}}a_2^\ast a_2+(-1)^{\varepsilon_{20}}b_1b_1^\ast+(-1)^{\varepsilon_{21}}b_2b_2^\ast+(-1)^{\varepsilon_{22}}b_3b_3^\ast,
\end{equation}
\begin{equation}
\begin{aligned}
&dz_{P}=(-1)^{\varepsilon_{23}}c_1c_1^\ast+(-1)^{\varepsilon_{24}}b_1^\ast b_1,dz_{Q}=(-1)^{\varepsilon_{25}}c_2c_2^\ast+(-1)^{\varepsilon_{26}}b_2b_2^\ast, \\
&dz_{R}=(-1)^{\varepsilon_{27}}c_3c_3^\ast+(-1)^{\varepsilon_{28}}b_3^\ast b_3.
\end{aligned}
\end{equation}
\begin{lemma}\label{lemma:sign}
Let $\mathbb{K}$ be any field, then there exists a vector $\bm{\varepsilon}=(\varepsilon_1,\dots,\varepsilon_{2(p+q+r)+16})\in(\mathbb{Z}/2)^{2(p+q+r)+16}$ such that $\mathcal{C}E(\Lambda_{p,q,r})$ over $\mathbb{K}$ is quasi-isomorphic to the dg algebra $\mathcal{G}_{p,q,r}^{\bm{\varepsilon}}$.
\end{lemma}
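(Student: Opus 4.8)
The plan is to combine the $\mathbb{Z}/2$ computation already established with the sign analysis from Section \ref{section:subo}, reducing the statement to a purely combinatorial verification that the sign data assembled from Morse flow trees is \emph{consistent} with a differential squaring to zero. First I would observe that the formulas (\ref{eq:grC7}) and (\ref{eq:grB}) already express the differentials of the surviving generators of $\mathcal{C}E'(\Lambda_{p,q,r})$ over an arbitrary field $\mathbb{K}$, with each monomial carrying a definite sign $\varepsilon_0(m,n)$, $\varepsilon_1(m,k,n)$, $\varepsilon_{\alpha,\pm}(m,n)$, etc., computed via Theorem \ref{theorem:sign} as $\varepsilon(\Gamma)=\nu_\mathit{int}(\Gamma)\cdot\nu_\mathit{end}(\Gamma)\cdot\nu_\mathit{stab}(\Gamma)$. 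By Proposition \ref{proposition:pqr} and the quasi-isomorphism (\ref{eq:RS2}), the \emph{underlying graded $\Bbbk$-algebra} of $\mathcal{C}E'(\Lambda_{p,q,r})$ is identified with $\mathcal{G}_{p,q,r}$ through the map $\Phi_{p,q,r}$ extending $\Phi_{2,2,2}$ of (\ref{eq:222}); what changes over a general field is only the appearance of signs in the differentials. Thus by definition the signed dg algebra is precisely some $\mathcal{G}_{p,q,r}^{\bm\varepsilon}$, and the content of the lemma is that this \emph{is} a dg algebra, i.e. $d^2=0$, and that the sign vector $\bm\varepsilon$ is well defined.

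The key steps, in order, are as follows. First I would fix the transverse square decomposition $\mathcal{E}_\pitchfork$ of the previous subsection and enumerate the rigid Morse flow trees contributing to each generator's differential, precisely the trees of Figures \ref{fig:type1} and \ref{fig:1-cell}, together with the elementary flow lines; this is already done for $\mathbb{Z}/2$, so the only new input is attaching a sign to each. Second, I would invoke the two worked examples at the end of Section \ref{section:subo} — the $Y_0$-vertex tree $\Gamma$ and the $2$-valent puncture tree $\Xi$ — as the two model local configurations, since every tree appearing in our setup is locally of one of these two shapes (recall that there are no switches or $Y_1$-vertices, by the structure of $\Lambda_{p,q,r}$). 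Third, using these local models I would record, for each generator $g$ of $\mathcal{G}_{p,q,r}$, a sign $(-1)^{\varepsilon_i}$ in front of each monomial of $dg$, producing the vector $\bm\varepsilon\in(\mathbb{Z}/2)^{2(p+q+r)+16}$; the length of the vector is just the total number of monomials appearing across all the listed differentials, counted exactly as in the explicit $\mathcal{G}_{2,2,2}^{\bm\varepsilon}$ display preceding the lemma. Finally, I would verify that $\mathcal{G}_{p,q,r}^{\bm\varepsilon}$ so defined satisfies $d^2=0$: this is automatic from the fact that $\partial^2=0$ in $\mathcal{C}E(\Lambda_{p,q,r})$, which holds over $\mathbb{Z}$ by the coherence statement of Theorem \ref{theorem:sign}, and then base-changes to any $\mathbb{K}$.

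The main obstacle is not the existence of some sign vector — that is essentially tautological once one believes Karlsson's coherent-orientation theorem — but rather confirming that the signs genuinely \emph{factor through} the monomials of $\mathcal{G}_{p,q,r}$ in the claimed uniform way, i.e. that after the cancellation procedure of Lemma \ref{lemma:tame} (passing from $\mathcal{C}E(\Lambda_{p,q,r})$ to $\mathcal{C}E'(\Lambda_{p,q,r})$) the surviving signs can be organized as a single vector indexed independently of the particular internal structure of each tree. Concretely, the delicate point is that the cancellations identifying pairs such as $(c_7^{2,5},c_{11}^{2,5})$ with $b_{18}^{2,3}$ in the proof of Proposition \ref{proposition:222} must be performed compatibly with the orientations, so that the combined generators like $c_7^{2,5}+c_{11}^{2,5}$ acquire a consistent sign; here one must check that the relative sign between the two trees feeding into such a combined generator is $+1$ (or can be normalized to be so by a change of basis rescaling generators by $\pm1$, which is an invertible change over any field). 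I would handle this by the standard device: perform a $\pm1$-rescaling of the generators to absorb as many signs as possible, and observe that the residual signs are exactly those forced by the graded commutators in the loop differentials $dz_A, dz_B, dz_{P_\bullet}, \dots$, which is where the $d^2=0$ constraint becomes genuinely restrictive. The essential simplification making this tractable is that $w_{p,q,r}$ is cubic, so all the relevant $\nu_\mathit{stab}$ contributions depend only on the parities $|\mu(\cdot)|$ of the low-degree generators, reducing the verification to a finite check in the $(2,2,2)$ case together with the $A_{p-1},A_{q-1},A_{r-1}$ chains handled exactly as in Proposition \ref{proposition:An}.
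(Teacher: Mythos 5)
Your proposal is correct and follows essentially the same route as the paper's own proof: both identify each monomial in the differentials of $\mathcal{C}E'(\Lambda_{p,q,r})$ with a unique rigid Morse flow tree (possible since the front has no swallowtails, so no trees cancel mod 2), attach the signs from Theorem \ref{theorem:sign} via the formulas (\ref{eq:grC7}) and (\ref{eq:grB}), and observe that the cancellation procedure of Lemma \ref{lemma:tame} goes through over any field because the leading coefficients remain units $\pm1$, yielding a term-by-term identification with $\mathcal{G}_{p,q,r}^{\bm{\varepsilon}}$. Your extra steps (verifying $d^2=0$, which as you note is automatic from Karlsson's coherence, and normalizing signs by rescaling generators) are harmless refinements of the same argument; the actual pinning down of the signs is deferred, in the paper as in your plan, to the subsequent proposition.
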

\begin{proof}
Since there is no swallowtail singularities in the Legendrian front of $\Lambda_{p,q,r}$, it follows from the arguments in Sections 5 and 6 of $\cite{rs2}$ that each term in the non-trivial differentials of the generators in $\mathcal{C}E(\Lambda_{p,q,r})$ corresponds to a unique rigid Morse flow tree (instead of an odd number of them), and no two of the rigid Morse flow trees are cancelled in the differentials of $\mathcal{C}E(\Lambda_{p,q,r})$ when $\mathbb{K}=\mathbb{Z}/2$. In particular, since $\mathcal{C}E(\Lambda_{p,q,r})$ and $\mathcal{C}(\Lambda_{p,q,r})$ are related by a stable tame isomorphism, the same argument as in the proof of Proposition \ref{proposition:222} applies when passing from $\mathcal{C}(\Lambda_{p,q,r})$ to $\mathcal{C}E(\Lambda_{p,q,r})$, and from $\mathbb{Z}/2$ to an arbitrary field $\mathbb{K}$, which cancels all of the generators in $\mathcal{C}E(\Lambda_{p,q,r})$ except for those in the quotient dg algebra $\mathcal{C}'(\Lambda_{p,q,r})$. This identifies the quotient dg algebra $\mathcal{C}E'(\Lambda_{p,q,r})$ and $\mathcal{G}_{p,q,r}$ as $\Bbbk$-bimodules.

On the other hand, observe from our computations in Sections \ref{section:computation} and \ref{section:comparison} that all of the generators in $\mathcal{C}(\Lambda_{p,q,r})$ associated to 0-cells are cancelled out in the quotient dg algebra $\mathcal{C}'(\Lambda_{p,q,r})$, therefore they do not contribute to $\mathcal{C}E'(\Lambda_{p,q,r})$. Similarly, since all the generators of $\mathcal{C}(\Lambda_{p,q,r})$ associated to 2-cells can be cancelled out except for those in the matrices $C_7,C_{11}$ and $C_{12}$, in order to determine the dg algebra $\mathcal{C}E'(\Lambda_{p,q,r})$ over $\mathbb{K}$, it suffices to use the formulae (\ref{eq:grC7}) (together with its analogues for the elementary squares $\square_{11}$ and $\square_{12}$) and (\ref{eq:grB}). More precisely, let $A_i$ be the matrix of generators in the cellular dg algebra $\mathcal{C}(\Lambda_{p,q,r})$ associated to the 0-cell $e_i^0$. Since the cellular decomposition $\mathcal{E}_{||}$ is clearly finer than the one we used in Section \ref{section:computation} for computing $\mathcal{C}(\Lambda_{p,q,r})$, one can regard $A_i$ as the matrix of generators associated to a 0-cell $e_i^0$ in $\mathcal{E}_{||}$. Since there is no elementary squares of Types V, VI, VIII and XII in $\mathcal{E}_\pitchfork$, under the identification $\mathcal{C}_{||}(\Lambda_{p,q,r})\cong\mathcal{C}E(\Lambda_{p,q,r})/\mathcal{I}$ proved in Section 8.2 of $\cite{rs2}$, where $\mathcal{I}$ is the ideal generated by the exceptional generators in $\mathcal{C}E(\Lambda_{p,q,r})$, it makes sense to talk about the matrix $A_i$ of generators in $\mathcal{C}E(\Lambda_{p,q,r})$. After taking into account the orientation data of Morse flow trees, denote by $A_i^\#$ the corresponding matrix of generators in $\mathcal{C}E(\Lambda_{p,q,r})$ over $\mathbb{K}$. Applying (\ref{eq:grB}) iteratively, one can determine the value of $A_6^\#$ in the quotient dg algebra $\mathcal{C}E'(\Lambda_{2,2,2})$, from which the differential $\partial b_7^{4,5}$ in $\mathcal{C}E'(\Lambda_{p,q,r})$ can be computed. Use (\ref{eq:grB}) again to determine the signs of rigid Morse flow trees with positive punctures at $b_7^{m,n}$, the value of $A_7^\#$ in $\mathcal{C}E'(\Lambda_{p,q,r})$ can be computed, from which one gets the differentials of the generators in $C_7$ using (\ref{eq:grC7}). The same method can be applied to the Reeb chords above the elementary squares $\square_{11}$ and $\square_{12}$. This enables us to conclude that there is a term by term identification between the differentials of the  generators in $\mathcal{C}E'(\Lambda_{p,q,r})$ and the corresponding generators in $\mathcal{G}_{p,q,r}$, and the quasi-isomorphism $\mathcal{C}E(\Lambda_{p,q,r})\cong\mathcal{G}_{p,q,r}^{\bm{\varepsilon}}$ follows.
\end{proof}

\begin{proposition}
Let $\mathbb{K}$ be any field. There is a quasi-isomorphism
\begin{equation}
\mathcal{C}E(\Lambda_{p,q,r})\cong\mathcal{G}_{p,q,r}
\end{equation}
between the Chekanov-Eliashberg algebra of $\Lambda_{p,q,r}$ and the Ginzburg algebra $\mathcal{G}_{p,q,r}$ over $\mathbb{K}$.
\end{proposition}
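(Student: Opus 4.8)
The plan is to reduce the statement, via Lemma \ref{lemma:sign}, to a purely algebraic comparison of two dg algebras that have the same underlying graded $\Bbbk$-algebra and the same monomials in each differential, differing only by the signs recorded in the vector $\bm{\varepsilon}$. Lemma \ref{lemma:sign} supplies a quasi-isomorphism $\mathcal{C}E(\Lambda_{p,q,r})\cong\mathcal{G}_{p,q,r}^{\bm{\varepsilon}}$, so it suffices to produce a dg isomorphism $\mathcal{G}_{p,q,r}^{\bm{\varepsilon}}\cong\mathcal{G}_{p,q,r}$. When $\mathrm{char}(\mathbb{K})=2$ the two dg algebras literally coincide and there is nothing to prove (this recovers Proposition \ref{proposition:pqr}), so the content is entirely in the case $\mathrm{char}(\mathbb{K})\neq2$.

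I would construct the desired isomorphism as a \emph{rescaling} automorphism of the free graded algebra: fix units $\lambda_g\in\{\pm1\}\subset\mathbb{K}^\times$, one for each generator $g$ of $\mathcal{G}_{p,q,r}$, and set $\phi(g):=\lambda_g g$. Requiring $\phi$ to intertwine the two differentials, $d\circ\phi=\phi\circ d^{\bm{\varepsilon}}$, becomes one multiplicative equation for each monomial occurring in the Ginzburg differential. For a reversed arrow such as $a_1^\ast$ these read
\begin{equation}
\lambda_{a_1^\ast}=(-1)^{\varepsilon_1}\lambda_{b_2}\lambda_{c_2},\quad \lambda_{a_1^\ast}=(-1)^{\varepsilon_2}\lambda_{b_3}\lambda_{c_3},
\end{equation}
and analogously for $b_j^\ast$, $c_k^\ast$ and the loops $z_v$. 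The rescalings of the arrows $a_i,b_j,c_k,x_i,y_j,z_k$ may be chosen freely; each equation of this type then \emph{determines} the rescaling of a reversed arrow or a loop, while the remaining equations (there is more than one per reversed arrow and per loop) become consistency constraints on the $\lambda_g$.

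The key step is to verify that these consistency constraints are always satisfiable, and here I would use that $\mathcal{G}_{p,q,r}^{\bm{\varepsilon}}$ is a genuine dg algebra, being quasi-isomorphic to the Chekanov--Eliashberg dg algebra. Imposing $(d^{\bm{\varepsilon}})^2=0$ and collecting the coefficient of each resulting cubic monomial yields a system of linear relations among the $\varepsilon_i$ over $\mathbb{Z}/2$; for instance the vanishing of $(d^{\bm{\varepsilon}})^2 z_B$ forces relations such as $\varepsilon_{18}+\varepsilon_2\equiv\varepsilon_{22}+\varepsilon_7+1\pmod 2$. One checks that these are precisely the conditions needed for the multiplicative system above to admit a solution with $\lambda_g=\pm1$. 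Thus I would solve the system in stages: first use the free arrow rescalings to normalise the differentials $d a_i^\ast, d b_j^\ast, d c_k^\ast$ to their standard sign-free form, then read off $\lambda_{z_v}$ from a single loop equation, and finally verify, using the $d^2=0$ relations, that the remaining loop equations hold automatically.

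The tails coming from the $A_{p-1},A_{q-1},A_{r-1}$ chains, namely the generators $x_i,x_i^\ast,z_{P_i}$ and their analogues, are treated by the same bookkeeping and in fact decouple from the central vertices exactly as in the cancellation of Proposition \ref{proposition:An}, so that the problem genuinely reduces to the finitely many signs appearing for $\Lambda_{2,2,2}$. The main obstacle I anticipate is not conceptual but combinatorial: one must confirm that, once all the $d^2=0$ relations are imposed, the number of independent consistency constraints never exceeds the available gauge freedom, so that a compatible choice of signs $\lambda_g$ always exists. Granting this, the resulting $\phi$ is by construction a $\Bbbk$-algebra isomorphism commuting with the differentials, giving the desired quasi-isomorphism $\mathcal{C}E(\Lambda_{p,q,r})\cong\mathcal{G}_{p,q,r}$ over $\mathbb{K}$; conceptually, this expresses that Karlsson's orientation scheme produces a potential right-equivalent to $w_{p,q,r}$.
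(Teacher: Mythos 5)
Your proposal is correct in outline but takes a genuinely different route from the paper, and the step you deferred is exactly where the two diverge. The paper never manipulates the sign vector algebraically: for $r=2$ it invokes geometry --- since $M_{p,q,2}$ is a \emph{double} suspension, Corollary \ref{corollary:double-susp} gives $\mathcal{V}_{p,q,2}\cong\mathcal{A}_{p,q,2}\oplus\mathcal{A}_{p,q,2}^\vee[-3]$, and feeding this into Theorem \ref{theorem:gem} applied to $\mathcal{G}_{p,q,2}^{\bm{\varepsilon}}$ pins down each $\varepsilon_i$ by matching products such as $\mu^2_{\mathcal{V}_{p,q,2}}(c_1^\vee,b_1^\vee)=-(a_2^\ast)^\vee$ against the dual differentials --- and it then reaches general $r$ by computing the two extra flow-tree signs $\varepsilon(\Xi_1)=-\varepsilon(\Xi_2)$ with the machinery of Section \ref{section:subo}. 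Your gauge-fixing argument replaces all of this input (Seidel's suspension theorem, the Ekholm--Lekili equivalence, and every explicit orientation computation) by linear algebra over $\mathbb{Z}/2$, and in exchange proves something slightly stronger: \emph{any} sign assignment on this quiver with $(d^{\bm{\varepsilon}})^2=0$ is rescaling-equivalent to the Ginzburg one, so the conclusion becomes insensitive to the details of Karlsson's conventions. What the paper's route buys is the actual values of the signs and a consistency check against the cyclic structure on the compact side, with no need to worry about independence of relations.

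That said, your proof is only a reduction until the transitivity count is actually performed, so let me confirm it closes. All $d^2=0$ constraints come from the loops (arrows and reversed arrows are closed), and for the core $(2,2,2)$ they number exactly $4+4+1+1+2=12$: four from $z_A$, four from $z_B$, one each from $z_P,z_Q$, two from $z_R$, of the type you wrote, e.g.
\begin{equation}
\varepsilon_{18}+\varepsilon_1\equiv\varepsilon_{21}+\varepsilon_6+1,\qquad
\varepsilon_{13}+\varepsilon_1\equiv\varepsilon_{16}+\varepsilon_{10}+1\pmod 2.
\end{equation}
A greedy elimination (six relations contain a sign occurring in no other relation, after which four more do, after which the last two are visibly independent) shows all twelve are independent, so the solution set is a $16$-dimensional affine subspace of $(\mathbb{Z}/2)^{28}$ containing the Ginzburg signs. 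The gauge group $(\mathbb{Z}/2)^{21}$, one $\lambda_g=\pm1$ per generator, acts by translations, and its kernel is $5$-dimensional: conjugation by vertex units modulo the overall unit gives four dimensions, and the rescaling $a_i\mapsto-a_i$, $b_j^\ast\mapsto-b_j^\ast$, $c_j^\ast\mapsto-c_j^\ast$, $z_v\mapsto-z_v$ gives the fifth. Hence the orbit of the standard signs has dimension $21-5=16$, sits inside the $16$-dimensional solution space, and therefore equals it: this is your transitivity. The chains then decouple exactly as you predicted, since $dx_i=dx_i^\ast=0$ means the loops $z_{P_i}$, $i\geq2$, impose no new relations, and their signs are absorbed by solving for $\lambda_{x_i}\lambda_{x_i^\ast}$ and $\lambda_{z_{P_{i+1}}}$ by forward substitution along the chain. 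With this count included your argument is a complete, self-contained alternative proof; without it, you have reduced the proposition to a true but unproved finite combinatorial statement, which is the one thing you must not leave to the reader.
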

\begin{proof}
Let $r=2$. By Lemma \ref{lemma:sign}, there exists a vector $\bm{\varepsilon}\in(\mathbb{Z}/2)^{2(p+q)+20}$ such that $\mathcal{C}E(\Lambda_{p,q,2})\cong\mathcal{G}_{p,q,2}^{\bm{\varepsilon}}$ as augmented dg algebras. On the other hand, it follows from Theorem \ref{theorem:gem} that
\begin{equation}
R\mathrm{Hom}_{\mathcal{G}_{p,q,2}^{\bm{\varepsilon}}}(\Bbbk,\Bbbk)\cong\mathcal{V}_{p,q,2},
\end{equation}
where the $R\mathrm{Hom}$ on the left-hand side is taken with respect to the trivial augmentation on $\mathcal{G}_{p,q,2}^{\bm{\varepsilon}}$. Since the Weinstein manifold $M_{p,q,2}$ can be regarded as the fiber of the double suspension of a Lefschetz fibration on $\mathbb{C}^2$, Corollary \ref{corollary:double-susp} of Section \ref{section:suspension} can be applied to show that $\mathcal{V}_{p,q,2}=\mathcal{A}_{p,q,2}\oplus\mathcal{A}_{p,q,2}^\vee[-3]$. This determines completely the signs in $\bm{\varepsilon}$. For example, since
\begin{equation}
\mu^2_{\mathcal{V}_{p,q,2}}(c_1^\vee,b_1^\vee)=-(a_2^\ast)^\vee
\end{equation}
and
\begin{equation}
\mu^2_{\mathcal{V}_{p,q,2}}(c_3^\vee,b_3^\vee)=(a_2^\ast)^\vee
\end{equation}
in $\mathcal{V}_{p,q,2}$, we get $\varepsilon_3=0$ and $\varepsilon_4=1$, where as in Section \ref{section:CY algebra}, $g^\vee\in\mathcal{V}_{p,q,r}$ denotes the dual generator of $g\in\mathcal{G}_{p,q,r}$ under Koszul duality. The other signs can be similarly determined and it follows that $\mathcal{C}E(\Lambda_{p,q,2})\cong\mathcal{G}_{p,q,2}$ over $\mathbb{K}$.

Passing from $\mathcal{C}E(\Lambda_{p,q,2})$ to the general case of $\mathcal{C}E(\Lambda_{p,q,r})$ essentially requires a computation of the signs of rigid Morse flow trees determined by an $A_{r-2}$ link of unknots $\Lambda_{r-2}$. In fact, by the discussions above and the computations in Section \ref{section:comparison}, the only rigid Morse flow trees that contribute to the differentials of the additional generators in $\mathcal{C}E'(\Lambda_{p,q,r})$ are those whose positive punctures correspond to Reeb chords which start at $\Lambda_{R_{j-1}}$ and end $\Lambda_{R_j}$ for some $j$. Let $c_{12}^{m,n}$ be the generator associated to such a Reeb chord. It follows from our assumptions that $b_{23}^{\sigma_0(m),\sigma_0(n)}=0$ in $\mathcal{C}E'(\Lambda_{p,q,r})$, so
\begin{equation}
\partial c_{12}^{m,n}=\sum_{m<k<n}\varepsilon_1(m,k,n)a_{14}^{\sigma_0(m),\sigma_0(k)}c_{12}^{k,n}+\sum_{m<k<n}\varepsilon_2(m,k,n)c_{12}^{m,k}a_{14}^{\sigma_0(k),\sigma_0(n)}.
\end{equation}
Let $c_{12}^{m',n'}$ be the generator associated to the Reeb chord of the Legendrian unknot $\Lambda_{R_j}$, then from the configuration of the Legendrian front of an $A_{r-2}$ link of unknots, it is clear that $m'=n-1$. Consider the terms $a_{14}^{\sigma_0(m),\sigma_0(n-1)}c_{12}^{n-1,n}$ in $\partial c_{12}^{m,n}$ and $c_{12}^{n-1,n}a_{14}^{\sigma_0(n),\sigma_0(n')}$ in $\partial c_{12}^{n-1,n'}$, denote by $\Xi_1$ and $\Xi_2$ the corresponding rigid Morse flow trees. (\ref{eq:grB}), together with our computations in Section \ref{section:subo}, shows that
\begin{equation}
a_{14}^{\sigma_0(m),\sigma_0(n-1)}=a_{14}^{\sigma_0(n),\sigma_0(n')}=b_{22}^{n-1,n}
\end{equation}
in $\mathcal{C}E'(\Lambda_{p,q,r})$. It follows that $\varepsilon(\Xi_1)=\varepsilon(\Xi)$ and $\varepsilon(\Xi_2)=\varepsilon(\Xi^\mathit{op})$, where $\Xi$ and $\Xi^\mathit{op}$ are the rigid Morse flow trees depicted in Figure \ref{fig:y0tree2}. The computation at the end of Section \ref{section:subo} then allows us to conclude that $\varepsilon(\Xi_1)=-\varepsilon(\Xi_2)$. One can make the choice of capping orientations so that $\varepsilon(\Xi_1)=-1$ and $\varepsilon(\Xi_2)=1$.

Using the notations as in the proof of Proposition \ref{proposition:pqr}, we have proved that
\begin{equation}
\partial z_{R_j}=z_{j-1}^\ast z_j-z_jz_{j+1}^\ast
\end{equation}
in $\mathcal{C}E'(\Lambda_{p,q,r})$. Moreover, the additional term $z_1z_1^\ast$ in the differential of $z_{Q_1}$ has negative sign, since its associated rigid Morse flow tree has the same sign as $\Xi$. This proves the quasi-isomorphism $\mathcal{C}E'(\Lambda_{p,q,r})\cong\mathcal{G}_{p,q,r}$.
\end{proof}


\begin{thebibliography}{99}
\fontsize{9pt}{9pt}\selectfont
\bibitem{ma1}M. Abouzaid, \textit{A geometric criterion for generating the Fukaya category}, Publ. Math. IHES 112 (2010), 191-240.
\bibitem{ma2}M. Abouzaid, \textit{A topological model for the Fukaya categories of plumbings}, J. Differential Geom. 87 (2011), no. 1, 1-80.
\bibitem{ma3}M. Abouzaid, \textit{On wrapped Fukaya category and based loops}, J. Symplectic Geom. 10 (2012), no. 1, 27-79.
\bibitem{aak}M. Abouzaid, D. Auroux, and L. Katzarkov, \textit{Lagrangian fibrations on blowups of toric varieties and mirror symmetry for hypersurfaces}, Publ. Math. IHES 123 (2016), 199-282.
\bibitem{as}M. Abouzaid and P. Seidel, \textit{An open string analogue of Viterbo functoriality}, Geom. Topol. 14 (2010), no. 2, 627-718.
\bibitem{asm}M. Abouzaid, I. Smith, \textit{Exact Lagrangians in plumbings}, Geom. Funct. Anal. 22 (2012), no. 4, 785-831.
\bibitem{ja}J.F. Adams, \textit{On the cobar construction}, Proc. Nat. Acad. Sci. U.S.A. 42 (1956), 409-412.
\bibitem{ah}J.F. Adams and P.J. Hilton, \textit{On the chain algebra of a loop space}, Comment. Math. Helv. 30 (1956), 305-330.
\bibitem{la}L. Avramov, \textit{(Contravariant) Koszul duality for dg algebras}. In: A. Buan, I. Reiten, {\O}. Solberg, (eds) Algebras, Quivers and Representations. Abel Symposia, vol 8. Springer, Berlin, Heidelberg  (2013).
\bibitem{bgs}A. Beilinson, V. Ginzburg and W. Soergel, \textit{Koszul duality patterns in representation theory}, J. Amer. Math. Soc. 9 (1996), no. 2, 473-527.
\bibitem{bhz}C. Bessenrodt, T. Holm, A. Zimmermann, \textit{Generalized Reynolds ideals for non-symmetric algebras}, J. Algebra 312, 985-994 (2007).
\bibitem{bee}F. Bourgeois, T. Ekholm, Y. Eliashberg, \textit{Effect of Legendrian surgery}, With an appendix by S. Ganatra and M. Maydanskiy, Geom. Topol., 16, (2012), 301-389.
\bibitem{bee2}F. Bourgeois, T. Ekholm, and Y. Eliashberg, \textit{Symplectic homology product via Legendrian surgery}, Proc. Natl. Acad. Sci. USA, 108(20):8114-8121, 2011.
\bibitem{na}N. A'Campo, \textit{Le groupe de monodromie du deploiement des singularites isolees de courbes planes I}, Mathematische Annalen, 213 (1975), 1-32.
\bibitem{cm}R. Casals and E. Murphy, \textit{Legendrian fronts for affine varieties}, Duke Math. J.
Volume 168, Number 2 (2019), 225-323.
\bibitem{cggr}B. Chantraine, G. Dimitroglou-Rizell, P. Ghiggini, and R. Golovko, \textit{Geometric generation of the wrapped Fukaya category of Weinstein manifolds and sectors}, arXiv:1712.09126.
\bibitem{cmrs}C. Cibils, E. Marcos, M.J. Redondo and A. Solotar, \textit{Cohomology of split algebras and of trivial extensions}, Glasgow Math. J. 45 (1) (2003) 21-40.
\bibitem{yc}Y. Chekanov, \textit{Differential algebra of Legendrian links}, Invent. Math., 150 (2002), no. 3, 441-483.
\bibitem{cyz}X. Chen, S. Yang and G. Zhou, \textit{Batalin-Vilkovisky algebras and the noncommutative Poincare duality of Koszul Calabi-Yau algebras}, Journal of Pure and Applied Algebra 220 (2016) 2500-2532.
\bibitem{cg}R. Cohen and S. Ganatra, \textit{Calabi-Yau categories, the Floer theory of a cotangent bundle, and the string topology of the base}, preliminary version available at http://math.stanford.edu/~ralph/papers.html.
\bibitem{te}T. Ekholm, \textit{Morse flow trees and Legendrian contact homology in 1-jet spaces}, Geom. Topol., 11:1083-1224, 2007.
\bibitem{ees}T. Ekholm, J. Etnyre, and M. Sullivan, \textit{The contact homology of Legendrian submanifolds in} $\mathbb{R}^{2n+1}$, J. Differential Geom., 71(2):177-305, 2005.
\bibitem{ehk}T. Ekholm, K. Honda and T. K\'{a}lm\'{a}n, \textit{Legendrian knots and exact Lagrangian cobordisms}, J. Eur. Math. Soc. Volume 18, Issue 11, 2016, pp. 2627-2689.
\bibitem{ekl}T. Ekholm and Y. Lekili, \textit{Duality between Lagrangian and Legendrian invariants}, arXiv:1701.01284.
\bibitem{en}T. Ekholm and L. Ng, \textit{Legendrian contact homology in the boundary of a subcritical Weinstein 4-manifold}, J. Differential Geom. 101 (2015), no. 1, 67-157.
\bibitem{etl1}T. Etg\"{u} and Y. Lekili, \textit{Koszul duality patterns in Floer theory}, Geometry and Topology 21 (2017) 3313-3389 .
\bibitem{etl2}T. Etg\"{u} and Y. Lekili, \textit{Fukaya categories of plumbings and multiplicative preprojective algebras}, arXiv:1703.04515, to appear in Quantum Topology.
\bibitem{fr}D. Fuchs, D. Rutherford, \textit{Generating families and Legendrian contact homology in the standard contact space}, J. Topol. 4 (2011), 190-226.
\bibitem{kf}K. Fukaya, \textit{Cyclic symmetry and adic convergence in Lagrangian Floer theory}, Kyoto J. Math. Volume 50, Number 3 (2010), 521-590.
\bibitem{fu}M. Futaki and K. Ueda, \textit{Exact Lefschetz fibrations associated with dimer models}, Math. Res. Lett. 17 (6) (2010) 1029-1040.
\bibitem{vg}V. Ginzburg, \textit{Calabi-Yau algebras}, arXiv:math.AG/0612139.
\bibitem{gp}E. Giroux and J. Pardon, \textit{Existence of Lefschetz fibrations on Stein and Weinstein domains}, arXiv:1411.6176.
\bibitem{sg}S. Ganatra, \textit{Symplectic cohomology and duality for the wrapped Fukaya category}, arXiv:1304.7312.
\bibitem{gp1}S. Ganatra and D. Pomerleano, \textit{A Log PSS morphism with applications to Lagrangian embeddings}, arXiv:1611.06849.
\bibitem{gp2}S. Ganatra and D. Pomerleano, \textit{Symplectic cohomology rings of affine varieties in the topological limit}, arXiv:1811.03609.
\bibitem{gps1}S. Ganatra, J. Pardon and V. Shende, \textit{Covariantly functorial Floer theory on Liouville sectors}, arxiv:1706.03152.
\bibitem{gps2}S. Ganatra, J. Pardon and V.Shende, \textit{Structural results in wrapped Floer theory}, arXiv:1809.03427.
\bibitem{rg}R. Golovko, \textit{A note on Lagrangian cobordisms between Legendrian submanifolds of} $\mathbb{R}^{2n+1}$, Pacific J. Math., 261(1):101-116, 2013.
\bibitem{ghk}M. Gross, P. Hacking and S.Keel, \textit{Mirror symmetry for log Calabi-Yau surfaces I}, Publications mathematiques de l'IHES, November 2015, Volume 122, Issue 1, 65-168.
\bibitem{hrs}D. Happel, I. Reiten and S. Smal{\o}, \textit{Tilting in abelian categories and quasitilted algebras}, Mem. Amer. Math. Soc. 120 (575) (1996).
\bibitem{eh}E. Herscovich, \textit{Hochschild (co)homology and Koszul duality}, arXiv: 1405.2247.
\bibitem{ck1}C. Karlsson, \textit{Orientations of Morse flow trees in Legendrian contact homology}, arXiv:1601.07346.
\bibitem{ck2}C. Karlsson, \textit{To compute orientations of Morse flow trees in Legendrian contact homology}, arXiv:1704.05156.
\bibitem{ak1}A. Keating, \textit{Lagrangian tori in four-dimensional Milnor fibers}, Geom. Funct. Anal. Vol. 25 (2015), 1822-1901.
\bibitem{ak2}A. Keating, \textit{Homological mirror symmetry for hypersurface cusp singularities}, A. Sel. Math. New Ser. (2017), 1-42.
\bibitem{bk}B. Keller, \textit{Deformed Calabi-Yau completions}, with an appendix by M. Van den Bergh, J. Reine Angew. Math. 654 (2011), 125-180.
\bibitem{bke}B. Keller, \textit{Erratum to ``Deformed Calabi-Yau completions"}, arXiv:1809.01126.
\bibitem{kn}B. Keller and P. Nicol\'{a}s, \textit{Weighted structures and simple dg modules for positive dg algebras}, International Mathematics Research Notices, Vol. 2013, No. 5, pp. 1028-1078, 2013.
\bibitem{lu}Y. Lekili and K. Ueda, \textit{Homological mirror symmetry for K3 surfaces via moduli of} $A_\infty$ \textit{structures}, arXiv:1806.04345.
\bibitem{ls}H. Lenzing and A. Skowronski, \textit{Quasi-tilted algebras of canonical type}, Colloq. Math. 71 (1996) 161-181.
\bibitem{lpwz}D. Lu, J. Palmieri, Q. Wu and J. Zhang, \textit{Koszul equivalences in} $A_\infty$ \textit{algebras}, New York J. Math. 14 (2008), 325-378.
\bibitem{dn}D. Nadler, \textit{A combinatorial calculation of the Landau-Ginzburg model} $M=\mathbb{C}^3,W=z_1z_2z_3$, Selecta Math. 23 (2017), no. 1, 519-532.
\bibitem{nz}D. Nadler and E. Zaslow, \textit{Constructible Sheaves and the Fukaya Category}, J. Amer. Math. Soc. 22, 233-286, (2009).
\bibitem{jp}J. Pascaleff, \textit{On the symplectic cohomology of log Calabi-Yau surfaces}, arXiv:1304.5298, to appear in Geometry and Topology.
\bibitem{dp}D. Pomerleano, \textit{Curved String Topology and Tangential Fukaya Categories II}, arXiv:1208.4881.
\bibitem{rs1}D. Rutherford and M. Sullivan, \textit{Cellular Legendrian contact homology for surfaces, Part I}, arXiv:1608.02984.
\bibitem{rs2}D. Rutherford and M. Sullivan, \textit{Cellular Legendrian contact homology for surfaces, Part II}, arXiv:1608.03011.
\bibitem{ts}T. Schedler, \textit{Zeroth Hochschild homology of preprojective algebras over the integers}, Advances in Mathematics, Vol. 299 (2016), 451-542.
\bibitem{es}E. Segal, \textit{The} $A_\infty$ \textit{deformation theory of a point and the derived categories of local Calabi-Yaus}, J. Algebra 320 (2008), no. 8, 3232-3268.
\bibitem{ps1}P. Seidel, \textit{Fukaya categories and Picard-Lefschetz theory}, Z\"{u}rich Lect. in Adv. Math., European Math. Soc., Z\"{u}rich, 2008.
\bibitem{ps2}P. Seidel, \textit{Lectures on categorical dynamics and symplectic topology}, available at http://www-math.mit.edu/~seidel/.
\bibitem{ps3}P. Seidel, \textit{Graded Lagrangian submanifolds}, Bull. Soc. Math. France 128 (2000), no. 1, 103-149.
\bibitem{ps4}P. Seidel, \textit{Suspending Lefschetz fibrations, with an application to mirror symmetry}, Comm. Math. Phys. 297 (2010), 515-528.
\bibitem{ps5}P. Seidel, $A_\infty$-\textit{subalgebras and natural transformations}, Homology, Homotopy Appl., 10:83-114, 2008.
\bibitem{ps6}P. Seidel, \textit{A biased view of symplectic cohomology}, Current developments in mathematics, International Press, Somerville, MA, 2008, 211-253.
\bibitem{ps7}P. Seidel, \textit{Picard-Lefschetz theory and dilating} $\mathbb{C}^\ast$-\textit{actions}, J. Topology (2015) 8 (4): 1167-1201.
\bibitem{ps8}P. Seidel, \textit{Disjoinable Lagrangian spheres and dilations}, Invent math (2014) 197:299-359.
\bibitem{ps10}P. Seidel, \textit{Lagrangian homology spheres in} $(A_m)$ \textit{Milnor fibres via} $\mathbb{C}^\ast$-\textit{equivariant} $A_\infty$-\textit{modules}, Geom. Topol. 16:2343-2389, (2012).
\bibitem{ps9}P. Seidel, \textit{Fukaya} $A_\infty$-\textit{structures associated to Lefschetz fibrations}, J. Symplectic Geom. Volume 10, Number 3 (2012), 325-388.
\bibitem{ss}P. Seidel and J. Solomon, \textit{Symplectic cohomology and q-intersection numbers}, Geom. Funct. Anal. 22, 443-477 (2012).
\bibitem{ssi}S. Sivek, \textit{A bordered Chekanov-Eliashberg algebra}, J. Topol. 4(1), 73-104 (2011).
\bibitem{is}I. Smith, \textit{Quiver algebras as Fukaya categories}, Geom. Topol. 19 (2015), no. 5, 2557-2617.
\bibitem{tt}T. Tradler, \textit{The Batalin-Vilkovisky algebra on Hochschild cohomology induced by infinity inner products}, Ann. Inst. Fourier 58 (2008), no. 7, 2351-2379.
\bibitem{cw}C. Weibel, \textit{An introduction to homological algebra}, Cambridge Studies in Advanced Mathematics, 38. Cambridge University Press, Cambridge, 1994.
\bibitem{dy}D. Yang, \textit{The interplay between 2-and-3-Calabi-Yau triangulated categories}, arXiv:1811.07553.
\bibitem{wy}W. Yeung, \textit{Relative Calabi-Yau completions}, arXiv: 1612.06352.
\end{thebibliography}
\end{document}